\documentclass[twoside,12pt,letterpaper]{article}
\usepackage[margin=1in,centering]{geometry}
\usepackage[labelfont=bf,textfont=sf]{caption}
\usepackage{latexsym,graphicx}
\usepackage{graphbox}  
\usepackage{multirow}
\usepackage{amssymb,amsmath,amsthm,amsopn,pifont} 
\usepackage[abs, percent]{overpic}
\usepackage{mathrsfs}
\usepackage[usenames,dvipsnames,table]{xcolor} 
\usepackage[implicit=true,               
colorlinks=true,linkcolor=Black,citecolor=Black,urlcolor=Black
]{hyperref}
\usepackage{aliascnt} 
\pagestyle{myheadings}
\mathsurround 1.5pt
\relpenalty=9999    
\binoppenalty=9999  

\raggedbottom
%
\usepackage{longtable} 
\usepackage{array}     
\definecolor{light-gray}{gray}{0.8}
\newcolumntype{:}{!{\color{light-gray}\vline}} 
\usepackage{enumitem}
\setlist[enumerate,1]{label=\textbf{(\arabic*)},ref=(\arabic*)}

\usepackage{placeins} 
\usepackage{xifthen}
\newcommand\measurepage{\dimexpr\pagegoal-\pagetotal-\baselineskip\relax}
\newtest{\IsThereSpaceOnPage}[1]{\lengthtest{\measurepage<#1}}
\let\stdsection\section
\renewcommand\section{\FloatBarrier%
  \ifthenelse{\IsThereSpaceOnPage{.15\textheight}}{\clearpage
  }{\relax}\stdsection}
\let\stdsubsection\subsection
\newcommand{\SubSecSkip}{\vspace{.5\baselineskip plus 10pt minus 5pt}}
\renewcommand\subsection{\FloatBarrier%
  \ifthenelse{\equal{\arabic{subsection}}{0}}%
  {\vspace{0pt plus 10pt}}{\SubSecSkip}\stdsubsection}
%
%
\input xy
\xyoption{all}
\def\r{{\bf r}}
\def\a{{\bf a}}
\def\f{{\bf f}}

\def\FP{{\rm FP}}
\def\({{(\!(}}
\def\){{)\!)}}
\def\du#1{\underline{\underline{#1}}}
\def\ssm{{\smallsetminus}}
\def\vecm{{\stackrel{\rightarrow}{m}}}
\def\vecx{{\stackrel{\rightarrow}{x}}}
\def\vecy{{\stackrel{\rightarrow}{y}}}

\def\vect{{\stackrel{\rightarrow}{t}}}

\def\U{{\mathcal U}}
\newcommand{\mapstoself}%
  {\,\protect\rotatebox[origin=c]{90}{\large$\circlearrowleft$}\,}
\newcommand{\rmapsto}%
  {\mathrel{\raisebox{.22ex}{\protect\rotatebox[origin=c]{180}{$\mapsto$}}}}
\def\ka{{\kappa}}
\def\ml{{\mu}}

\def\k{{\bf k}}
\def\K{{\bf K}}
\def\h{{\bf h}}
\def\wf{{\widetilde f}}
\def\Per{{\bf Per}}
\newcommand\Tstrut{\rule{0pt}{3.2ex}}         
\newcommand\Bstrut{\rule[-1.2ex]{0pt}{0pt}}   
\newcommand\ssk{{\smallskip}}
\newcommand\msk{{\medskip}}
\newcommand\bsk{{\bigskip}}

\newcommand\C{{\mathbb C}}
\newcommand\bP{{\mathbb P}}
\newcommand\R{{\mathbb R}}

\newcommand\Q{{\mathbb Q}}
\DeclareMathOperator\Arg{Arg\,}

\def\Rhat{{\widehat\R}}

\def\PSL{{\rm PSL}}

\def\I{{\mathcal I}}
\def\M{{\mathcal M}}
\def\npc{{N_{\bf pc}}}
\def\F{{\mathscr F}}
\def\PCF{{critically finite~}}
\def\<{{\langle}}
\def\>{{\rangle}}
\newcommand\Z{{\mathbb Z}}

\makeatletter
\def\newaliasedtheorem#1[#2]#3{%
  \newaliascnt{#1@alt}{#2}
  \newtheorem{#1}[#1@alt]{#3}
  \expandafter\newcommand\csname #1@altname\endcsname{#3}
}
\makeatother

\def\equationautorefname~#1\null{Equation~(#1)\null} 
\numberwithin{equation}{section}
\numberwithin{table}{section}
\newaliasedtheorem{lem}[equation]{Lemma}
\newaliasedtheorem{theo}[equation]{Theorem}
\newaliasedtheorem{cla}[equation]{Claim}
\newaliasedtheorem{coro}[equation]{Corollary}
\newaliasedtheorem{prop}[equation]{Proposition}
\newaliasedtheorem{question}[equation]{Question}
\newaliasedtheorem{conj}[equation]{Conjecture}
\newaliasedtheorem{asse}[equation]{Assertion}
\newaliasedtheorem{prob}[equation]{Problem}

\newtheoremstyle{mydef}
  {\topsep}
  {.5em}
  {\normalfont}
  {\parindent}
  {\bfseries}
  {.}
  { }
  {\thmname{#1}\thmnumber{ #2}\thmnote{ (#3)}}
\theoremstyle{mydef}
\newaliasedtheorem{definition}[equation]{Definition}
\newaliasedtheorem{rem}[equation]{Remark}
\newaliasedtheorem{ex}[equation]{Example}

\makeatletter
\let\oldtable\table
\def\table{\@ifnextchar[\table@i \table@ii}   
\def\table@i[#1]{\addtocounter{equation}{1}\oldtable[#1]} 
\def\table@ii{\addtocounter{equation}{1}\oldtable} 
\let\oldlongtable\longtable
\def\longtable{\@ifnextchar[\longtable@i \longtable@ii}   %
\def\longtable@i[#1]{\addtocounter{equation}{1}\oldlongtable[#1]} %
\def\longtable@ii{\addtocounter{equation}{1}\oldlongtable} %
\makeatother

\makeatletter
\def\@seccntformat#1{\@ifundefined{#1@cntformat}%
   {\csname the#1\endcsname\quad}  
   {\csname #1@cntformat\endcsname}
}
\let\oldappendix\appendix 
\renewcommand\appendix{%
    \oldappendix
    \newcommand{\section@cntformat}{\appendixname~\thesection\quad}
}
\makeatother

\markboth{%
  \centerline{\sc Thurston Algorithm for Real Quadratic Maps}}%
 {\centerline{\sc Araceli Bonifant, John Milnor and Scott Sutherland}}

\begin{document}
\thispagestyle{empty}

\title{\bf The W. Thurston Algorithm\\ for Real Quadratic Rational Maps\\}

\author{\bf Araceli Bonifant, John Milnor and Scott Sutherland}

\maketitle

\begin{abstract}
 A study of real quadratic maps with real critical points,  
  emphasizing the effective construction of critically finite maps with specified
  combinatorics. We discuss the behavior of the Thurston
  algorithm in obstructed cases, and in one exceptional badly behaved case, and
provide  a new description of the appropriate
  moduli spaces. There is also an application to topological entropy.
\end{abstract}

\vspace{.2cm}
\noindent
{\bf Keywords:} Thurston pullback, real quadratic maps, topological entropy, 
critically finite maps, obstruction, moduli space, combinatorics, hyperbolic
shift locus, unimodal maps, symmetry locus, bones, kneading theory,
Chebyshev curve, Levy cycle, Thurston maps. 

\vspace{.2cm}
\noindent
{\bf Mathematics Subject Classification (2020):} 37B40, 37E05, 37E10, 37F10, 37F20 

\thispagestyle{empty}
\section{Introduction.}\label{s-1}
This paper will study real quadratic maps 
with real critical points, and especially with those which are
\textbf{\textit{critically finite}}, in the sense that both
critical points have finite orbit.\ssk

\autoref{s2} provides a classification of critically
finite maps in terms of their combinatorics. By definition,  
the \textbf{\textit{combinatorics}} is an ordered
list $\(m_0,\ldots,m_n\)$ of integers describing how the union of the two
critical orbits maps to itself. This section
also provides very rough classifications, either according 
to the shape of the graph restricted to the interval $f(\Rhat)$, or else
according to dynamic behavior which may be either hyperbolic of type B,
C, or D, or half-hyperbolic or totally non-hyperbolic.\ssk

\autoref{s3} provides a convenient way of representing such maps. Every real
quadratic map can be described topologically as a map from the circle
$~\Rhat=\R\cup\{\infty\}$ 
to itself. Hence its graph can be described as a subset
of the torus~~ {\bf circle}$\times${\bf circle}\,; and this torus can be
represented as a square with opposite sides identified.\ssk

\autoref{s4} describes an effective implementation of the Thurston algorithm,
which starts from combinatorics and produces the corresponding quadratic
rational map whenever such a map exists (except in one special case as 
described in \autoref{s6}).\ssk

\autoref{s6}
proves that every conjugacy class of critically finite maps is uniquely
determined by its combinatorics. It can be constructed by the Thurston algorithm
in nearly every case. But there is one exceptional  case 
where the algorithm does not converge from a generic 
 choice of starting point.\ssk

\autoref{s7} discusses two moduli spaces for such real quadratic maps: one
in which we allow only orientation preserving changes of coordinate,
and one where we
also allow orientation reversing changes of coordinate. Both moduli 
spaces are smooth surfaces; but the first is topologically an open cylinder,
while the second is simply-connected with two boundary edges. This section also
discusses asymptotic relations between different coordinate systems, 
and discusses a rich family of critically finite maps constructed by 
Filom and Pilgrim \cite{FP}.
\ssk

\autoref{s5} concerns obstructed cases, distinguishing
between ``weak obstructions'' which are harmless, and ``strong obstructions''
which are serious. Combinatorics of bimodal shape $-+-$ are always
strongly obstructed.  Those of shape $+-+$ are often strongly 
obstructed. There is a simple criterion which applies in all $+-+$ strongly
obstructed cases that we have observed,  
using the construction of a Levy cycle to prove obstruction.\ssk 

\autoref{s8} makes a particular study of the unimodal case, depending 
 essentially on work of  Filom \cite{F}, and making use of
 kneading theory. It shows that hyperbolic or half-hyperbolic 
unimodal combinatorics is never strongly
 obstructed; and also completes a partial proof 
 by Filom concerning topological entropy in the unimodal region. (This proof
 has also been completed by Yan Gao \cite{G}.) 
\ssk

 \autoref{a2}  illustrates all possible minimal, 
 non-polynomial combinatorics $\(m_0,\cdots, m_n\)$ 
 with {$n\le 4$},  plus  a few  cases with $n\ge 5$. \ssk

 \autoref{ap-b}  provides more information about those figures in 
 this paper which illustrate some combinatorics. 
 \autoref{t-1} classifies them in terms of their 
 dynamic type and topological shape; while \autoref{t-2} gives the
 corresponding parameters  $\mu$, $\kappa$, $\Sigma$ and $\Delta$.

\bigskip

\noindent\textbf{Acknowledgment:} We are grateful to Khashayar Filom for 
extremely useful comments. 
\ssk

\section{Combinatorics} \label{s2}
The phrase \textbf{\textit{real quadratic map}} will always be used
to mean a quadratic rational map which not only has real
coefficients, but also has real critical points. 
 Let $\PSL_2(\R)$ be the group of all orientation
 preserving fractional linear transformations
 $$L:x\mapsto\frac{ax+b}{cx+d}\qquad{\rm with}\qquad ad-bc>0~.$$
 
 \begin{definition}\label{D-c} Two real quadratic maps $f$ and $g$ are
   \textbf{\textit{conjugate}} if they correspond under some orientation
    preserving change of coordinates,  or equivalently if
   $g=L\circ f\circ L^{-1}$ for some
 $L\in \PSL_2(\R)$. The notation $\langle f\rangle$ 
 will be used for the conjugacy class of $f$. If $f$ and $g$ correspond
 under some change of coordinate which may be either orientation
 preserving or orientation reversing, then we will use the term 
\textbf{\textit{$\pm$-conjugate}}. 
\end{definition}\msk

The combinatorics  for a critically finite
real quadratic map is a rough but easily understood description
which suffices to determine the map uniquely up to conjugation. (See
\autoref{T-notexc}.) It can be defined as follows.\msk

For any real quadratic map $f:\Rhat\to\Rhat$,
the image $f(\Rhat)\subset\Rhat$ is a compact interval
bounded by the two critical values. We can always
assume (after replacing $f$ by a conjugate if necessary)
that $f(\Rhat)$ is contained in the finite line $\R$.\msk

\begin{enumerate}[series=_cases, label={\textbf{Case \arabic*.}}, ref=Case \arabic*, 
                 leftmargin=*,itemindent=\parindent]
\item \label{combi-case1}
Suppose that both critical points are contained in
$f(\Rhat)\subset\R$. Let
$$ x_0<x_1<\cdots< x_n$$ 
be an ordered list of all of the critical and postcritical points.
By definition,  the \textbf{\textit{combinatorics}}
$$\vecm=\( m_0,\,m_1,\,\ldots, m_n\)$$ is the list
of integers between $0$ and $n$  such that $f(x_j)=x_{m_j}$ for each $j$.
\end{enumerate}
\msk

A good way of visualizing the combinatorics is to consider the associated
piecewise linear mapping $~\f:[0,n]\to[0,n]~$ 
which maps $j$ to $m_j$ and is linear between consecutive integers.
Evidently the combinatorics  determines this 
map $\f$, and it determines the combinatorics, so we can
pass freely from one to the other. (Those with a musical ear may
want to think of $\vecm$ as a sequence of rising and falling
musical notes.) 

As an example, \autoref{f1} shows a quadratic map with combinatorics
$\(5,6,4,1,0,2,3\)$, together with the associated piecewise linear model. 
In this example there are periodic critical orbits of period three
and four. Ben Wittner   \cite{W},  showed that up to $\pm$-conjugacy 
there is only one such real quadratic map.\footnote{The corresponding complex
  map has a Sierpinski carpet as Julia set. Compare \cite[App. F]{M}, 
  written with Tan Lei.}
\msk

\begin{figure}[!htb]\centerline{
    \includegraphics[width=2.2in]{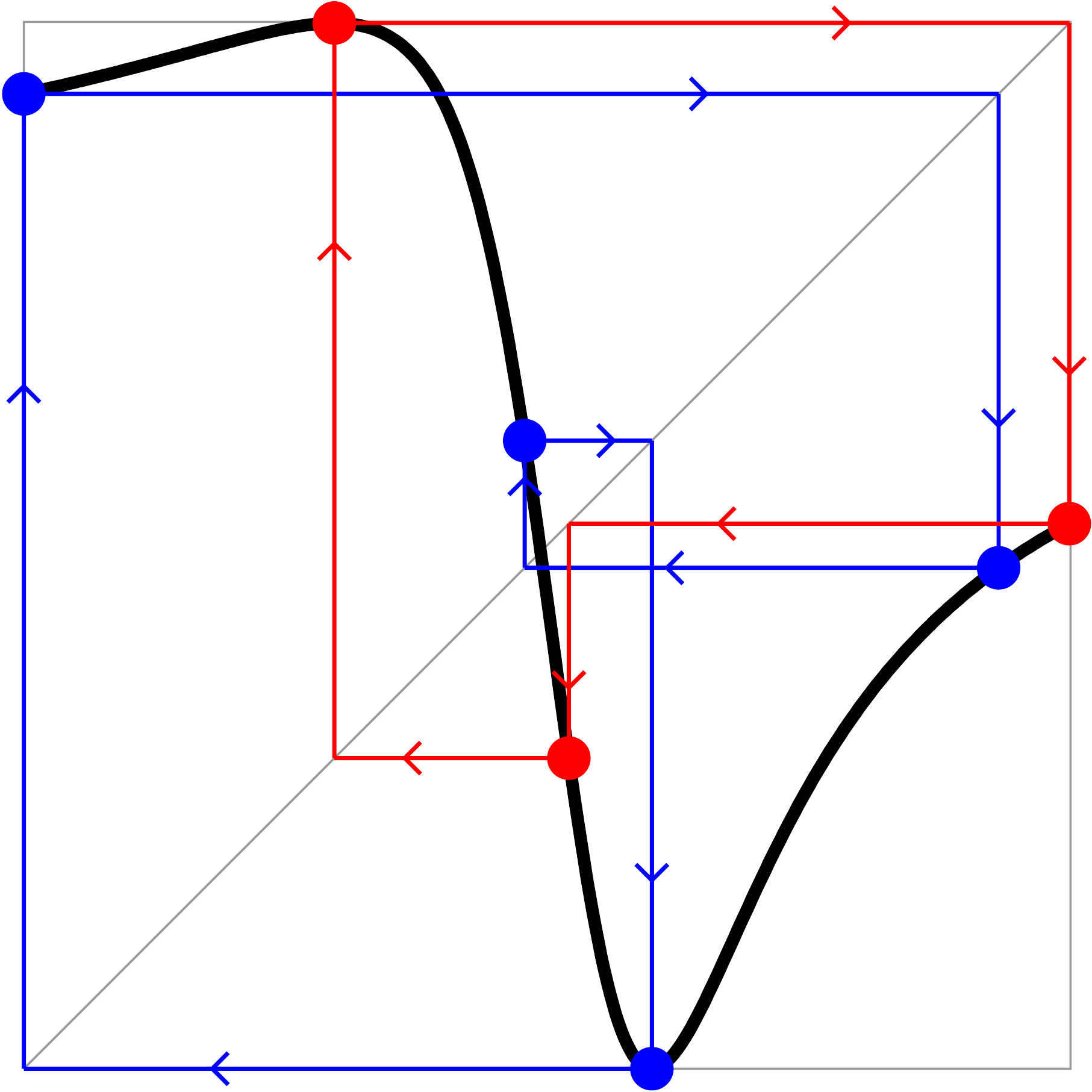}\hfil
    \includegraphics[width=2.2in]{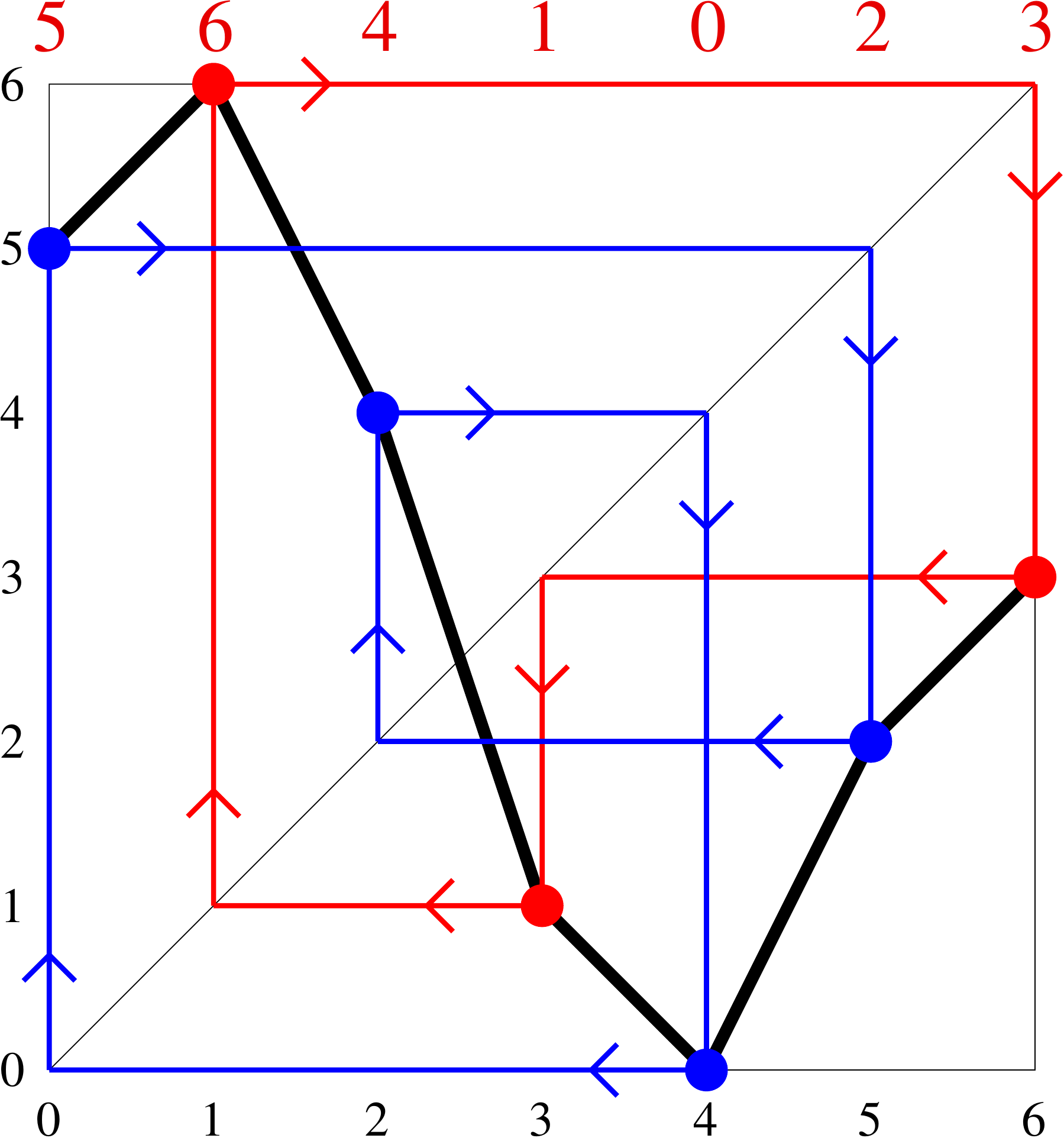}}
  \caption{\label{f1} On the left a quadratic rational map
    with {combinatorics} $\(5,\,6,\,4,\,1,\,0,\,2,\,3\)$, and with
    critical orbits of period three and four. On the right,
    the corresponding piecewise linear model, with the 
    combinatorics shown along the top.}
\end{figure}\ssk

Of course any list of $n+1$ numbers between zero and $n$ will yield a
corresponding piecewise linear graph; but most such graphs could not
possibly represent a quadratic map. 
In \autoref{D-admissible} we will specify
strong and explicit restrictions on which lists $\vecm$ are to be considered.

\begin{enumerate}[resume*=_cases]
\item \label{combi-case2}
  If there is only one\footnote{We will see in \autoref{L2} that 
    there is always at least one critical point in $f(\Rhat)$ in the
    critically finite case. Maps with only one critical 
    point in $\f(\Rhat)$ will be called ``strictly unimodal''.}
 critical point in $f(\Rhat)$, then it might seem that it doesn't 
 matter whether the other critical point is to the left or the right
 of $f(\Rhat)$ or at infinity, since we can always change this by
 replacing $f$ by a conjugate map. However, the following explicit choice
 of where to put it in the combinatorics
 will be important later:\footnote{In fact, this choice guarantees
   that the associated piecewise linear map will have a fixed 
   point in the lap between the two critical points, and this will be
   important for the implementation of the Thurston algorithm.}

 \begin{quote}
 {\sf If one critical point is outside of $f(\Rhat)$ then
  put it: 

  \begin{itemize}[beginpenalty=10000,midpenalty=9999, 
    topsep=-.2ex,itemsep=0ex,parsep=0pt]
  \item to the left  in the combinatorics
    if the associated critical value is a maximum,

  \item or to the right if it is a minimum.
\end{itemize}

\noindent  In the first case, the combinatorics will start with
$m_0=n$, with  all $m_j>0$. In the second case 
it will end with $m_n=0$,  with all $m_j<n$. (Compare \autoref{f2}.)}
\end{quote}
\ssk

Otherwise the definition  proceeds just as above.
\end{enumerate} 
\msk

\begin{figure}[!htb]
  \centerline{
    \includegraphics[width=2.6in]{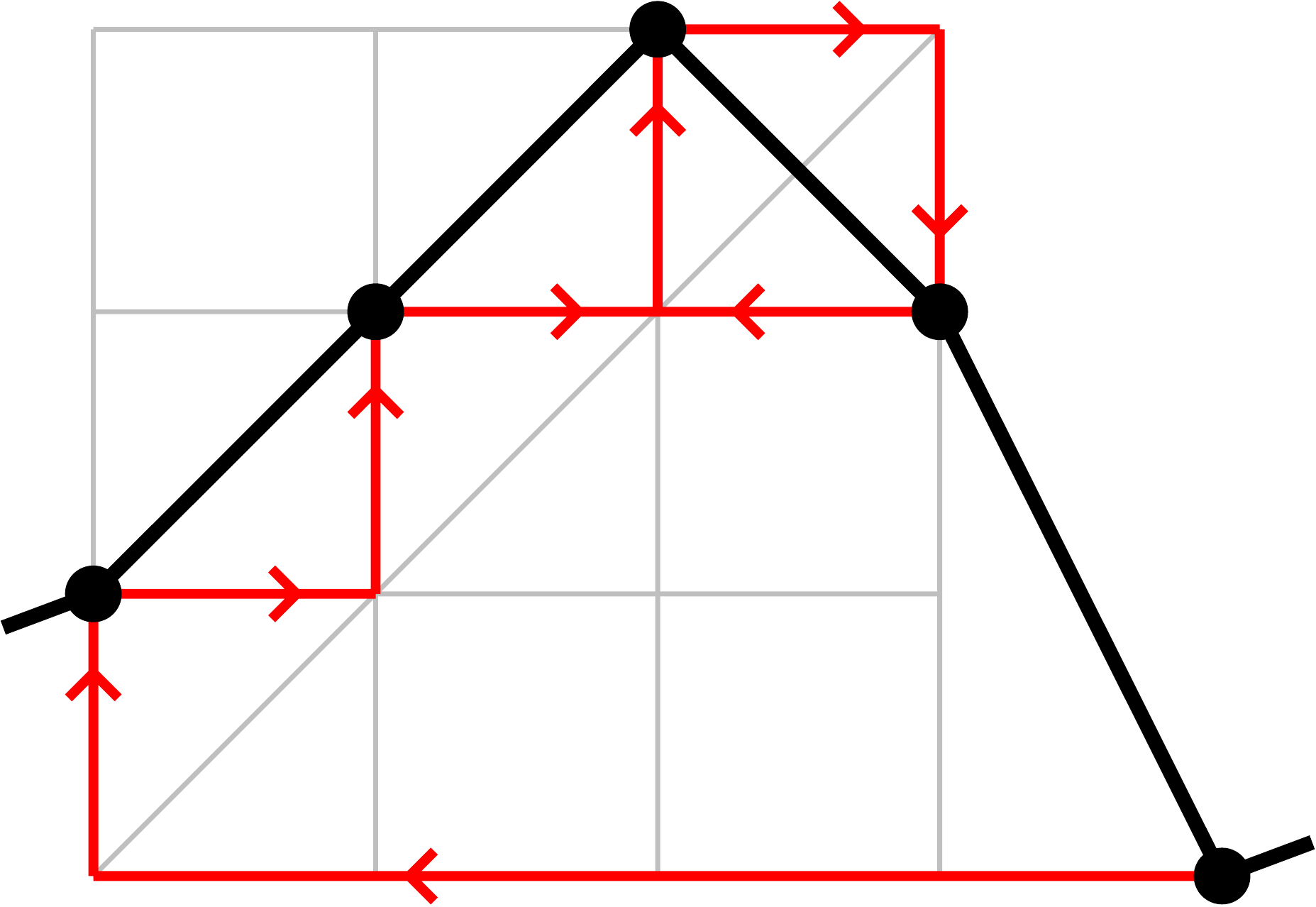}\qquad
  \includegraphics[width=3.1in]{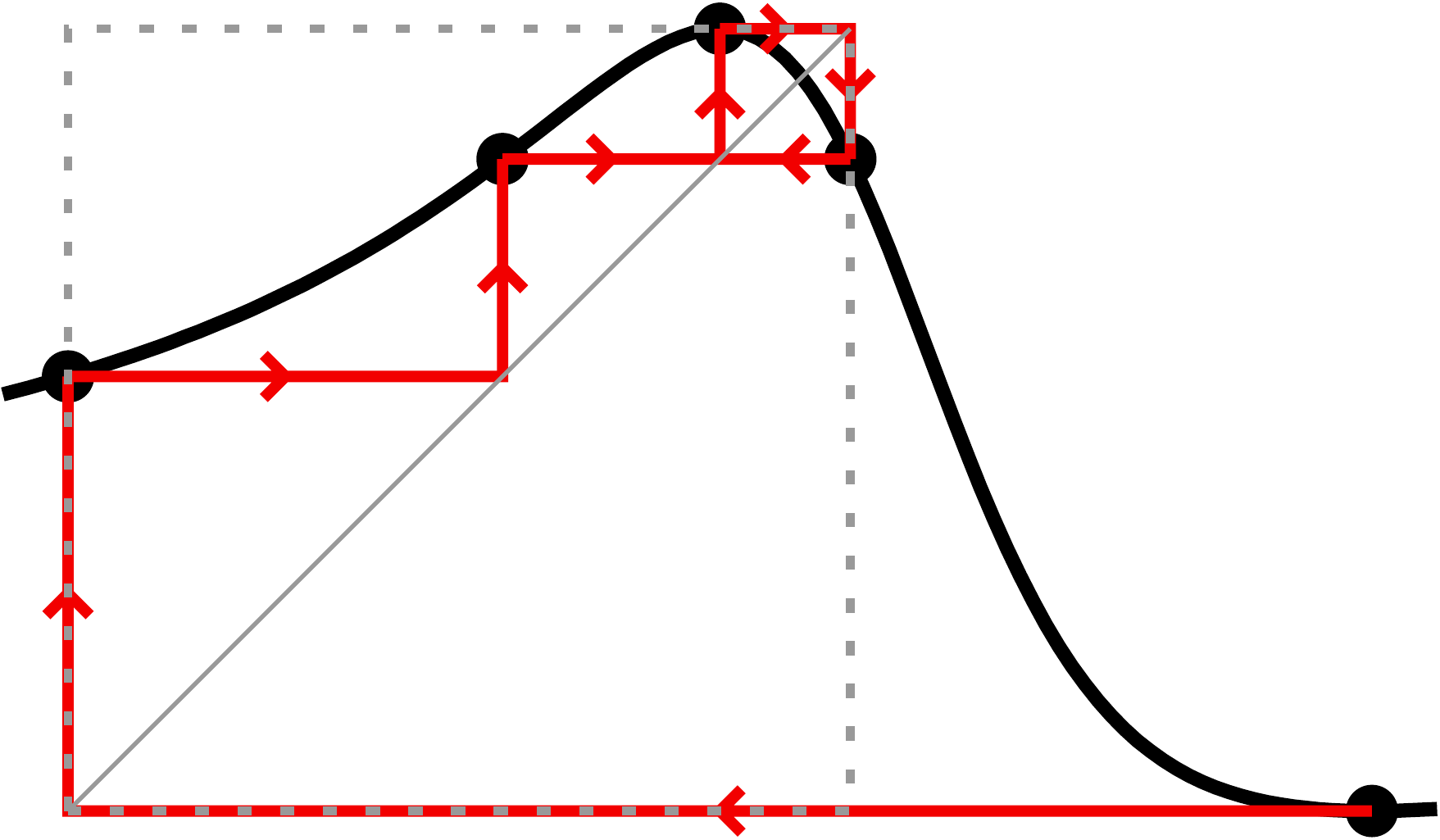}}

\caption{  \label{f2} Graphs of the piecewise linear model map with
  combinatorics $\(1,\,2,\,3,\,2,\,0\)$, and the associated rational map. The
  mapping pattern is
  {$\underline{\underline{x_4}}\mapsto x_0\mapsto x_1
  \mapsto\underline{\underline{x_2}}
    \leftrightarrow x_3 $}
  (with the critical points double underlined). 
  The square $[0,3]\times[0,3]$ on the left, with gray grid lines,
  corresponds to the square $f(\Rhat)\times f(\Rhat)$, outlined
  by dotted lines on the right.   In this example, since the fixed point
  must lie between the two critical points, the critical point
  which is outside $f(\Rhat)$ must be placed to the right.}
\end{figure}
\msk

\begin{rem}[Mapping Patterns] By an abstract \textbf{\textit{ mapping pattern}}
  we will
  mean simply a finite set $S$ of marked points, together with a function from
  $S$ to  itself, and a two element subset $S_0\subset S$ consisting of
  ``critical points''. Evidently our combinatorics  is essentially just a
  special kind of mapping pattern, together with an explicit ordering of the
  points of $S$.  Given such an abstract mapping pattern, there may be several
  compatible combinatorics, or there may be none.\footnote{See \autoref{L-para}
    for some specific examples of this problem.}

  Now suppose that  $S$  is given as a subset of $\Rhat$. Then at least
  we are given a cyclic  order of the points of $S$.
  \ssk
  
  \begin{lem} \label{L-mp} Given such an abstract mapping pattern, together
    with a cyclic ordering of $S$, there is at most one compatible combinatorics.
  \end{lem}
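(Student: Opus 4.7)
The plan is to reconstruct, from the cyclic ordering of $S \subset \Rhat$ together with the abstract mapping pattern, the linear ordering of $S$ that underlies the combinatorics, and to show that this linear ordering is forced.

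First I would locate the subset corresponding to $f(\Rhat)$ inside the circle $\Rhat$. The two critical values $v_1 = f(c_1)$ and $v_2 = f(c_2)$ are determined by the abstract data, and they split the cyclic order of $S$ into two arcs. Since $f(\Rhat)$ is forward invariant, the entire postcritical set lies in the arc that represents $f(\Rhat)$; this is the unique closed arc bounded by $v_1, v_2$ which contains every image $f(x)$ for $x \in S$, and this criterion can be checked purely from the cyclic-order data. In particular, the complementary open arc can contain no points of $f(S)$, and hence no points of $S$ other than possibly one of the two critical points.

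Next I would separate two possibilities, matching \autoref{combi-case1} and \autoref{combi-case2} of the definition. In \emph{Case 1}, both critical points are found inside the $f(\Rhat)$-arc, so the complementary arc contains no marked points at all; cutting $\Rhat$ at any point of this arc produces the same linear ordering of $S$, and therefore the same list $\vecm$. In \emph{Case 2}, exactly one critical point $c$ lies in the complementary arc, so the only freedom is whether to cut just before or just after $c$, which corresponds to placing $c$ first or last in the linear order. The convention in the definition prescribes that $c$ is to be placed on the left precisely when its critical value $f(c)$ is a maximum of the piecewise linear model $\f$, i.e.\ when $f(c)$ is the right-hand endpoint of $f(\Rhat)$ in the resulting linear order, and on the right otherwise. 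Since $f(c) \in \{v_1,v_2\}$ is specified by the mapping pattern, and since the cyclic order tells us which of $v_1, v_2$ is adjacent to $c$ on each side of the complementary arc, this picks out exactly one of the two cuts.

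The only real obstacle is in \emph{Case 2}: one has to check that the left/right convention translates cleanly into a unique choice of cut, i.e.\ that the ``maximum'' of $\f$ (a statement about the linear order still to be constructed) is already determined by the cyclic-order data plus the identification of $f(c)$. This is a piece of bookkeeping with the cyclic order rather than a conceptual difficulty; once it is carried out, the linear order of $S$, and hence the combinatorics $\vecm$, is uniquely pinned down by the hypotheses, proving the lemma.
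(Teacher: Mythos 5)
Your argument is correct and follows essentially the same two-step strategy as the paper's proof: identify the arc corresponding to $f(\Rhat)$ (you do this via forward invariance, the paper via the direct observation that the critical values must be adjacent or separated only by a single critical point), and then pin down the unique cut, with the Case~2 left/right convention resolving the remaining ambiguity. The ``bookkeeping'' you flag in the final paragraph does go through — the convention always selects the cut separating the stray critical point from its own critical value — so your proof is complete and matches the paper's.
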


  \begin{proof}  In fact we can easily find the corresponding combinatorics
    (if it exists),   in two steps as follows.

\begin{enumerate}[series=_steps,
   label={\textbf{Step \arabic*.}}, ref=Step~\arabic*, 
   leftmargin=*,itemindent=\parindent]
\item List these marked points in positive cyclic order; for  example as
$$ x_0~<~x_1~<~\cdots~<~x_n~,$$
where all of the entries except possibly the last are finite.
\end{enumerate} \ssk

\noindent Then one of the following should be true:
Either the two critical values
are next to each other in cyclic order; or they are separated in
cyclic order only by a single critical point. (If neither is true,
then the mapping pattern is not compatible with any real quadratic map.)
\ssk

\begin{enumerate}[resume*=_steps]
\item Assuming that this is the case, there is a unique cyclic
permutation $ x'_0,~ x'_1,~\cdots,~x'_n$ of the $x_j$ so that either:

  \begin{itemize}
  \item the two extreme points $x'_0$ and $x'_n$ are the two critical values; or

  \item the two extreme points are a critical point and its associated
  critical value, with the other critical value  next to this critical
      point.
   \end{itemize}
 \end{enumerate} 

\noindent The required combinatorics $\vecm$ is then defined by the usual
rule: $f(x'_j)=x'_{m_j}$. \end{proof}
\msk

If we started with a mapping pattern
which is possible for a real quadratic map, then the resulting
$\vecm$ will always be admissible, as defined below.
\end{rem}\msk

\begin{definition}[Admissibility]\label{D-admissible}
There are several restrictions that one
  can put of the sequence $\vecm$ of $n+1$ integers between zero and $n$.
  We will say that $\vecm$ is \textbf{\textit{admissible}} if it satisfies
  the following two essential conditions. (Four further possible 
  restrictions will be described later.)
\ssk

  \begin{enumerate}[series=_admiss]
  \item \label{admiss_cond1}
    The difference between the largest $m_j$ and the smallest is
    either $n$ (in \ref{combi-case1}) or $n-1$ (in
    \ref{combi-case2}). Furthermore, in \ref{combi-case2}, either  
 the first entry will be $n$ or the last entry will be zero. 
\ssk

  \item \label{admiss_cond2}
  After a cyclic permutation which places the smallest $m_j$ on
  the left the resulting sequence will consist of a strictly monotone increasing
  sequence from smallest to largest, followed by a strictly decreasing sequence
  which never gets as far as the smallest.\msk
  \end{enumerate}
\end{definition}

As examples, for \autoref{f1} the cyclically permuted
sequence would be
$$0,~2,~3,~5,~6,~4,~1~ $$
while for \autoref{f2} it would be $0,~1,~2,~3,~2$.
One immediate consequence of Condition~\ref{admiss_cond2}
is that no $m_j$ can occur more than two times in the sequence.
This is clearly a necessary property for quadratic maps.\msk

To relate these properties to Thurston's ideas, we need the following.\ssk

\begin{definition} A \textbf{\textit{Thurston map}} is an orientation preserving
branched covering map from a topological 2-sphere to itself
such that the forward orbit of any branch point is finite.
\end{definition}\msk

The branch points are often referred to as ``critical points'' and their
iterated forward images as ``postcritical points''. See \cite{KL} for
a detailed classification of all Thurston maps with at most four postcritical
points.\ssk

For our purposes, we can take
the Riemann sphere as our 2-sphere, and call a Thurston
map \textbf{\textit{real}} if it commutes with complex conjugation, and has real
critical points.\ssk

\begin{lem} \label{L1} Any admissible combinatorics
  gives rise to a real Thurston  map of degree two.
\end{lem}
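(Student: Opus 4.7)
The plan is to realize the admissible combinatorics $\vecm=\(m_0,\ldots,m_n\)$ first as a two-to-one fold map of the equator $\Rhat$, and then extend it to an orientation-preserving degree $2$ branched covering $F:S^2\to S^2$ that commutes with complex conjugation.

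For the equatorial stage, I would place the marked points on $\Rhat=\R\cup\{\infty\}$ in their prescribed cyclic order, with $\infty$ lying in the arc complementary to $f(\Rhat)$: between $x_n$ and $x_0$ in \ref{combi-case1}, and on the external side designated by the combinatorics in \ref{combi-case2}. Declaring $F(x_j)=x_{m_j}$ and interpolating monotonically on each arc between consecutive marked points yields a continuous map $F:\Rhat\to\Rhat$. The main combinatorial claim, immediate from admissibility conditions \ref{admiss_cond1} and \ref{admiss_cond2}, is that $F|_{\Rhat}$ is a two-to-one fold map onto the image arc $f(\Rhat)=[\min_j x_{m_j},\max_j x_{m_j}]$ with exactly two fold points $c_-,c_+\in\{x_0,\ldots,x_n\}$: on one arc of $\Rhat$ from $c_-$ to $c_+$ the prescribed values are strictly increasing and sweep out $f(\Rhat)$, while on the complementary arc they are strictly decreasing and again sweep out $f(\Rhat)$.

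For the extension I would use the following standard topological fact: any two-to-one fold map $g:\Rhat\to\Rhat$ with image an arc and exactly two fold points can be written as $g=\phi\circ(x\mapsto x^2)\circ\psi$ for some orientation-preserving homeomorphisms $\phi,\psi$ of $\Rhat$, since the squaring map (with fold points $0$ and $\infty$) is a universal model for this local data. Extending $\phi,\psi$ to orientation-preserving homeomorphisms of $S^2$ that commute with complex conjugation --- which is possible by the Alexander trick applied symmetrically to the two hemispheres --- one obtains $F:=\phi\circ(z\mapsto z^2)\circ\psi$ as an orientation-preserving degree $2$ branched cover of $S^2$ commuting with conjugation. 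Its critical points are the real points $c_\pm$, and its postcritical set lies in the finite forward-invariant set $\{x_0,\ldots,x_n\}$, so $F$ is a real Thurston map of degree two.

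The hard part will be the construction of the conjugating homeomorphisms $\phi,\psi$ on $\Rhat$. One must arrange simultaneously that the fold points $c_\pm$ of $F|_{\Rhat}$ are sent to $\{0,\infty\}$ and that the monotonicity patterns on the two arcs match those of $x\mapsto x^2$. The former is easy because orientation-preserving homeomorphisms of $\Rhat$ act transitively on ordered pairs of distinct points; the latter reduces to the fold-structure analysis above. Some care is needed in \ref{combi-case2} to match the ``off-interval'' critical point with the critical point at $\infty$ of the squaring map, and this is precisely why the combinatorics specifies its placement.
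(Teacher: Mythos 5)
Your proof is correct, and it takes a genuinely different route to the extension step. Both arguments build the same two-to-one fold map of $\Rhat$ with fold points at the two critical indices, and both extensions are forced by the same data; but the paper constructs the branched covering \emph{by hand}, placing the critical points at the two points of $E\cap C$ (for $C$ the imaginary great circle), sending the two semicircles of $C$ homeomorphically onto the closed gap $\Rhat\ssm\f(\Rhat)$, and then mapping each of the four quadrants of $S^2\ssm(E\cup C)$ homeomorphically onto a hemisphere. You instead factor the circle map as $\phi\circ(x\mapsto x^2)\circ\psi$ with $\phi,\psi$ orientation-preserving homeomorphisms of $\Rhat$, and extend $\phi,\psi$ to conjugation-equivariant orientation-preserving homeomorphisms of $S^2$ by the (symmetric) Alexander trick, so that $\phi\circ(z\mapsto z^2)\circ\psi$ is the desired Thurston map. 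Your route has the advantage that degree two, the branched-covering structure, and commutation with complex conjugation are all immediate from the factorization through the model map; the paper's route is more self-contained (no appeal to the normal form for circle fold maps) and produces an explicit picture of what the map does everywhere on $S^2$, which is closer in spirit to the coordinate-heavy constructions in the rest of the paper. One small remark: your factorization claim does require checking that $\psi$ can be chosen orientation-preservingly so that the increasing lap lands on the positive-real arc $[0,\infty]$ (otherwise $\phi$ would need to reverse orientation); this works out because one has the free choice of which critical point to send to $0$ and which to $\infty$, but it is worth saying explicitly since it is exactly the kind of sign issue that can silently break the ``commutes with conjugation, orientation-preserving'' conclusion.
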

  \ssk

\begin{proof} We will identify $\Rhat$
  with the equator $E$ of the Riemann sphere, which divides the
  sphere into a ``northern hemisphere'' and a ``southern hemisphere''.
  (Compare  \autoref{F-RS1}.) The pure imaginary
  complex numbers correspond to an orthogonal great circle $C$,
  which divides the sphere into an ``eastern hemisphere'' to the right,
  and a ``western hemisphere'' to the left.  Each of these great
  circles  is divided into a positive and negative semicircle;
 and they divide the sphere into four quadrants, which are  labeled
 as 1 (for northeast) through 4 (for southeast).
 
\begin{figure}[!htb]
  \centerline{\includegraphics[width=2.5in]{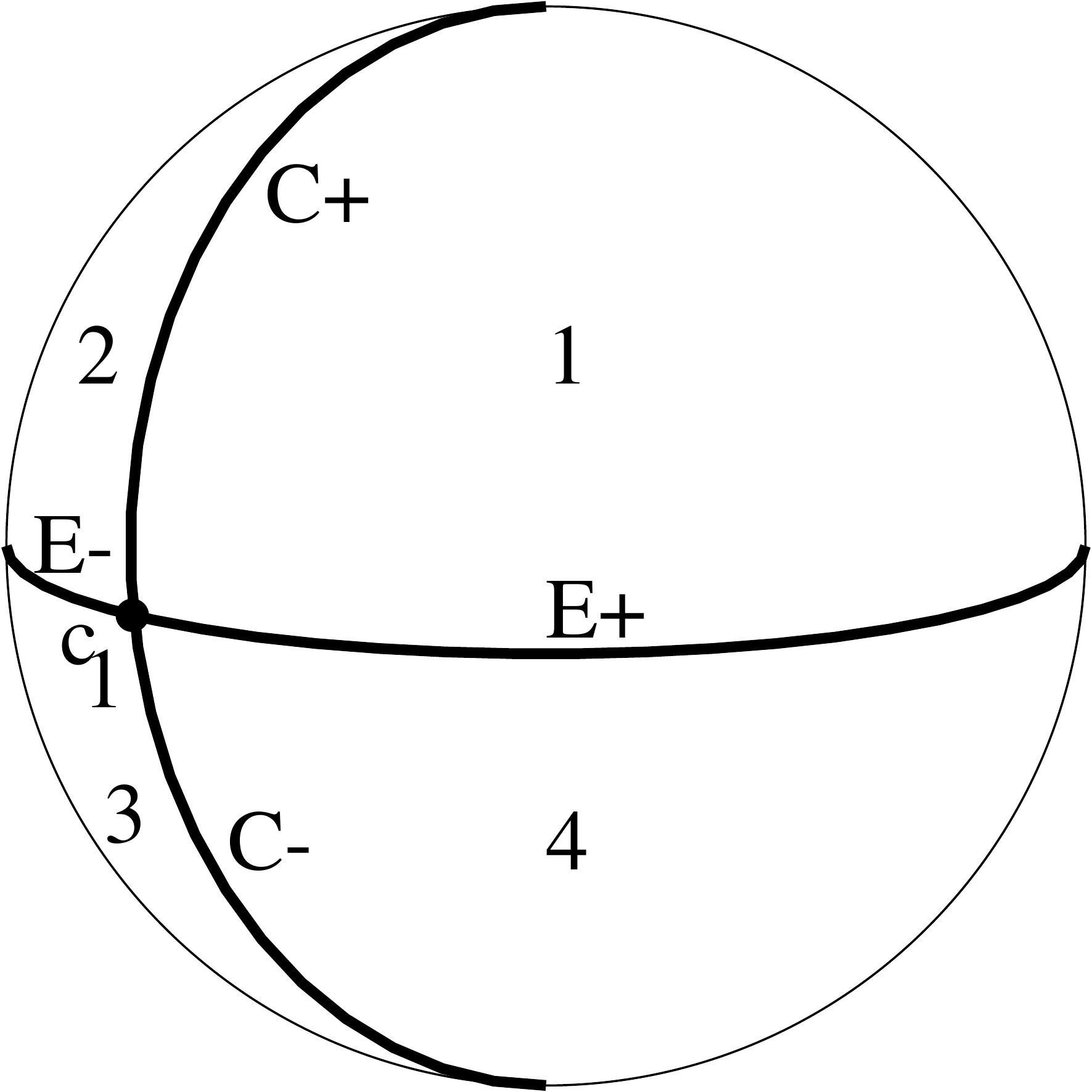}}
  \caption{\label{F-RS1}  The Riemann sphere, divided into
    four quadrants.}
\end{figure}

To begin the construction, place the marked points $x_0$ through
$x_n$ in positive cyclic order around the equator in such a way
that points in an increasing lap are placed in $E+$, those in a decreasing
lap are placed in $E-$, while the two critical points are placed
at the two points of $E\cap C$. Then choose a map $\f$ from
$E$ to itself which sends each $x_j$ to $x_{m_j}$, and maps $E+$
by an orientation preserving homeomorphism, and $E-$ by an orientation
reversing homeomorphism. Thus $E$ will be mapped two-to-one onto
the set $\f(\Rhat)\subset\Rhat=E$. Now extend to a map from $E\cup C$
onto $E$ which sends each of the two semicircles $C\pm$ homeomorphically
onto the closure of the gap $\Rhat\ssm\f(\Rhat)$.

Then it is not hard to check that the boundary of each of the four
quadrants maps homeomorphically onto the equator. It follows that we
can extend to a map $\f$ from the Riemann sphere onto itself which
sends the first and third quadrants homeomorphically onto the
northern hemisphere, and sends the second and fourth quadrants
homeomorphically onto the southern hemisphere. This is the required
two-fold branched covering map, branched only at the two points of
$E\cap C$.\end{proof}

We  will say that the combinatorics $\vecm$ is \textbf{\textit{unobstructed}}
if there exists a rational map having 
combinatorics $\vecm$. Otherwise it is  \textbf{\textit{obstructed}}. In nearly
every case, if the combinatorics is unobstructed, then Thurston's iterated
pull-back construction, as described in \S\ref{s4}, converges locally uniformly
to the required rational map. (Compare \S\ref{s6}.) 
For the unique exceptional case, see \autoref{P-exc}.
\msk

In addition to the essential requirements~\ref{admiss_cond1}
and~\ref{admiss_cond2} described 
earlier, there are four further requirements that we may want to impose 
on the combinatorics. We will refer to $\{0,\,1,\,\cdots,\, n\}$ as the 
``marked points'' for the associated 
 piecewise linear map, and the two points where this map 
 is maximized or minimized as the ``critical points''.\msk

We will say that the combinatorics is \textbf{\textit{minimal}}
if it is admissible, and also 
satisfies the following two conditions, which say roughly that
every vertex and every edge of the associated PL graph is essential.

\begin{enumerate}[resume*=_admiss] 
\item\label{admiss_minimal}
  \textbf{(All marked points are critical or postcritical)}
  (Compare \autoref{F-min}.)  Of course
  this condition is automatically satisfied for combinatorics constructed
  from a given postcritically finite map, as described in the 
  beginning of this section. 

\item\label{admiss_expansive}\textbf{(Expansiveness)} 
  For any edge of the piecewise
  linear model, some forward image contains a critical point. (Compare
  \autoref{F-nonex}.)
\end{enumerate}

For the analogous theory 
for polynomial maps of any degree, there is no Thurston obstruction
if and only if the corresponding condition is satisfied.
(Compare \cite{BS} and \cite{P}, as well as \cite{BMS}.) 
However for quadratic rational maps, although this is still a necessary
condition, it is far from sufficient: In many cases, there is an
obstruction even when the combinatorics is expansive.

\begin{rem}[\bf Lifted Graphics] 
  From now on, most  graphs of quadratic rational maps will be shown in
  \textbf{\textit{lifted normal form}}. The precise definition
  will be given  in \autoref{s3}; but roughly speaking 
this is a   form which provides  a uniform presentation
in which the point at infinity does not play any special role. The
parameters $\mu$ and $\kappa$ which uniquely determine the map will
usually be given in \autoref{t-2} of \autoref{ap-b}.
\end{rem}
  
\begin{figure}[!htb]
    \centerline{
      \includegraphics[height=1.7in]{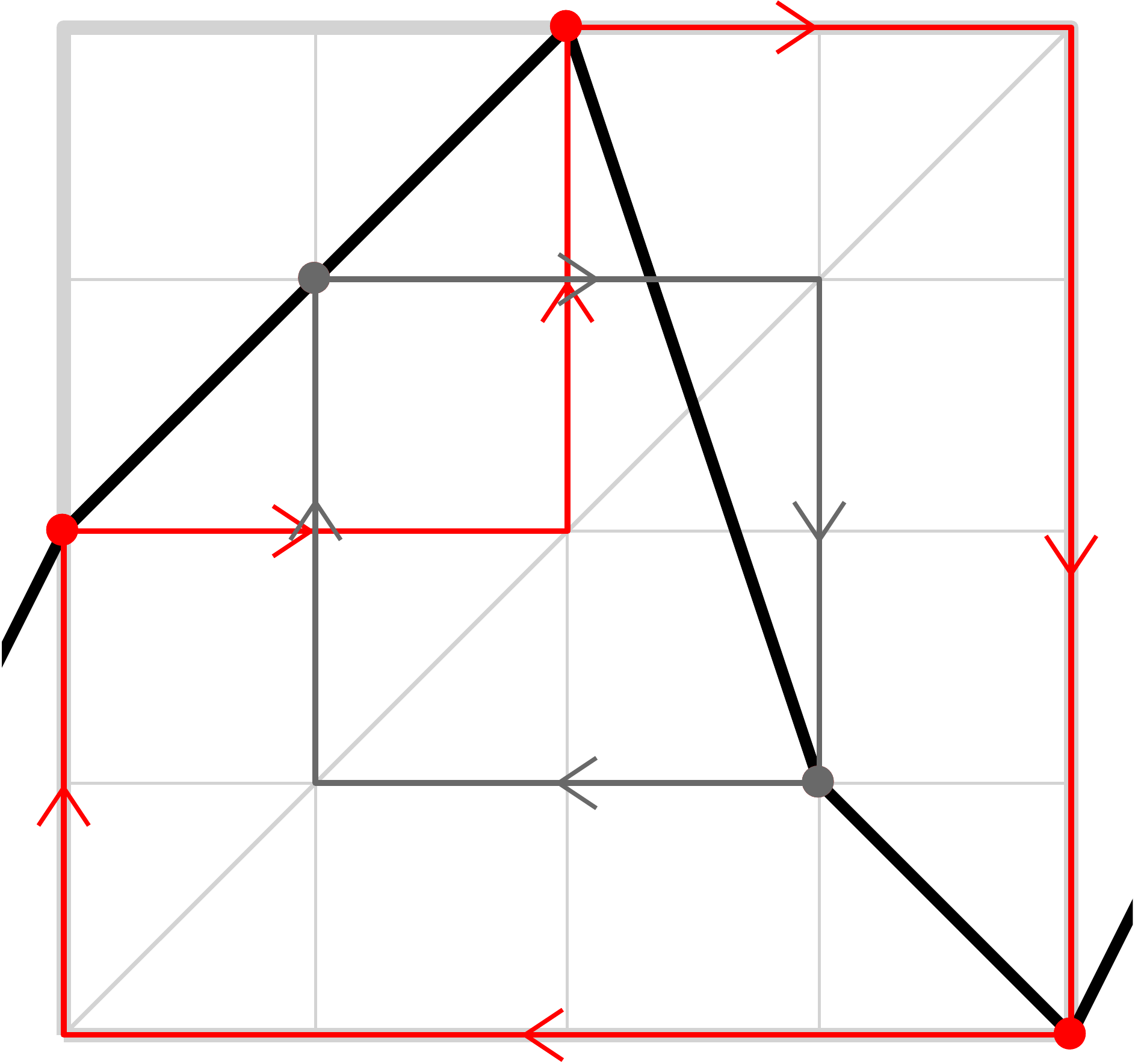}\hfill
      \includegraphics[height=1.7in]{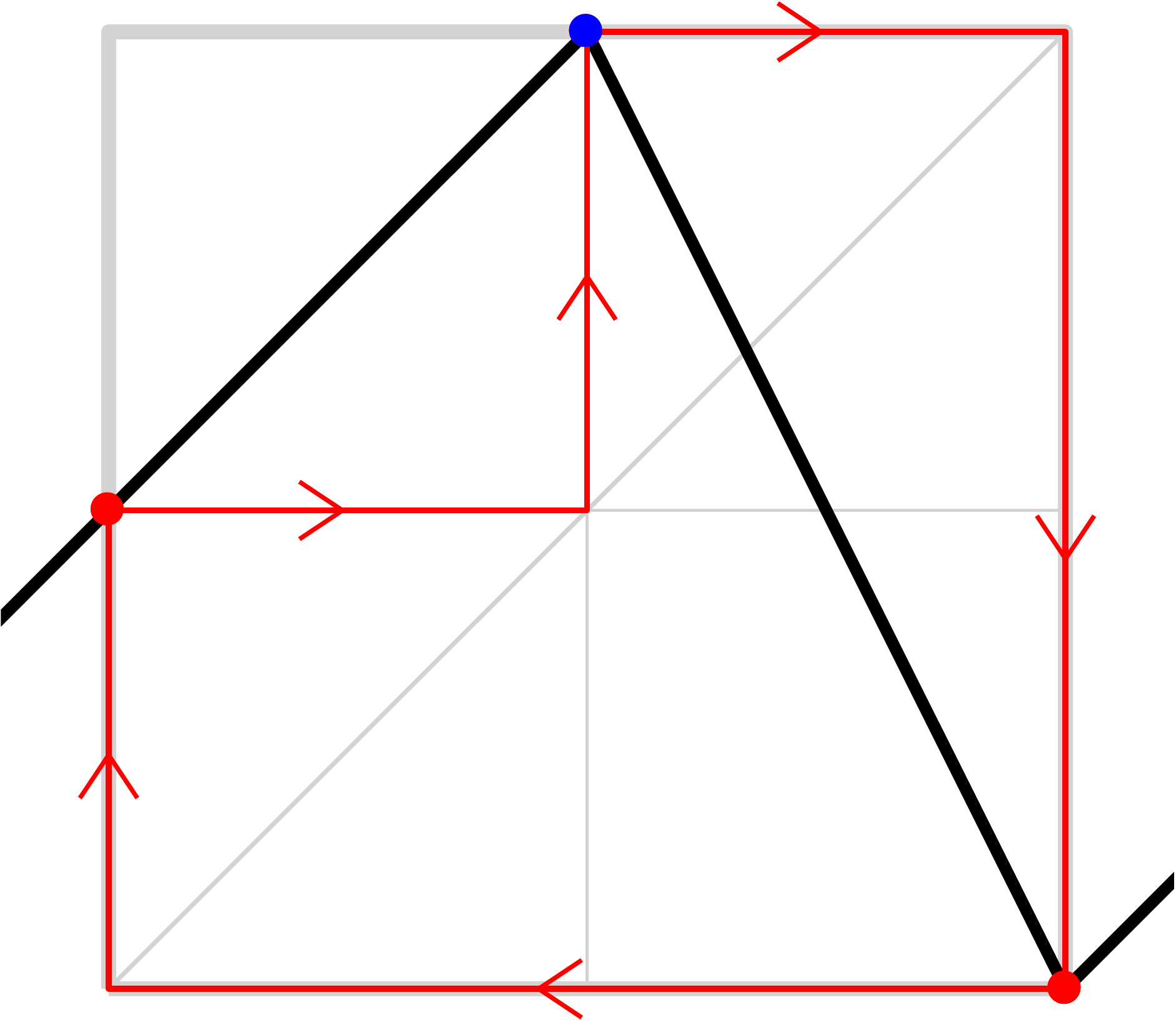}\hfill
      \includegraphics[height=1.7in]{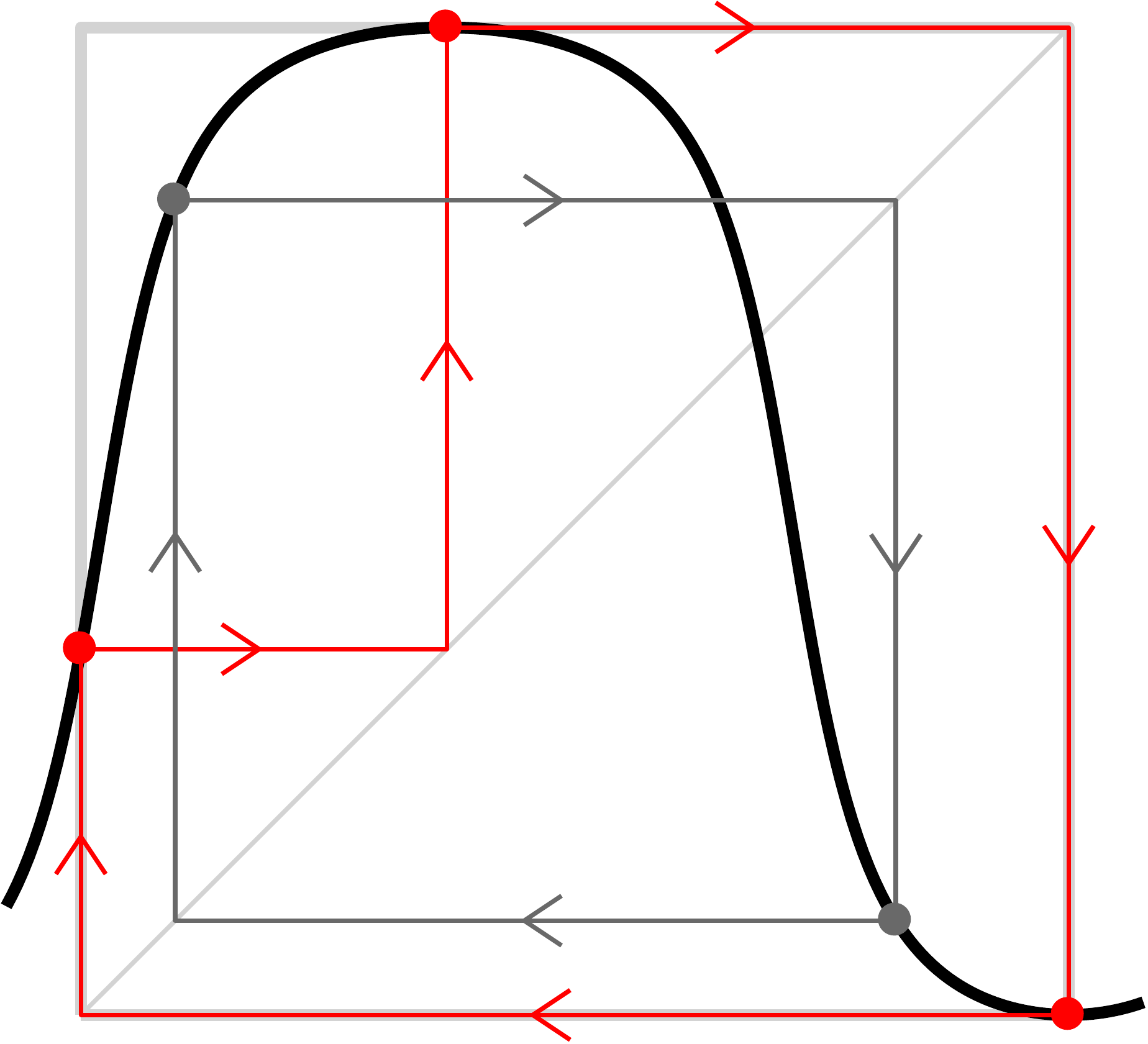}}
    \caption{\label{F-min}  The combinatorics $\(2,\,3,\,4,\,1,\,0\)$ is not
      minimal, since the points $1$ and $3$ are not critical or postcritical.
      Removing $1$ and $3$
      and renumbering, we get the simplified combinatorics $\(1,\,2,\,0\)$. The
      resulting rational map will be the same in either case. The only
      difference is that in the graph of the corresponding rational map,
      the period two orbit $x_1\leftrightarrow x_3$ will be shown in the
      first case but not in the second.}
    \end{figure}

\begin{figure}[!htb]
  \centerline{
    \includegraphics[height=1.7in]{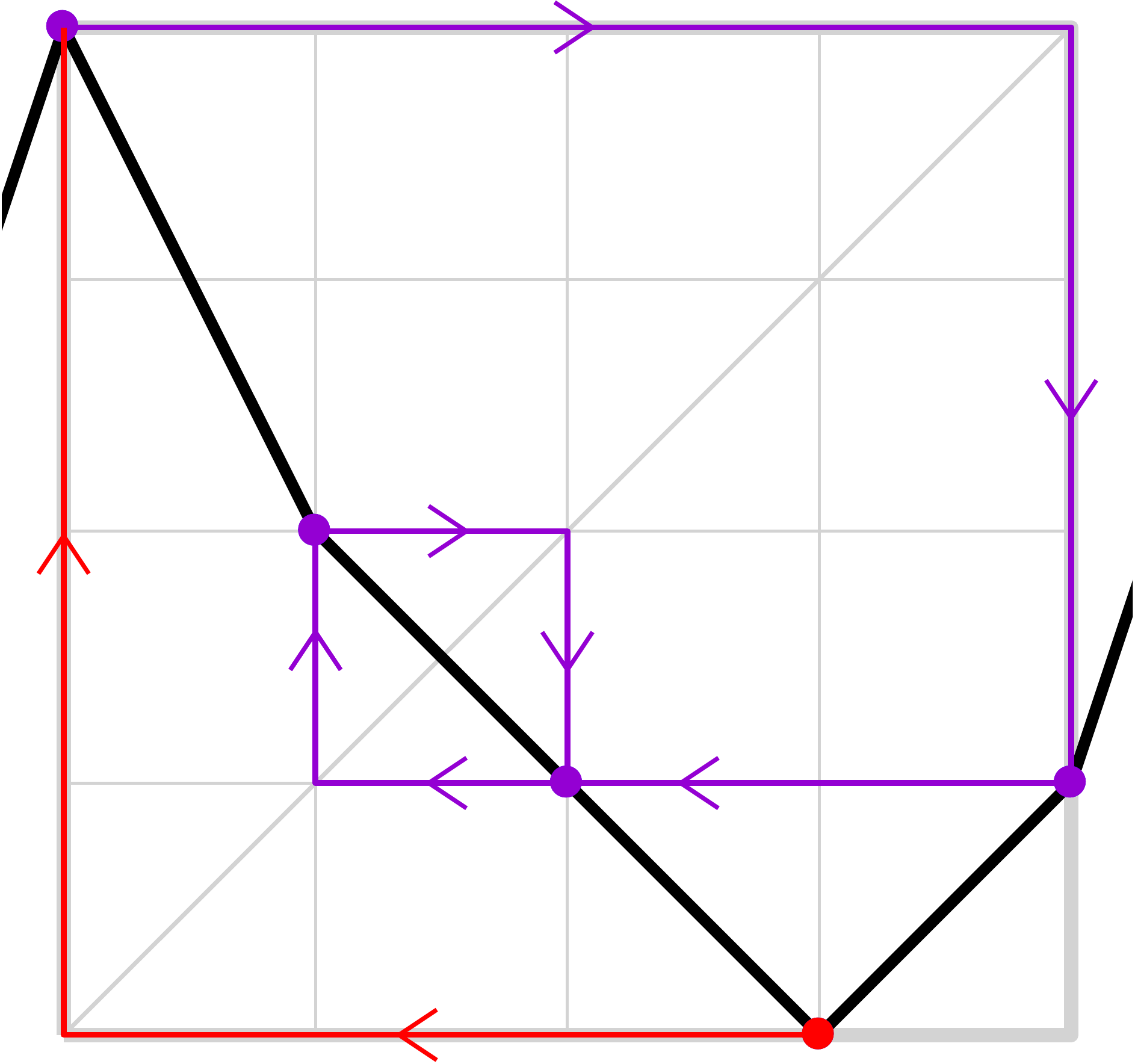}\qquad\qquad
    \includegraphics[height=1.7in]{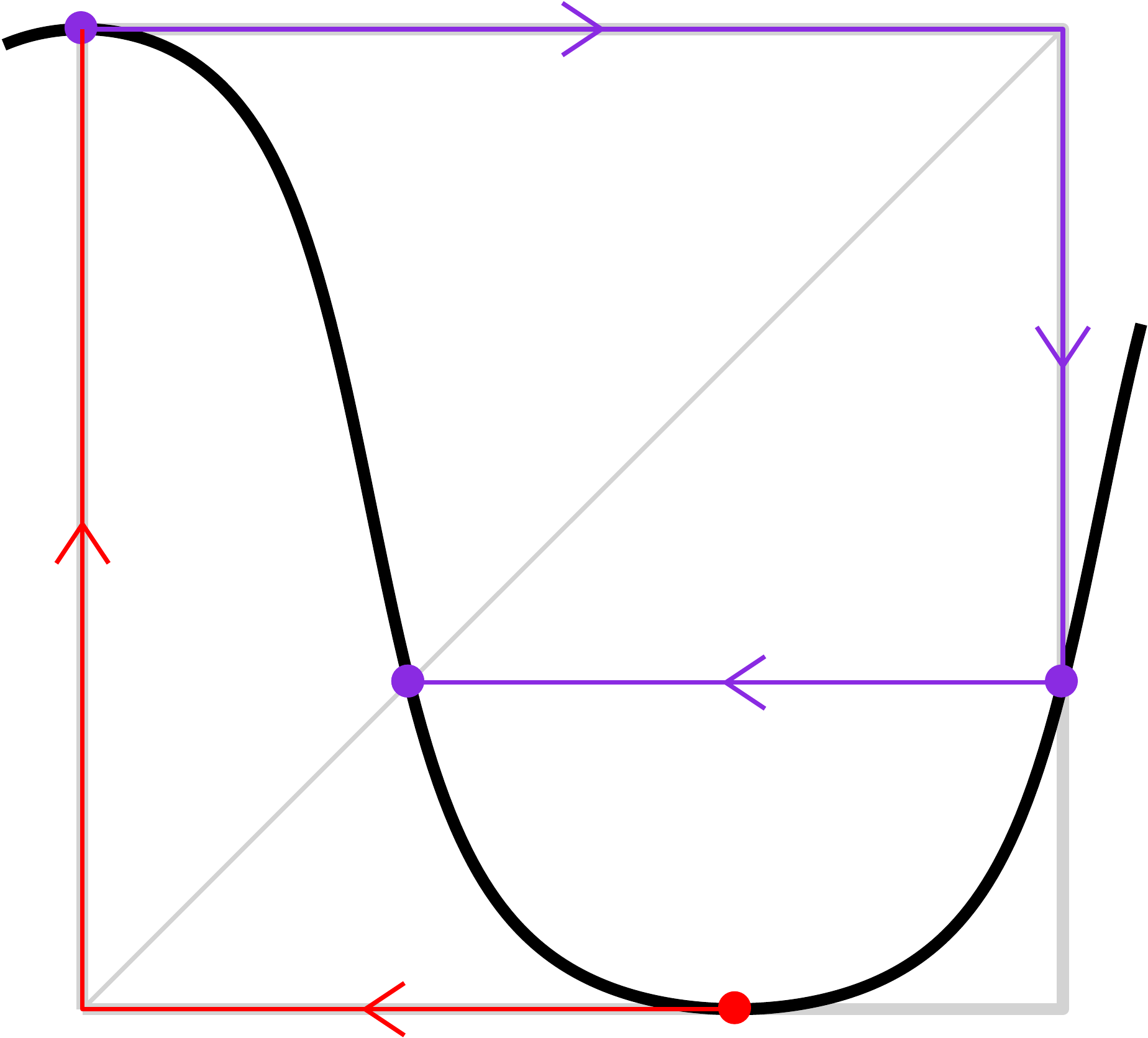}}
  \caption{\label{F-nonex}    An example of combinatorics
    which satisfies Condition~\ref{admiss_minimal} but is not expansive.
    Here $n=4$, the map has  combinatorics
    $\(4,\,2,\,1,\,0,\,1\)$. The edge $[2,\,3]$ is not expansive,
    and hence collapses to a point in the corresponding lifted rational
    map, which has simplified combinatorics $\(3,\,1,\,0,\,1\)$.} 
\end{figure}

\begin{rem}\label{R-bad}
  In most cases, any admissible combinatorics which is not minimal can be
  reduced to a minimal example with simplified combinatorics in three steps
  as follows. (Compare Figures~\ref{F-min} and \ref{F-nonex}.) However this
  is not always possible, so a fourth step is needed.

  \begin{enumerate}[resume*=_steps,start=1,
      topsep=3pt plus 1ex,midpenalty=9999,endpenalty=-9999,
      leftmargin=2\parindent,labelwidth=\parindent,itemsep=0pt plus 1ex]
\item Remove any vertices of the PL model which are not critical or
  postcritical.

\item Collapse any edge which is not expansive to a point.

\item Renumber the vertices which are left.

\item Check that the resulting combinatorics is admissible.
  (For an example where this last step fails, see \autoref{f-bad}.) 
\end{enumerate}

\end{rem}\bigskip

 \begin{figure}[!htb]
\centerline{\includegraphics[height=1.7in]{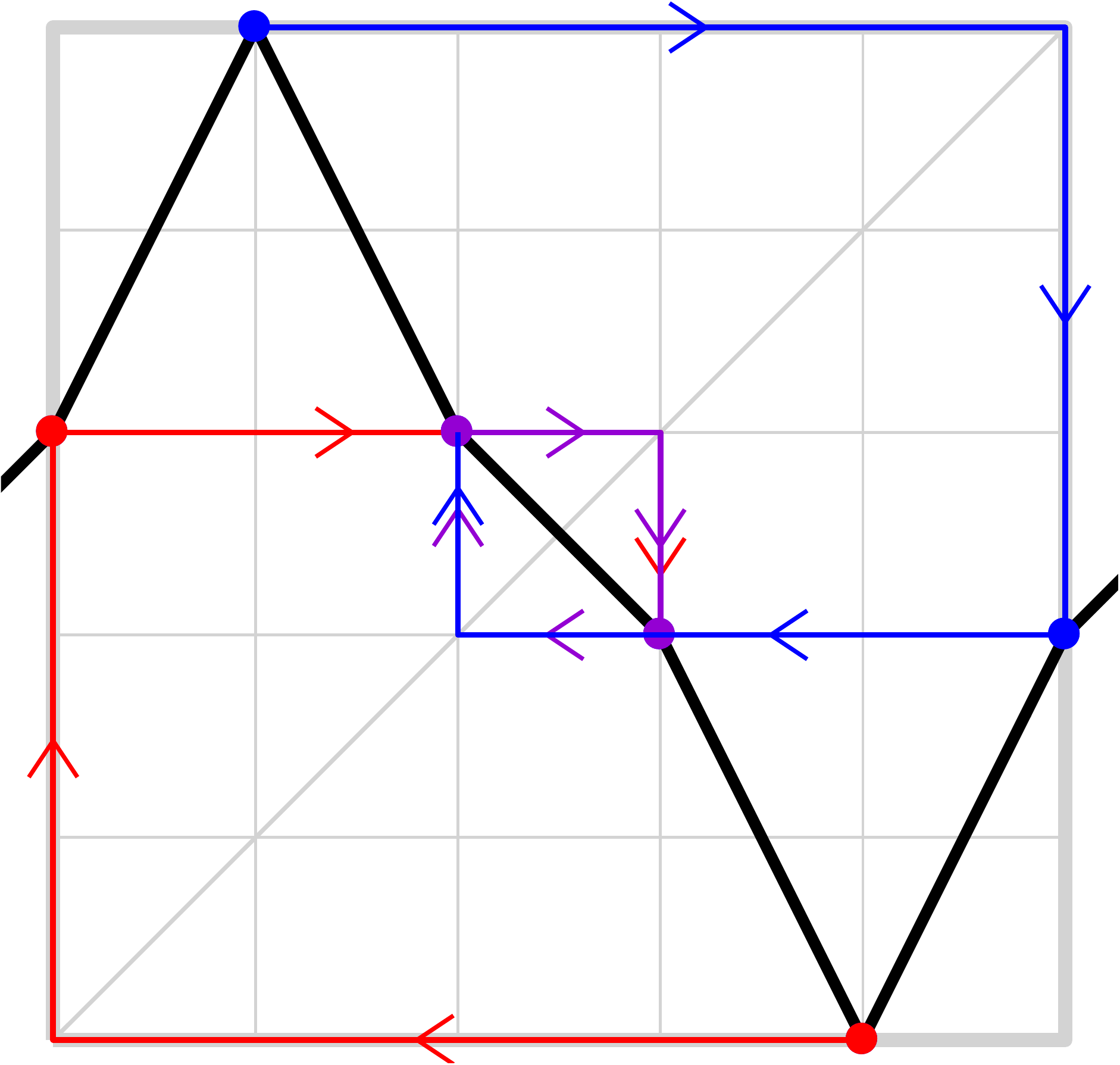}\qquad\qquad
            \includegraphics[height=1.7in]{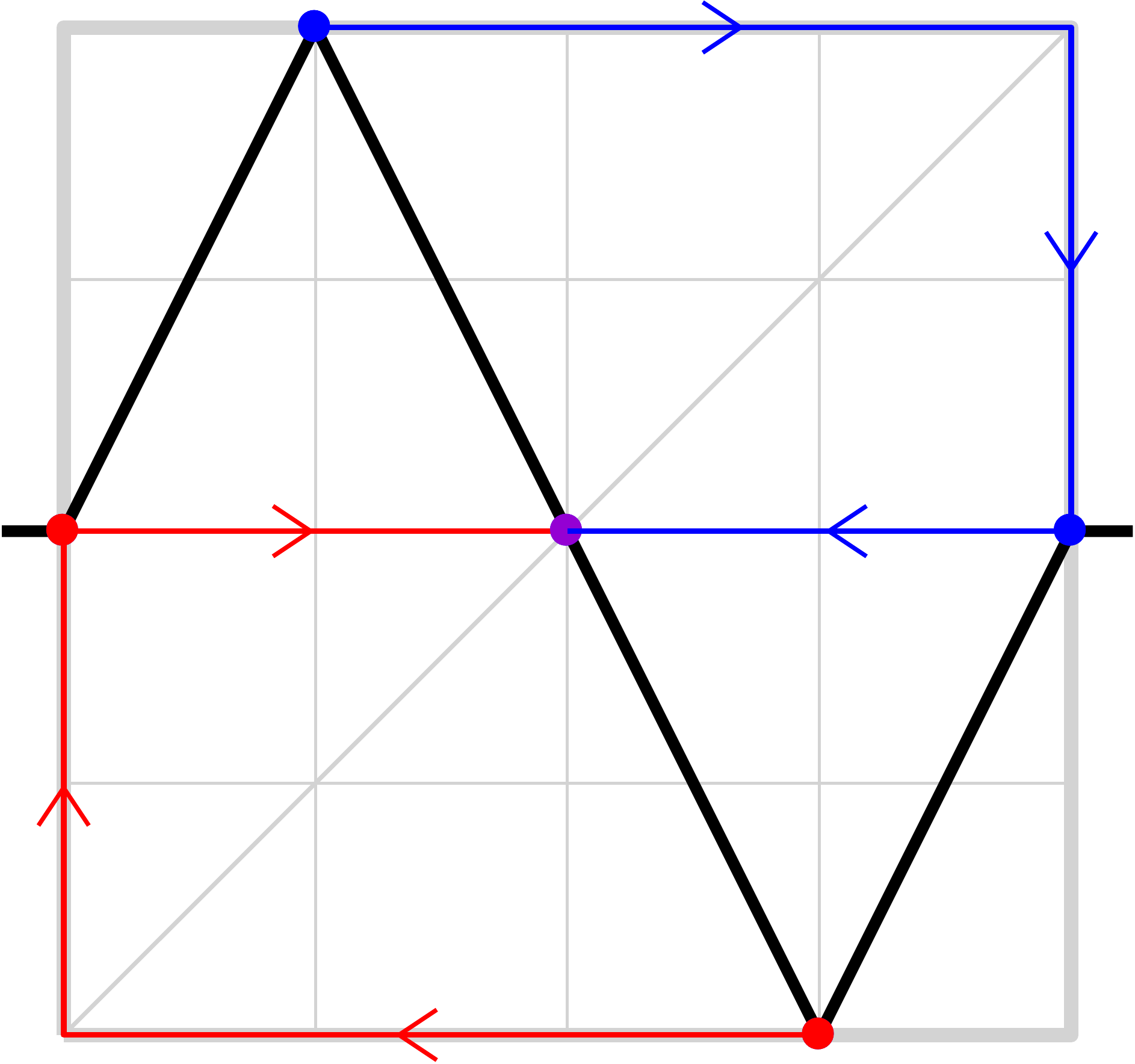}}
 \caption{\label{f-bad}  The combinatorics $\(3,\,5,\,3,\,2,\,0,\,2\)$
 on the left is admissible, although not minimal. However, if we try to
 reduce
 it to a minimal example, we obtain the combinatorics $\(2,\,4,\,2,\,0,\,2\)$
 on the right, which is not admissible and hence 
 cannot describe any quadratic example, since one
 horizontal line intersects the graph three times. (In the terminology we will introduce in
\autoref{s5}, it follows that the combinatorics on the left is ``strongly obstructed''.) 
}
   \end{figure}

      There are two further restrictions which we may sometimes
      want to impose on the  combinatorics.\smallskip

\begin{enumerate}[resume*=_admiss]
\item\label{admiss_notpoly}\textbf{(Not a polynomial).} The combinatorics
  satisfies  $m_0>0$ and $m_n<n$ so that there 
is no critical fixed point. There is nothing wrong with combinatorics
with a critical fixed point since they correspond to maps conjugate to a
polynomial. In fact they are much easier to deal with; and quadratic 
polynomials are well understood. However, we will concentrate
on non-polynomial maps in the subsequent discussion, except in
\autoref{s8} where polynomial maps will play an essential role). 
\end{enumerate}

\begin{figure}[!h] 
  \centerline{\includegraphics[height=1.6in]{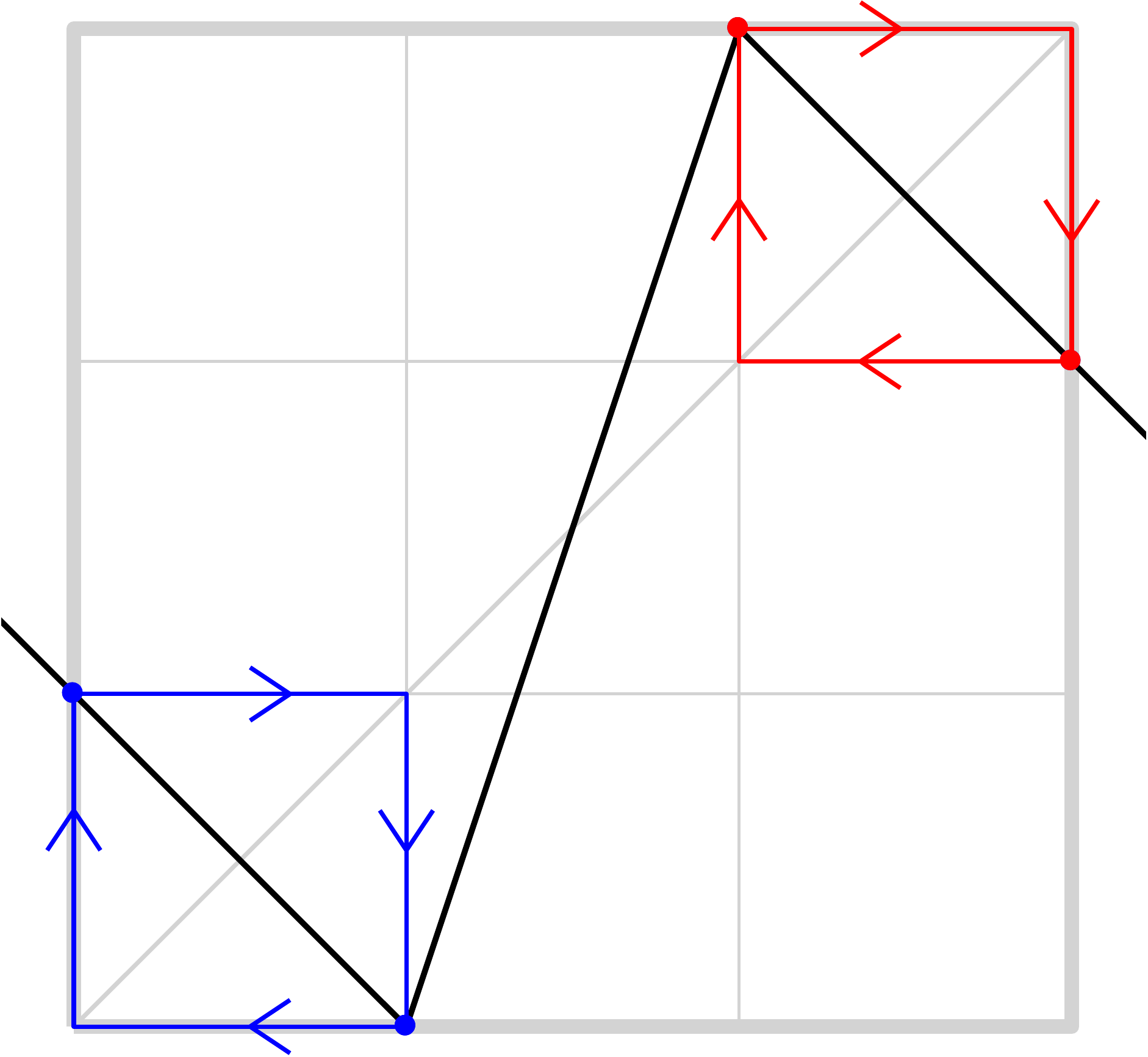}}
  \caption{\label{f4}  A piecewise linear example with combinatorics
    $\(1,\,0,\,3,\,2\)$ and with three fixed points.
   See also \autoref{f-aBn3a}R.}
  \end{figure}

\begin{enumerate}[resume*=_admiss]
\item\label{admiss_onefp}\textbf{(Only one fixed point)}
   The associated piecewise linear model has exactly 
one fixed point. This is closely related to the previous requirement,
since a critically finite quadratic polynomial always has three fixed
points. However we will see in \autoref{L2} that a critically finite
quadratic map which is not conjugate to a polynomial always has just
one fixed point. Thus for combinatorics such as $\( 1,~0,~3, ~2\)$, 
as illustrated in \autoref{f4}, there must be a Thurston obstruction.
(Compare \autoref{s5}  and \cite[Lemma 10.2]{M}. 
)
\end{enumerate}\msk

\begin{rem}[\bf Dynamic Classification]
By definition a critically finite rational map is \textbf{\textit{hyperbolic}}
if every postcritical cycle contains at least one critical point. In the
quadratic  case, every hyperbolic map  belongs to one of the following three types:

  \begin{description}[style=sameline,leftmargin=2.5\parindent,
                      rightmargin=2\parindent,labelwidth=\parindent]
  \item[\bf Type B (bicritical).] Both critical points are contained in a common
    periodic orbit. (Compare \autoref{F-min}.) 
    
  \item[\bf Type C (capture).] The orbit of one critical point lands, after one or
    more iterations, on a periodic orbit containing the other critical point.
    (Compare \autoref{f2}.)

  \item[\bf Type D (disjoint).] The orbits of the two critical points are periodic
    and disjoint. (Compare \autoref{f1}.)
  \end{description}

Similarly, each non-hyperbolic map belongs to one of two types:

  \begin{description}[resume*]
  \item[\bf Half-Hyperbolic.] One critical orbit is periodic; but the other is
    only eventually periodic. (Compare \autoref{f-semihyp1}.)

  \item[\bf Totally Non-Hyperbolic.] No critical orbit is periodic, 
    although both are eventually periodic.
    (Compare \autoref{f-nonhyp}.)
  \end{description}

  Conjugacy classes of Hyperbolic Type are always isolated, 
  since they are contained in an open hyperbolic component which contains
  no other critically finite point. However a sequence of hyperbolic
  critically finite conjugacy classes may well converge to a limit which
  is Half-Hyperbolic or Totally Non-Hyperbolic.
\ssk

This classification extends easily to our piecewise-linear model maps, and
hence to any admissible combinatorics.
\end{rem}

\begin{rem}\label{R-npc}
  One important number associated with any combinatorics  is the number of
  postcritical points, which we will denote by $\npc$. Note that $\npc=n+1$ for
  combinatorics of Type B or D, but $\npc=n$ in the  Type  C or Half-Hyperbolic
  cases; while $\npc$ may be either $n$ or $n-1$ in the Totally
  Non-Hyperbolic case. Cases with $\npc\le 4$ require special
  attention in   Thurston's theory (Compare \autoref{P-exc}.)
\end{rem}

\begin{rem}[Topological Classification]\label{R-top-class}
  Real quadratic maps can be classified topologically by the location of
  their  critical points with respect to the interval $f(\Rhat)$. For a very rough
  classification, let $\ell$ be the number of critical points in the interior
  of $f(\Rhat)$. Then the map $f$ restricted to $f(\Rhat)$ can be described as
  \textbf{\textit{bimodal}} if $\ell=2$, or \textbf{\textit{unimodal}}
  if $\ell=1$, and \textbf{\textit{monotone}} if $\ell=0$.

  The $\ell$ critical points
  divide $f(\Rhat)$ into $\ell+1$ \textbf{\textit{laps}}, or maximal intervals of
  monotonicity. On each lap, $f$ is either monotone increasing  if $f'>0$
  (indicated by a plus sign), or monotone decreasing  if $f'<0$ (indicated by a
  minus). Thus in the bimodal case
  we either have the case $+-+$ or the case $-+-$, with a similar dichotomy
  for the unimodal and monotone  cases.\msk

\begin{figure}[!t]
\newcommand{\figHT}{.18\textwidth}
\begin{tabular*}{\textwidth}{c @{\extracolsep{\fill}} c c}
 \includegraphics[height=\figHT]{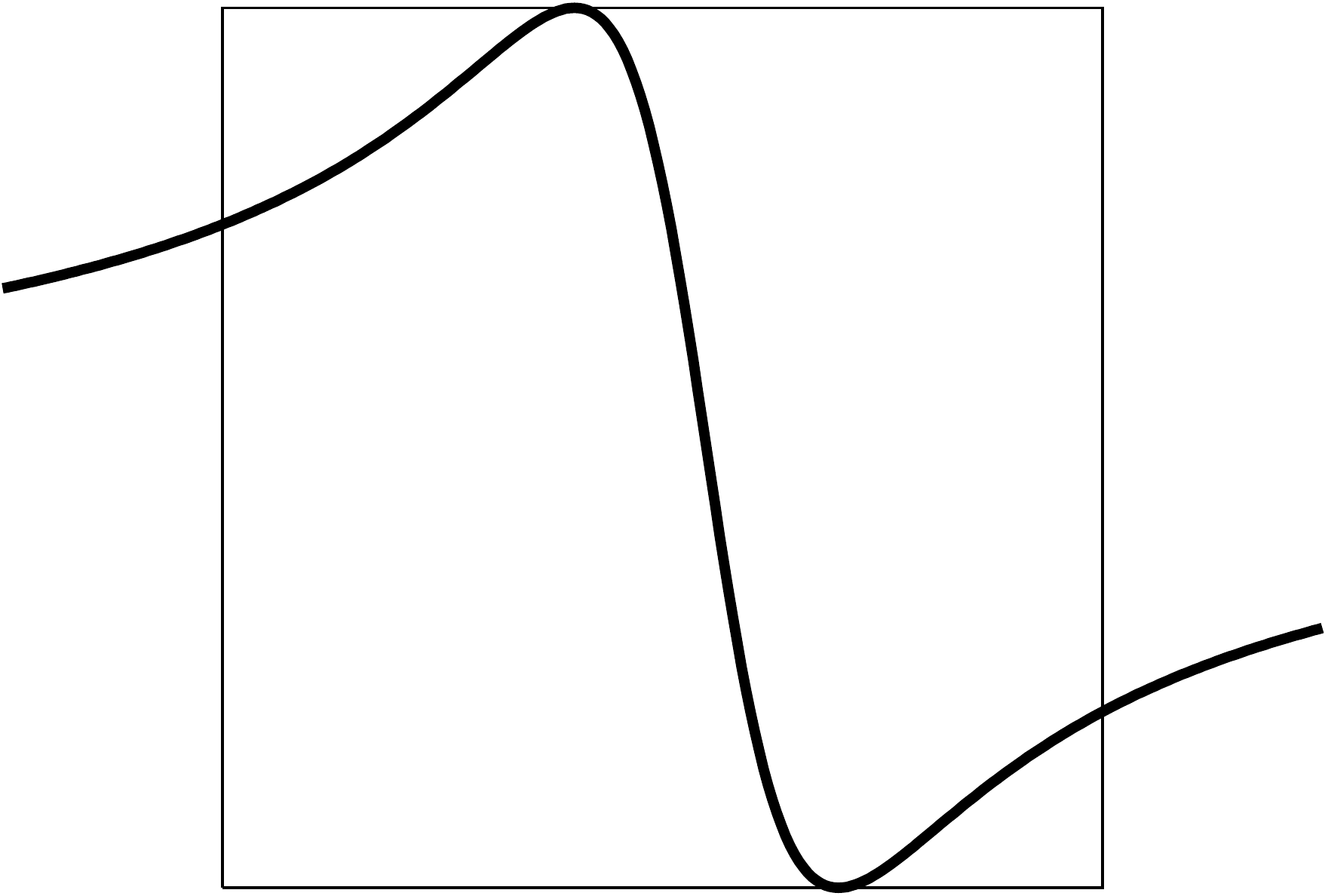}  &
 \includegraphics[height=\figHT]{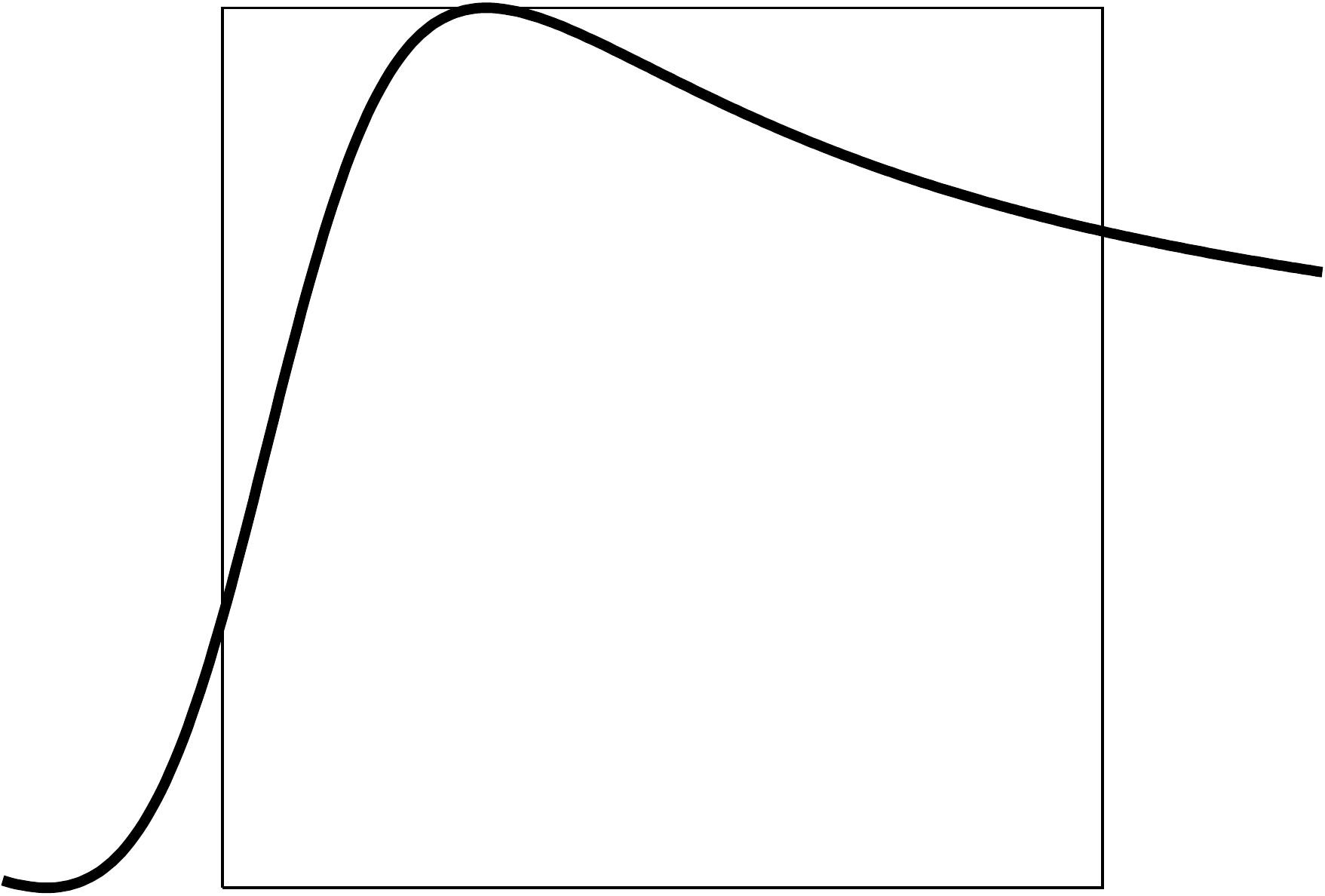}   &
 \includegraphics[height=\figHT]{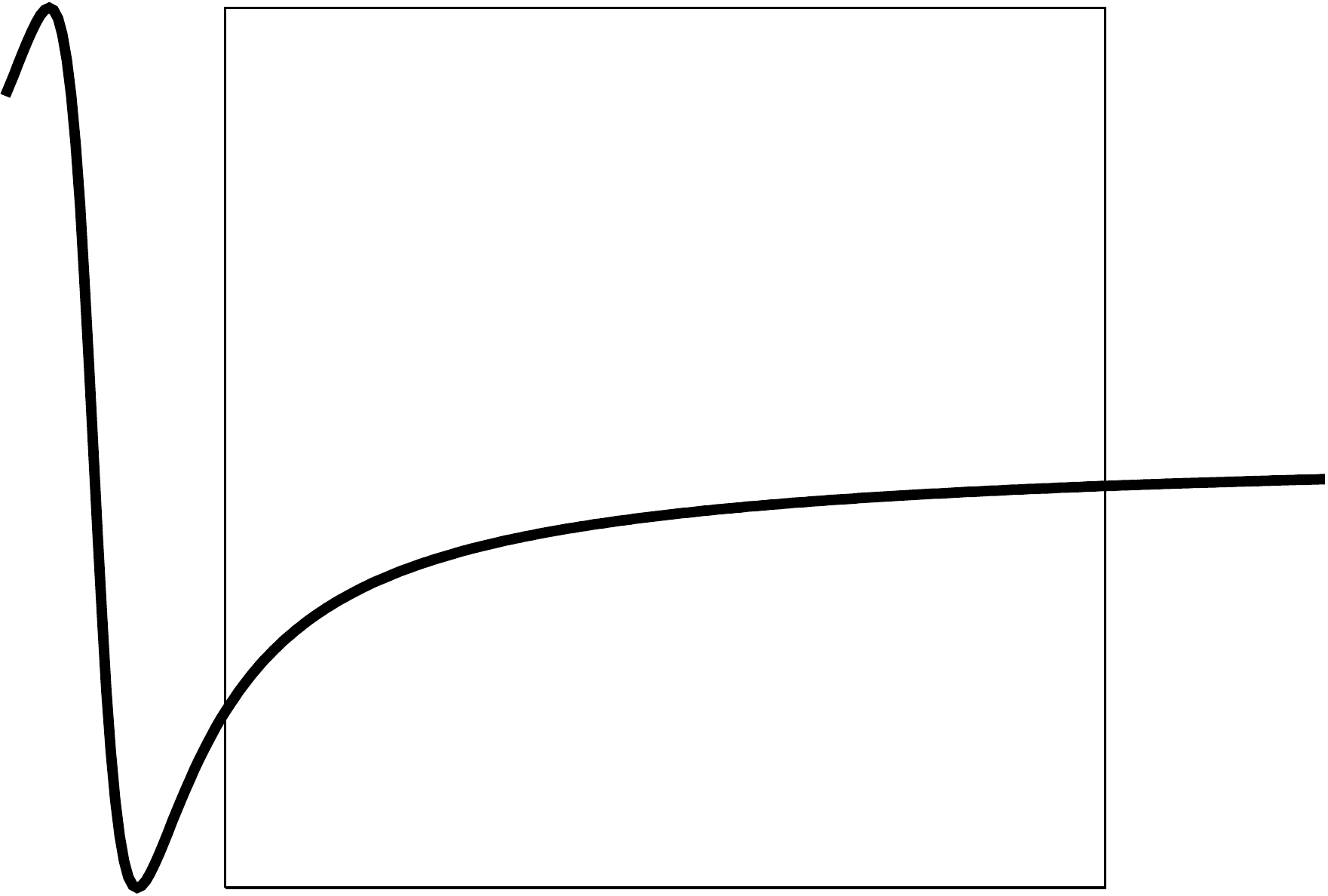}    \\
 $\displaystyle +\,-\,+$ & 
 $\displaystyle +\,-$    & 
 $\displaystyle +$ \\[2ex]
 \includegraphics[height=\figHT]{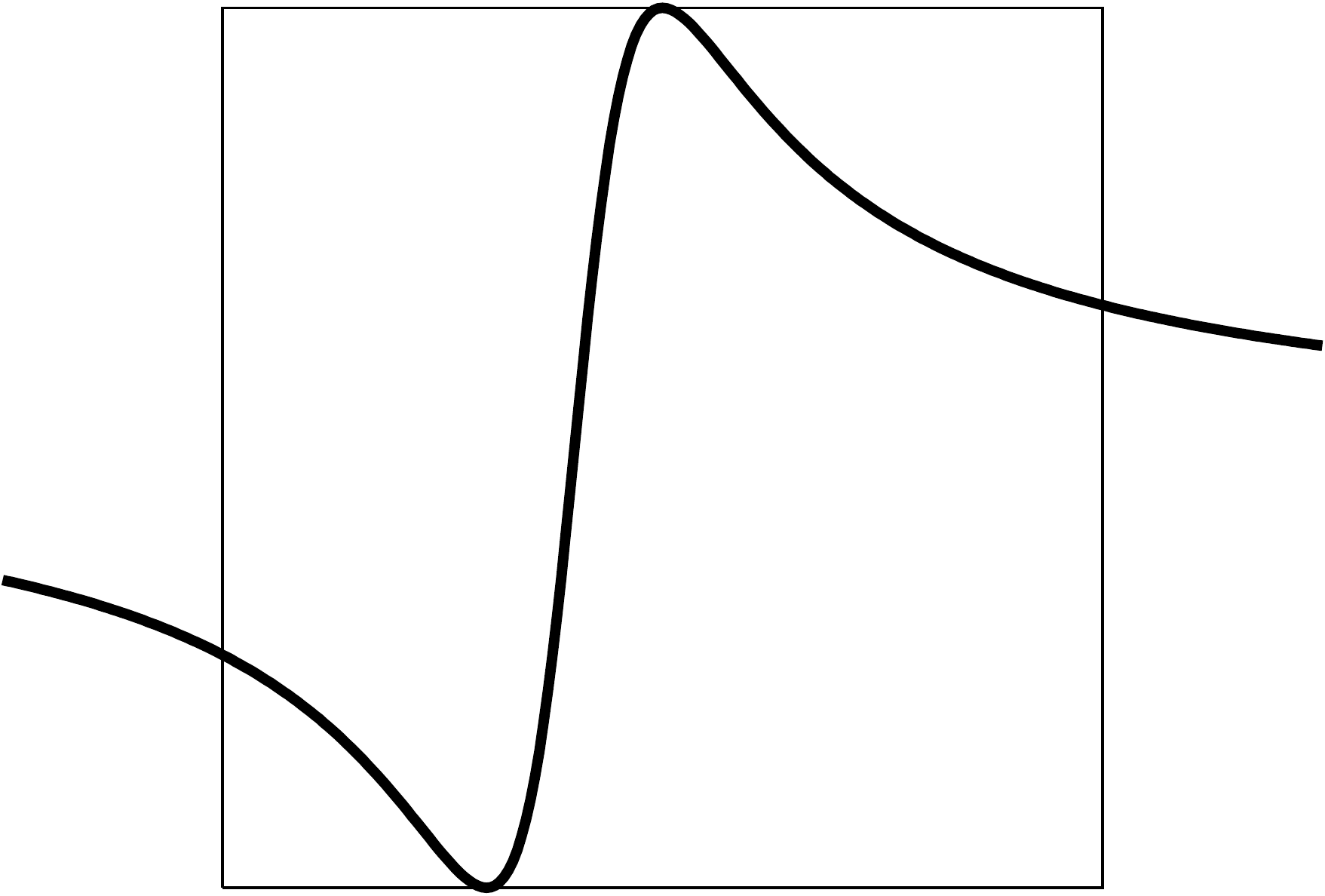}  &
 \includegraphics[height=\figHT]{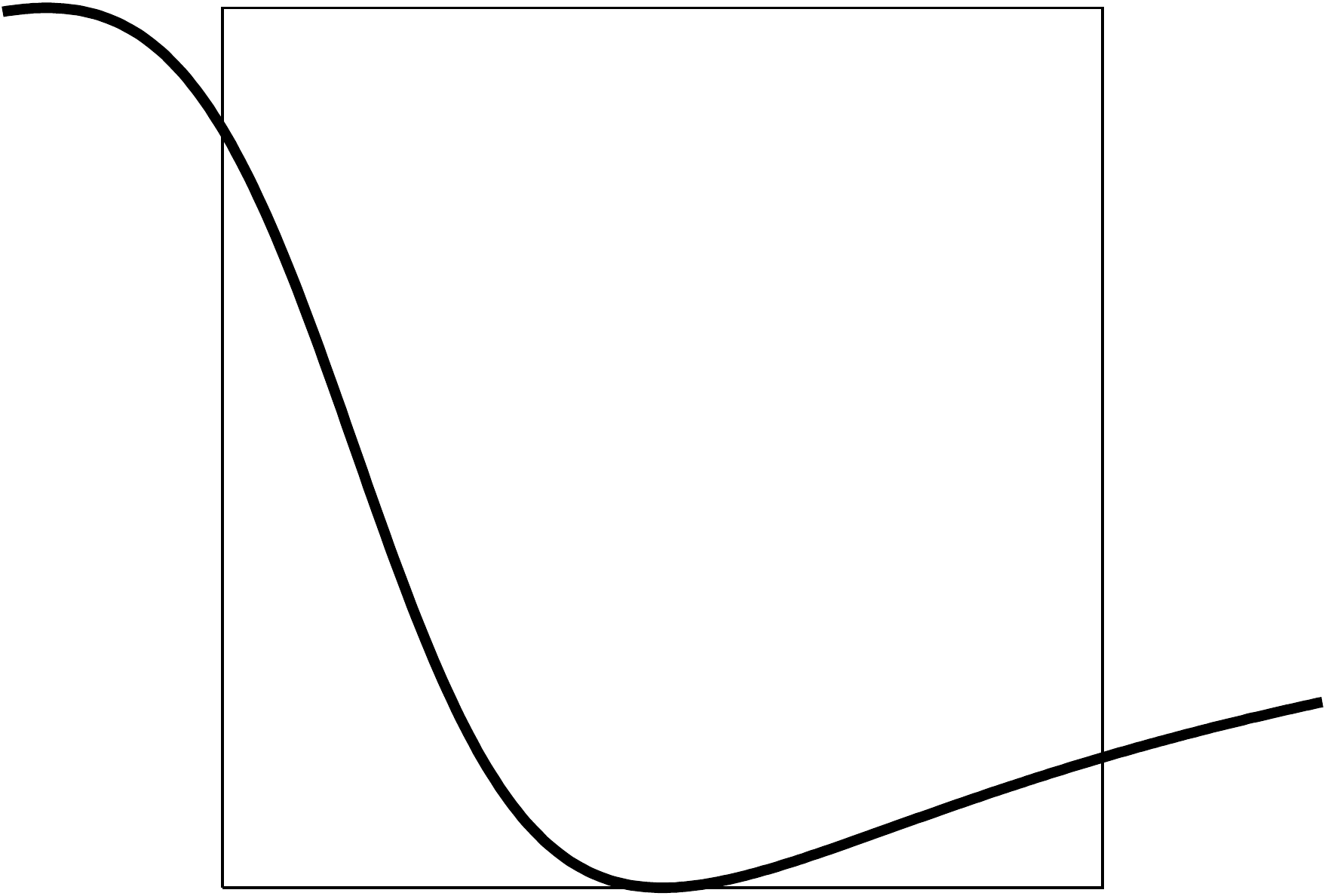}  &
 \includegraphics[height=\figHT]{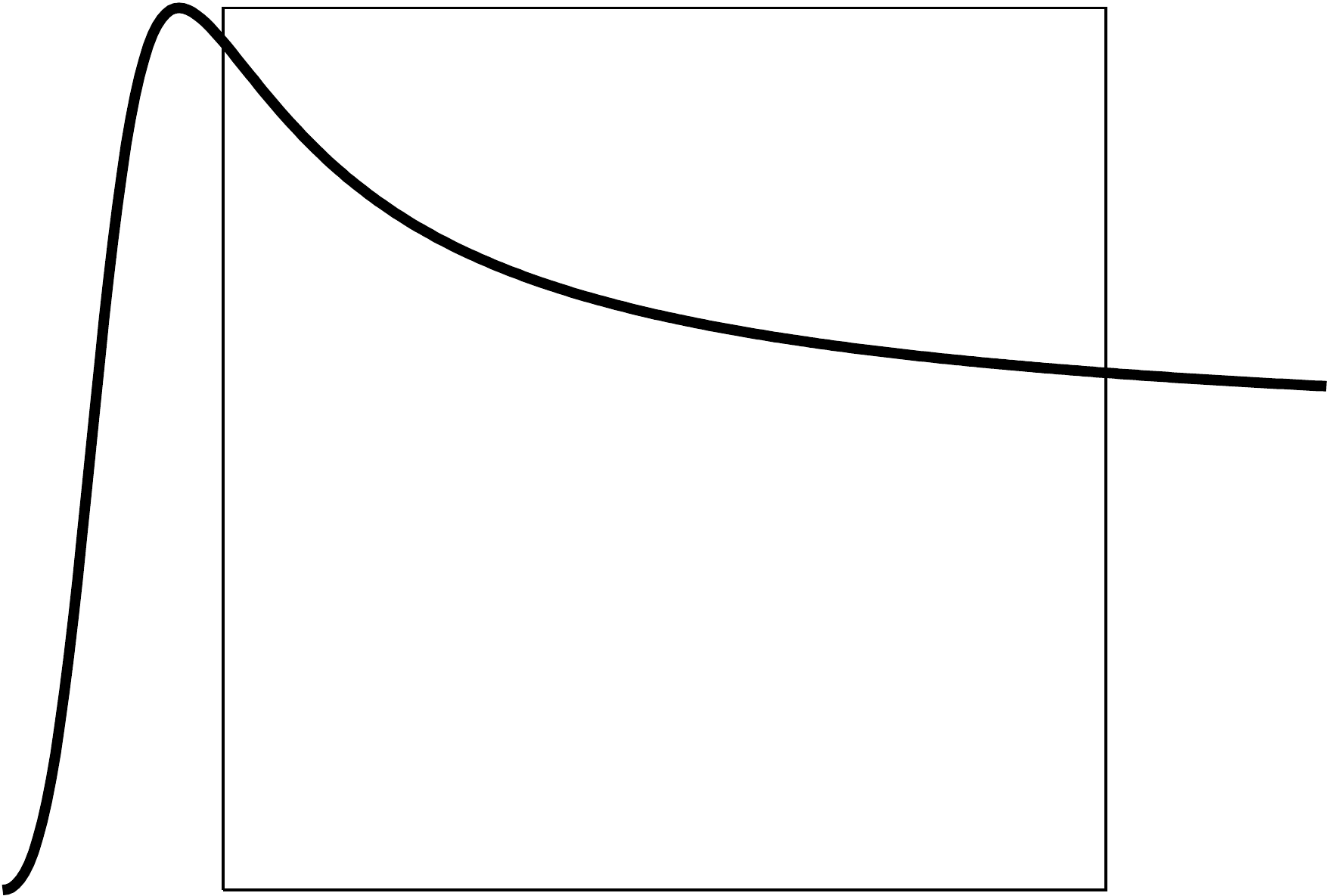}  \\
 $\displaystyle -\,+\,-$ & 
 $\displaystyle -\,+$    & 
 $\displaystyle -$ 
\end{tabular*}
\caption{ \label{af1} The six possible shapes for maps which
  have   no critical point on the boundary of $f(\Rhat)$.
 The left two figures are of bimodal shape, the middle two of unimodal
   shape, and the last two of monotone shape.
 (The $+-$ and $-+$ cases are not really different from each other, since
 one can be obtained from the other by the orientation reversing change
 of coordinate $x\leftrightarrow 1-x$.)}
\end{figure}

For a more precise classification, we must single out the cases where there is
a critical  point precisely in the boundary of $f(\Rhat)$, or in other words,
a critical point which is also a critical value. There are two possibilities:

\begin{definition}\label{d-copoly} The quadratic map $f$ is of
  \textbf{\textit{polynomial shape}}\footnote{We will reserve the word
    ``type'' for dynamic properties, which involve following critical orbits,
    and use the word ``shape'' for topological properties, which are
usually   evident from a glance   at the  graph of $f$, restricted
    to a neighborhood of $f(\Rhat)$.}
  if it has a critical  fixed point; and is
  of \textbf{\textit{co-polynomial shape}}
  if one critical point maps to the other. Note that $f$ is of polynomial
  shape  if and only if it is conjugate to a polynomial; and is of
  co-polynomial shape
    if and only if it is conjugate to a map of the form $x\mapsto 1/p(x)$ where
    $p(x)$ is a polynomial.  (See \autoref{P-copoly}.) \end{definition}

  We will say that a map is \textbf{\textit{strictly unimodal}} if its
  restriction to $f(\Rhat)$ is unimodal, and if there is no critical point of
  the boundary of $f(\Rhat)$, so that it will remain unimodal under a small
  perturbation.  Thus means that one critical point is in the interior and
  one critical point is
  strictly outside of $f(\Rhat)$. Similarly it is strictly monotone if both
  critical points are strictly outside of $f(\Rhat)$. (Compare \autoref{af1}.)

   This dynamic classification of maps gives rise to a corresponding
  partition of the  ``moduli space'', which consists 
  of all conjugacy classes, into six connected open sets, and two connected
  closed sets made  of points which are on the common boundary
  between two or more of 
  these open sets. (Compare \autoref{F-canmod} in \autoref{s7}.)\msk

  In the unimodal and bimodal cases, this classification  extends easily to our
  PL model maps,
  and hence to any admissible combinatorics. However, there is no
  such thing as strictly monotone combinatorics.
  \end{rem}

  \begin{rem}[Relations between dynamic and topological classifications]\label{R-rel}
    Although these classifications are quite different, there are some
    obvious and some not so obvious relations between them. As 
an obvious relation, for combinatorics of Type B or D, both critical orbits
are periodic, so we  cannot be  in the strictly unimodal case.
Note also that co-polynomial combinatorics can only be of
Type B, or Totally Non-Hyperbolic. This is true since any given point can have
at most two immediate pre-images, counted with multiplicity. 
If for example the critical point $c_1$
maps to $c_2$, then no other point can map to $c_2$. Therefore we must be
either in the Type B or the Totally Non-Hyperbolic case. A similar
argument shows that polynomial combinatorics can only be of Type D
or Half-Hyperbolic. Here is a less obvious example.

\begin{prop}
  No combinatorics of shape $~-+-~$ can be of   Type B.
\end{prop}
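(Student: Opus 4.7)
The plan is to argue by contradiction: assuming the combinatorics $\vecm$ has shape $-+-$ and is of Type B, I would exhibit a subset $S\subseteq\{0,1,\ldots,n\}$ that is forward invariant under $\ell\mapsto m_\ell$, contains $c_1$, and omits $c_2$, contradicting the common-cycle condition. Let $i<j$ denote the two critical indices, so $0<i<j<n$, $m_i=0$, $m_j=n$, and the three monotone laps give $m_0>m_1>\cdots>m_{i-1}>0$, $0<m_{i+1}<\cdots<m_{j-1}<n$, and $n>m_{j+1}>\cdots>m_n>0$.

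The first step extracts the inequality $m_0\le j-2$. Condition~\ref{admiss_cond2} of admissibility, applied to the cyclic shift that places $m_i=0$ first, joins the two outer laps into a single strictly decreasing tail $m_{j+1}>m_{j+2}>\cdots>m_n>m_0>m_1>\cdots>m_{i-1}>0$, so in particular $m_0<m_n$. Pigeonhole on the lap monotonicities forces $m_0\ge i$ and $m_n\le j$, while Type B rules out the short $2$-cycles $\{0,i\}$ and $\{j,n\}$ that would correspond to $m_0=i$ or $m_n=j$. Together these give $i+1\le m_0<m_n\le j-1$.

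The key step is to verify that $S:=\{0,1,\ldots,m_0\}$ is invariant under $\ell\mapsto m_\ell$. For $\ell\in\{0,\ldots,i-1\}$ the first-lap inequalities give $m_\ell\le m_0$, and $m_i=0\in S$. The nontrivial case is $\ell\in\{i+1,\ldots,m_0\}\subseteq M:=\{i+1,\ldots,j-1\}$; by inner-lap monotonicity it is enough to show that the first $m_0-i$ entries of the increasing sequence $m_{i+1}<\cdots<m_{j-1}$ lie in $\{1,\ldots,m_0\}$. Since Type B makes $\ell\mapsto m_\ell$ a permutation of $\{0,\ldots,n\}$, the three sets $\{m_0,\ldots,m_{i-1}\}$, $\{m_{i+1},\ldots,m_{j-1}\}$, $\{m_{j+1},\ldots,m_n\}$ are pairwise disjoint with union $\{1,\ldots,n-1\}$; the first step places the third set strictly above $m_0$, so $\{1,\ldots,m_0\}=\{m_0,\ldots,m_{i-1}\}\sqcup\bigl(\{m_{i+1},\ldots,m_{j-1}\}\cap\{1,\ldots,m_0\}\bigr)$. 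Comparing cardinalities yields $|\{m_{i+1},\ldots,m_{j-1}\}\cap\{1,\ldots,m_0\}|=m_0-i$, so the $m_0-i$ smallest entries of the inner-lap sequence, namely $m_{i+1},\ldots,m_{m_0}$, all lie in $\{1,\ldots,m_0\}\subseteq S$.

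Finally $c_1=i\in S$ because $i\le m_0-1$, while $c_2=j\notin S$ because $j\ge m_0+2$, so the forward orbit of $c_1$ is trapped in $S$ and cannot reach $c_2$, contradicting Type B. The only delicate point is the counting identity $|\{m_{i+1},\ldots,m_{j-1}\}\cap\{1,\ldots,m_0\}|=m_0-i$; it rests squarely on the shape-$-+-$ inequality $m_0<m_n$ derived from admissibility, and for other bimodal shapes the analogous invariant set would not exist.
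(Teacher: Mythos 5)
Your proof is correct and takes a genuinely different, more combinatorial route than the paper's. The paper's argument introduces the middle fixed point $\widehat x$ of the piecewise linear model and observes (implicitly invoking the admissibility inequality $m_0<m_n$, which rules out $m_n\le\widehat x\le m_0$) that either $\widehat x>m_0$ or $\widehat x<m_n$, so one of the half-intervals $[0,\widehat x]$ or $[\widehat x,n]$ is forward invariant and cannot contain both $0$ and $n$. You avoid the fixed point entirely: pigeonhole bounds from the lap lengths, together with the same inequality $m_0<m_n$ and the exclusion of the two short $2$-cycles, pin down $i+1\le m_0<m_n\le j-1$, and a cardinality count on the permutation $\ell\mapsto m_\ell$ then shows the explicit integer set $S=\{0,\dots,m_0\}$ is forward invariant and traps $c_1$ away from $c_2$. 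What this buys you is a single explicit invariant set with no case split; both proofs ultimately rest on the admissibility condition forcing $m_0<m_n$.

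The one place where you assume more than the paper does is the sentence that Type~B makes $\ell\mapsto m_\ell$ a permutation of $\{0,\dots,n\}$. That holds whenever every marked point is critical or postcritical, which is condition~(3) of \autoref{D-admissible} and is automatic for combinatorics read off from an actual Type~B map; but the proposition is stated for admissible combinatorics in general, which may carry extra marked points outside the critical cycle. In that case $\ell\mapsto m_\ell$ need be neither injective nor surjective, the disjoint-union identity $\{1,\dots,m_0\}=\{m_0,\dots,m_{i-1}\}\sqcup\bigl(\{m_{i+1},\dots,m_{j-1}\}\cap\{1,\dots,m_0\}\bigr)$ can fail, and with it the count $m_0-i$. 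You should either state the minimality hypothesis explicitly, reduce to it by deleting non-postcritical marked points (the critical cycle keeps $0,i,j,n$, so the shape $-+-$ is preserved), or use the paper's fixed-point argument, which does not need $\ell\mapsto m_\ell$ to be a bijection.
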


\begin{proof} The piecewise linear map $\f:[0, n]\to[0, n]$ necessarily 
  has three fixed points.  Let $\widehat x$ be the middle fixed
      point. Then either $\widehat x>f(0)$ or $\widehat x<f(n)$ or both.
      In the first case the interval $[0,~\widehat x]$ maps to itself,
      and in the second case $[\widehat x,~n]$ maps to itself. In
      either case, no periodic orbit can contain both zero and $n$.
    \end{proof}
    
  If we consider only unobstructed combinatorics, corresponding to
 actual quadratic maps, then there are further restrictions.
We will see in \autoref{s3}
that the  $(-+-)$-bimodal region does not contain any maps which are
critically finite. However, it is easy to find combinatorics which
are of shape $-+-$; and it follows that these must be strongly obstructed. 
See \autoref{ap-b} for further information.
\end{rem} 

  \begin{rem}[The Cross-Ratio Invariant]\label{R-cr}
    One simple and useful invariant is the following. 
  If $c_1$ and $c_2$ are the two  critical points of $f$
  and $v_1$, $v_2$ are the corresponding critical values,
  then the cross-ratio
  $$ \rho(f)~=~\frac{(c_1-v_1)(c_2-v_2)}{(c_1-c_2)(v_1-v_2\
)} $$
is clearly invariant under fractional linear changes of
coordinate. It is an easy exercise to check the following:

\begin{itemize}
\item[$\bullet$] $\rho=1$ if and only if $f$ is of polynomial shape.

\item[$\bullet$] $\rho=0$ if and only if $f$ is of co-polynomial shape.

\item[$\bullet$] $0<\rho<1$ if and only if $f$ is  strictly unimodal.

\item[$\bullet$] $\rho$ is finite in all cases.
\end{itemize}

\noindent However this invariant does not distinguish between
the $+-+$
bimodal case and the $-$ monotone case, both with $\rho<0$. 
Similarly it does not distinguish between the $-+-$ bimodal
case and the $+$ monotone case, both with $\rho>1$.
\end{rem}
\ssk

  \begin{rem}[Orientation Reversal]\label{R-I}
    If we reverse orientation, then any given combinatorics
    $\vecm=\(m_0,~\ldots,~m_n\)$ will be replaced by 
    $$\I(\vecm)~=~\(n-m_n,~n-m_{n-1},~\ldots,~n-m_1,~n-m_0\)~.$$
    This corresponds to 180 degree rotation of the graph.
    It does not affect the dynamical classification or the
    cross-ratio invariant. However, it replaces
 any unimodal combinatorics of shape $+-$ 
    by  unimodal combinatorics of shape $-+$ with identical dynamic
    properties. For more on this orientation reversing involution,
    see \autoref{s7} and \autoref{F-M/I}. 
    \end{rem}

    \section{The Lifted Normal Form}\label{s3}
    
The object of this section will be to introduce a
family of  real quadratic maps, parametrized by their
two critical values, in a form which is easy to understand and
which is convenient for carrying out the Thurston algorithm. The
following will help to motivate the construction.
 \ssk

\begin{lem}\label{L2} Let $f$  be a real quadratic map, not of polynomial
   shape, such that every real fixed point is strictly repelling. Then
  $f$ has precisely one real fixed point, and precisely
  one decreasing lap,  which must contain this fixed point. Furthermore,
  there must be at least one critical point in $f(\Rhat)$. 
\end{lem}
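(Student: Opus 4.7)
The plan is to prove the three conclusions -- uniqueness of the real fixed point, placement on a decreasing lap, and presence of a critical point in $f(\Rhat)$ -- by combining the holomorphic fixed-point formula with the intersection theory of the graph of $f$ against the diagonal of the torus $\Rhat\times\Rhat$.

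First I would bound the number of real fixed points. The map $f$ has exactly three fixed points in $\mathbb{P}^1$ (counted with multiplicity), and over $\R$ they arise as roots of a real cubic, so there are one or three real fixed points. To exclude three, suppose $p_1<p_2<p_3$ are real and strictly repelling, so each multiplier $\lambda_i=f'(p_i)$ is real with $\lambda_i>1$ or $\lambda_i<-1$. The sign of $f(x)-x$ just to the right of $p_i$ equals $\mathrm{sign}(\lambda_i-1)$, and just to the left the opposite; requiring no further fixed point in each gap $(p_i,p_{i+1})$ forces the $\lambda_i$-types to alternate, yielding two possible sign patterns. Substituting into the identity $\sum_i \frac{1}{1-\lambda_i}=1$, each term with $\lambda>1$ is negative while each term with $\lambda<-1$ lies in $(0,1/2)$; in either alternation pattern the sum is strictly less than $1$, a contradiction. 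Hence there is a unique real fixed point $p$.

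Next I would pin down the sign of $\lambda=f'(p)$ by an index argument. Because the image $f(\Rhat)$ is a proper arc of $\Rhat$ (bounded by the two critical values), the induced map $f\colon\Rhat\to\Rhat$ has topological degree zero, so the graph of $f$ is homologous to a $(1,0)$-cycle on $\Rhat\times\Rhat$. Its algebraic intersection with the diagonal $(1,1)$-cycle is therefore $+1$, and the local sign at each fixed point is $+1$ when $f'<1$ and $-1$ when $f'>1$. With a single fixed point contributing, the sign at $p$ must be $+1$, so $\lambda<1$; combined with $|\lambda|>1$ this gives $\lambda<-1$. In particular $f$ is strictly decreasing near $p$, so $p$ lies on a decreasing lap, and $p\in f(\Rhat)$ trivially since $f(p)=p$.

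Finally I would eliminate the topological shapes of $f|_{f(\Rhat)}$ incompatible with the conclusions. The $-+-$ bimodal shape is killed by applying IVT separately to each of its three monotone laps, giving at least three fixed points in $f(\Rhat)$ and contradicting uniqueness. The strictly monotone-increasing shape is killed because the only preimages of the endpoint critical values lie at the critical points, which are outside $f(\Rhat)$, so $f(a)>a$ and $f(b)<b$; the fixed point then has $f'(p)\leq 1$, contradicting $\lambda<-1$. The hardest case, which I expect to be the main obstacle, is strictly monotone-decreasing: the pointwise condition $\lambda<-1$ is consistent with it. Here I plan to bring in global information: the second iterate $f^2$ is monotone increasing on $f(\Rhat)$, and the two conjugate complex fixed points, together with any attracting cycle created in $f(\Rhat)$, are constrained by the holomorphic fixed-point formula and by Fatou's theorem that every attracting cycle must absorb a critical orbit. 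Using these together should force one of the two critical points into $f(\Rhat)$, eliminating the strictly monotone-decreasing case and finishing the proof.
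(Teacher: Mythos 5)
Your first two steps are correct and take a route genuinely different from the paper's. To rule out three real fixed points, the paper argues lap-by-lap (every decreasing lap carries exactly one fixed point, every increasing lap at most one, so three fixed points force three laps) and then applies the relation $\mu_3 = (2-\mu_1-\mu_2)/(1-\mu_1\mu_2)$ to conclude all three multipliers would share a sign, which is impossible. Your sign-alternation argument plus the identity $\sum_i 1/(1-\lambda_i)=1$ is the same algebraic fact repackaged; note that you should first normalize so that $f(\Rhat)\subset\R$, since otherwise $f(x)-x$ may change sign at a pole lying between two fixed points and the alternation pattern need not hold. Your Lefschetz-index argument (graph in homology class $(1,0)$, so the unique fixed point has index $+1$, hence $\lambda<1$, hence $\lambda<-1$) is a clean alternative to the paper's way of extracting the sign of the multiplier.

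The third conclusion, that $f(\Rhat)$ must contain a critical point, is not proved, and the gap is real, not cosmetic. You correctly single out strictly monotone decreasing as the delicate case and you identify the key observation, that $f\circ f$ is monotone increasing on $f(\Rhat)$, which is exactly what the paper uses. But that observation only produces a non-repelling fixed point of $f^2$, which may lie on an attracting or parabolic two-cycle of $f$ rather than being a fixed point of $f$; the hypothesis ``every real fixed point is strictly repelling'' does not exclude this, and Fatou's theorem then works against you rather than for you, since the critical orbits can simply fall into the two-cycle without $f(\Rhat)$ ever containing a critical point. Concretely, $f(x)=-\tfrac{3}{2}\,x/(x^2+1)$ has $f(\Rhat)=[-\tfrac34,\tfrac34]$, neither critical point $\pm1$ lies in it, the only real fixed point is $0$ with multiplier $-\tfrac32$, and $\{\pm1/\sqrt2\}$ is an attracting two-cycle with multiplier $1/9$. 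So the plan to ``force a critical point into $f(\Rhat)$'' cannot succeed from the stated hypotheses alone. In the paper the lemma is invoked only for critically finite maps, for which every cycle not containing a critical point is repelling; with the hypothesis strengthened to rule out non-repelling real periodic orbits (of any period), the $f^2$-monotonicity argument does close the monotone decreasing case, and your write-up should make that strengthening explicit rather than hope to deduce the conclusion from repelling fixed points alone.
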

\ssk

In particular, these statements apply to any critically finite map
which is not of polynomial  shape. For such maps,
 the unique  real fixed point is always repelling, with  multiplier
 $\ml<-1$. The map may be  of  shape  $+-+$, or strictly 
 unimodal, or co-polynomial;
but it can never be  of  shape $-+-$, or strictly monotone. (These
statements do not apply to critically finite 
maps of polynomial  shape; so these may require slightly different
treatment.)
\ssk

\begin{proof}[Proof of $\autoref{L2}$]
  First note every decreasing lap must contain exactly one fixed point.
  In fact, for the graph of the given lap, the left hand endpoint must be
  above the diagonal and the right hand endpoint must be below the diagonal;
  and it is easy to see that the graph cannot cross the diagonal twice.
On the other hand, for an increasing lap there can be at most one fixed point.
In fact the orbit of any point
between two consecutive fixed points must converge to one or the other, which
would  contradict our hypothesis that there is no attracting or indifferent
fixed point.

If a real quadratic map has two real fixed points, then it must have three,
counted with multiplicity. Since we have excluded indifferent fixed points, this
means that there must be three distinct laps, each with its own repelling
fixed point. This is perfectly possible for a smooth or piecewise 
linear map. (Compare \autoref{f4}.) But it is not possible for a quadratic
map. According to \cite{M}, the multipliers of these three fixed
points must be related by the equation
  $$ \ml_3~=~\frac{2-\ml_1-\ml_2}{1-\ml_1\ml_2}~.$$
    Thus if $\ml_1>1$ and $\ml_2>1$, then it follows that $\ml_3>0$;
    while if $\ml_1<-1$ and $\ml_2<-1$, it follows that $\ml_3<0$.
    Thus all three  multipliers must have the same sign,
    which is impossible, since the sign must be
  positive in an increasing lap and negative in a decreasing lap. This contradiction
  proves that there can be only one fixed point; and hence only one decreasing lap. Finally note that every strictly monotone map must have an
  attracting or parabolic fixed point. In fact $f\circ f$ will always
  be monotone increasing on $f(\Rhat)$, hence every orbit of $f\circ f$
  must converge to an attracting or parabolic fixed point.
\end{proof}
\bsk

In particular, it follows that a bimodal
map of shape $~-+-~$ with two decreasing laps can never be critically finite.
Furthermore, it is not hard to check that a map with only one lap is
critically finite only in two very special cases, 
namely maps $\pm$-conjugate to $f(x)=x^2$ or $f(x)=1/x^2$. 
\bsk

We are finally ready to discuss normal forms. We will be primarily
interested in maps which have exactly one fixed point in the lap
between the two critical points. This will be called the 
\textbf{\textit {primary fixed point}}.\footnote{Of course it is often
  the only real fixed point. In some $-+-$ cases there will be three fixed points in the middle
  lap, so that the normal form is no longer unique. In some polynomial cases, there is
  no such fixed point.} In such cases, the map is conjugate
to a uniquely defined map with critical points at $\pm 1$ and with
primary fixed point at zero. 
We can  write the resulting map in \textbf{\textit{Epstein normal form}} as
 \begin{equation}\label{EDM}
   f(x)~=~\frac{\ml x}{x^2+2\ka x+1}~.\end{equation}
 (Compare Epstein \cite{E}, as well as DeMarco \cite{D}.)
Note the identity  $f(x)= f(1/x)$. 
 Here $x=0$ is a fixed point of multiplier $\ml\ne 0~$.  The critical points
 are $c_1=-1$ and $c_2=1$, and the associated critical values are
 $$v_1=\frac{\ml}{2(\ka -1)}\quad{\rm and}
 \quad v_2=\frac{\ml}{2(\ka +1)}~.$$
 Alternatively we can solve for the two parameters as functions of 
 the critical values, with
 \begin{equation}\label{DM2}
\ml= \frac{4v_1v_2}{v_1-v_2}\quad{\rm and}\quad
 \ka=\frac{v_1+v_2}{v_1-v_2}~.\end{equation}

\noindent This may seem ideal for the Thurston algorithm. However in practice it
seems to give very distorted pictures, and the poles at
$x=-\ka\pm\sqrt{\ka^2-1}$ are awkward. Furthermore, there can be
a drastic transition if we deform the parameters. If  $\ka$ passes
through $\pm 1$, one critical value will pass through the point
at infinity, and two poles will appear or disappear. In fact for any normal
form that we choose, it may seem that the infinite point
will cause trouble for some maps of interest.

However there is an easy way to avoid this problem. The space
$\Rhat\cong\bP^1(\R)$ can be identified with the quotient $\R/\Z$, identifying
each $t\in\R/\Z$ with $\tan(\pi t)\in\Rhat$, or with
$\big(\sin(\pi t):\cos(\pi t)\big)$ in the real projective line. 
Hence the universal covering space of $\Rhat$ can be identified with
the real line. Any real quadratic
map $f:\Rhat\to\Rhat$ (always of degree zero)
lifts to a periodic map $\wf:\R\to\R$ with $\wf(t+1)=\wf(t)$. Furthermore, $f$
restricted to a neighborhood of $f(\Rhat)$ is real analytically conjugate
to $\wf$ restricted to a corresponding neighborhood of $\wf(\R)$.

Another way of thinking of this is the following. Since the circle $\Rhat$ is
canonically isomorphic to $\R/\Z$, the graph of $f$ can be thought of
as a subset of the torus $(\R/\Z)\times(\R/\Z)$. This torus 
is conveniently represented by a square  $[0,1]\times[0,1]$ with
opposite edges identified. Compare \autoref{f5}  (where the coordinate
of $\R$ has been translated so that $\wf(\R)\subset[0,1]$).

\begin{figure}[!htb]
  \centerline{\includegraphics[width=2.5in]{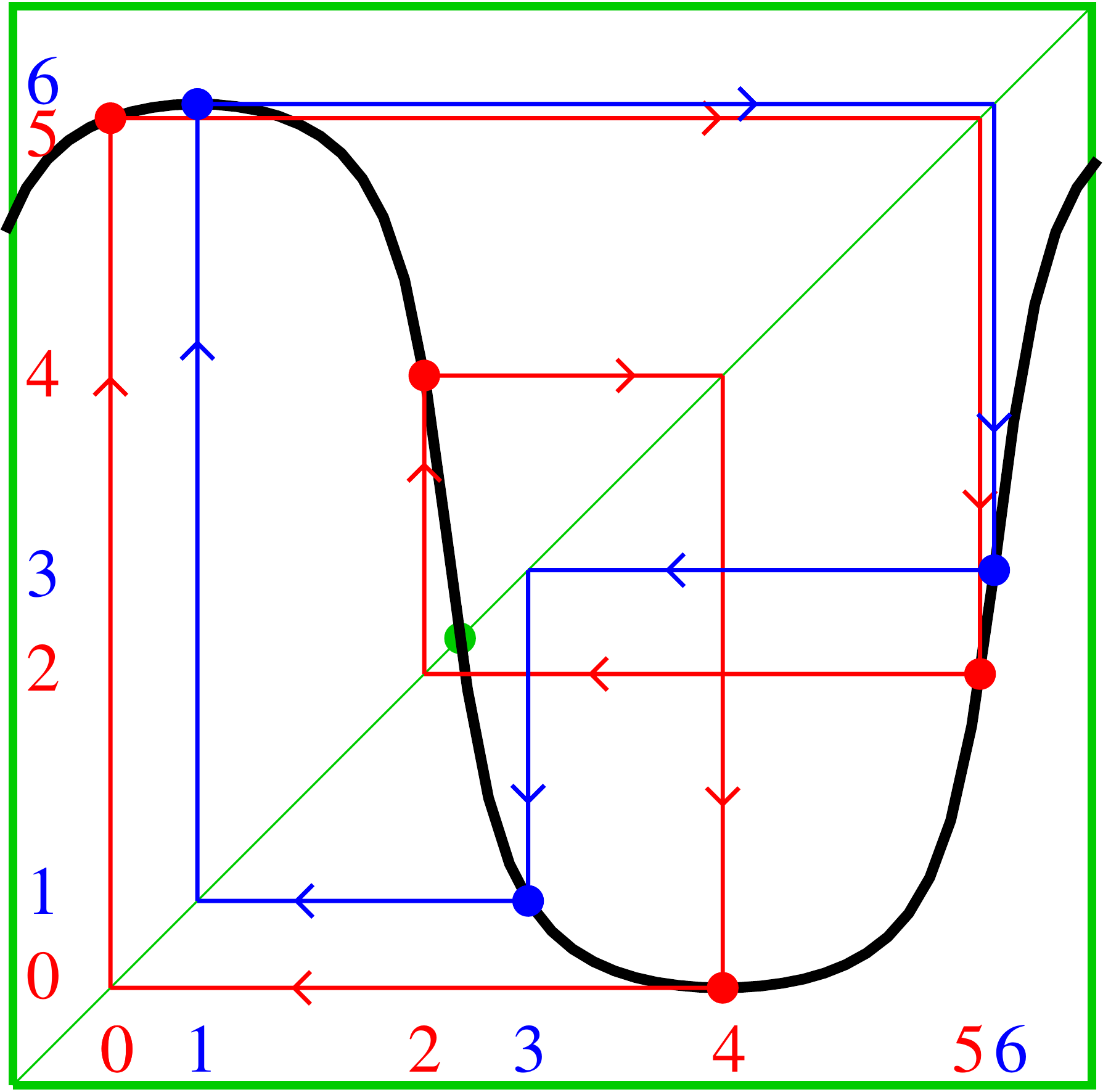}}\ssk

  \caption{\label{f5}  The Wittner map of \autoref{f1}, lifted to the
    universal covering space of $\Rhat$. Here the heavy green lines bound a
    fundamental domain. Thus the corresponding graph in the torus
    $\Rhat\times\Rhat$  can be obtained from this picture simply by
    identifying opposite green edges. Note that the product
     $f(\Rhat)\times f(\Rhat)$ corresponds to a square which is
 properly contained in this fundamental domain.}
\end{figure}
\bsk

Thus if we use the Epstein  normal form lifted to the universal 
covering space, then we will have a unique normal form such that
the graph will deform smoothly as we change the parameters. 
Note that the fixed point is half way between the two 
critical points, either in Epstein normal form or in lifted form.
\bsk

\begin{lem}\label{l-liftf}
  For the map $f$ of $\autoref{EDM}$, the corresponding lifted
map $F$ is given by
  $$  t \mapsto  F(t)~=~\frac{1}{\pi} \Arg (z)~,$$
  where
  $z = (1+ \kappa \sin(2\pi t)) + i(\mu \sin(2\pi t)/2) \,\in\, {\mathbb C}$,
    and  $\Arg (z)$ is the branch of the argument\footnote{%
   $\Arg(x+\mathrm{i}y)$ can also be expressed as ~\texttt{atan2(y,x)}~ in several
  computer languages.} with  $-\pi < \Arg (z) < \pi$~.
 \end{lem}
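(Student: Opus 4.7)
My plan is to verify the formula by direct computation, using the identification $x=\tan(\pi t)$ between $\R/\Z$ and $\Rhat$. By definition of the lift, I need $\tan(\pi F(t)) = f(\tan(\pi t))$, and then to argue that the principal argument gives the correct continuous periodic branch.

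First I would substitute $x = \tan(\pi t)$ into the Epstein form \autoref{EDM} and clear the $\tan$'s by multiplying numerator and denominator by $\cos^2(\pi t)$. Using $\sin(\pi t)\cos(\pi t) = \tfrac12 \sin(2\pi t)$ and $\sin^2(\pi t)+\cos^2(\pi t)=1$, the numerator becomes $\tfrac{\mu}{2}\sin(2\pi t)$ and the denominator becomes $1 + \kappa \sin(2\pi t)$. Thus
\begin{equation*}
f\bigl(\tan(\pi t)\bigr) = \frac{\mu \sin(2\pi t)/2}{1+\kappa \sin(2\pi t)} = \frac{\Im(z)}{\Re(z)},
\end{equation*}
with $z = (1+\kappa\sin(2\pi t))+i\,\mu\sin(2\pi t)/2$. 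Since $\tan(\Arg z) = \Im(z)/\Re(z)$ whenever $\Re(z)\neq 0$, this shows $\tan(\pi F(t)) = \tan(\Arg(z))$, hence $\pi F(t) \equiv \Arg(z) \pmod{\pi}$.

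It remains to justify that the \emph{principal} branch $-\pi<\Arg(z)<\pi$ gives a well-defined, continuous, $1$-periodic lift $\wf$. For this I would observe that the path $t\mapsto z(t)$ in $\C$ is $1$-periodic (both real and imaginary parts depend only on $\sin(2\pi t)$) and that $\Im(z) = 0$ happens precisely when $\sin(2\pi t)=0$, at which points $\Re(z)=1>0$. In particular $z(t)$ never reaches the origin nor crosses the negative real axis, so $\Arg(z(t))$ is a continuous $1$-periodic function of $t$ with values in $(-\pi,\pi)$. Dividing by $\pi$ yields the stated continuous periodic lift $F$, and the identity $\tan(\pi F(t)) = f(\tan(\pi t))$ established above shows it is the lift of $f$. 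The only delicate point is the verification that $z(t)$ avoids the branch cut, but this follows immediately from the elementary observation that the imaginary and real parts of $z$ cannot vanish simultaneously.
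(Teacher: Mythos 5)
Your proof is correct and takes essentially the same approach as the paper: substitute $x=\tan(\pi t)$ into the Epstein form, simplify using the double-angle identities, and recognize the result as $\Im(z)/\Re(z)$. The paper defers the branch-cut discussion (that $z$ never lies on the negative real axis, since $\Im(z)=0$ forces $\sin(2\pi t)=0$ and hence $z=1$) to the remark immediately following the lemma; you have folded that observation into the proof itself, but the underlying argument is identical.
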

  \ssk
  
\begin{proof}
  Let $x = \sin(\pi t)/\cos( \pi t)\in \Rhat$  with
  $t \in {\mathbb R}/{\mathbb Z}$.  After replacing $x$ in \autoref{EDM}, a brief
  computation shows that,

  \begin{equation} \label{e-liftf}
    f(x)~ =~ \frac{\mu \sin(2\pi t)/2}{1+\kappa \sin(2 \pi t)}~.
  \end{equation}

  Notice that the right hand side of \eqref{e-liftf} is the slope of the line
  from  $0$ to $z$ in~${\mathbb C}$.
\end{proof}
\ssk

\begin{rem}
  There seems to be a problem since the function  $z \mapsto \Arg(z)$ has a jump
  discontinuity on the negative real axis. However $z$ is never negative real,
  since $\mu \neq 0$ and since if $\sin(2\pi t)=0$ then $z=1$.
  \ssk
  
  The lifted map of \autoref{l-liftf} will be used in the implementation of the
  algorithm described in the next section.
\end{rem}
\ssk

As an example, putting the Wittner map of \autoref{f1}  into lifted
normal form we obtain \autoref{f5}.
As another example, \autoref{f6} shows the lifted normal form for the map
$$ f(x)~=~\frac{x^2-1}{x^2+1}~.$$ In this case
we have $x_0=\infty,~x_1=-1,~x_2=0$, and $x_3=1$,
with combinatorics $\( 3,\, 2,\,1,\,2\)$. The mapping pattern is
$$\xymatrix{\du{x_0} \ar@{|->}[r] & x_3 \ar@{|->}[r] & \du{x_2} \ar@{<->}[r]& x_1~}.$$

\begin{figure}[!htb]
  \centerline{%
    \includegraphics[height=1.6in]{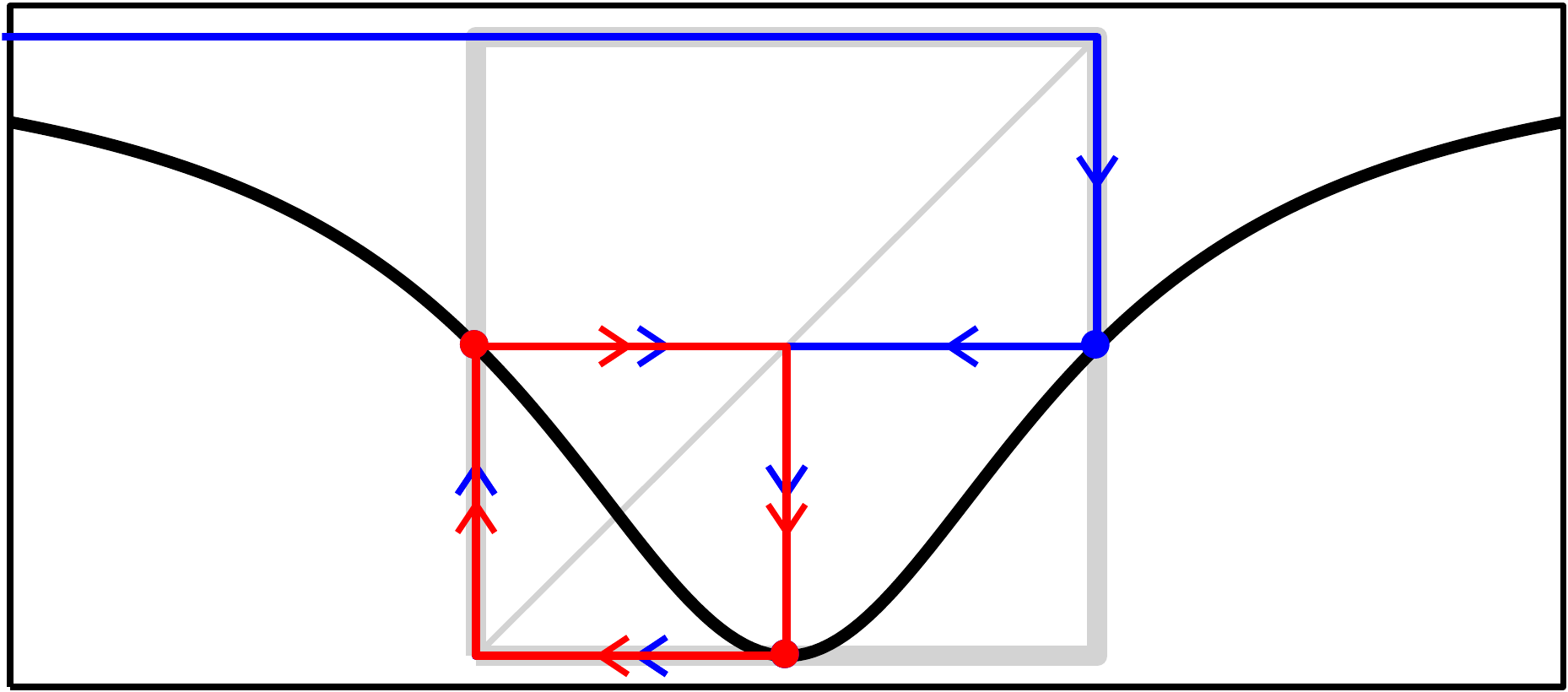} \qquad
    \includegraphics[height=1.6in]{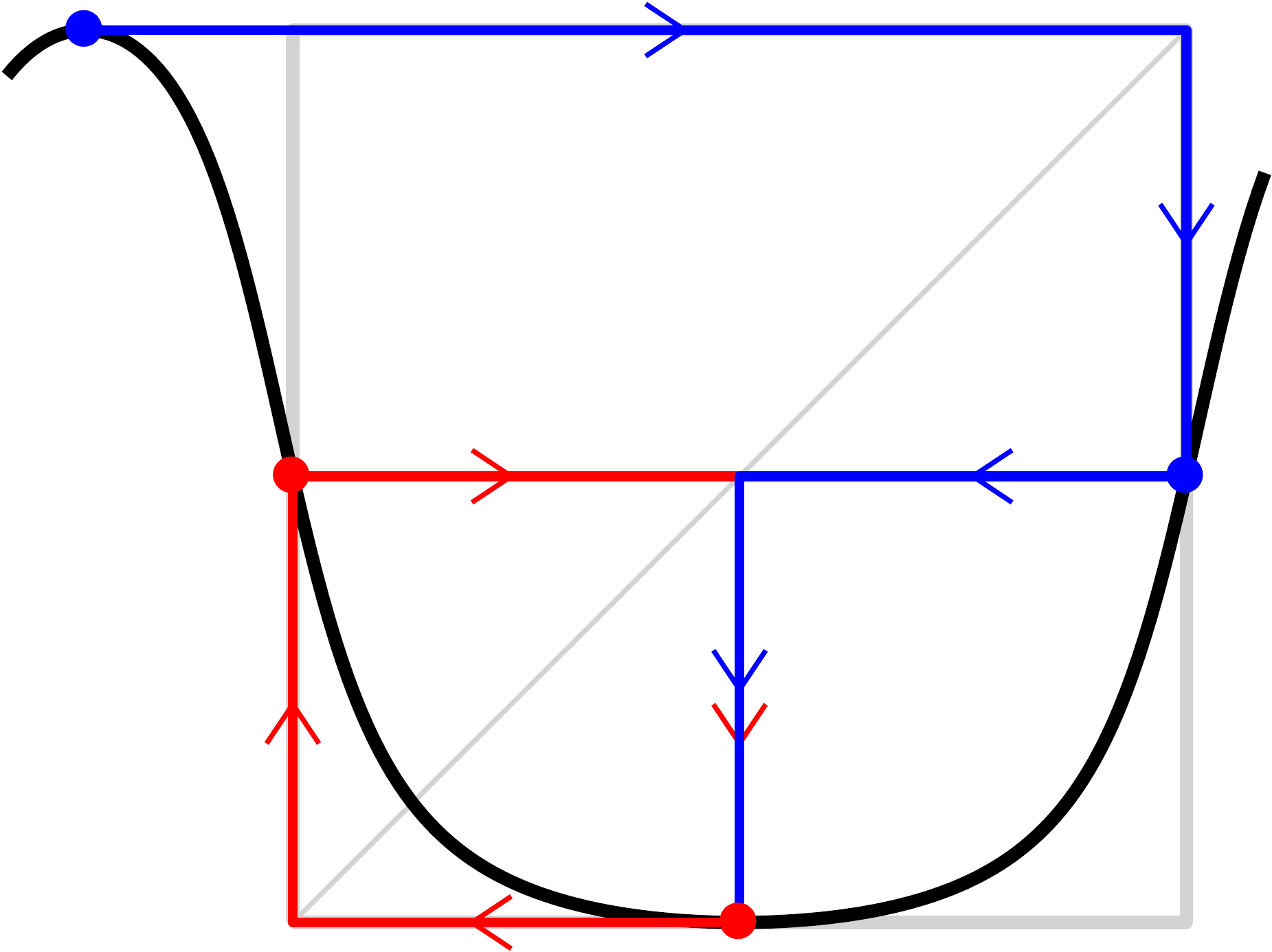}}
  \caption{ \label{f6} 
    The map $x\mapsto (x^2-1)/(x^2+1)$.  On the left is shown the rational
    map for $-2.5 < x < 2.5$ along with the forward orbits of the critical
    points $\infty$~(blue) and $0$~(red).  On the right is shown the map
    lifted to the universal covering line.  In both cases, $f(\Rhat)\times
    f(\Rhat)$ is shown as a gray box.  Note that the critical point $x_0$ is
    not within $f(\Rhat)$.}
  \end{figure}

\section{The Algorithm}\label{s4}

For any real quadratic  map,
the circle $\Rhat$ is divided by the two real critical points
into an ``increasing'' (or orientation preserving) half-circle
and a ``decreasing'' half-circle. For $x$ in the interior
of $f(\Rhat)$, there is one branch $f_+^{-1}$ of $f^{-1}$
taking values in the increasing half-circle,
and one branch $f_-^{-1}$ taking values in the decreasing
half-circle. Note that these two branches of $f^{-1}$ coincide
at the critical values, which map to critical points.

Suppose that some admissible combinatorics $\vecm$ has been specified 
(see \autoref{D-admissible}). Let $m_{j_1}$ be the smallest
$m_j$ and let $m_{j_2}$ be the largest one. The two ``critical''
indices $j_1$ and $j_2$ divide $\{0,\ldots,n\}$ into
two or three laps, each of which is either
increasing or decreasing. 
We will always assume that $n\ge 2$.\msk

\subsection*{The basic construction.}\label{ss-basic}

Let $X_n\subset\Rhat^{n+1}$ be the space consisting of all $(n+1)$-tuples
$\vecx=(x_0,\cdots,x_n)$ of distinct points of $\Rhat$ which are in positive
cyclic order. Let $f$ be a quadratic rational map such that $f(\Rhat)$ is
precisely the smallest interval containing
all of the $x_{m_j}$. In other words, $f(\Rhat)$ is the
interval consisting of all points $x$ which lie between the critical values
$x_{m_{j_1}}$ and $x_{m_{j_2}}$ in cyclic order. Then the
\textbf{\textit{pullback}} $T_f(\vecx)=\vecy$ is defined by setting
$$ y_j ~=~ f_\pm^{-1}(x_{m_j})~,$$
using either $f_+^{-1}$ or $f_-^{-1}$ according as $m_j$ is in an increasing
or decreasing lap. (In the case where $j$ is critical index, it doesn't
matter which branch we choose.)  It is not hard to check that the image
points $y_j$ are always in positive cyclic order.\msk

\begin{theo} \label{t-pullb}
   Every admissible combinatorics $\vecm$  
   gives rise to a well defined pullback map $~~ T:X_n/G\to X_n/G$.
\end{theo}

\begin{proof}
We will prove first that this construction does not depend on the choice of $f$.
Any other quadratic rational map satisfying the same conditions can be written
as a composition $f\circ L$ where $L$ belongs to the group $ G={\rm PSL}_2(\R)$ of
orientation preserving fractional linear transformations. Then
$(f\circ L)^{-1}=L^{-1}\circ f^{-1}$, and it follows that $\vecy$
will be replaced by $L^{-1}(\vecy)$, using the diagonal action of
$G$ on $X_n$.\msk

On the other hand, if we replace each 
 $\vecx$ by $L(\vecx)$, then the image
$\vecy$ will not change. In fact, we can simply 
 replace $f$ by $L\circ f$,
so that $L\circ f$ will map $\Rhat$ to  $L\big(f(\Rhat)\big)$. 
Thus each $y_j$ will be replaced by the appropriate branch of
$$ x_j~\mapsto~(L\circ f)^{-1}\big(L(x_j)\big)=f^{-1}(x_j)~,$$
which is just $y_j$ itself. \end{proof} \msk
  \bsk

  Note that this quotient space $X_n/G$  is diffeomorphic to a convex 
  open subset\footnote{Caution: The precise shape of this convex set depends
    on the following rather arbitrary choices, 
    and does not have any invariant meaning. In particular, the boundary
    of this set does not have any invariant meaning.}
 of $\R^{n-2}$.  In fact for each $\vec x$,
we can choose a uniquely defined  group element $L$ so that
$L(\vecx)=\vecy$ satisfies
$y_{n-2}=1, ~ y_{n-1}=\infty,$ and $ y_n=0$. Then the remaining $y_j$
must satisfy $$\quad 0<y_0<y_1<\cdots< y_{n-3}< 1~.$$
Thus they form an interior point of one standard model for the $(n-2)$-simplex.
\ssk

If there is no Thurston obstruction, then in nearly every case, 
the iterated pullback converges to a unique point of $X_n/G$,
and this determines a unique conjugacy class of quadratic rational maps. In the
exceptional case, it converges to a pair of points on the (non-compact)
line of fixed points of $T\circ T$; see \autoref{s6}. In the obstructed case,
the sequence of points $T^{\circ k}(\vecx)$ always leaves every compact
subset of $X_n/G$.

\subsection*{The Lifted Pullback Map.} 

Before we begin, we must choose a convenient family of quadratic rational
maps.\footnote{One benefit of working with explicit maps 
  rather than conjugacy classes is that we obtain a well defined
  sequence of approximating rational maps as we iterate the 
  pullback construction. This will be important when we study
  obstructions.}
  As a consequence of \autoref{t-pullb}, we can choose any convenient
one, such as the 
Epstein normal form (see \autoref{EDM}), but nearly any such choice will leave
us having to deal with infinity, even though all the interesting behavior
occurs in a compact subset of $\Rhat$.  As noted in \autoref{s3}, it will be
most convenient to lift to the universal covering space, that is, to work with
the family\footnote{%
  We have also implemented the algorithm (see \autoref{f-pullback-134310}) using
  the family 
  $x\mapsto (1/x -2\kappa + x)/\mu$, which has the nice property that
  $\infty$ is fixed and $0$ is its only preimage, making the poles easy to
  deal with.
  However, the lifted family of \autoref{l-liftf} is vastly preferable for
  visualization, since all  marked points must lie inside~$(-3/4,~ 3/4)$.}
\begin{equation}\label{E-LF} F(t)~=~F_{\mu,\kappa}(t)~=~
  \frac{1}{\pi} \Arg\Big(1+\kappa\,\sin(2\pi t)  + i\,\mu\,\sin(2\pi t)/2\Big)\end{equation}
 as in \autoref{l-liftf}. 
This is a periodic function of period 1, with a fixed point of
 multiplier $F'(0)=\mu\ne 0$ at $t=0$, with critical points at
 $t\equiv \pm 1/4 ~~({\rm mod}~\Z)$, and with image $F(\R)\subset(-3/4,~ 3/4)$
 bounded by the two critical values and of length strictly less than one.
 Note that $\mu$ and $\kappa$ can
be determined uniquely from the critical values of $F$. 
If $F(-1/4)=v_1$ and $F(1/4)=v_2$, then by \autoref{DM2}: 
\[
  \mu    = \frac{4 \tan (\pi v_1) \, \tan (\pi v_2)}{\tan(\pi v_1) -
    \tan(\pi v_2)}
  \quad\textrm{and}\quad
  \kappa = \frac{  \tan(\pi v_1)  + \tan(\pi v_2)}{\tan(\pi v_1) - \tan(\pi v_2)}
  \, .
\]

\begin{coro} For any admissible combinatorics $\vecm$, there is a  well-defined
  pullback map acting on elements of the lifted family $\eqref{E-LF}$.
\end{coro}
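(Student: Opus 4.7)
The plan is to deduce this from \autoref{t-pullb} by recognizing the lifted family as a specific cross-section of the quotient $X_n/G$. First, I would note that a configuration $\vect=(t_0,\ldots,t_n)$ in lifted form---meaning the two critical marked points sit at $t_{j_1}=-1/4$ and $t_{j_2}=1/4$---uniquely determines a member $F=F_{\mu,\kappa}$ of the family \eqref{E-LF}: the critical values $v_1=t_{m_{j_1}}$ and $v_2=t_{m_{j_2}}$ are already present among the marked points, and $(\mu,\kappa)$ is then recovered by the explicit formulas displayed just after \eqref{E-LF}. Together with the condition $F(0)=0$ built into the formula, this normalization uses up all three degrees of freedom of $G=\PSL_2(\R)$, so the lifted configurations form a canonical set of representatives for classes in $X_n/G$.

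Next I would define the lifted pullback directly: choose branches $F^{-1}_\pm$ on the two monotone laps of $F$ and set $s_j = F^{-1}_\pm(t_{m_j})$, with the $\pm$-choice dictated by the lap structure of $\vecm$ exactly as in \autoref{ss-basic}. Every $t_{m_j}$ lies in $F(\R)$ by admissibility Condition \ref{admiss_cond1}, so both branches are defined at the required points. Because $F(\R)\subset(-3/4,\,3/4)$ has length strictly less than $1$, each preimage is unambiguously realized inside a single fundamental domain of the universal cover, avoiding any wrapping concern; this is the practical payoff from choosing to work in the lifted setting.

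Finally I would verify that the output $(s_0,\ldots,s_n)$ again lies in the lifted cross-section, so that the construction is truly a self-map of that space. The key observation is that $s_{j_1}=F^{-1}(v_1)=-1/4$ and $s_{j_2}=F^{-1}(v_2)=1/4$, since each critical value has its corresponding critical point as unique preimage; hence the normalization is preserved automatically. Positive cyclic order of the $s_j$'s transfers from the proof of \autoref{t-pullb} verbatim, with lifted coordinates replacing cyclic coordinates on $\Rhat$. The main remaining obstacle is bookkeeping: one must check that the inverse branches chosen in the universal cover correspond to those produced by the quotient pullback, but this is immediate from \autoref{l-liftf}, where $F$ is exhibited as a genuine lift of the underlying rational map, so monotone laps on the two sides match up canonically.
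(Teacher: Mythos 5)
Your argument and the paper's both reduce the corollary to \autoref{t-pullb} by realizing the lifted family as a normalized slice of the quotient $X_n/G$: the paper does this in two sentences via the conjugation $t\mapsto\tan(\pi t)$ back to Epstein form, which is the same bridge you invoke at the end through \autoref{l-liftf}, and your added verifications (criticals stay at $\pm 1/4$, branch choices are unambiguous within a fundamental domain of the lift) simply make that reduction explicit. So this is essentially the paper's proof, spelled out. One side claim is overstated: fixing the critical markings at $\pm 1/4$ together with $F(0)=0$ does not in general exhaust the three degrees of freedom of $\PSL_2(\R)$; when the combinatorics has no marked fixed point, condition \textbf{c)} of \autoref{D-Xm} imposes only the open inequality $t_{p'}<0<t_{p''}$, so a one-parameter residual freedom remains and the lifted slice is not literally a cross-section of $X_n/G$ in that case. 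This does not affect the well-definedness of the pullback, but the sentence about a ``canonical set of representatives'' should be weakened accordingly.
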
\ssk

\begin{proof}
Given a map $F$ in the lifted family, we can obtain a map $f$ in Epstein form by
conjugating with $t\mapsto x= \tan(\pi t) \in \Rhat$, which is biholomorphic
for $t$ in a  complex  neighborhood of $(-1,1)$.
Hence the pullback acting on an element of the 
lifted family corresponds exactly to the pullback acting on $X_n/G$. 
\end{proof} \ssk

\begin{rem}\label{r-liftedCombi} 
  It is essential for our argument that the fixed point at $t=0$
  must lie between the two critical points within the interval $f(\Rhat)$.
  In the strictly unimodal case where there is only one critical point
  in $f(\Rhat)$, the choice described in \ref{combi-case2} of
  \autoref{s2} is needed in order to ensure this property.
As an explicit example, the map with combinatorics $\(1, 2, 1, 0\)$
(see \autoref{f-aC1}-left) could also be described as the family with
combinatorics $\(1, 2,3,2\)$. However, this would not be consistent
 with our conventions in \autoref{D-admissible}.
 The first has the mapping pattern
$\du{x_3}\mapsto x_0 \mapsto \du{x_1}\leftrightarrow x_2$, and the
second 
$\du{x_0}\mapsto x_1 \mapsto \du{x_2}\leftrightarrow x_3$.
These are the same except for choice of labeling of the marked
points.  But the implicit ordering in the second does not follow our
convention:  both critical points are on the same side of the fixed
point (which must lie between the points of the period~2 cycle 
$x_2 \leftrightarrow x_3$).
This would cause our implementation to fail. 
\end{rem} \ssk

\begin{rem}
In our implementation, we only insist on the the first two admissibility
conditions, and do not require those of minimality~\ref{admiss_minimal},
expansiveness~\ref{admiss_expansive}, non-polynomial~\ref{admiss_notpoly},
or unique fixed point~\ref{admiss_onefp}.

However, for our implementation in the polynomial case, we will require
the additional condition 
that the combinatorics must be of $+-$  shape,
 with a zero in the first entry. Thus:
$\(0,\,2,\,1\)$ is the basilica, 
$\(0,\,3,\,4,\,5,\,6,\,2,\,1\)$ the period-doubled airplane, 
$\(0,\,1,\,3,\,1\)$ the Chebyshev point, and so on.
Since $\mu$ is the value of the derivative at fixed point
between the two critical points, the
corresponding polynomial can be written as $\mu\,x(1-x)$.
Of course polynomials can be dealt with by other methods. Compare
\cite{BMS}; and see \autoref{s8} below.
\end{rem}

Just as in \autoref{t-pullb}, we will define the pullback
$T=T_F$ as a map from a space of sequences to itself.

\begin{definition}\label{D-Xm}
  Let $X(\vecm)\subset\R^{n+1}$ be the space 
  of all sequences
  $\vect=(t_0,t_1,\ldots, t_n)$ which satisfy the following three conditions.
  \begin{enumerate}[label=\textbf{\alph*)},ref=(\alph*)]
  \item We must have
    $$ -3/4 ~<~t_0~<~t_1~<~\cdots~<~t_n~<~ 3/4~.$$ 
  \item If $j_-<j_+$ are the two \textbf{\textit{critical indices}},
  defined by the requirement that one of $m_{j_-}$ and $m_{j_+}$ is the
  largest $m_j$ and the other is the smallest, then
$$ t_{j_-} = -1/4\qquad{\rm and}\qquad t_{j_+} = +1/4~.$$
\item (Locating the fixed point.) If there is an index $j_-<p<j_+$
such that $m_p=p$, then we require that
$t_p=0$. Otherwise there must be a unique
pair of consecutive indices $j_-\le p'<p''\le j_+$ such that
the differences $m_{p'}-p'$ and $m_{p''}-p''$ have opposite sign.
In this case, we require that 
$$ t_{p'} ~<~ 0 ~<~ t_{p''}~.$$
\end{enumerate}
\end{definition}

In order to define the pullback map 
$T(\vect)= \stackrel{\to}{t'}$,
we must solve the equation  $F(t'_j) = t_{m_j}$, taking care to choose
between the two possible solutions. To do this, we
divide the interval $[-3/4,\,3/4]$ into three\footnote{%
     While there are only two half-circles, when working with the
     lift it is important to treat $I_1$ and $I_3$ separately, since
     there is one branch of $F^{-1}$ taking values in $I_1$ and a
     different      branch taking values in $I_3$.
     If we worked directly with a family of rational maps, 
     the situation would be further complicated by poles.}
   subintervals: 
   \begin{equation}\label{E-Ij}
     I_1=(-3/4,\, -1/4),\quad I_2=(-1/4,\, 1/4),
   ~~{\rm and}\quad I_3=(1/4,\, 3/4)~.\end{equation}
   Here 
   $I_1$ and $I_3$ are different lifts of the same half-circle, while 
   $I_2$ corresponds to the other half-circle. Then the requirement
   is that $t_j$ and $t'_j$ must belong to the same subinterval $I_k$.
   This is always uniquely possible since $F$ maps each interval $I_k$
   bijectively onto the interval $F(\R)$.

   \subsection*{Implementation.} 
We now give an outline of the steps involved in implementing the
Thurston Algorithm.  The implementation is very similar to that in
\cite{BMS}.  When necessary, will use $t_k^{[\ell]}$ to denote the
position of the $k$th marked point at the $\ell$th step, omitting this
superscript when it is irrelevant or apparent. We will also use $F_\ell$
to indicate the map at the $\ell$th step, and $w_k^{[\ell]}$ for 
$F_\ell(t_k^{[\ell]})$.

  Begin by examining the given combinatorics $\vecm$, confirming
  admissibility and adherence to the requirements of
  \autoref{r-liftedCombi}. From the combinatorics, determine the 
  indices of the critical points and the location of the fixed point
  lying between them, as described in \autoref{D-Xm}. Finally,
  choose initial  values of $t_j^{[0]}$ satisfying the required
  equalities and inequalities; and set $w_j^{[0]}=t_{m_j}^{[0]}$.
  \ssk
  
  The inductive step in the construction can now be described as follows.

  \begin{enumerate}[label=\textbf{\roman*)},ref=(\roman*),
    itemsep=0pt plus 2ex minus .2ex]
  \item \label{step_alg_increment}
     Increment $\ell$ by $1$, and determine $F_{\ell}$ from the
     critical values $w_{j_\pm}^{[\ell-1]}$.

  \item \label{step_alg_solve}
    For each $j$ other than the critical indices $j_\pm$ and
    or the fixed point index $p$ (when there is one),
    find the value of 
     $t_j^{[\ell]}$ by numerically solving the equation
     {$F_\ell(t^{[\ell]}_j) = w^{[\ell-1]}_j$,} with
     $t_j^{[\ell]}$ in the 
     same interval  $I_k$ as $t_j^{[\ell-1]}$. 

   \item If the results are  close enough, then return
      $F_\ell$ and $\vect^{[\ell]}$.
      Otherwise, 
      repeat the inductive step starting from \ref{step_alg_increment}.
\end{enumerate}

\begin{rem}
  It might seem more natural to use the smaller intervals
 $I_1=(-\tfrac{1}{2},\,-\tfrac{1}{4})$ and
   $I_3=(\tfrac{1}{4},\, \tfrac{1}{2})$ in \autoref{E-Ij}.
   However, in some cases doing this leads to problems; and
we found it more straightforward to just use the larger intervals.
\end{rem} 

In most cases, the calculations need to be done with at least double
precision floating point arithmetic, and often require
20 or 30~decimal digits
of working precision to get reasonably close to the limit.

\bsk
We now explicitly discuss the pullback process for a specific example.
Shown in \autoref{F-algExamp} are several steps for combinatorics
$\(1,\,2,\,5,\,6,\,4,\,2,\,1,\,0\)$, with mapping pattern
\begin{equation*}
  \xymatrix @R=2ex{
    \du{t_7}\ar@{|->}[r]&t_0\ar@{|->}[r]&t_1\ar[d]&t_6\ar@{|->}[l]&\du{t_3}\ar@{|->}[l]
    &\qquad t_4\mapstoself \\
      & &t_2\ar@{<->}[r]&t_5}
\end{equation*}
This combinatorics is not minimal,
since it includes the fixed point $t_4$, not part of any critical orbit.
After omitting the fixed point and renumbering, we would obtain
$\(1,\,2,\,4,\,5,\,2,\,1,\,0\)$. 
\ssk

\begin{figure}[!h]
\newcommand{\figHT}{.2\textwidth}
\begin{tabular*}{\textwidth}{c @{\extracolsep{\fill}} c c}
  \includegraphics[height=\figHT]{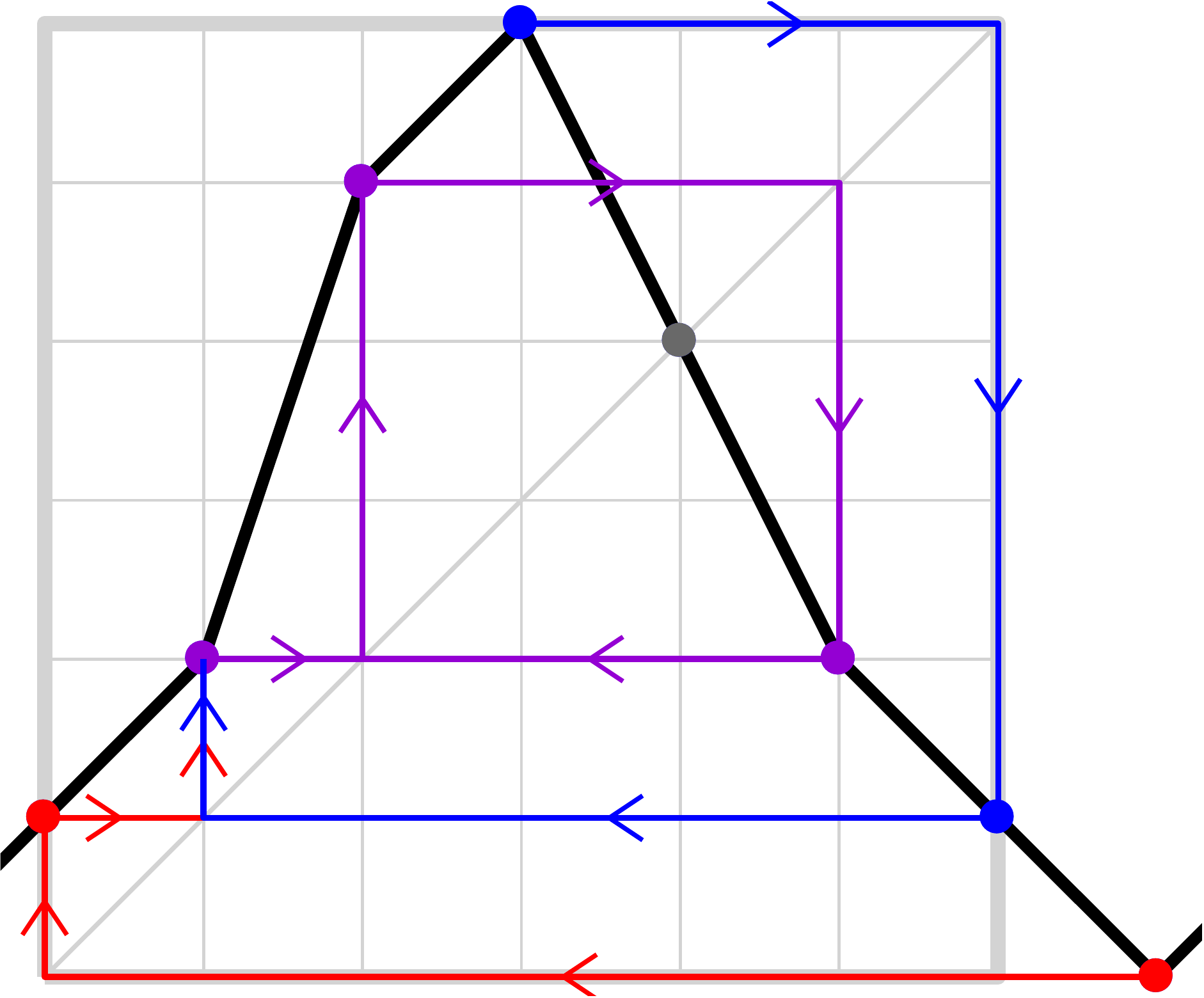} & 
  \includegraphics[height=\figHT]{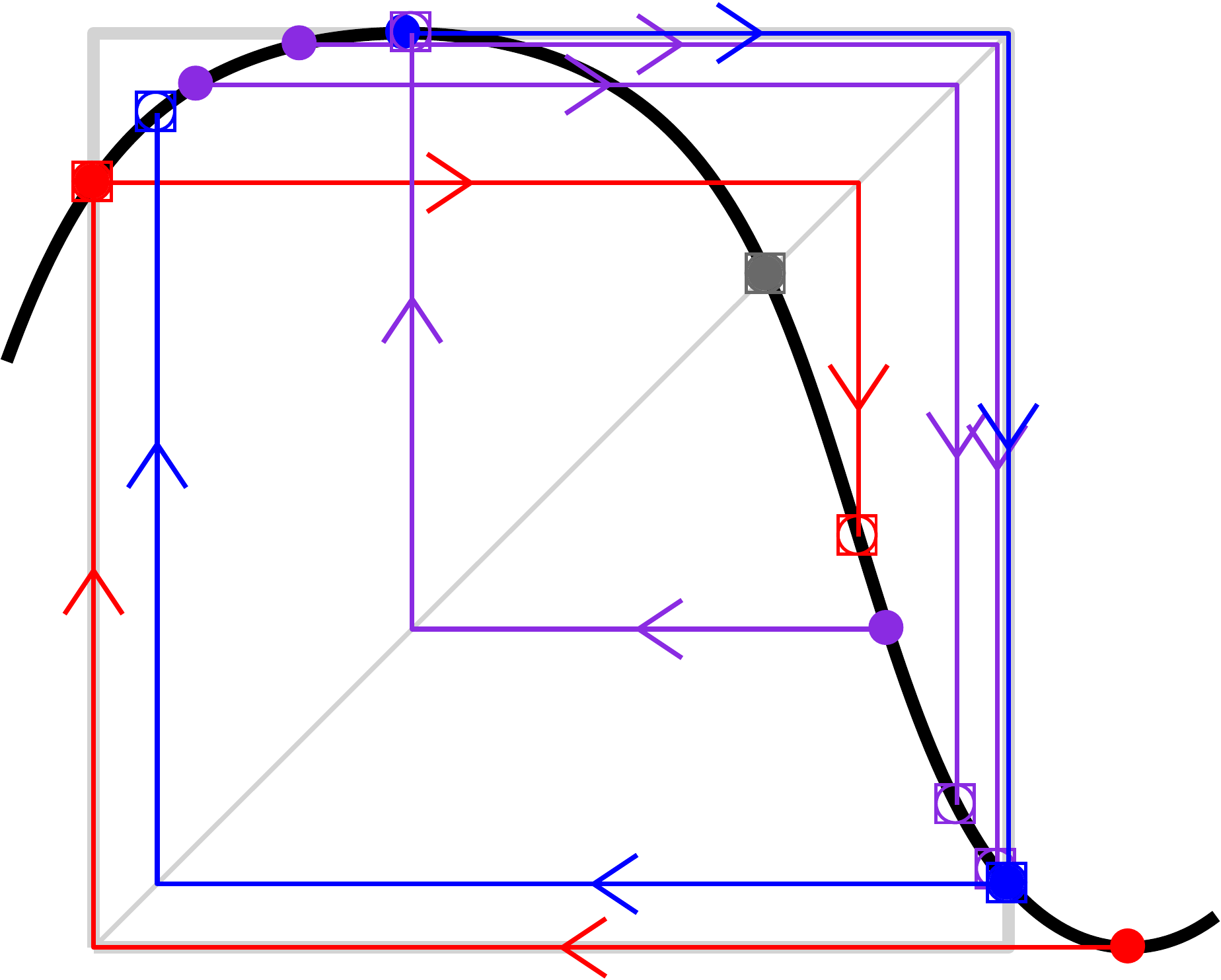} &
  \includegraphics[height=\figHT]{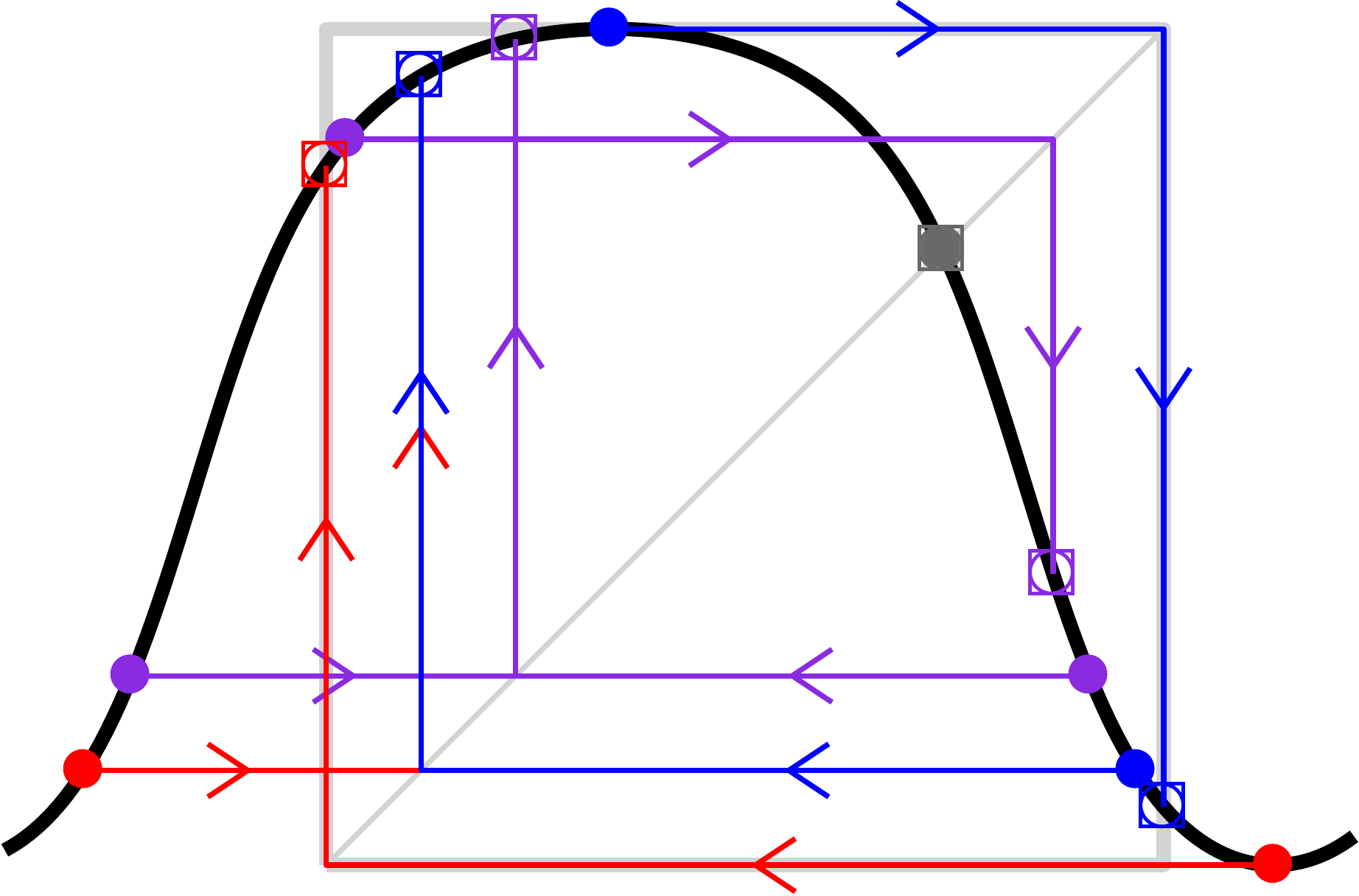} \\[-1ex]
  {\footnotesize PL-model} &
  {\footnotesize step 0: $\mu\approx-2.168,\,\kappa\approx-.8778$} &
  {\footnotesize step 1: $\mu\approx-2.168,\,\kappa\approx-.8778$} \\[2ex]
  \includegraphics[height=\figHT]{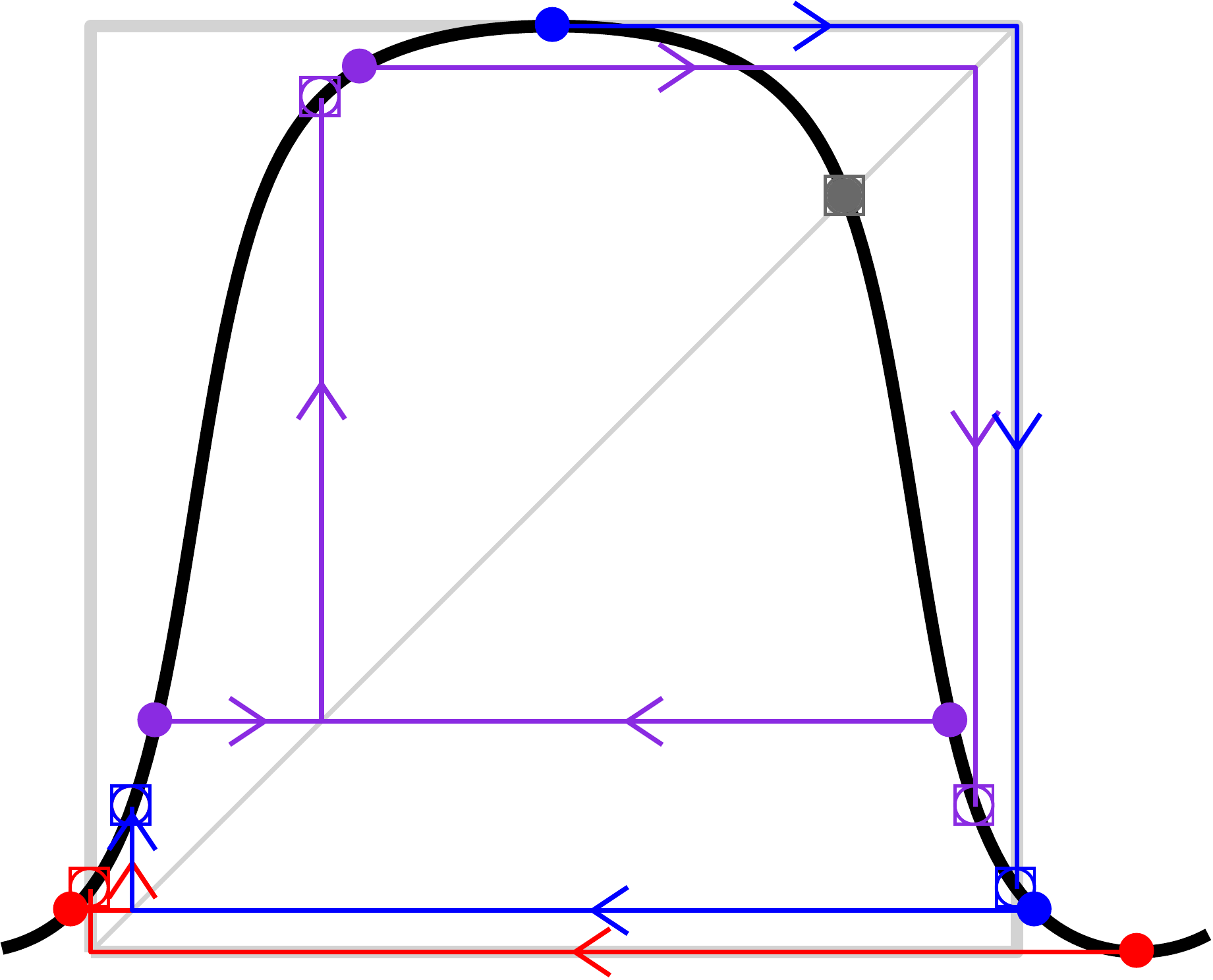} & 
  \includegraphics[height=\figHT]{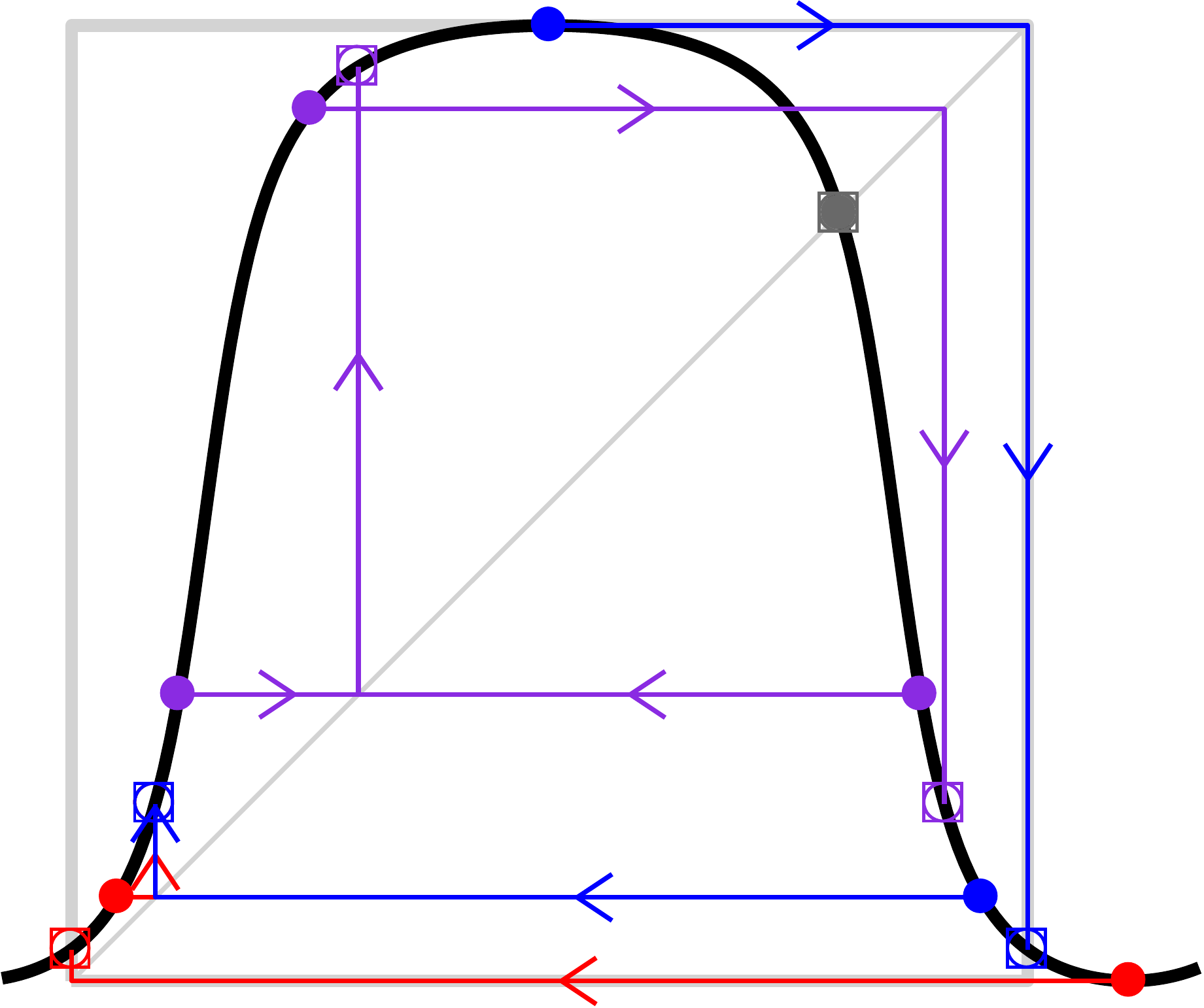} &
  \includegraphics[height=\figHT]{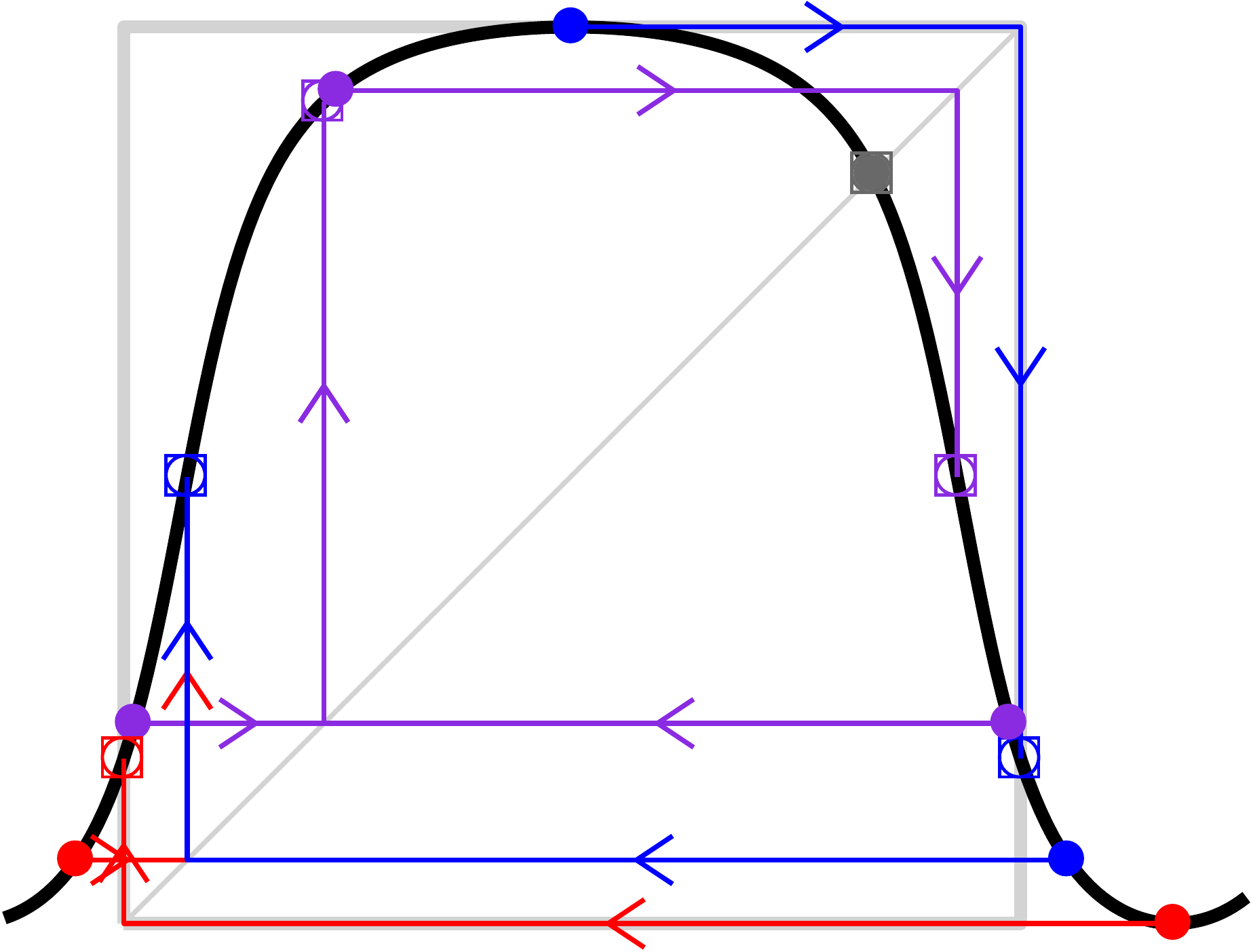} \\[-1ex]
  {\footnotesize step 2: $\mu\approx-2.625,\,\kappa\approx-1.645$} &
  {\footnotesize step 3: $\mu\approx-3.255,\,\kappa\approx-1.649$} &
  {\footnotesize step 4: $\mu\approx-1.939,\,\kappa\approx-1.910$} \\[2ex]
  \includegraphics[height=\figHT]{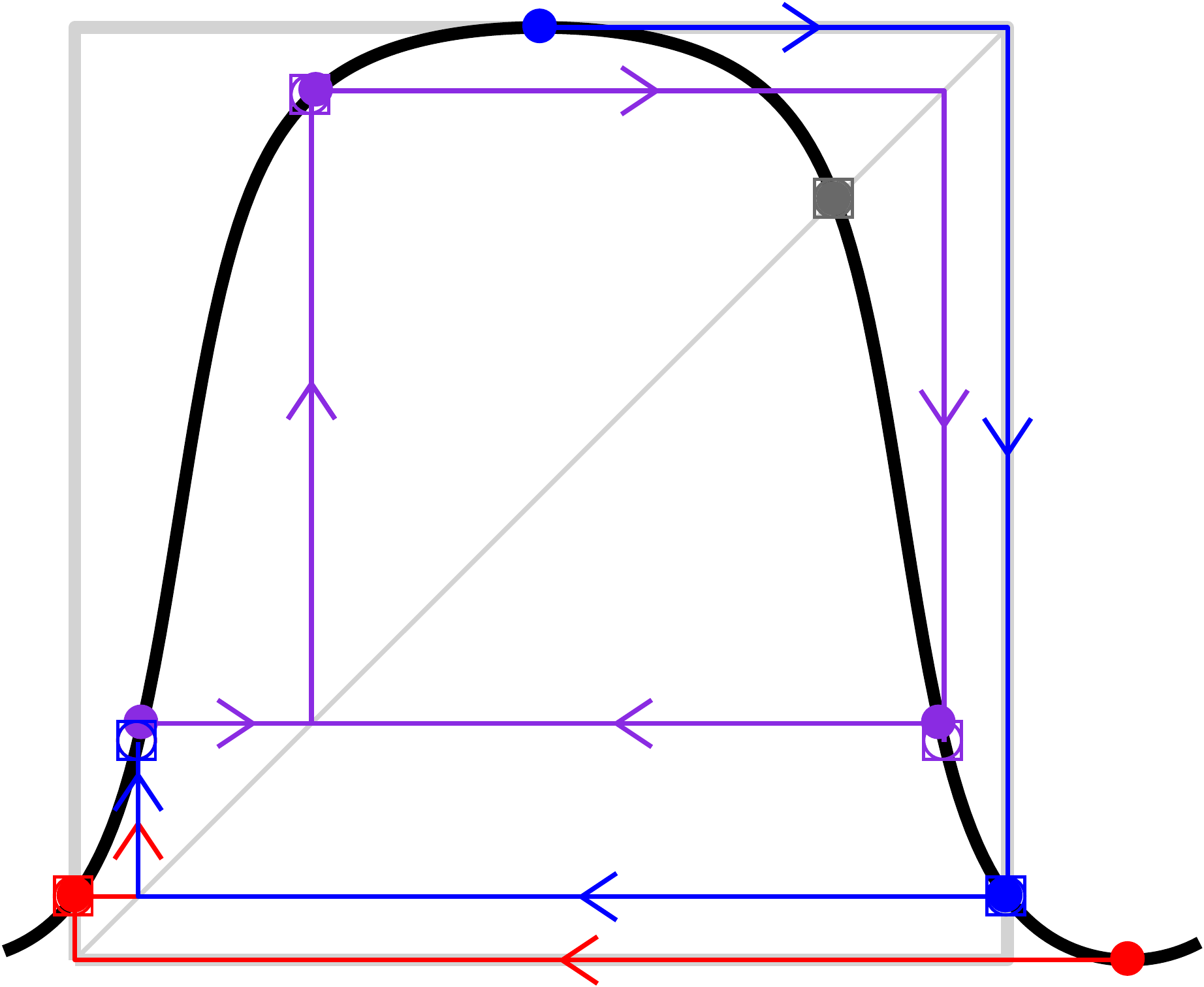} & 
  \includegraphics[height=\figHT]{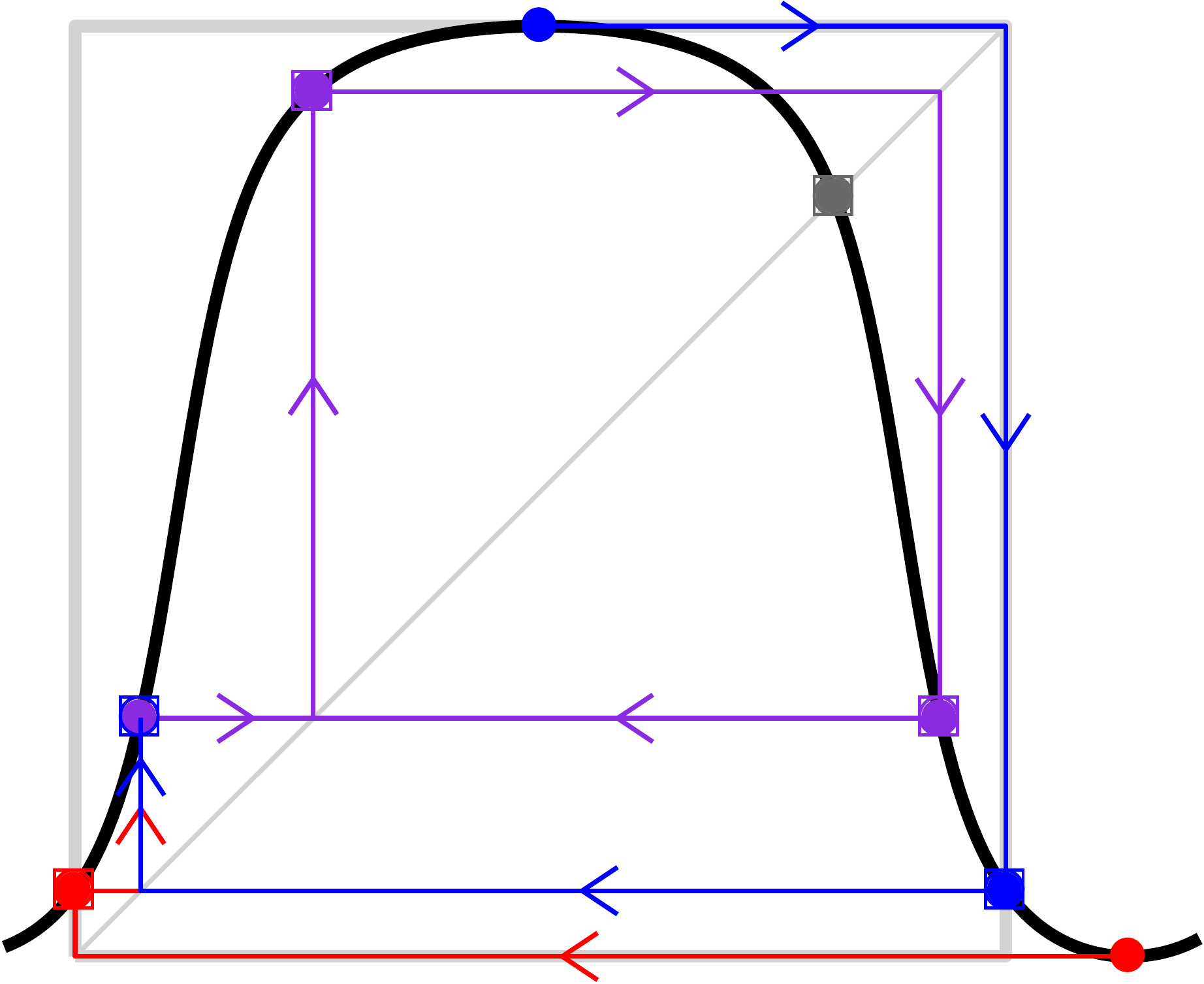} &
  \includegraphics[height=\figHT]{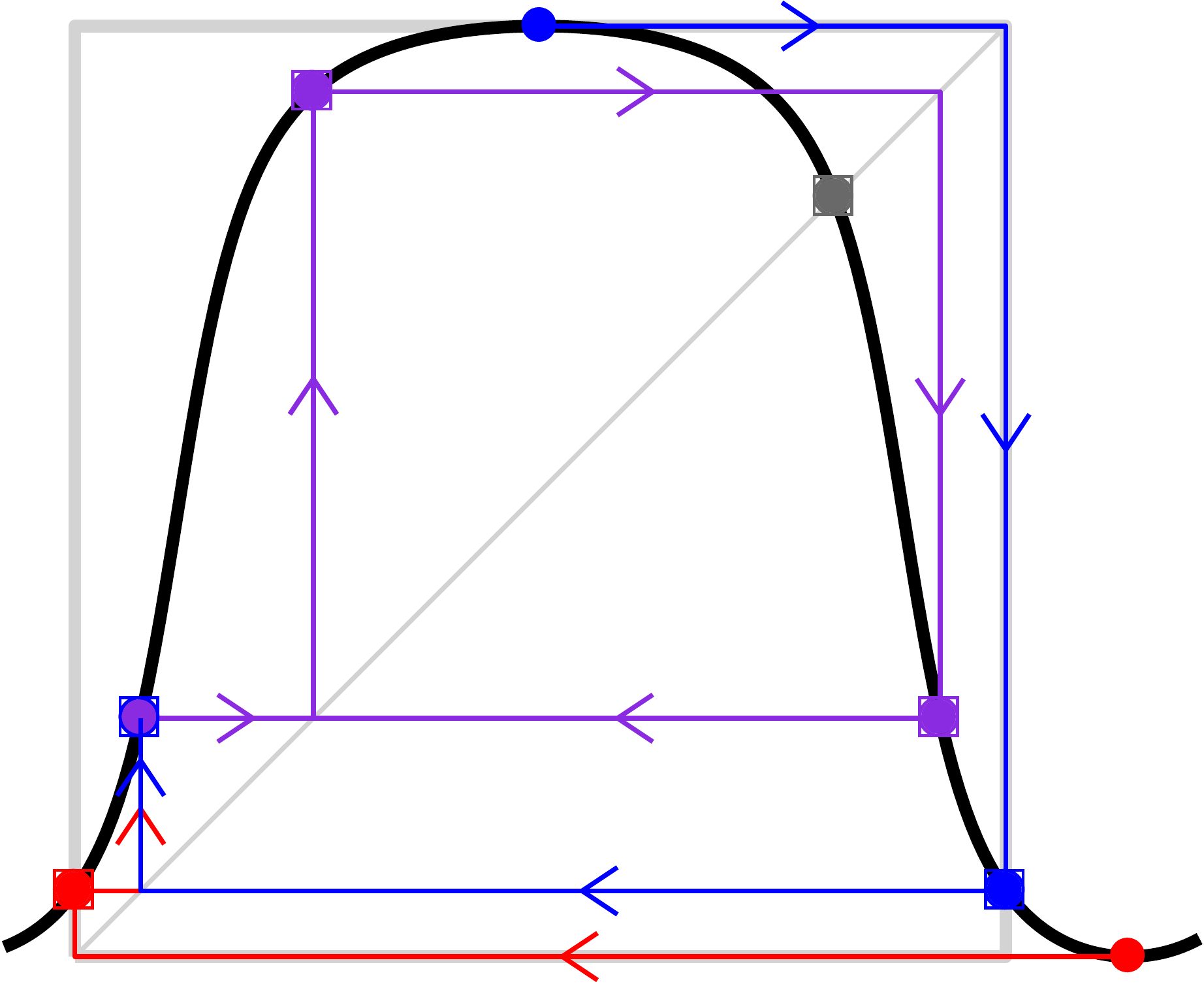} \\[-1ex]
  {\footnotesize step 10: $\mu\approx-2.635,\,\kappa\approx-1.654$} &
  {\footnotesize step 20: $\mu\approx-2.592,\,\kappa\approx-1.637$} &
  {\footnotesize step 40: $\mu\approx-2.594,\,\kappa\approx-1.638$} \\
\end{tabular*}
  \caption{\label{F-algExamp}
    The PL-model and several steps of the pullback process 
    for the combinatorics $\(1,\,2,\,5,\,6,\,4,\,2,\,1,\,0\)$.
    Each of the marked points $t_j^{[\ell]}$ is indicated by a filled disk
    (either in blue, red, or purple) with attached arrows connecting to its
    image under $F_\ell$ (indicated by an open box).
    The critical point $t_3=-1/4$ and its forward images are drawn in blue,
    the critical point $t_6=1/4$ and its images are in red, and in 
    purple are those which are a forward image of both critical points.  The
    fixed  point $t_4$ is in gray --- it is not part of a critical
    orbit. (This convention will be used in most of the figures henceforth).
    Observe (in the bottom row) that by step~20, the graph has converged
    visually, but $\mu$ and $\kappa$ still change in the third decimal
    place. 
    }
\end{figure}

In the set-up phase, we choose initial points $t_0^{[0]}$ through
$t_7^{[0]}$, with $t_3=-1/4$, $t_4=0$, and $t_7=1/4$. The critical values
$t_0$ and $t_6$ determine the map $F_0$ as in \autoref{E-LF}.
This is shown in \autoref{F-algExamp}, top-center.
Note that we have quite a lot of freedom in choosing the $t_j$, only subject
to the restrictions of \autoref{D-Xm}. In \autoref{F-algExamp}, we have
chosen the initial $t_j$ (other than the critical and fixed points) to be
equally spaced within\footnote{The other intervals have no $t_j$ for this
  combinatorics.} the intervals $(-1/2, -1/4)$ and $(0,1/4)$.

The values of $t_j^{[1]}$ are then computed by solving
$F_0(t_j^{[1]})=t_{m_j}^{[0]}$.  Since the map $F_0$ was determined by
$t_0^{[0]}$ and $t_6^{[0]}$, we will have $F_1=F_0$, but the values of $t_j$
will change, in some cases dramatically.

We repeat the process again to obtain $F_2$ and $t_j^{[2]}$, and the
dynamical behavior becomes roughly apparent (see the middle row of
\autoref{F-algExamp}), although none of the marked points actually map to
each other.  It takes 28~steps to get $\mu$ and $\kappa$ correct to three
decimal places, and after 54~pullbacks $\mu$ and $\kappa$ are good to eight
places.

\section{The Unique Exceptional Case $\(1,\,3,\,4,\,3,\,1,\,0\)$}\label{s6}

In this section we will prove the following.

\begin{theo}\label{T-notexc}
  Any critically finite real quadratic map $f$ is uniquely determined up to
  conjugacy by its combinatorics. In fact the conjugacy class
  $\langle f\rangle$
  is the unique fixed point of the associated pull-back transformation.
  For any minimal 
  combinatorics other than $\(1,\,3,\,4,\,3,\,1,\,0\)$ or its
  image under orientation reversal, this fixed point is a global attractor,
  so that the iterated   pull-back transformation will always converge to a
  map in the required conjugacy class.
\end{theo}\msk

On the other hand:

\begin{prop}\label{P-exc}
  In the exceptional case $\vecm = \(1,\,3,\,4,\,3,\,1,\,0\)$, 
 the iterated pull-back $f\mapsto T(f)$ does not usually converge
  to a single map. Instead, from a generic starting point, it
converges to a pair of maps $(f,g)$ for which $T(f)=g$ and
$T(g)=f$.  In fact the composition $T\circ T$  has a global attractor
consisting of a one-parameter family of conjugacy classes, and the fixed
point of $T$ is just  one point in this one-parameter family.
\end{prop}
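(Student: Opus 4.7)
The plan begins with the structural observation that the combinatorics $\vecm=(1,3,4,3,1,0)$ has exactly four postcritical points $\{x_0,x_1,x_3,x_4\}$: both critical orbits $x_5\mapsto x_0\mapsto x_1\mapsto x_3$ and $x_2\mapsto x_4\mapsto x_1\mapsto x_3$ have length three and merge at $x_1$ before landing on the fixed point $x_3$. Thus $\npc=4$, which is the regime flagged by \autoref{R-npc} as requiring special attention in Thurston's theory. Moreover there is a manifest combinatorial involution $\sigma$ of the mapping pattern which interchanges the two critical orbits via $x_0\leftrightarrow x_4$, $x_2\leftrightarrow x_5$, fixing $x_1$ and $x_3$; this $\sigma$ is the source of the 2-cycle behaviour described in the proposition.

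My first step is to exhibit a one-parameter family of conjugacy classes on which $T\circ T$ acts as the identity. In the lifted normal form \autoref{E-LF} the map $F=F_{\mu,\kappa}$ is determined by its two critical values $F(-1/4)=t_4$ and $F(1/4)=t_0$, and a direct computation from \autoref{l-liftf} shows that the orientation-reversing reflection $t\mapsto-t$ conjugates $F_{\mu,\kappa}$ to $F_{\mu,-\kappa}$. This reflection is the analytic incarnation of the combinatorial symmetry $\sigma$, sending any configuration $(t_0,t_1,t_4)$ in the normalized $X_5/G$ to its $\sigma$-partner $(-t_4,-t_1,-t_0)$. I would verify by direct computation that applying $T$ to a configuration produces (up to a controlled modification consistent with the branch choices) precisely its $\sigma$-partner, so that $T\circ T$ returns to the starting configuration. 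This produces the required one-parameter family of $T\circ T$-fixed points, which can be parametrized for instance by the common value of $|t_1|$.

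The second step is to identify the unique fixed point of $T$ within this family. A genuine fixed point of $T$ must coincide with its $\sigma$-partner, forcing $t_0=-t_4$, $t_1=0$, and consequently $\kappa=0$. Under this reflection symmetry the defining pointwise equations $F(t_0)=t_1$, $F(t_1)=0$, $F(t_4)=t_1$, $F(-1/4)=t_4$, $F(1/4)=t_0$ collapse to a single transcendental equation in $\mu$, whose unique real solution of the appropriate sign gives the critically finite conjugacy class promised by \autoref{T-notexc}.

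The third step, which I expect to be the main obstacle, is to prove global attraction of $T\circ T$ to this one-parameter family from a generic initial configuration in $X_5/G$. The strategy is to combine the standard Thurston/McMullen argument---which gives weak contraction of the pullback on the Teichm\"uller space of the postcritical set, strict transverse to any obstruction---with the observation that the involution structure confines the nontrivial dynamics to the two directions transverse to the one-parameter family, where uniform contraction is expected. The delicate point is ruling out escape of orbits to the boundary of $X_5/G$ before they settle onto the family; this should follow from the absence of a genuine Thurston obstruction for this combinatorics, which can be verified via the Levy-cycle criterion of \autoref{s5} to rule out the only mechanism of divergence not already accounted for by the mild $\npc=4$ degeneracy responsible for the 2-cycle.
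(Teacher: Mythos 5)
Your proposal misidentifies the mechanism behind the $2$-cycle, and this leads to a concretely wrong prediction for the fixed point of $T$.

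The combinatorial involution $\sigma$ you describe, interchanging the two critical orbits via $x_0\leftrightarrow x_4$, $x_2\leftrightarrow x_5$ and fixing $x_1,x_3$, is indeed a symmetry of the abstract mapping pattern. But it does not preserve the cyclic order of the marked points on $\widehat\R$: applying $\sigma$ to the ordered $6$-tuple $(x_0,\dots,x_5)$ yields $(x_4,x_1,x_5,x_3,x_0,x_2)$, which is neither a cyclic rotation nor a cyclic reversal of the original list. Consequently $\sigma$ cannot be realized by any change of coordinate, orientation-preserving or reversing, and the ``$\sigma$-partner'' of a configuration is not a conjugate of that configuration. Relatedly, the combinatorics $(1,3,4,3,1,0)$ is not fixed under orientation reversal: by \autoref{R-I} its $\I$-image is $(5,4,2,1,2,4)$. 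So there is no a priori reason the $T$-fixed point should lie on the symmetry locus $\kappa=0$, and indeed the paper's realization in \autoref{F-exc} has $\kappa=-\sqrt{2}$. Your inference ``$T$ produces the $\sigma$-partner $\Rightarrow$ the fixed point is $\sigma$-symmetric $\Rightarrow$ $\kappa=0$'' therefore collapses at the first step. One can also see directly that $T$ cannot coincide with $\sigma$: the paper computes that on the relevant one-parameter family $T$ acts by $v\mapsto(v+1)/(v-1)$, whereas $\sigma$ (swapping $v$ and $w=1/v$) would act by $v\mapsto 1/v$, and the equation $1/v=(v+1)/(v-1)$ has no real solution.

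The paper's proof proceeds quite differently and is essentially a direct calculation. Working with the normal form $f_{v,w}(x)=\tfrac{(w-v)(x+x^{-1})}{4}+\tfrac{v+w}{2}$, which has the postcritical fixed point at $\infty$, one observes that a single pullback automatically forces two algebraic constraints: $x'_1=0$ (because $x'_1$ is the preimage of $\infty$ other than $\infty$ itself), and $w'=1/v'$ (because $f(v')=f(w')$ and the identity $f_{v,w}(a)=f_{v,w}(b)\Leftrightarrow a+a^{-1}=b+b^{-1}$ then gives $w'=(v')^{-1}$). Thus after one step of $T$, every configuration lands on a one-parameter family indexed by $v$, and the pullback is seen by direct computation to act on this family as the fractional-linear involution $v\mapsto(v+1)/(v-1)$, with unique real fixed point $v=1-\sqrt2$. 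No contraction argument is needed or invoked. This matters for your third step: the whole point of the $\npc=4$, Euclidean-orbifold case (flagged in \autoref{R-npc}) is that the Thurston pullback acts as an isometry, not a strict contraction, on the relevant flat Teichm\"uller space. A McMullen-type contraction argument ``transverse to the family'' is exactly what is unavailable here; the global-attractor conclusion holds instead because the family is attained exactly, in one step, by every orbit.
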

\msk

\begin{figure}[!htb] 
  \centerline{%
    \includegraphics[height=1.75in]{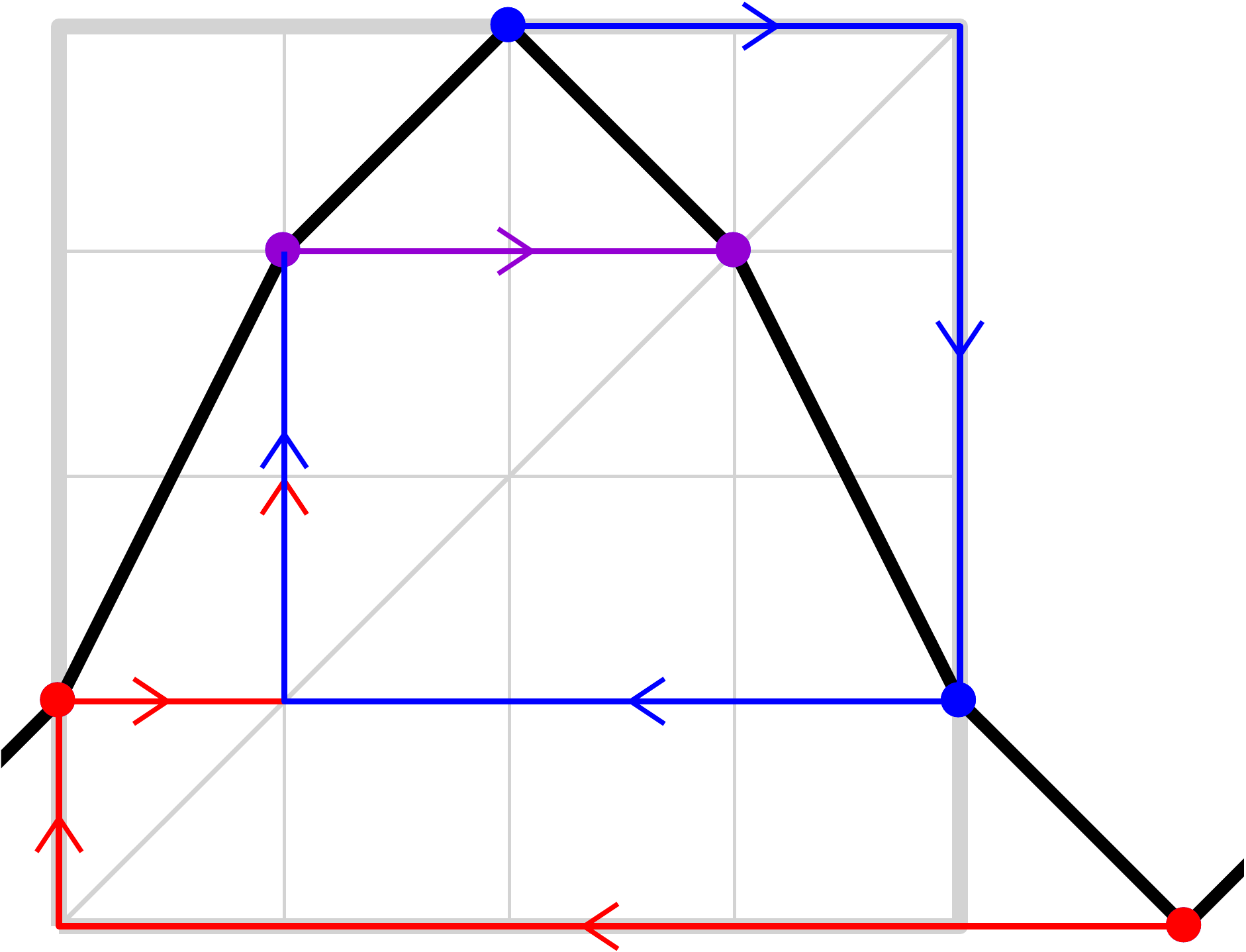}\qquad\qquad
    \includegraphics[height=1.75in]{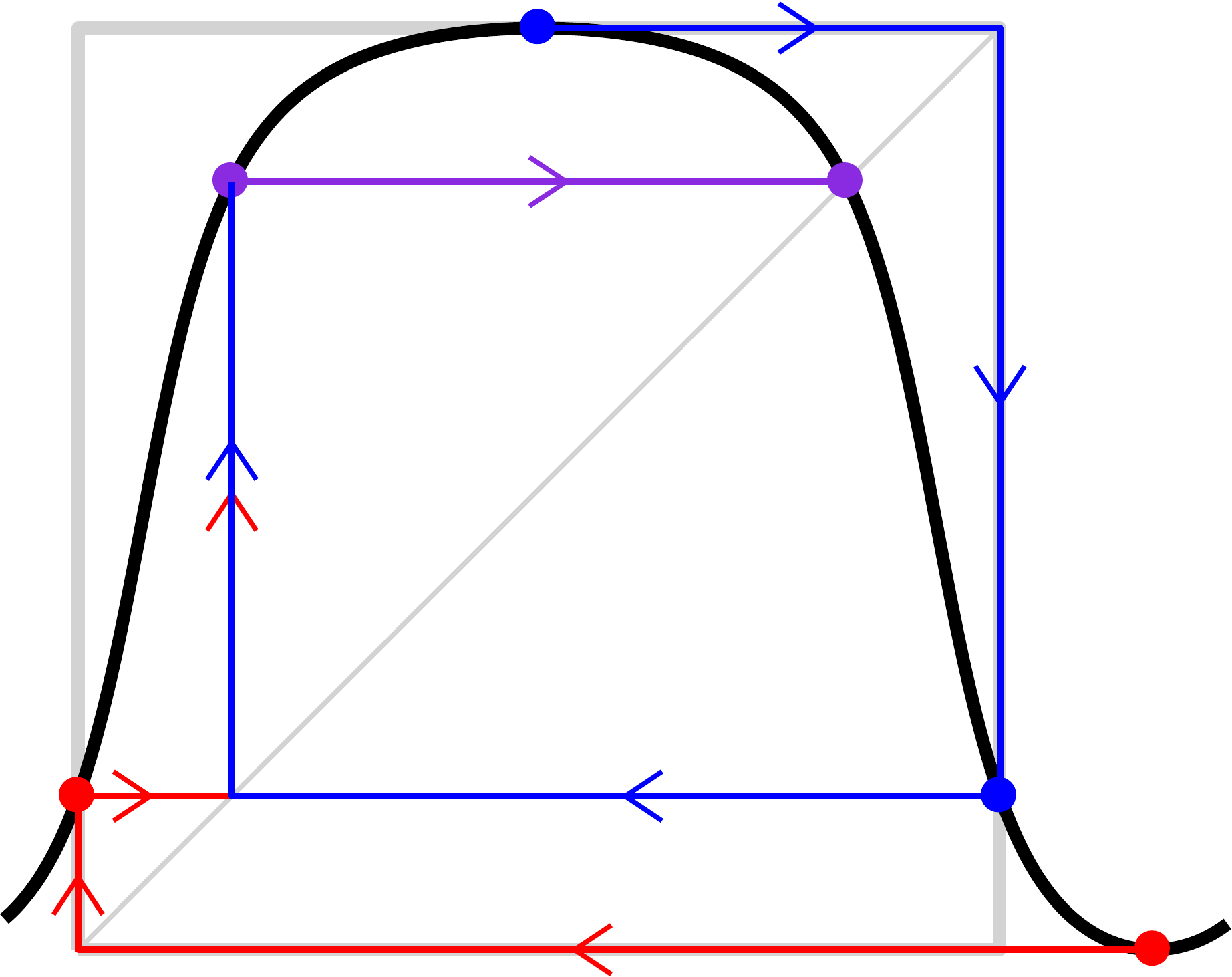}%
  }
  \caption{\label{F-exc}  On the left, the PL model for combinatorics
    $\(1,\,3,\,4,\,3,\,1,\,0\)$.  
    On the right is shown its realization in the lifted model, with 
    $\mu=-2$ and $\kappa=-\sqrt{2}$. Note that the two critical
    values 
map to a common point, which maps to a fixed point.}
\end{figure}

\begin{proof}[Proof of $\autoref{P-exc}$]
  Any quadratic map which has a non-critical fixed point can be put
  into the normal form
    \begin{equation}\label{e-ie134310}
    f_{v,w}(x) = \frac{(w-v)(x+x^{-1})}{4} + \frac{v+w}{2}~.
  \end{equation}
with critical points at $\pm 1$, fixed point at $\infty$,
 and critical values $~~f(1)=w \ne f(-1)=v$.\break
In order for such a map to be compatible with the $\(1,3,4,3,1,0\)$
mapping pattern
\begin{equation*}
  \xymatrix @R=2ex{
    \du{x_2}\ar@{|->}[r] & x_4\ar@{|->}[r]& x_1\ar[d]& x_0\ar@{|->}[l]& \du{x_5}\ar@{|->}[l]\\
  &  &x_3\ar@{->}@(ur,dr)}~~, 
\end{equation*}
we must identify this with the pattern\footnote{Alternatively
  we could identify $x_2$ with $\,-1$ and $x_5$ with $1$;
  but this would correspond to the orientation reversed combinatorics
  $\(5,4,2,1,2,4\)$.} 
\begin{equation*}
  \xymatrix @R=2ex{
    \du{1}~\ar@{|->}[r] &~ w~\ar@{|->}[r]& ~x_1~\ar[d]&~ v~
   \ar@{|->}[l]&~ \du{-1}\ar@{|->}[l]\\  &  &\infty\ar@{->}@(ur,dr)}~~. 
\end{equation*}
In particular, we must assume that the points
$(x_0,x_1,x_2,x_3,x_4, x_5)=(v,~x_1,~1,~\infty,~w, -1)$
are in positive cyclic order, or in other words that
\begin{equation}\label{E-order}
 w~<~-1~<~v~<~x_1~<~1~. \end{equation}
Now suppose that we start with any map in the form
\eqref{e-ie134310} satisfying \eqref{E-order}, and apply
the $\(1,3,4,3,1,0\)$-pullback transformation.
Then we obtain a new rational map $f$ for which the two
  critical values must map to a common point. Putting $f$
  into the form \eqref{e-ie134310} with the postcritical fixed point at
  infinity, the equation $f(v)=f(w)$ implies
  that   $v+v^{-1}=w+w^{-1}$. Since $v$ can never be equal to $w$,  
  this implies   that $w=v^{-1}$. Similarly, since $x_1$ can never be
  equal to $\infty$, the equation $f(x_1)=f(\infty)=\infty$
  implies that $x_1=0$.
  In other words, as we iterate,
  after the first   step we will always
  have a rational map of the form \eqref{e-ie134310} with $w=v^{-1}$
  and with $x_1=0$
 More explicitly, we will show that the action of $T$ then
  corresponds to the  transformation
  $$(v, 1/v)~ \longleftrightarrow~ (v', 1/v')\qquad{\rm with}\qquad
  v'=\frac{v+1}{v-1}~.$$
(See \autoref{f-pullback-134310} for an example with $v=-1/2 \leftrightarrow
  v'=-1/3$.) 

\begin{figure}[!htb]
  \centerline{%
    \includegraphics[height=2.5in]{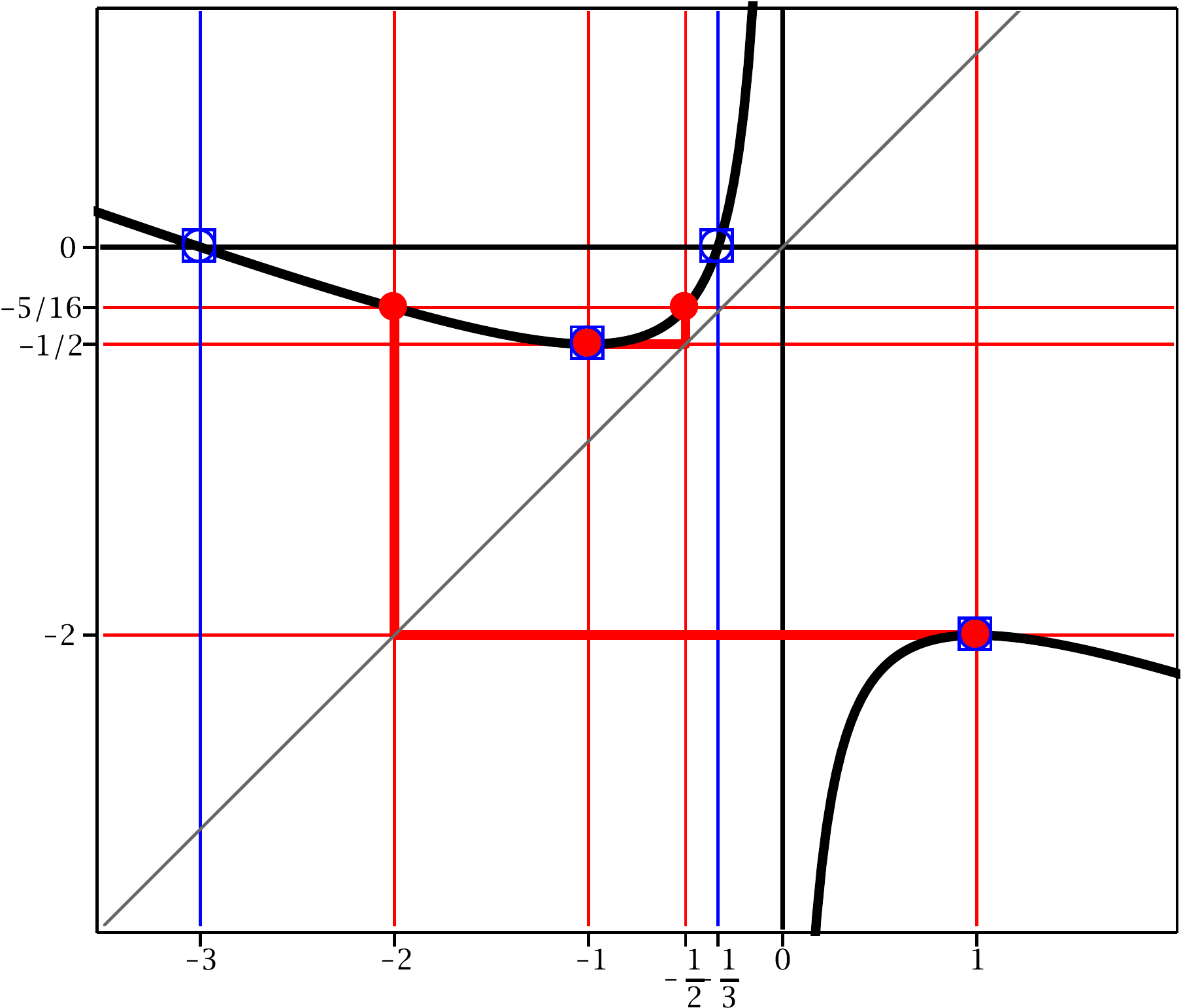}\qquad
    \includegraphics[height=2.5in]{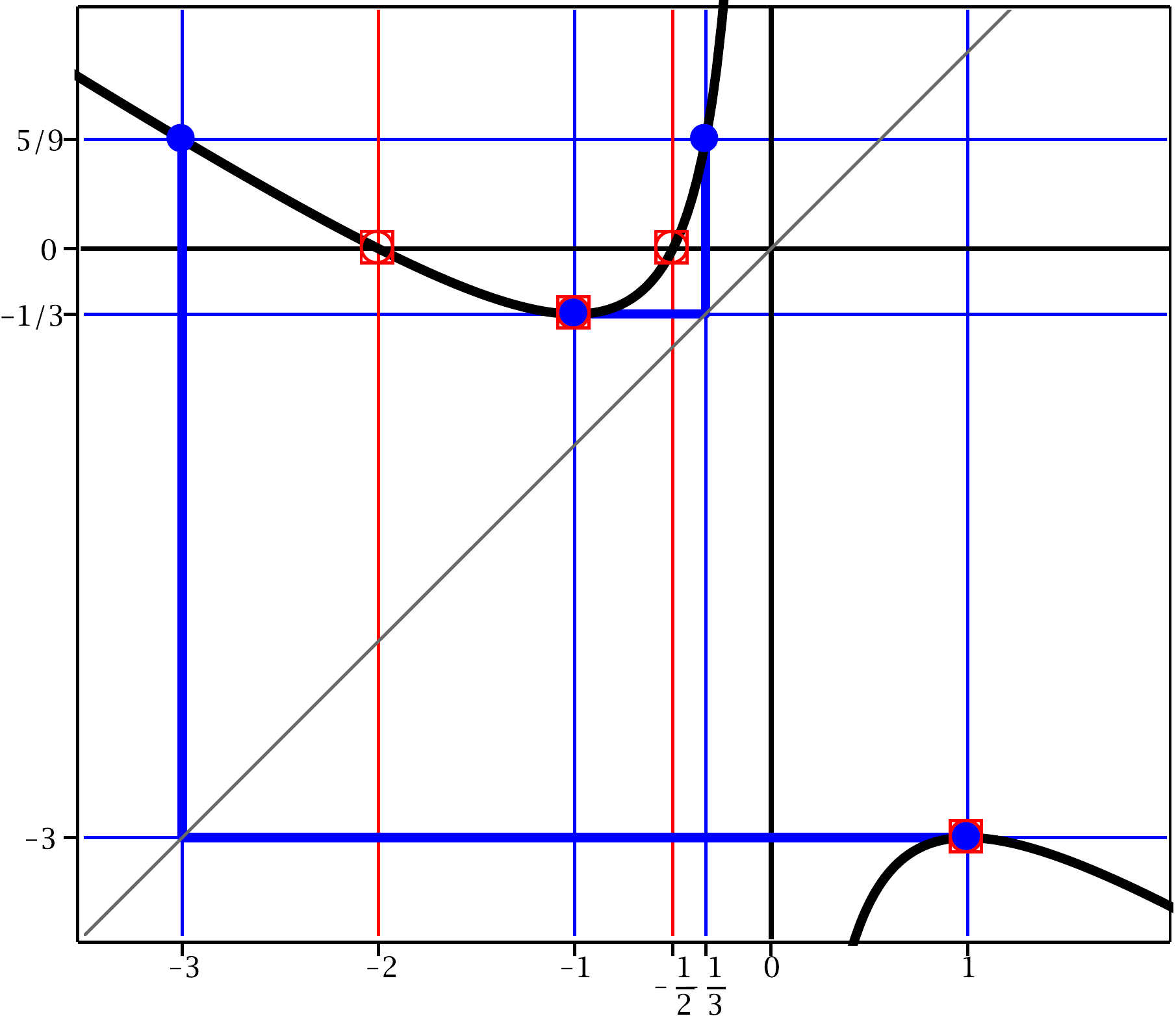}\qquad
  }
  \caption{\label{f-pullback-134310} 
     On the left is the rational map as in \autoref{e-ie134310}
    with critical values $v=-1/2$, $w=-2$ and on the right is its pullback
    with $v=-1/3$, $w=-3$. Conversely the left hand figure represents the
    pullback of the right hand one, so that this pair represents a
    two cycle for the pullback map. 
    For each graph, the marked points are indicated by a solid disk (either
    in red or blue) with corresponding vertical and horizontal lines, and the
    pullbacks of these are also indicated with open boxes of the other color.
    Thus, in the left-hand figure, the points at $(-1/2, -5/16)$ and 
    $(-2, -5/16)$ have only a red disk because they are marked points, but not
    the pullback of any marked point. By contrast, $(-1/3, 0)$ and $(-3, 0)$ are
    indicated by a blue square since these points are the pullback of $0$; the
    critical points at $x=\pm1$ have both symbols, since they are both marked
    points and the pullback of a marked point.
  }
\end{figure}

  Suppose that $(x_0,\ldots, x_5)$ are the marked points
  for $f=f_{v,w}$. To find the corresponding marked points
  $(x'_0,\ldots,x'_5)$ for the image under $T$ we must solve the equations 
  $$ f(x'_j)~= ~x_{m_j}~,$$
  taking care to choose the solution which belongs to the correct half-circles.
  In particular, to compute $v'$ we must solve the quadratic equation
  $f(v')=0$,   choosing
  the solution which belongs to the lap $-1<v'<0$. 
  A brief computation shows that $v'=(v+1)/(v-1)$ is the correct
  solution. 
  
  Since the fractional linear transformation $v\mapsto(v+1)/(v-1)$
has period two, it
  follows that $T\circ T$ is the identity for maps of this form.
  Furthermore,  since the equation $v=v' =(v+1)/(v-1)$ with $v<0$
  implies that $v=1-\sqrt 2$,
  it follows that $T$ has only one fixed point.
   This completes the proof of \autoref{P-exc}.
  \end{proof}

\bigskip
\begin{proof}[Proof of $\autoref{T-notexc}$]  
  To explain this behavior, we must go back to Douady and Hubbard.
  To every Thurston map (and therefore to every combinatorics),
  they assign an orbifold structure on the 2-sphere. It can be defined as follows
  (compare \cite[Page 2]{DH} ). Each point $x$ of the sphere is assigned a
  ramification
  index $\nu(x)\in\{1,2,3,\cdots, \infty\}$ which is greater than one if and
  only if $x$ is a postcritical point. More precisely, it can be defined as the
  supremum over all iterated preimages $f^{\circ k}(y)=x$ of the local degree of
  $f^{\circ k}$ at $y$. Thus $\nu(x)=\infty$ if and only if $x$ belongs to a
  periodic critical orbit; but $\nu(x)$ is bounded by the product of the local
  degrees of $f$ at its
  critical points otherwise. This orbifold structure has a well defined orbifold 
  Euler characteristic   $\chi\in\Q$ defined by the formula
  \begin{equation}\label{euler1}  \chi~=~ 2~-~ \sum_x \Big(1-\frac{1}{\nu(x)}\Big)~,
  \end{equation}
  to be summed over all postcritical points. They show that $\chi\le 0$ in all
  cases.
  By definition, the orbifold is Euclidean if $\chi=0$; and non-Euclidean 
  if $\chi<0$. In particular, it is clearly non-Euclidean whenever the number of
  postcritical points satisfies $\npc>4$.
\msk

As an example, if $~~\vecm=\(1,\,3,\,4,\,3,\,1,\,0\)~~$ with
mapping pattern  as described earlier,
  then all four postcritical points $x_4,~x_1,~ x_0,~x_3$ have ramification index
  $\nu=2$, so  $\chi=0$. 
  For a more typical example, consider  $\(1,\,2,\,1,\,0\)$ with mapping pattern
  $$\xymatrix{\du{x_3}\ar@{|->}[r] & x_0\ar@{|->}[r]& \du{x_1}\ar@{<->}[r]& x_2}~.$$ (Compare
  \autoref{f-aBn4a}-left.)
  In this case, we have $\nu(x_0)=2$ but $\nu(x_1)=\nu(x_2)=\infty$, and it follows
 that $\chi=-1/2<0$.
\bsk

Douady-Hubbard (\cite[Theorem 1]{DH}) asserts that: 

\begin{quote} \sf If the orbifold is non-Euclidean, then either:
  \begin{itemize}[beginpenalty=10000,midpenalty=9999, 
    topsep=-.2ex]
  \item[$\bullet$] the iterated pull-back transformation converges, yielding  a
    rational map with the specified combinatorics; or

\item[$\bullet$] there is no such rational map. \end{itemize}\end{quote}

\noindent However they make no such assertion in the Euclidean case where $\chi=0$.
We will prove the following for real quadratic combinatorics. 
\msk

\begin{lem} The only admissible, minimal and non-polynomial combinatorics with
  $\chi=0$ are $\(1,\,0\)$, $\(2,\,3,\,2,\,0\)$, and $\(1,\,3,\,4,\,3,\,1,\,0\)$;
  together with the corresponding cases with reversed orientation. 
\end{lem}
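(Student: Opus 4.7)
The approach is to apply the orbifold Euler characteristic formula \autoref{euler1} together with the constraint $\chi=0$, enumerate the finitely many possible orbifold signatures, and then for each one identify the minimal non-polynomial admissible combinatorics that can realize it.

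First note that every postcritical point has $\nu(x)\geq 2$, so each summand $1-1/\nu(x)$ is at least $1/2$ and the equation $\sum(1-1/\nu(x))=2$ forces $\npc\leq 4$. Moreover, for a degree-two map, any preimage chain contains at most two distinct critical points, so $\nu(x)\leq 4$ unless one of the two critical points is periodic, in which case every point of that periodic orbit has $\nu(x)=\infty$. Enumerating multisets $(\nu_1,\ldots,\nu_{\npc})$ with entries in $\{2,4,\infty\}$ and $\sum(1-1/\nu_i)=2$ leaves exactly $(\infty,\infty)$ for $\npc=2$; $(2,4,4)$ or $(\infty,2,2)$ for $\npc=3$; and $(2,2,2,2)$ for $\npc=4$.

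The easy signatures can be disposed of in turn. For $(\infty,\infty)$ both critical points lie in periodic critical orbits totalling only two postcritical points; non-polynomially this forces a common $2$-cycle $c_1\leftrightarrow c_2$, giving $\vecm=\(1,0\)$, which is its own reversal. For $(\infty,2,2)$ exactly one postcritical point has $\nu=\infty$, so the corresponding periodic critical orbit has length one, i.e.\ a critical fixed point, hence polynomial. For $(2,4,4)$ the two points with $\nu=4$ require some preimage chain containing both critical points, and since neither critical is periodic this forces co-polynomial shape $c_1\to c_2$; with $\npc=3$ the orbit must be $c_1\to c_2\to v\to p$ with $p$ a non-critical fixed point. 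Using that $c_2$ and $v$ are the two critical values (hence the extremes of $f(\Rhat)$) and that by \autoref{L2} the primary fixed point $p$ lies between the two critical points, a short case check of the possible cyclic orders yields $\vecm=\(2,3,2,0\)$ with orientation reversal $\(3,1,0,1\)$.

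The main obstacle is the $(2,2,2,2)$ case. Here every postcritical point has $\nu=2$, so neither critical is periodic and no preimage chain contains both criticals; in particular neither critical is postcritical, the map is totally non-hyperbolic, and both critical orbits are eventually periodic in disjoint fashion. For $\npc=4$ the two orbits must merge, and the quadratic preimage bound $|f^{-1}(x)|\leq 2$ combined with the expansiveness requirement of minimality forces the common terminal cycle to be a single fixed point $p$: any non-critical terminal cycle of length $\geq 2$ would introduce a non-expansive edge between consecutive cycle points, as in \autoref{f-bad}. A preimage count at $p$ (which already has itself as a preimage) then leaves room for exactly one further marked preimage, a common pre-fixed point $\mu$ absorbing both critical orbits; matching $\npc=4$ fixes both orbits to have length three, yielding the unique structure $c_1\to a\to\mu\to p$, $c_2\to b\to\mu\to p$. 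A direct admissibility check, realizing this in Case~2 of \autoref{D-admissible} with $c_2$ placed outside $f(\Rhat)$ according to \ref{combi-case2}, produces $\vecm=\(1,3,4,3,1,0\)$; its orientation reversal $\(5,4,2,1,2,4\)$ completes the list.
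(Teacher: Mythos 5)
Your enumeration of orbifold signatures with $\chi=0$ and the subsequent matching of combinatorics is essentially the paper's proof, differing mainly in organization: you index directly by the multiset of ramification indices, whereas the paper works through the dynamic types (B/D, C, Half-Hyperbolic, Totally Non-Hyperbolic), bounding $\chi$ in each; the resulting enumeration is identical, and your handling of $(\infty,\infty)$, $(\infty,2,2)$, and $(2,4,4)$ matches the paper's reasoning.

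For the $(2,2,2,2)$ case the paper is more careful, and this is where your write-up has gaps. Your eventual structure $c_1\to a\to\mu\to p$, $c_2\to b\to\mu\to p$ is correct, but two steps need more justification. First, ``the two orbits must merge'' is not automatic: the disjoint configuration $c_1\to v_1\to p_1$, $c_2\to v_2\to p_2$ with $p_1\neq p_2$ both fixed also has signature $(2,2,2,2)$ and $\npc=4$, and one must check that no cyclic ordering of these six marked points satisfies admissibility condition~\ref{admiss_cond2} (the degree-two shape condition). Second, your expansiveness argument excludes a period-two terminal cycle only when the two cycle points end up adjacent in the marked ordering; the paper's \autoref{L-para} instead examines the mapping pattern $c_1\to v_1\to x$, $c_2\to v_2\to y$, $x\leftrightarrow y$ topologically and finds that one of the two possible placements of $x$ and $y$ inside $f(\Rhat)$ cannot support a degree-two graph at all (so admissibility, not expansiveness, is the obstruction there), while the other placement collapses a non-expansive edge exactly as you describe. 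Both mechanisms are needed to close out that pattern.
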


(Compare Figures~\ref{f-aBn2}-left, \ref{f-nonhyp}-left, and \ref{F-exc}.)\msk

\begin{proof} Since our maps are quadratic, the ramification index of a
  postcritical point can only take the values 2, 4, or $\infty$. It will be
  convenient to set

  \begin{itemize}[itemsep=.2ex]
  \item[\textbf{s}] equal to the   number of ``simply postcritical'' points with
    $\nu=2$,

\item[\textbf{d}] the number of ``doubly postcritical'' points with $\nu=4$, and

\item[\textbf{i}] the number of ``infinitely postcritical'' points with $\nu=\infty$.
  \end{itemize}\ssk

  \noindent Then the formula \eqref{euler1}  becomes
  $$\chi ~=~ 2-\frac{1}{2}~\textbf{s}  -\frac{3}{4}~\textbf{d} -\textbf{i}~.$$
  \ssk

  First consider combinatorics of Type B or D. Then all $n+1$ of the marked points
  are infinitely postcritical, so that $\chi=1-n$. Thus $\chi=0$ only for $n=1$,
  with combinatorics $\(1,\,0\)$.\msk

  Next consider Type C.~~ Since critical fixed points have been excluded, there
  must be a periodic critical orbit with period at least two; thus $~{\bf i}\ge 2$.
  Furthermore, the  other critical point   can't map directly to this periodic
  orbit, so that $~{\bf s}\ge 1$; and it follows that   $\chi\le -1/2$.\msk

  Similarly in the Half-Hyperbolic case we have $~~{\bf i}\ge 2$ and
  $~{\bf s}\ge 2$, so $\chi\le -1$.\msk

  There remains the Totally Non-Hyperbolic case. Since neither critical
  point can 
  map directly to a periodic cycle, and since at most one point can map to a
  fixed 
  point, the only possible  mapping patterns with $\chi=0$ are the following:

  \begin{equation}\label{mp1}
    \xymatrix{\du{c_1}\ar@{|->}[r] & \du{c_2}\ar@{|->}[r] & v_2\ar@{|->}[r]& x
      \ar@{->}@(ur,dr)}\end{equation}
  or
  \begin{equation}\label{mp2}
    \xymatrix @R=2ex{
      \du{c_1}\ar@{|->}[r]& v_1\ar@{|->}[r]& x \ar@{|->}[d]& v_2\ar@{|->}[l]
      & \du{c_2}\ar@{|->}[l] ~,\\
    &  & y\ar@{->}@(ur,dr)& &\\}~\end{equation}
or
  \begin{equation}\label{mp3}
\xymatrix{\du{c_1}\ar@{|->}[r]& v_1\ar@{|->}[r]& x \ar@{<->}[r]& y&v_2\ar@{|->}[l]& \du{c_2}\ar@{|->}[l]}~.\end{equation}
\end{proof}

  \begin{figure}[h!]
    \centerline{\includegraphics[width=2in]{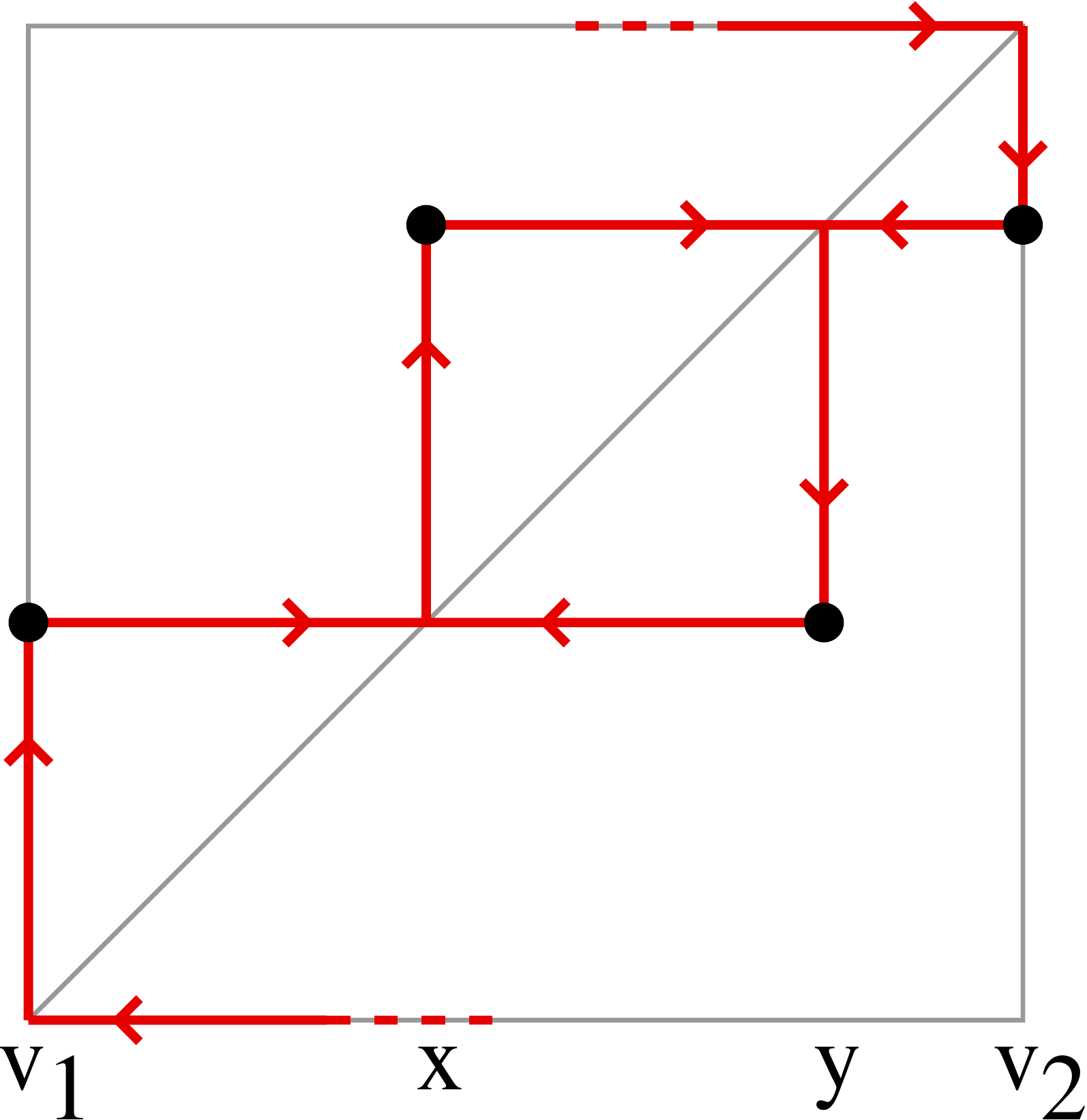}\qquad\qquad\qquad
      \includegraphics[width=2in]{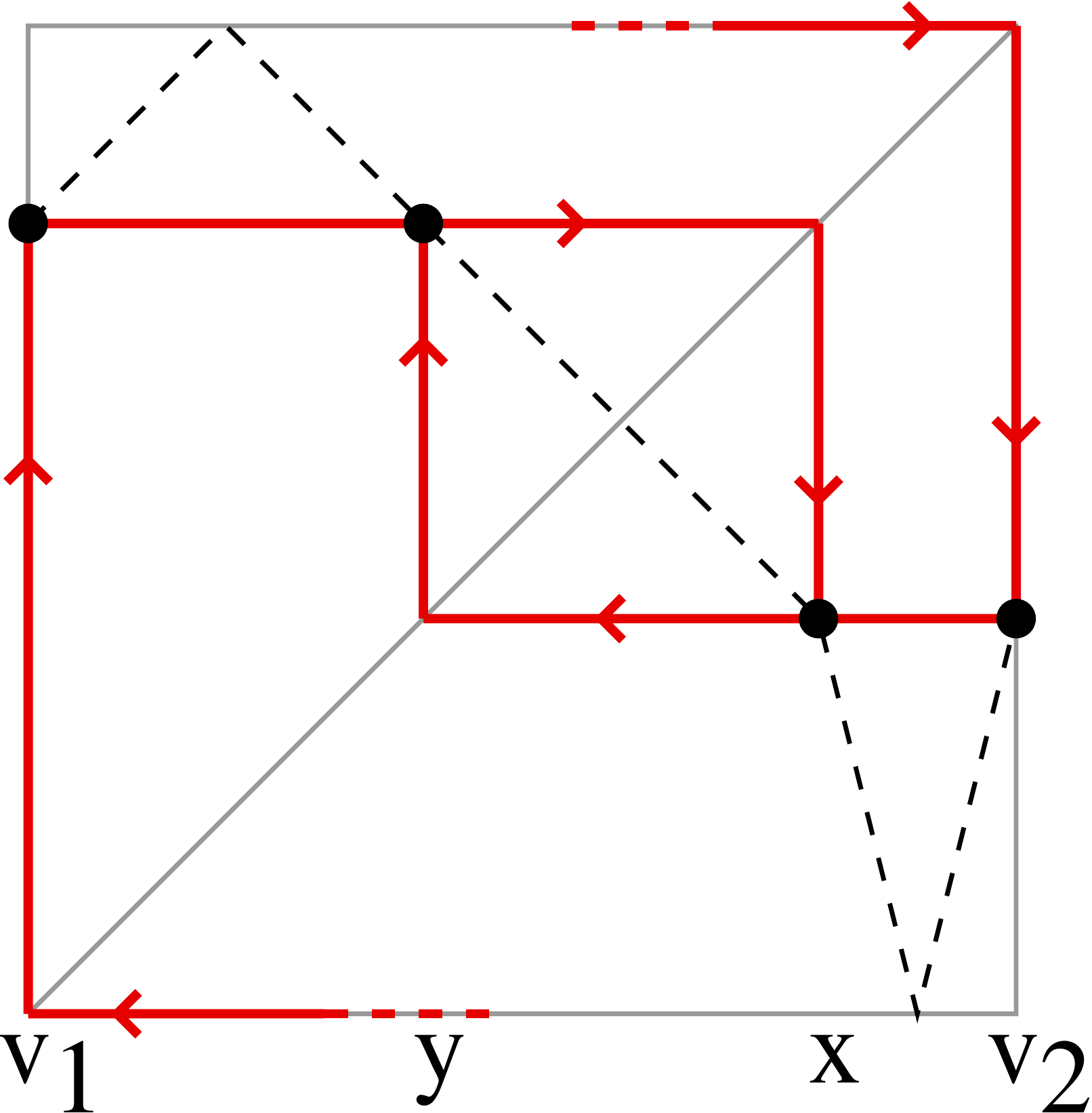}}
    \caption{\label{f-notpos}  The topologically possible arrangements
    for a map with mapping pattern~\eqref{mp3}.}
    \end{figure}

\begin{lem}\label{L-para} There is only one admissible and minimal
  combinatorics compatible with the   mapping pattern
  \eqref{mp1} or with \eqref{mp2}; but there is none compatible
  with \eqref{mp3}.
\end{lem}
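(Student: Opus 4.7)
The plan is a finite case analysis on the cyclic orderings of the marked points in $\Rhat$: \autoref{L-mp} reduces each such ordering to at most one candidate combinatorics, and \autoref{D-admissible} together with the minimality conditions~\ref{admiss_minimal} and~\ref{admiss_expansive} then filter out which survive. The guiding structural principle I use throughout is that, since $f$ has degree~$2$, any value interior to $f(\Rhat)$ has exactly two preimages, one on each of the two monotone arcs of $\Rhat\smallsetminus\{c_1,c_2\}$.

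For \eqref{mp1}, there are only four marked points $c_1,c_2,v_2,x$, with $c_2$ playing the dual role of critical point and critical value ($c_2=v_1$). Placing $c_2$ and $v_2$ at the endpoints of $f(\Rhat)$ leaves essentially two orderings for the interior placement of $c_1$ and $x$; strict monotonicity~\ref{admiss_cond2} rules out one, and the survivor is trivially minimal. For \eqref{mp2}, the relation $f^{-1}(x)=\{v_1,v_2\}$ places $v_1$ and $v_2$ at the endpoints of $f(\Rhat)$ on opposite monotone arcs; the fixed point $y$ together with $x$ and the critical points $c_1,c_2$ then fit into the interior in only one way consistent with strict lap monotonicity, yielding a unique admissible minimal combinatorics.

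The essential content of the lemma is the \eqref{mp3} case. The $2$-cycle gives $f^{-1}(x)=\{v_1,y\}$ and $f^{-1}(y)=\{v_2,x\}$, which by the preimage-pair principle forces $v_1$ and $y$ onto opposite monotone arcs of $\Rhat\smallsetminus\{c_1,c_2\}$, and likewise $v_2$ and $x$. Up to the symmetries $c_1\leftrightarrow c_2$, $v_1\leftrightarrow v_2$, $x\leftrightarrow y$, this leaves two cyclic configurations: either the two arc interiors are $\{v_1,v_2\}$ and $\{x,y\}$, or they are $\{v_1,x\}$ and $\{v_2,y\}$. A direct inspection of the induced $\f$-values along each arc in the second configuration shows that monotonicity cannot be achieved on both arcs simultaneously, so admissibility~\ref{admiss_cond2} fails there. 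In the first configuration the combinatorics is admissible (this is the situation suggested by \autoref{f-notpos}), but $x$ and $y$ are necessarily adjacent in cyclic order with no marked point between them, so the PL edge joining them maps monotonically onto itself (orientation-reversingly, to realize the $2$-cycle). Hence no forward iterate of this edge meets a critical point, expansiveness~\ref{admiss_expansive} fails, and the combinatorics is not minimal. As a consistency check, collapsing the non-expansive edge identifies $x$ with $y$ and reduces the mapping pattern to exactly \eqref{mp2}.

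The main obstacle is organizing the \eqref{mp3} case cleanly: the non-monotonicity check for the second configuration is mechanical but involves several sub-orderings, so it is important to use the three two-element symmetries above to keep the enumeration short, and to separate the essential preimage-on-opposite-arcs observation from the case-by-case verification.
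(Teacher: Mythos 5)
Your approach is genuinely different from the paper's. Where the paper normalizes $f(\Rhat)=[v_1,v_2]$ and argues directly about the graph over this interval (plotting the four forced points $(v_1,x),(x,y),(y,x),(v_2,y)$ and counting horizontal-line crossings), you use a preimage-pairing principle to place $v_1,y$ and $v_2,x$ on opposite arcs of $\Rhat\ssm\{c_1,c_2\}$ and then enumerate cyclic configurations. This is an attractive reorganization, and your configuration~(B) analysis (monotonicity fails on one arc) is correct and parallels what the paper's horizontal-line count does in the unimodal case.

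However, there is a real gap in your configuration~(A) analysis. You reduce ``up to the symmetries $c_1\leftrightarrow c_2$, $v_1\leftrightarrow v_2$, $x\leftrightarrow y$'' to a single case and assert the resulting combinatorics is admissible but non-expansive. In fact configuration~(A) contains \emph{two} inequivalent cyclic orders, distinguished by whether, after normalizing $f(\Rhat)=[v_1,v_2]$ with $v_1<v_2$, one has $x<y$ or $x>y$; these are exactly the two pictures in \autoref{f-notpos}, and the listed symmetries do \emph{not} identify them (each sub-case is fixed by the full swap composed with orientation reversal, and the combinatorics $\(2,5,3,2,0,3\)$ and $\(3,5,3,2,0,2\)$ arising from the two sub-cases are each separately $\I$-invariant). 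The $x>y$ sub-case does give the admissible, non-expansive combinatorics $\(3,5,3,2,0,2\)$ you describe; but the $x<y$ sub-case yields $\(2,5,3,2,0,3\)$, which already violates Condition~\ref{admiss_cond2} of \autoref{D-admissible} (cyclically permuted it reads $0,3,2,5,3,2$, which is not increasing-then-decreasing). For that sub-case your expansiveness argument is vacuous, since the PL model to which it refers does not exist, and a separate observation (non-admissibility, equivalently the paper's three-crossings count) is needed to rule it out. The conclusion of the lemma still holds, but the enumeration must be completed before the proof does.

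A smaller but similar point applies to your statements that for \eqref{mp1} and \eqref{mp2} the interior points ``fit in only one way'': those are plausible and the paper also treats them briefly, but your symmetry reduction for \eqref{mp3} shows these reductions need to be checked with some care rather than asserted.
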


\begin{proof} First consider the mapping pattern \eqref{mp3}.
Clearly both $x$ and $y$ must belong to the interval $f(\Rhat)=[v_1,~v_2]$ and
cannot be equal to $v_1$ or $v_2$. Furthermore, either $x<y$ as in
\autoref{f-notpos}-left,  or $x>y$ as in \autoref{f-notpos}-right.
The left hand picture is impossible since any compatible graph would
have to pass through the four points marked by black dots. But then a
 horizontal line between levels $x$ and $y$ would have to 
 intersect this graph at least three times: once between the first
 two points,  once between the two middle points, and once between
 the last two points. This is impossible for any graph which is
 supposed to imitate a function of   degree two.

  The right hand figure is possible; but only if 
 the graph is topologically as described by the dotted lines. But then the
  corresponding combinatorics is not minimal, since the interval $[y,~x]$ maps
  to itself and contains no critical point. Thus there is no minimal
  solution in this case. (See \autoref{f-bad} for further discussion of this
  combinatorics.)
\msk

The proof for \eqref{mp2} is similar. If $x<y$, then we see easily that the
topological arrangement shown in \autoref{F-exc} is the only one possible; while
if $x>y$ then we get the   $180^\circ$ rotation of that figure.
This case has been thoroughly discussed in \autoref{P-exc}. 
\msk

  Finally, for \eqref{mp1}, if we write the mapping pattern as
$$~\xymatrix{\du{c_2}\ar@{|->}[r]&(v_2=\du{c_1}) \ar@{|->}[r]& v_1\ar@{|->}[r]& x\ar@{->}@(ur,dr)}~~~~~,$$
    then we are quickly led to the combinatorics shown in
  \autoref{f-nonhyp}-left. This completes the proof of \autoref{L-para}.
    \end{proof}

    Thus we have explicitly described what happens in the three cases
    with Euclidean orbifold. This also completes the proof of
    \autoref{T-notexc}, since in all other cases
    the statement follows from Thurston's
    arguments, as described in \cite{DH}.
  \end{proof}\bsk

\vspace*{-.5\baselineskip}  
\section{The Moduli Spaces $\M$ and $\M/\I$}\label{s7}

Let $\M$  be the moduli space for real quadratic maps with real
critical points up to orientation preserving change of coordinates.
This section will show that $\M$ 
is a smooth\footnote{The corresponding complex
  moduli space has one singular point. Compare Rees \cite{R}.}
 manifold with the topology of a cylinder or annulus. On the
 other hand, if we allow orientation reversing changes of coordinate, then
 we obtain the quotient manifold $\M/\I$, which is a simply-connected
 smooth manifold with
boundary. Compare Figures \ref{F-canmod} and \ref{F-M/I}.

We will first prove the following.

\begin{theo}\label{t1} 
  Every quadratic map with real coefficients and real critical points
  is conjugate, under an orientation preserving change of coordinates,
  to one and only one map in the \textbf{\textit{canonical form}}
    $$ f(x)=\frac{A\,x^2+B}{C\,x^2+D}\quad{\it satisfying}
\quad A^2+C^2\,=\,B^2+D^2\,,\quad
 {\sf and}\quad AD-BC\,>\,0~.$$
    Here $A,~B,~ C,~D$ are uniquely determined up to
    multiplication by a common non-zero constant.
  \end{theo}

\begin{proof} Every conjugacy class can be put into the form
$$ g(x)~=~ \frac{ax^2+b}{cx^2+d} $$
by placing its two critical points at zero and infinity. We can always
assume that {$ad-bc>0$,} conjugating $g$ if necessary by the
orientation preserving transformation $x\mapsto -1/x$ which interchanges
the two critical points and changes the sign of $ad-bc$. (It then
follows that $g'(x)>0\Leftrightarrow x>0$; assuming that the denominator
is not zero.)

Now consider a scale change, replacing $g(x)$
by the map {$f(x)=g(\lambda x)/\lambda$} with $\lambda>0$. Then
$$ f( x)~=~\frac{Ax^2+B}{Cx^2+D} \quad{\rm with}\quad
A=\lambda^2 a,~~B=b,~~ C=\lambda^3 c,~~ D=\lambda d~.$$
We must choose $\lambda$ so that
$$ A^2+C^2=B^2+D^2\quad{\rm or~equivalently}\quad
\lambda^6 c^2 +\lambda^4 a^2-\lambda^2 d^2- b^2 ~=~0\,.$$
Dividing the last equation by $\lambda^3$, we get
$$ \lambda^3 c^2+\lambda\, a^2 - d^2/\lambda - b^2/\lambda^3~=~0~.$$ The
left side of this equation, considered as a function of $\lambda$, is clearly
monotone, mapping the half-line $\lambda>0$ diffeomorphically onto the
entire real line. Therefore there is one and only one choice of $\lambda$
which satisfies the equation. This proves that every such map is
conjugate to one in canonical form. Since each step of the argument
is uniquely determined, uniqueness follows easily. This proves
\autoref{t1}.\end{proof}

\begin{figure}[!htb]
  \centerline{\includegraphics[width=4in]{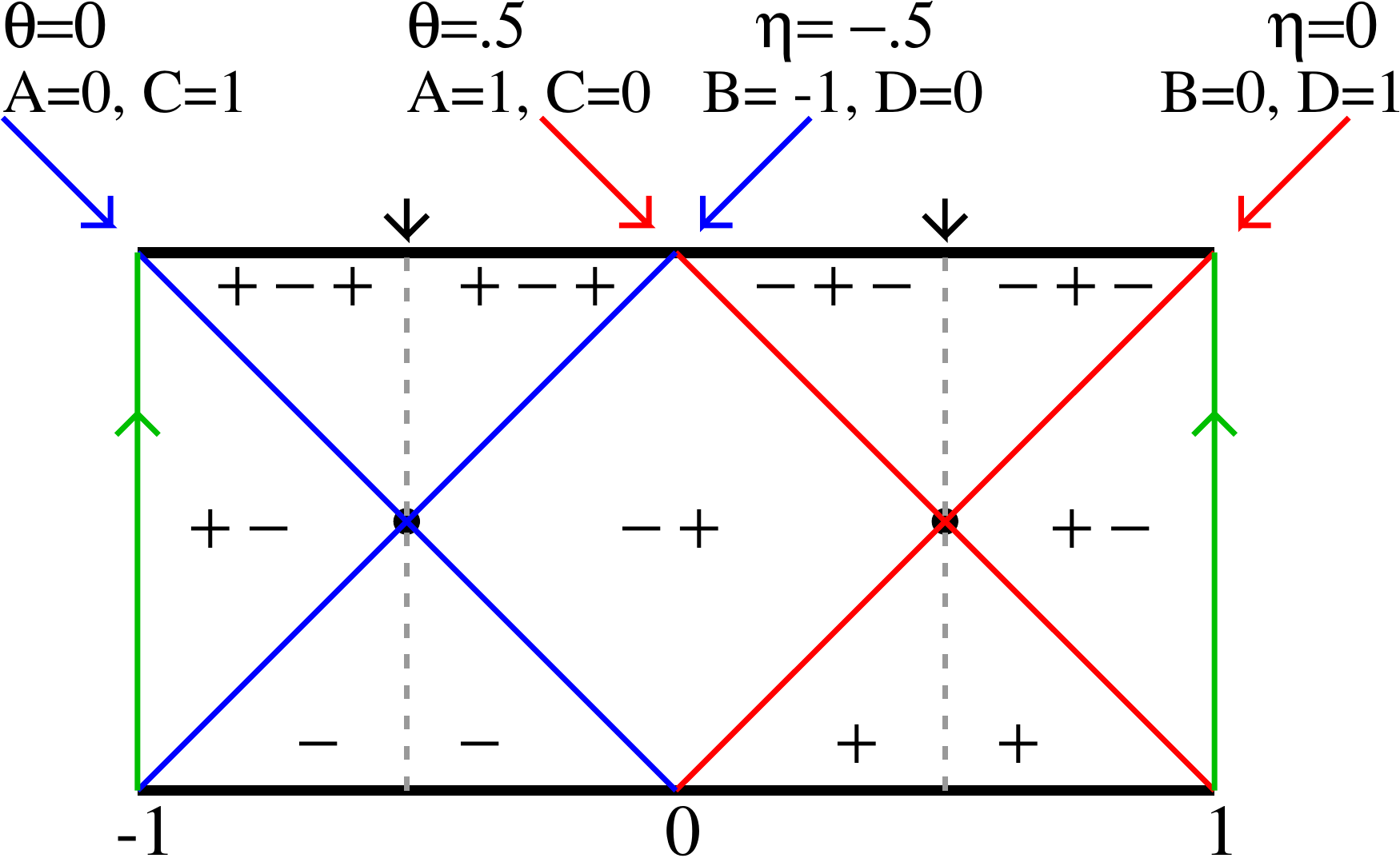}}
\caption{ \label{F-canmod} The cylindrical moduli
  space $\M\cong(\R/2\Z)\times(0,1)$ can be   obtained from the
  rectangle $[-1,1]\times[0,1]$ in the $(\Sigma,\,\Delta)$-plane
 by identifying the left and right edges  $\Sigma=\pm 1$ so that the arrows match.
Here the top and bottom edges  represent points in the ideal
boundary of $\M$. The red lines represent maps of polynomial  shape.
They cross each other at the point $\langle x\mapsto x^2\rangle$.
The blue lines represent maps of co-polynomial  shape, with one critical
point mapping to the  other. These lines  cross at the point
$\langle x\mapsto -1/x^2\rangle$. The
four red and blue lines divide $\M$ into six complementary regions,
each either monotone, unimodal, or bimodal of the  shape indicated.
The dotted lines form the symmetry  locus, consisting of
conjugacy classes  which are invariant under
 reflection in either of these lines (or under the 
 orientation reversing transformation
 {$(\Sigma,\,\Delta)\leftrightarrow(\pm 1-\Sigma,\,\Delta)$}).
 The small black arrows point to what we believe are 
 the only possible limit points of the iterated
pullback in the strongly obstructed case. (See \autoref{C-2lims}.)
}
\end{figure}

Using this result, we can provide an explicit description for the moduli space $\M$
consisting of all conjugacy classes of real quadratic maps. First note that
we can  always normalize so that $A^2+C^2=B^2+D^2=1$ by multiplying $A,B,C, D$
by a suitable common constant. It is then natural to choose angles $\theta$ and $\eta$
so that
$$ A=\sin(\pi\theta),~~C=\cos(\pi\theta)~,\qquad{\rm and}\qquad
 B=\sin(\pi\eta),~~D=\cos(\pi\eta)~.$$
 Here it is necessary to be careful. If we add one to both $\theta$
 and $\eta$, then the constants $A,~B,~C,~D$ will all be multiplied by
 $-1$, and the map $f$ will not change. However, if we replace
 $(\theta,~\eta)$ by $(\theta+1, ~\eta)$, then we will get a quite
 different map.

 Note also that the determinant can be written as
 $$ AD-BC~=~\sin(\pi\theta)\cos(\pi\eta)-\cos(\pi\theta)\sin(\pi\eta)~=~
 \sin\big(\pi(\theta-\eta)\big)~.$$
 Since we require that $AD-BC>0$, it will be convenient to assume that
 $$ \theta-\eta~=~ \Delta\qquad{\rm with}\qquad 0<\Delta<1~.$$
 On the other hand, since we can't add one to $\theta$ without also adding
 one to $\eta$, it follows that the sum $\Sigma=\theta+\eta$ is actually
 well defined modulo two. This proves the following
 
 \begin{coro}\label{c1} 
   A map in the normal form of $\autoref{t1}$ is uniquely
   determined by the two invariants
   $$ \Sigma ~=~\theta+\eta ~\in~\R/(2\Z)\qquad{\rm and}\qquad \Delta~=~\theta-\eta
   ~\in~(0,1)~.$$ Therefore the moduli space $\M$, consisting of all conjugacy classes
   of quadratic maps with real coefficients and real critical points, is diffeomorphic
   to the cylinder {$(\R/2\Z)\times(0,1)~$.}
˜\end{coro}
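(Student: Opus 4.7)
The plan is to leverage \autoref{t1} directly. That theorem already gives a canonical representative $(A,B,C,D)$ unique up to a common nonzero scalar; rescaling so that $A^{2}+C^{2}=B^{2}+D^{2}=1$ fixes $(A,B,C,D)$ up to the simultaneous sign change $(A,B,C,D)\mapsto(-A,-B,-C,-D)$. The parametrization
\[
  A=\sin(\pi\theta),\quad C=\cos(\pi\theta),\quad B=\sin(\pi\eta),\quad D=\cos(\pi\eta)
\]
then identifies the set of such normalized quadruples with the quotient of the torus $(\R/2\Z)\times(\R/2\Z)$ by the involution $(\theta,\eta)\mapsto(\theta+1,\eta+1)$, since this is exactly the sign flip on $(A,B,C,D)$.

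Next I would track how $\Sigma$ and $\Delta$ transform under this diagonal involution. The sum $\Sigma=\theta+\eta$ shifts by $2$ under it, hence is well defined as an element of $\R/2\Z$; the difference $\Delta=\theta-\eta$ is left untouched, hence is a priori well defined in $\R/2\Z$. The determinant condition $AD-BC=\sin(\pi\Delta)>0$ then picks out the unique representative of $\Delta$ lying in $(0,1)$. This shows the assignment $\langle f\rangle\mapsto(\Sigma,\Delta)\in(\R/2\Z)\times(0,1)$ is well defined.

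For the converse and the smooth structure, given $(\Sigma,\Delta)\in(\R/2\Z)\times(0,1)$, I would set $\theta=(\Sigma+\Delta)/2$ and $\eta=(\Sigma-\Delta)/2$ --- well defined up to the diagonal shift since $\Sigma$ is only known modulo $2$ --- and build the corresponding canonical map from \autoref{t1}. Both directions of the correspondence are given by elementary trigonometric formulas together with the smooth rescaling used in the proof of \autoref{t1} (whose scaling factor $\lambda$ depends smoothly on the coefficients because the defining degree-six polynomial in $\lambda$ is strictly monotone on $\lambda>0$), so the resulting bijection $\M\cong(\R/2\Z)\times(0,1)$ is a diffeomorphism.

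The only subtle point --- really just bookkeeping --- is the insistence on $\R/2\Z$ rather than $\R/\Z$ for $\Sigma$: as the discussion preceding the corollary already warns, shifting $\theta$ alone by $1$ changes $(A,B,C,D)$ to $(-A,B,-C,D)$ and hence produces a genuinely different map. So $\Sigma$ must be read modulo $2$, not modulo $1$, and this is precisely what forces $\M$ to be a cylinder rather than a torus quotient.
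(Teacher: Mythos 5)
Your proposal is correct and follows essentially the same line of argument as the paper: the discussion between \autoref{t1} and the corollary introduces the angles $\theta,\eta$, observes that the only ambiguity after normalizing $A^2+C^2=B^2+D^2=1$ is the simultaneous sign flip (i.e.\ $(\theta,\eta)\mapsto(\theta+1,\eta+1)$), notes $AD-BC=\sin(\pi(\theta-\eta))>0$ pins $\Delta\in(0,1)$, and that $\Sigma=\theta+\eta$ is well defined mod~$2$ but not mod~$1$. Your added remark on smoothness via the strict monotonicity of the degree-six equation for $\lambda$ is a reasonable elaboration of what the paper leaves implicit when asserting ``diffeomorphic.''
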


We can provide a more geometric interpretation of the invariants
$\theta$, $\eta$, $\Sigma$, and
$\Delta$ as follows. We make use of three different closely related models for 
$\Rhat$. By definition  $\Rhat=\R\cup\{\infty\}$. However, $\Rhat$ can be
identified
with the standard circle $\R/\Z$ by letting $t\in\R/\Z$ correspond to
$\tan(\pi\,t)\in\Rhat$;
and can also be identified with the real projective line by letting $t$
correspond
to the ratio  
$$\big(\sin(\pi\,t)~:~\cos(\pi\,t)\big)~\in ~\bP^1(\R)~ .$$

\begin{figure} [t!]
  \centerline{\includegraphics[height=2.7in]{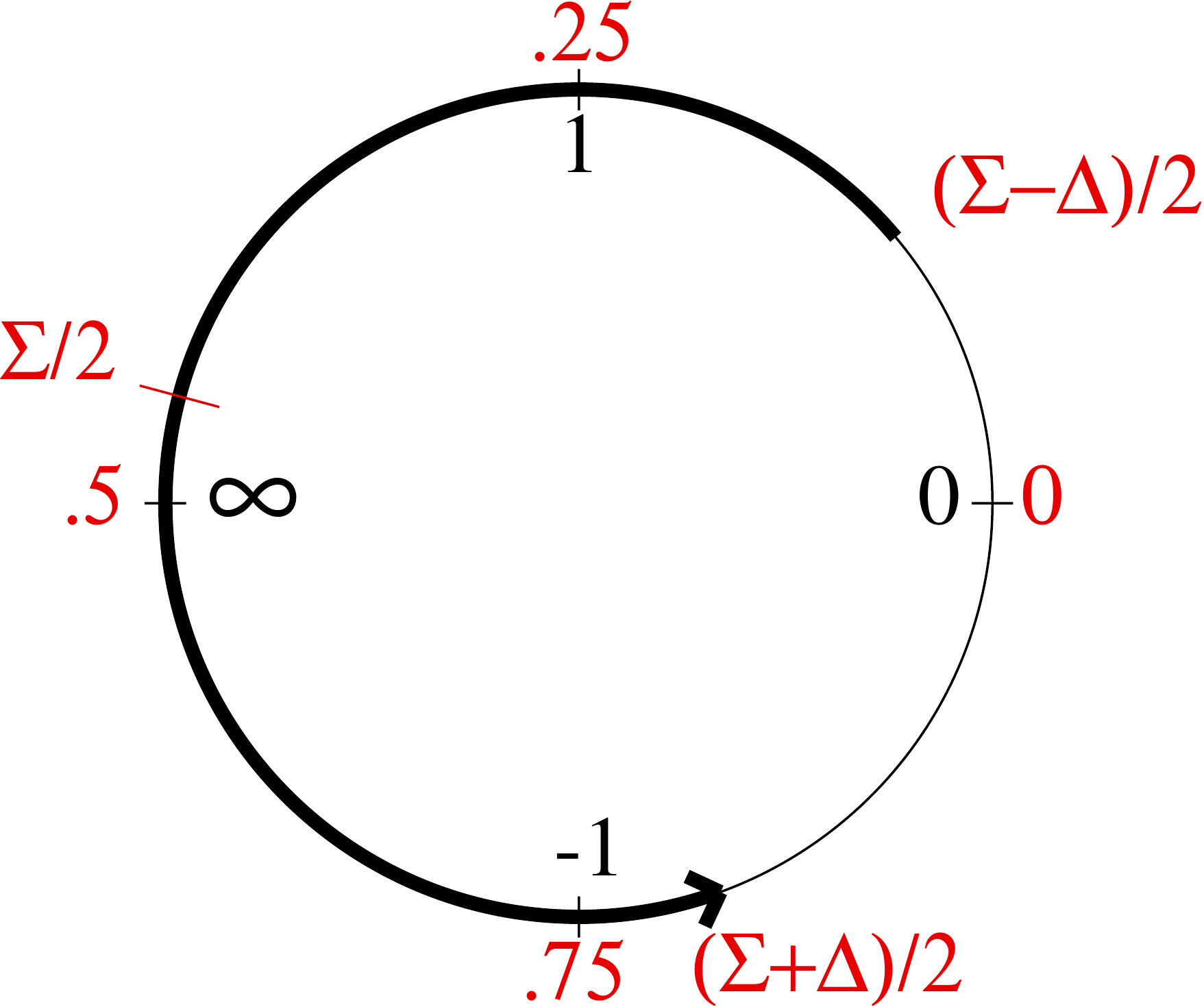}}
  \caption{\label{f2circ} Identifying $\widehat\R$ with the unit circle, or
    with $\R/\Z$.
    Several points $x\in\widehat\R$ are indicated in black, inside the circle,
    while the corresponding coordinates  $t\in \R/\Z$, with $\tan(\pi t)=x$,
    are indicated in red, outside.     The basic
    invariant for a map in canonical form is the image $f(\Rhat)$,
    indicated here (for a typical example of  shape $+-+$) by a
    heavy black arc. The length of this arc in $\R/\Z$ coordinates is equal
    to $\Delta\in(0,1)$, and its midpoint is  $\Sigma/2$.}
\end{figure}

\begin{coro}\label{c2}
  If $f$ is in canonical form, then the image $f(\Rhat)
  \subset \Rhat\cong\bP^1$ is the circle arc of length $\Delta$ in $\R/\Z$
  coordinates, with end points $\eta$ and $\theta$, and with mid point $\Sigma/2$.
\end{coro}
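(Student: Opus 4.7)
The plan is to pin down which of the two arcs of $\Rhat$ bounded by the two critical values is the image $f(\Rhat)$ by computing three specific points on the image and using connectedness. Because the canonical form depends only on $x^2$, we have the symmetry $f(-x) = f(x)$, so the image is already swept out as $x$ ranges over the half-circle $[0,\infty]$; this will be useful at the end.

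First I would locate the critical values. The canonical form $f(x) = (Ax^2+B)/(Cx^2+D)$ has critical points at $x=0$ and $x=\infty$, so the two critical values are
\[
  f(0) \;=\; B/D \;=\; \sin(\pi\eta)/\cos(\pi\eta) \;=\; \tan(\pi\eta),
  \qquad
  f(\infty) \;=\; A/C \;=\; \tan(\pi\theta).
\]
Under the identification $\Rhat\cong\R/\Z$ given by $t\leftrightarrow\tan(\pi t)$, these correspond to the points $\eta$ and $\theta$. Thus $f(\Rhat)$ is one of the two arcs of $\R/\Z$ with endpoints $\eta$ and $\theta$.

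Next I would evaluate $f$ at one well-chosen interior point. Since $1 \in [0,\infty]$, the value $f(1)$ lies in $f(\Rhat)$. Using the sum-to-product identities
$\sin\alpha+\sin\beta = 2\sin\tfrac{\alpha+\beta}{2}\cos\tfrac{\alpha-\beta}{2}$ and $\cos\alpha+\cos\beta = 2\cos\tfrac{\alpha+\beta}{2}\cos\tfrac{\alpha-\beta}{2}$, I expect
\[
  f(1) \;=\; \frac{A+B}{C+D}
  \;=\; \frac{2\sin(\pi\Sigma/2)\cos(\pi\Delta/2)}{2\cos(\pi\Sigma/2)\cos(\pi\Delta/2)}
  \;=\; \tan(\pi\Sigma/2),
\]
where the common factor $\cos(\pi\Delta/2)$ is nonzero because $0<\Delta<1$. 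Thus in $\R/\Z$ coordinates the point $\Sigma/2$ lies in $f(\Rhat)$.

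Finally, since $\Sigma/2 = (\eta+\theta)/2 = \eta + \Delta/2 = \theta - \Delta/2$ and $0 < \Delta < 1$, the point $\Sigma/2$ is the midpoint in $\R/\Z$ of the arc of length $\Delta$ running positively from $\eta$ to $\theta$. As $f(\Rhat)$ is connected with endpoints $\eta,\theta$ and contains this midpoint, it must coincide with that arc, proving both the length and midpoint claims. The only mildly technical step is the sum-to-product simplification of $f(1)$; everything else is immediate from the definitions of $\Sigma$ and $\Delta$ together with the symmetry $f(-x)=f(x)$.
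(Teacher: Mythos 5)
Your argument is correct, and it takes a genuinely different route from the paper's. The paper computes the endpoints $f(0)\leftrightarrow\eta$ and $f(\infty)\leftrightarrow\theta$ exactly as you do, but then, rather than computing anything further, it observes that which of the two arcs bounded by these endpoints is $f(\Rhat)$ (and whether its midpoint is $\Sigma/2$ rather than $\Sigma/2+1/2$) is a locally constant question over the connected parameter space $(\R/2\Z)\times(0,1)$, so it suffices to check the single example $f(x)=x^2$, where $\eta=0$, $\theta=1/2$, $\Sigma=\Delta=1/2$, and $f(\Rhat)=[0,\infty]$ visibly matches. You instead evaluate $f$ at the interior point $x=1$ and simplify
\[
  f(1)\;=\;\frac{A+B}{C+D}\;=\;\frac{\sin(\pi\theta)+\sin(\pi\eta)}{\cos(\pi\theta)+\cos(\pi\eta)}\;=\;\tan\!\left(\frac{\pi\Sigma}{2}\right),
\]
the cancellation of $\cos(\pi\Delta/2)$ being legitimate because $0<\Delta<1$. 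Since $\Sigma/2=(\eta+\theta)/2$ lies strictly in the interior of exactly one of the two arcs bounded by $\eta$ and $\theta$ (namely the one of length $\Delta=\theta-\eta$), and $f(\Rhat)$ is a connected arc with those endpoints containing $f(1)$, this identifies $f(\Rhat)$ directly, with no appeal to continuity or to a reference example. The trade-off is that the paper's argument is shorter and avoids any trigonometric identity, while yours is more explicit and yields as a byproduct the pleasant formula $f(1)\leftrightarrow\Sigma/2$. Two minor polish points: the symmetry $f(-x)=f(x)$ you mention at the outset is not actually used in your final argument, and the case $C+D=0$ (so that $f(1)=\infty$, i.e.\ $\Sigma/2\equiv 1/2$) is handled automatically by the projective-line identification but could be flagged for completeness.
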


Compare \autoref{f2circ}. Since $\Sigma$ is well defined mod $2\Z$, it follows
that $\Sigma/2$ is well defined mod $\Z$.

\begin{proof}[Proof of $\autoref{c2}$] The end points of $f(\Rhat)$ are the critical
  values
  $$f(0)~=~B/D~=~\big(\sin(\pi\eta):\cos(\pi\eta)\big) \quad{\rm and}\quad
  f(\infty)~=~A/C~=~\big(\sin(\pi\theta):\cos(\pi\theta)\big)~,$$
  corresponding to the points $t=\eta$ and $t=\theta$ in $\R/\Z$.
  These two points divide the circle into two arcs. We must check that $\Sigma/2$ is
  the center point of the arc corresponding to $f(\Rhat)$, which has length $\Delta$.
  It is enough to check this for a single example, since all other cases will then follow
  by continuity.   As our example, for $f(x)=x^2$ with $A=D=1$ and $B=C=0$, it is not
  hard to check that
  $$ \eta=0\,,\quad \theta=1/2\,,\quad {\rm and}\quad \Sigma=\Delta=1/2~.$$
On the other hand, $f(\Rhat)=[0,\,+\infty]$  corresponds to the interval $0\le t\le 1/2$,
with center point at $\Sigma/2=1/4$, as required. This completes the proof.  \end{proof}

\subsection*{\bf Orientation Reversal: The Canonical Involution $\I$.}
\ssk

Let $\langle f\rangle\in\M$ denote the  conjugacy class of $f$,
 and let 
$J:\Rhat\to\Rhat$ be any orientation reversing fractional linear
transformation.
There is a canonical involution $\I$ of $\M$
defined by the equation
$$ \I\big(\langle f\rangle\big)~=~\langle J\circ f\circ J\rangle ~.$$
This conjugacy class  
does not depend on the choice of $J$. If $J'$ is another involution
and if  $L=J\circ J'$ with $L^{-1}=J'\circ J$, then evidently
$$L\circ(J'\circ f\circ J') \circ L^{-1}~=~J\circ f \circ J~, $$
so the two are conjugate.
For example we could take $J(x)= -x$ or $1/x$. 
If we consider maps normalized so that $f(\Rhat)=[0,1]$,
then the most convenient choice is $J(x)=1-x$, corresponding to a
$180^\circ$ rotation of the graph of $f$.

The fixed points of $\I$ form the  \textbf{\textit{symmetry locus}}.
The  conjugacy class  $\langle f\rangle$ 
belongs to this symmetry locus if and only if $f$  commutes
with some orientation reversing fractional linear transformation, which
necessarily interchanges the two critical points of $f$.

\begin{rem}\label{R-I2}
  This orientation reversing involution also reverses combinatorics,
{replacing} $\(m_0, \ldots, m_n\)$ by the sequence
$$ \(n-m_n\,, ~~n-m_{n-1}\,, ~~ \ldots\,,~~ n-m_1\,,~~ n-m_0\)~.$$
It acts on the Epstein parameters by sending $(\mu,\,\kappa)$ to 
$(\mu,\,-\kappa)$.
\end{rem}
\msk

\begin{definition}\label{D-M/I} Let $\M/\I$ be the quotient
 in which each  conjugacy class $\langle f \rangle$ is identified
 with $\I(\langle f\rangle)$. This is the appropriate moduli space to
work with when studying properties which do not depend on orientation.
Since the involution $\I$ acts
on the cylinder by mapping each pair
$(\Sigma,\,\Delta)\in (\R/2\Z)\times(0,1)$ to the  
pair {$(1-\Sigma,\,\Delta)$}. It follows that each pair
$(\langle f\rangle,~\I\langle f\rangle)$ has a unique representative
for which $|\Sigma|\le 0.5$. {\sf In other words, 
the middle half of \autoref{F-canmod}, consisting of pairs
$(\Sigma,~\Delta)$ with $|\Sigma|\le 0.5$, maps bijectively onto
$\M/\I$.}
\end{definition}\msk

\begin{figure} [t!]
  \begin{center}
    \begin{overpic}[width=4in]{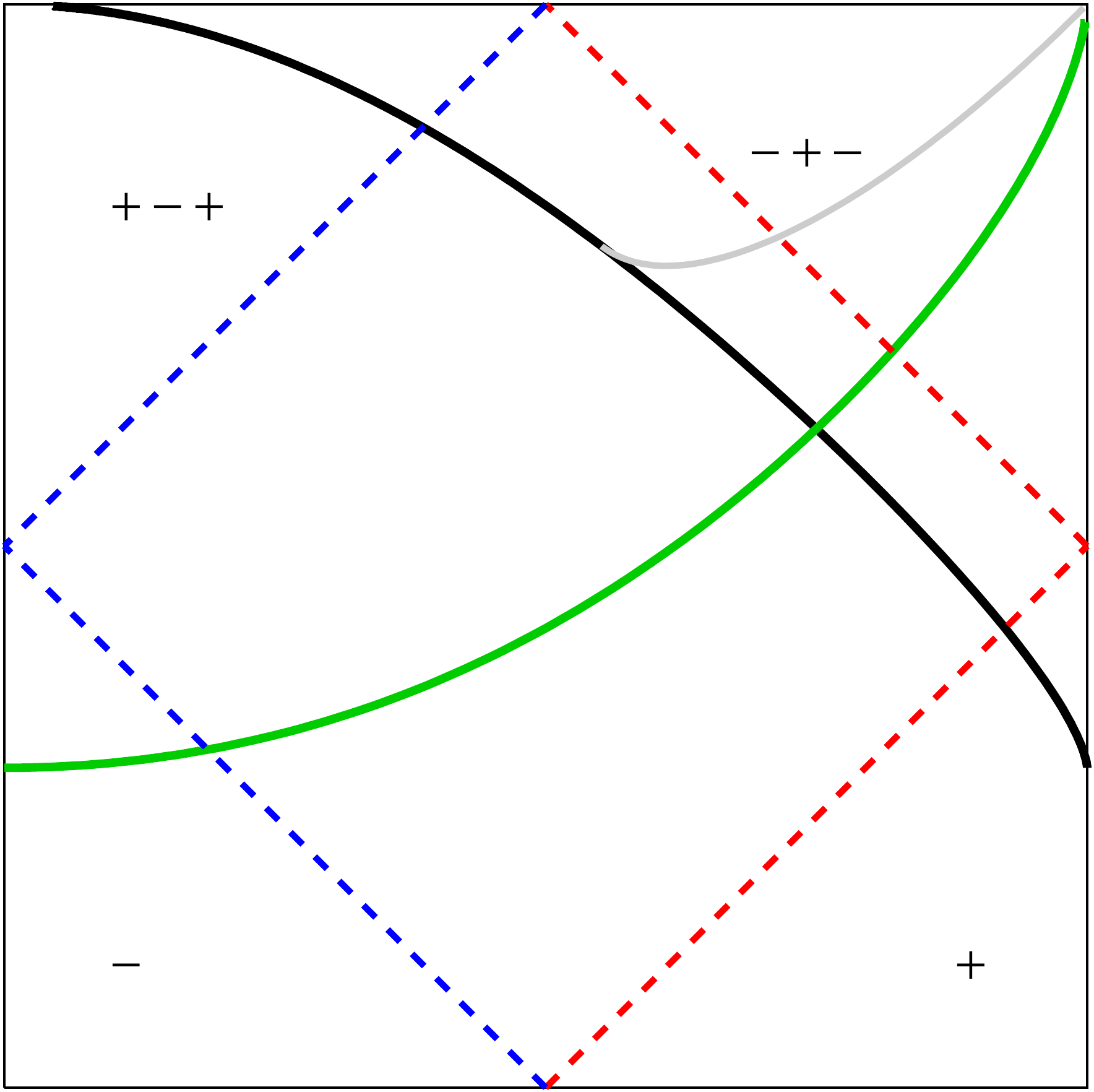}
      \put(180,265){$\rm SL$}
      \put(100,265){$\rm SL$}
      \put(140,265){$\rm SL$}
      \put(213,220){$\F(1,2)$}
      \put(245,190){$\F(2,1)$}
      \put(125,150){\rm unimodal}
      \put(80,190){$\F(0,1)$}
      \put(125,90){$\F(1,0)$}
      \put(128,50){$J_\R=\emptyset$}
    \end{overpic}
    \caption{\label{F-M/I}   The quotient space $\M/\I$ can be
    identified with the square
  region $[-.5,~.5]\times(0,1)$ between the two dotted green lines of
  \autoref{F-canmod}. In the figure above, the black curve $\Per_1(1)$ 
  and the green curve $\Per_1(-1)$  divide $\M/\I$ into four regions
  $\F(\a,\,\r)$, distinguished by the number of attracting and
  repelling fixed points; while the dotted
  red and blue lines divide it independently into five regions,
  with the unimodal region in the center. The left and
  right boundaries form the symmetry locus and are
  part of $\M/\I$; but the top and bottom boundaries
  represent ideal limit points. The shift locus $\rm SL$ is the subset 
  of $\F(1,2)$ which lies above both $\Per_1(1)$ and the gray Chebyshev
  curve. It intersects the $+-+$, unimodal, and $-+-$ regions.
  (See \autoref{R-M/I}.)}
\end{center}
\end{figure} 

\begin{rem}[Important subsets of $\M/\I$]\label{R-M/I}
  For a picture of $\M/\I$ see \autoref{F-M/I}.
(Compare the older picture 
  of $\M/\I$ in \autoref{F-old}; but be 
 warned that comparison of the two  pictures can be very confusing.)
  Just as in \autoref{F-canmod},
  the diagonal lines representing conjugacy classes of polynomial
  and co-polynomial  shape divide the figure into bimodal, unimodal
  and monotone regions. However in this case there are only five such
  regions since we no longer distinguish between $+ -$ unimodal
  and $- +$ unimodal.

  There is a quite different subdivision of $\M/\I$  as follows.
  The black curve in \autoref{F-M/I}
    represents $\Per_1(+1)$, the set of all conjugacy classes with
  a fixed point of multiplier $+1$; and similarly the green curve
  represents $\Per_1(-1)$. These curves divide $\M/\I$ into four
   ``fixed point regions'', which we denote by $ \F(\a,~\r)$,  
  where $\a$ is the number of attracting fixed points of $f$
  for $\langle f\rangle$ in this region, and $\r$ is the number
  of repelling fixed points. Here the lower region  $\F(1,0)$ is
  dynamically rather boring. It consists of maps for which the topological
  entropy is zero and the real Julia set $J_\R$ is empty. (All real orbits 
  converge to the unique attracting fixed point.) The right hand region
  $\F(2,1)$ is somewhat more interesting. Here $J_\R$ consists of one repelling
  fixed point in $f(\Rhat)$ and its one preimage. The orbit of every
  point in  $\Rhat\ssm J_\R$ converges to one of the two attracting fixed
  points.

  The two regions  $\F(0,1)$ and $\F(1,2)$ above the green curve
  are much more interesting. In particular, every critically
  finite class which is not of polynomial  shape must be contained
  in  $\F(0,1)$; while every one which is of polynomial  shape
  must be contained in  $\F(1,2)$ (except in the special case of
  $\langle x\mapsto x^2\rangle$).\msk

  There is an important sub-region of $\F(1,2)$. The
  \textbf{\textit{hyperbolic shift locus}}, labeled as ${\rm SL}$,
  is the open subset consisting all conjugacy classes $\<f\>$ for which:
  
  \begin{quote}
    \begin{itemize}
    \item[$\bullet$] all orbits in $\Rhat\ssm J_\R$ converge to the unique
      attracting fixed point; and

    \item[$\bullet$] if we put the critical points at zero and infinity,
      then every orbit \hbox{$x_0\mapsto  x_1\mapsto\cdots$} in $J_\R$
      is uniquely determined by the sequence of signs
  $({\rm sgn}(x_0),\,{\rm sgn}(x_1),\,\cdots)$, where any such sequence can
  occur.
\end{itemize}
\end{quote}
  
\noindent  The closure $\overline{SL}$ is the set of pf points with
maximal topological entropy $\log(2)$. (See \cite[Prop. 3.6]{F}.) 
The boundary of  the hyperbolic shift locus within $\M/\I$ is
  a piecewise  analytic curve.   The left part of the boundary is a
  subset of $\Per_1(1)$ called the
  \textbf{\textit{parabolic shift locus}}. The right part of the
  boundary   (the gray curve in our figure) will be called the
  \textbf{\textit{Chebyshev curve}}. It consists
  of all   $\langle f\rangle$ which have one critical orbit of the form
  $$~\xymatrix{\du{c_1}\ar@{|->}[r]& v_1 \ar@{|->}[r]&  x\ar@{->}@(ur,dr)}~~~~~,$$
  where $x$ is a fixed point with multiplier $\mu>1$. (This curve
 intersects the locus of polynomial  shape maps precisely in the
  class of the Chebyshev map $x\mapsto x^2-2$.)
\end{rem}

\begin{rem}[\bf Computing with Epstein coordinates]\label{R-ep} For computational purposes, Epstein coordinates,
  with $$f(x)=\mu x/(1+2\kappa x +x^2)~,$$ are often convenient. One 
  useful quantity is the \textbf{\textit{discriminant}}
  $$ D~=~\mu+\kappa^2-1$$
  which is positive if there are three real fixed points, negative if there
  is only one, and zero along the curve $\Per_1(1)$. (Caution: $D$ can be
  very large for points which are very close to $\Per_1(1)$.)
  The \textbf{\textit{hyperbolic shift locus}}
  is the region defined by the inequalities
$\mu>1$ and $|\kappa|>1$. The \textbf{\textit{parabolic shift locus}} is
the part of the boundary of this region with     $\mu=1$ and $|\kappa|>1$,
while the \textbf{\textit{Chebyshev curve}} is the rest of the boundary with
$|\kappa|=1$ and $\mu\ge 1$. (It extends analytically into the shift locus,
but with $\mu<1$).
  \end{rem}
\msk

\begin{figure}
  \centerline{\includegraphics[width=4.5in]{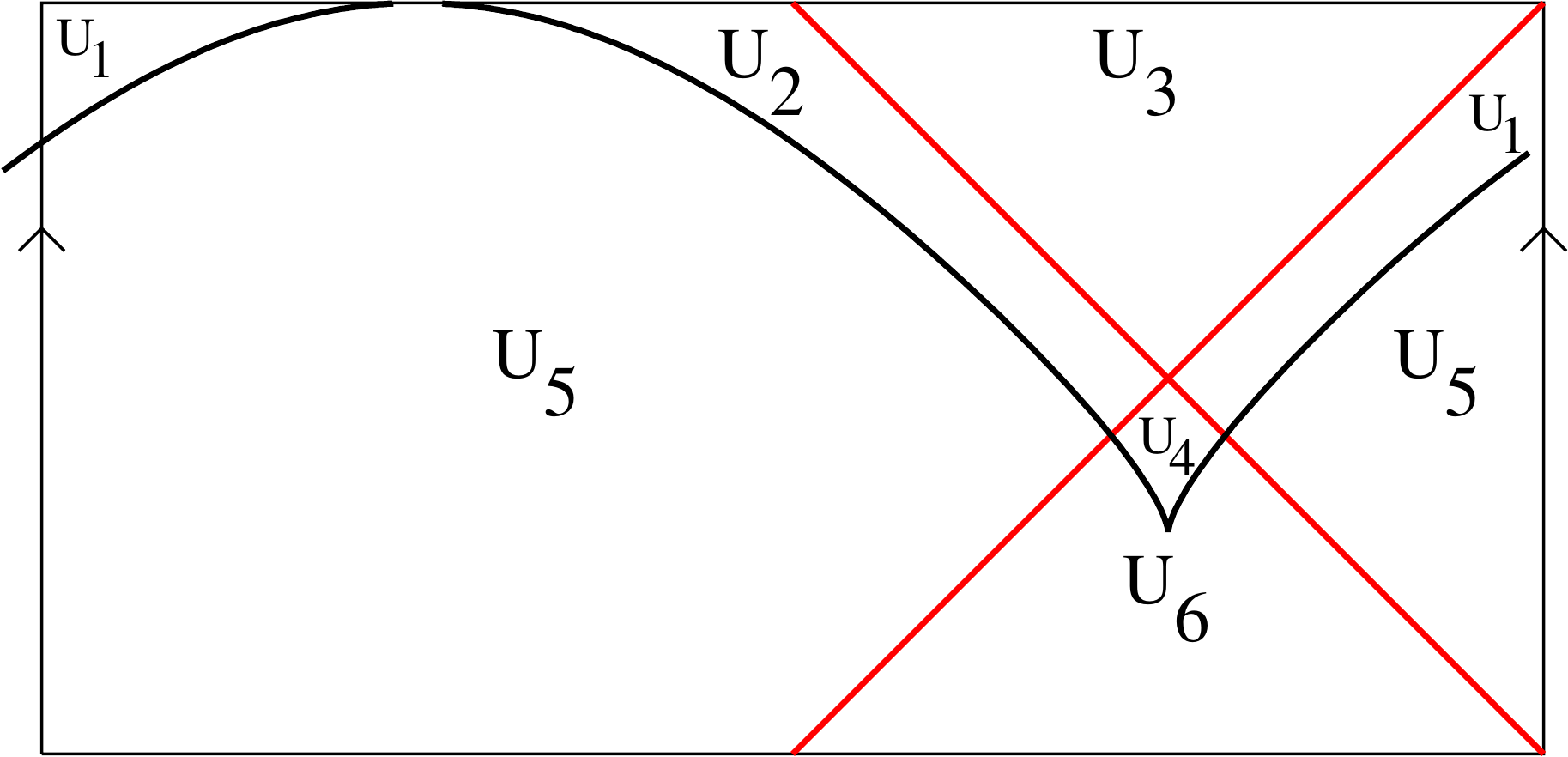}}
  \caption{\label{F-psiinv} This figure shows a partition of the moduli space
    $\M$ into six regions with the following property: If $\Psi$ is
    the natural transformation from Epstein coordinates to canonical coordinates,
    then each branch of $\Psi^{-1}$ is smooth and real analytic within each
    region.}\msk
  \end{figure}

  \begin{rem}[\bf Epstein Coordinates $\leftrightarrow$ Canonical Coordinates] 
    \label{R-ep2can} The natural transformation
    $$\Psi:(\mu,\,\kappa)~\mapsto~(\Sigma,\,\Delta)$$
 from Epstein coordinates  to canonical coordinates 
    is  smooth, real analytic, and not too hard to compute. (Compare the
    proof of \autoref{t1}.) But this does not mean that it is easy to understand.
    Given a real quadratic map $f$ and a non-critical fixed point $x_0=f(x_0)$,
    we can choose coordinates which place the critical points at $\pm 1$ and
    place $x_0$ at the origin, and then compute the corresponding Epstein
    coordinates. If $\<f\>$ lies below the curve $\Per_1(1)$ in moduli space,
    then there is only one real fixed point, so that $\Psi^{-1}\<f\>$ 
    is uniquely defined.\footnote{Here we are identifying a conjugacy class
      $\<f\>\in\M$ with its coordinate pair $(\Sigma,\,\Delta)$.} 
But if $\<f\>$ is above 
    $\Per_1(1)$ then there are three distinct real fixed points, so that
    generically there are three different branches of $\Psi^{-1}$. Points
    on the polynomial locus provide an additional complication, since one of
    their fixed points is critical, and hence doesn't correspond to any
    choice of Epstein coordinates.
    
      However, if we remove both the (red) polynomial locus  and  the
      (black) curve  $\Per_1(1)$ from $\M$, as illustrated in \autoref{F-psiinv},
      then we are left with the complementary open set which has six
      connected components $U_j$. In some sense, the inverse map is well
      behaved and real analytic on each of these components. More
    precisely, $\Psi^{-1}$ is uniquely defined on the lower regions $U_5$ and
    $U_6$, and has three distinct well defined branches on each of the upper
    regions $U_1$, $U_2$, $U_3$, $U_4$. Here the image $\Psi^{-1}(U_5)$ is
    contained in the left half-plane $\mu<0$; while $\Psi^{-1}(U_6)$ is
    contained in the right half-plane $\mu>0$.
 For $U_3$ (which is precisely the $-+-$ region),
    two of the branches of $\Psi^{-1}$ map to the left half-plane, and one
    maps to the right half-plane. Similarly, for $U_1$ and $U_2$,
    which intersect the $+-+$ region,  two branches map to the right and
    one to the left. On the other hand,  for $U_4$ which lies in the $+$
    monotone region, all three branches map to the right half-plane.\msk

    We will be particularly interested in asymptotic behavior as
    $\mu\to\pm\infty$.

\begin{theo}\label{T-asymp}
  As $\mu\to\pm\infty$ with $\kappa$ fixed:

  \begin{itemize}[beginpenalty=10000,midpenalty=9999, 
                 itemsep=0pt plus 1ex,topsep=0pt plus 1ex]
 \item[$\bullet$] the pair $(\Sigma,~\Delta)$ converges to $(\pm .5,~1)$,

 \item[$\bullet$] the difference ratio
   $\displaystyle{\frac{\pm .5-\Sigma}{1-\Delta}}$ converges to  $-\kappa$,
   and

 \item[$\bullet$] the product $|\mu|(1-\Delta)$ converges to $4/\pi$.
 \end{itemize}
 \end{theo}

\begin{proof}
Start with $~f(x)=\mu\,x/(1+2\,\kappa\,x + x^2)~$. Let $~p=2(1+\kappa)~$
and {$~q= 2(1-\kappa)$,} so that $p+q=4$ and $p-q=4\,\kappa$.
The orientation preserving automorphism $$L(x) ~=~(1+x)/(1-x)\quad{\rm
satisfies}\quad L~:~0\mapsto 1\mapsto\infty\mapsto -1\mapsto 0~.$$
Furthermore
\begin{equation}\label{e0}
 L\circ f \circ L^{-1}(x)~=~ \frac{a\,x^2+b}{c\,x^2+d}~,\qquad{\rm with}
\qquad\begin{matrix} a = p + \mu, & b = q -\mu ,\\  c = p - \mu ,&   d = q + \mu .
  \end{matrix}\end{equation}
Following the proof of \autoref{t1}, we must now solve the equation
\begin{equation}\label{e1}
 u^3\, c^2~+~ u\,a^2~-~d^2/u~-~b^2/u^3~=~ 0~.\end{equation}
It will be convenient to make the substitutions $ \mu=1/s$ and $u=e^t$.
Multiplying equation \eqref{e1} by $s^2=1/\mu^2$, it takes the form
$$ e^{3\,t}(1-s\,p)^2 + e^t (1+s\,p)^2 -e^{-t}(1+s\,q)^2 - e^{-3\,t}(1-s\,q)^2
~=~ 0~.$$
For each fixed value of $\kappa$,
the left side of this equation is clearly
a real analytic function $\Omega(s,t)$
which can be expanded in an everywhere convergent power series
$$\Omega(s,t)~=~ \sum_{i,j=0}^\infty \omega_{i,j}s^i t^j~.$$
It is not difficult to compute the first few coefficients:

\begin{center}
\begin{tabular}{lll}
$\omega_{0,0}=0$ & $\omega_{0,1}=8$ & $\omega_{0,2}= 0$\Bstrut \\
$\omega_{1,0}=0$ & $\omega_{1,1}= -16$& \Bstrut\\
 $\omega_{2,0}=32\,\kappa$ & & \\
\end{tabular}~,
\end{center}
so that
  $$ \Omega(s,t)/8 = t\,(1-2\,\,s) + 4\,\kappa\, s^2
  + ({\rm higher~order~terms})~=~0~.$$
This implies the asymptotic equality 
  $$ t~\simeq~ -4\kappa\,s^2/(1-2s)~ \simeq~ -4\,\kappa\,s^2
  \quad{\rm as}\quad t,~ s~\to 0~.$$
  Thus, if we ignore terms of order $s^2$ then we can just take $t=0$
  hence $u=1$. This means that we can just use the original values of
  $a,~b,~c,~d$ as given in \autoref{e0}. 
  The angle $\theta$ can now be computed,
     modulo $1/2$,  by the equation
     $$\tan(\pi\,\theta)~=~a/c~=~(p+\mu)/(p-\mu)~=~(p\,s+1)/(p\,s-1)
      ~=~ -1 -2\,p\,s +O(s^2)~,$$
  or equivalently
  $$ \theta ~\equiv~ \frac{1}{\pi}\arctan(-1-2\,p\,s) + O(s^2)
  \quad ({\rm mod}~ \scriptsize{\frac{1}{2}}\,\Z)~.$$

  {\bf Note.} For a direct computation of $\theta\in\R/\Z$ we would have to
  work with both the sine and cosine functions; but the computation mod
  1/2 using the tangent is easier, 
 and will be enough for the proof.\ssk

 Since $\arctan(-1)=-\pi/4$ and  the derivative of $\arctan(x)$
  evaluated at $x=-1$ is $1/\big(1+(-1)^2\big)=1/2$,
  this yields
  $$\theta~\equiv~ - .25~-~ p\,s/\pi + O(s^2)\quad ({\rm mod}~ 1/2)~~.$$
  Similarly, since $\tan(\pi\eta)\equiv~b/d~\equiv~(q-\mu)/(q+\mu)$, we get
  $$\eta~\equiv~ -.25~+~q\,s/\pi +O(s^2).$$ Therefore
  $$\Sigma~=~\theta+\eta~\equiv~  (q-p)s/\pi \equiv  +4\,\kappa\,s/\pi
  +O(s^2)\qquad ({\rm mod}~ 1/2)~~.$$
  Similarly
  $$ \Delta ~=~\theta-\eta~\equiv~-4\,s/\pi +O(s^2)\qquad ({\rm mod}~ 1/2)~~.$$
  What we want is the value of $\Sigma\in \R/2\Z$ modulo two, and the actual
  value of $\Delta\in(0,1)$; but each of these formulas  may be wrong by an
  integer or half-integer additive constant. However this constant cannot
  change as  as we vary $s>0$ or as we vary $s<0$. This means that
  to get the required formulas, we need only choose the right additive
  constants for   any one case  
  with $\mu\to  +\infty$ and any one case with $\mu\to -\infty$. The general
  cases will then follow by continuity. The correct formulas, obtained 
  in this way, are
  $$ \Sigma ~=~ \pm .5+ 4\,\kappa\,s/\pi +O(s^2) $$
  and $$ \Delta= 1- 4\,s/\pi + O(s^2)~.$$
Replacing $s$ with $1/\mu$, the theorem as stated follows easily. 
\end{proof}
\end{rem}\msk

\begin{rem}[{\bf The Filom-Pilgrim Maps}]\label{R-FP}
  These form a rich family of critically finite maps of Type B. 
  Given relatively prime numbers {$0<p<q$,} consider the combinatorics
  $$ \(p,\,p+1,\,p+2,\,\ldots,\,q-1,\,0,\,1,\,\ldots,\,p-1\)~,$$
  corresponding to a cyclic permutation of the integers between zero and
  $q-1$. Filom and Pilgrim \cite{FP} show that this combinatorics
  is unobstructed in all cases, yielding  maps of Type B which they
  denote  by $f_{p/q}$. Note that $f_{p/q}$ is of shape $+-+$ except in the
  two extreme cases $p=1$ and $p=q-1$ where it is co-polynomial.
  Under orientation reversal, we have $\I\<f_{p/q}\>= \<f_{(q-p)/q}\>$.

  \begin{figure}[ht!]
  \begin{center}
    \begin{overpic}[width=5.5in]{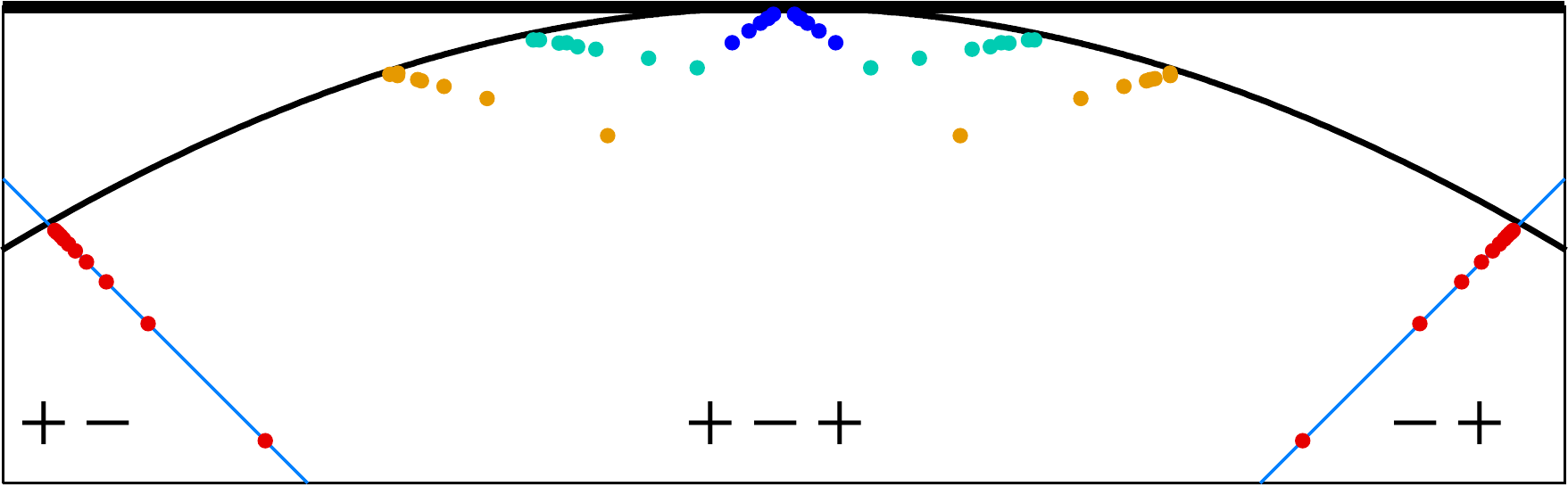}
      \put(350,100){\rm SL}
      \put(45, 100){\rm SL}
   \end{overpic}
  \caption{\label{F-FP} Part of the $+-+$ region in $\M$,
    with $\Delta\ge .75$.
    The red dots represent those Filom-Pilgrim conjugacy classes
    $\FP(p,\,p')$ for which the minimum of $p$ and $p'$ is $1$; 
    the orange dots represent points with ${\rm min}(p,\,p')=2$, and the
    green points have minimum $3$. On the other hand,
    the blue dots represent points with $p'=p\pm 1$ and with $p,\,p'\ge 4$.
    The  black curve is $\Per_1(1)$.}
\end{center}
\end{figure}

  For our purposes 
it will be convenient to introduce the notation $\FP(p,\,q-p)$
  for the conjugacy class $\<f_{p/q}\>$ in moduli space.
  Thus the point $\FP(p,\,p')\in\M$ is well defined for every pair of
  strictly positive coprime integers $p$ and $p'$. Here $p'$ is the number of
  iterations needed to map the first critical point to the second, and $p$
  is the number needed to map the second critical point back to the first.
  The orientation reversing involution satisfies $\I:\FP(p,p')\leftrightarrow
  \FP(p',\,p)$

  \msk
  
  \begin{conj} As $p'\to\infty$ with fixed $p$, the conjugacy classes
  $\FP(p,p')$ tend to a well defined limit $\FP(p,\,\infty)$
  which belongs to the
  curve $\Per_1(1)$ in moduli space; and similarly, as $p$ tends to infinity
  with fixed $p'$ there is a well defined limit $\FP(\infty,\,p')\in\Per_1(1)$.
  On the other hand, if both $p$ and $p'$ tend to infinity, then the limit
  is the ideal point with coordinates $(\Sigma,\,\Delta)=(-.5,\,1)$.
  \end{conj}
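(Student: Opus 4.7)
The idea is to combine the isolation of critically finite points within their hyperbolic components with the asymptotic analysis of \autoref{T-asymp}, reducing the conjecture to a quantitative statement about how long real critical orbits dwell near repelling real fixed points in the outer laps of the $+-+$ region.

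\emph{Compactness and subsequential limits.} The natural compactification of the cylinder $\M$ described in \autoref{c1} is compact, so for each fixed $p$ the set $\{\FP(p,p') : p'\ge 1,\ \gcd(p,p')=1\}$ has a nonempty set of accumulation points in $\overline{\M}$. By \autoref{T-notexc} and the standard fact that critically finite maps are isolated in their hyperbolic components, no accumulation can occur at another critically finite map; every subsequential limit must therefore lie either on the parabolic boundary of some hyperbolic component of period $q=p+p'$, or on the ideal boundary $\partial\overline{\M}$.

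\emph{Identifying the limit with $\Per_1(1)$.} For a map of shape $+-+$, the combinatorics of the Filom--Pilgrim cycle forces the orbit segment $c_1\mapsto f(c_1)\mapsto\cdots\mapsto f^{p'}(c_1)=c_2$ to contain, for large $p'$, a sub-segment of length $p'-O(1)$ lying close to a real fixed point $x_\ast$ in one of the outer laps. Near a repelling real fixed point of multiplier $\lambda$ with $|\lambda|>1$, iterates within a $\delta$-neighborhood escape in roughly $\log(1/\delta)/\log|\lambda|$ steps. Since the orbit enters and leaves this neighborhood at distances bounded away from both $x_\ast$ and the boundary of $f(\Rhat)$, one is forced to have $\log|\lambda|=O(1/p')$, hence $|\lambda|\to 1$ as $p'\to\infty$, placing the limit on $\Per_1(1)$. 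Continuous dependence of the pair (multiplier at $x_\ast$, combinatorial type of the remaining $p$-step return arc) on $(\ml,\ka)$ then pins down the limit uniquely in terms of $p$, giving $\FP(p,\infty)$. The case $\FP(\infty,p')$ is symmetric.

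\emph{The double limit and the main obstacle.} For $p,p'\to\infty$ simultaneously, the previous step applied to both return arcs forces both outer multipliers to tend to $1$; since no finite $(\ml,\ka)$ realizes a simultaneous parabolic bifurcation on both sides, every subsequential limit must escape to the ideal boundary $\Delta=1$. The orientation-reversing symmetry $\I:\FP(p,p')\leftrightarrow\FP(p',p)$ of \autoref{R-I2} forces all limit points to lie on the symmetry locus $\Sigma\in\{1/2,-1/2\}$; the $+-+$ shape of $f_{p/q}$ together with the geometry of \autoref{F-canmod} selects $\Sigma=-1/2$, and \autoref{T-asymp} then gives the stated ideal point $(-1/2,1)$. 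The main obstacle is making the escape-rate heuristic of the second paragraph rigorous: a clean argument probably requires normal-family or complex-bounds estimates for the first-return map in a Koenigs coordinate at $x_\ast$, together with a monotonicity-in-$p'$ argument to promote subsequential convergence to full convergence of the family $\FP(p,\cdot)$.
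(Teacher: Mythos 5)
The statement you are addressing is explicitly labeled a \emph{Conjecture} in the paper, and immediately after stating it the authors write ``We don't know why these statements should be true; but empirical evidence certainly suggests them.'' There is no proof in the paper to compare against. The closest thing the paper offers is the observation, following \autoref{P-FP}, that the topological entropy of $\FP(p,p')$ depends only on $p+p'$ and converges monotonically to $\log 2$, so that the conjugacy classes must approach the closed set $\overline{\rm SL}$ of maximal-entropy classes as $p+p'\to\infty$; since the Filom--Pilgrim classes are of shape $+-+$ but lie outside the shift locus, this already forces convergence toward $\Per_1(1)$ or the ideal boundary. Your sketch does not use this entropy argument, which is a pity since it cleanly replaces (and is much stronger than) your claim that subsequential limits must lie on a ``parabolic boundary of some hyperbolic component of period $q$''; that claim as stated does not follow from isolation of critically finite points alone, since a priori the limit could be an arbitrary non-hyperbolic class.

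Two substantive errors beyond that. First, your escape-rate argument is anchored to ``a real fixed point $x_\ast$ in one of the outer laps'' with multiplier $\lambda>1$. By \autoref{L2}, a critically finite real quadratic map that is not of polynomial shape has \emph{exactly one} real fixed point, lying in the decreasing (middle) lap with multiplier $\ml<-1$; the two increasing outer laps contain no real fixed point. The correct dynamical picture as $p'\to\infty$ is a near-parabolic bottleneck in an outer lap, i.e.\ a pair of complex conjugate fixed points approaching the real line, with Pomeau--Manneville passage-time scaling $\sim \pi/\sqrt{\epsilon}$ where $\epsilon$ measures the gap between the graph and the diagonal; this is a qualitatively different mechanism from ``escape from a $\delta$-neighborhood of a repelling real fixed point in $\log(1/\delta)/\log|\lambda|$ steps,'' and the quantitative conclusion $\log|\lambda|=O(1/p')$ does not apply since there is no $\lambda$. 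Second, your symmetry argument for the double limit is incomplete: the involution $\I$ only shows that the \emph{set} of accumulation points of $\{\FP(p,p')\}$ with both indices tending to infinity is $\I$-invariant, not that every individual accumulation point is $\I$-fixed; to conclude $\Sigma=\pm 1/2$ you would still need to know in advance that the accumulation set is a single point. You acknowledge these as obstacles, and your overall strategy (compactness, isolation, a transit-time estimate, then \autoref{T-asymp}) is a reasonable program, but as written it is a heuristic plan rather than a proof, and the fixed-point misidentification would need to be repaired before the quantitative step could even be formulated correctly.
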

    \msk
    
    We don't know why these statements should be true;
    but empirical evidence   certainly suggests them. (See \autoref{F-FP}.)
    Furthermore the following is known: 

\begin{prop}[{\bf Filom and Pilgrim}] \label{P-FP} The topological entropy of
      $F(p,p')$ depends only on
      the sum $p+p'$, and is an explicitly computable number which converges 
      monotonically to $\log(2)$ as $p+p'\to\infty$.
    \end{prop}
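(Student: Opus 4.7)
The strategy is to express $h(f_{p/q})$ as the logarithm of the Perron eigenvalue of an explicit Markov transition matrix $T(p,q)$, and then to show that the characteristic polynomial of $T(p,q)$ depends only on $q=p+p'$.

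Since $f_{p/q}$ is hyperbolic of Type B, it is postcritically finite and expanding on its real invariant interval $f_{p/q}(\Rhat)$. Its topological entropy therefore equals $\log\rho(T)$, where $T=T(p,q)$ is the $0/1$ transition matrix for the Markov partition into the $q-1$ cells cut out by the $q$ postcritical marked points, and $\rho(T)$ denotes the spectral radius. Reading off the transitions from the PL model for the combinatorics $\(p,\,p{+}1,\,\ldots,\,q{-}1,\,0,\,\ldots,\,p{-}1\)$ --- slope $+1$ on the two outer laps and slope $-(q-1)$ on the narrow middle lap $[p'-1,\,p']$ --- one sees that row $p'-1$ of $T$ is all ones (the middle lap folds across every cell), while each other row $j$ has a single $1$: in column $j+p$ if $j<p'-1$, and in column $j-p'$ if $j>p'-1$.

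The key combinatorial observation is that if one extends this outer-lap shift to a permutation $\sigma$ of $\{0,\ldots,q-2\}$ by setting $\sigma(p'-1)=p-1$ (the unique index missing from the image on non-critical rows), then $\sigma$ is precisely the first-return map to $\{0,1,\ldots,q-2\}$ of the rotation $r:j\mapsto j+p$ on $\Z/q\Z$: indeed $j+p\ne q-1\pmod q$ except when $j=p'-1$, and in that exceptional case one more application of $r$ lands at $p-1$. Since $\gcd(p,q)=1$ the rotation $r$ is a single $q$-cycle, and the first-return map of any cyclic permutation to a subset of cardinality $q-1$ is again a single cycle. Writing $T=P+e_{p'-1}w^{\top}$, where $P$ is the permutation matrix of the $(q-1)$-cycle $\sigma$ and $w=(1,\ldots,1)^{\top}-e_{\sigma(p'-1)}$, the matrix-determinant lemma applied with the explicit series for $(\lambda I-P)^{-1}$ gives
\[
\det(\lambda I-T)\;=\;\lambda^{q-1}-\lambda^{q-2}-\lambda^{q-3}-\cdots-\lambda-1,
\]
an integer polynomial depending only on $q$. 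Since $T$ is nonnegative and irreducible (row $p'-1$ is all ones and $\sigma$ is a single cycle), Perron--Frobenius gives $\rho(T)=s_q$, the largest real root of this polynomial, so $h(f_{p/q})=\log s_q$ depends only on $q=p+p'$ and is manifestly computable.

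For the monotonicity and the limit, set $h_q(\lambda)=\lambda^{q-1}-\lambda^{q-2}-\cdots-\lambda-1$ and verify the telescoping identity $h_{q+1}(\lambda)=\lambda\,h_q(\lambda)-1$. Evaluating at $\lambda=s_q$ gives $h_{q+1}(s_q)=-1<0$, forcing $s_q<s_{q+1}$; since $h_q(2)=2^{q-1}-(2^{q-1}-1)=1>0$ we have $s_q<2$ for every $q$; and the defining relation $h_q(s_q)=0$ rearranges to $s_q^{q-1}(2-s_q)=1$, which forces $2-s_q\to 0$ as $q\to\infty$. Hence $h(f_{p/q})=\log s_q$ is a monotone increasing sequence converging to $\log 2$. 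The main subtlety in this plan is the identification of $\sigma$ with the first-return map of $r$, which reduces to the arithmetic fact that $(p'-1)+2p\equiv p-1\pmod q$ already lies in $\{0,\ldots,q-2\}$, so that a single extra step of $r$ suffices at the exceptional index.
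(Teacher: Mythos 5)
Your argument is correct and is self-contained, whereas the paper does not actually prove this proposition: it simply cites Filom and Pilgrim \cite[Proposition~3.2 and Lemma~4.1]{FP}. So there is no ``paper proof'' to compare against, and what you have supplied is a complete proof of the cited fact. The computation is sound throughout. In particular, the identification of the off-critical part of $T$ as the permutation matrix of the first-return map of the rotation $j\mapsto j+p$ on $\Z/q\Z$ to the complement of $\{q-1\}$ is the right structural observation, and one checks directly that it gives a single $(q-1)$-cycle $\sigma$ with $\sigma(p'-1)=p-1$. Writing $T=P+e_{p'-1}w^\top$ and applying the matrix-determinant lemma with $\det(\lambda I-P)=\lambda^{q-1}-1$ and $(\lambda I-P)^{-1}=(\lambda^{q-1}-1)^{-1}\sum_{k=0}^{q-2}\lambda^{q-2-k}P^k$, together with the fact that $w^\top P^k e_{p'-1}=1$ for $0\le k\le q-3$ and $=0$ for $k=q-2$, does yield $\det(\lambda I-T)=\lambda^{q-1}-\lambda^{q-2}-\cdots-\lambda-1$, depending only on $q=p+p'$. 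The monotonicity and limit arguments via $h_{q+1}(\lambda)=\lambda h_q(\lambda)-1$, $h_q(2)=1$, and $s_q^{q-1}(2-s_q)=1$ are all correct.

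One small inaccuracy in the set-up is worth flagging: $f_{p/q}$ being hyperbolic of Type~B means it has an attracting critical cycle, so it is \emph{not} expanding on $f(\Rhat)$. This does not affect the proof, since the formula $h=\log\rho(T)$ for a piecewise-monotone Markov interval map (Misiurewicz--Szlenk, or \cite{BK} as cited in the paper) requires only that the postcritical points give a Markov partition, not any expansion; but the justifying sentence should invoke the Markov property rather than expansion.
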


    See \cite[Proposition 3.2 and Lemma 4.1]{FP}. This clearly implies at
    least that   $F(p,p')$ converges towards the set $\Per_1(1)$ as
    $p+p'\to\infty$, since the topological entropy is a continuous function
    on $\M$ which takes the  value $\log(2)$
    only on the closure of the shift locus. (See \cite[Prop. 3.6]{F}.) 
    For graphs of individual maps see Figures \ref{f-aBn2}R, \ref{f-aBn3}L,
    \ref{f-aBn4}R, \ref{f-aBn4a}R, \ref{f-aBn5a} as well as \ref{F-FP2}.
\end{rem}
    \msk
    
  \begin{rem}\label{R-info} 
  Here are descriptions of some special points of $\M/\I$.\vspace{-.1cm}

  \begin{itemize}
\item[$\bullet$] The lower end 
  points of the two curves $\Per_1(\pm 1)$ occur at the
  conjugacy classes of $\displaystyle{f(x)=\frac{\pm x}{x^2+1}}$, with coordinates
$(\Sigma,\,\Delta)= (\pm 0.5,\,0.295167)$.\smallskip

\item[$\bullet$] The center point of the
figure, with $(\Sigma,\,\Delta)=(0,\,0.5)$, is represented by the
critically finite map $\displaystyle{x\mapsto\frac{x^2-1}{x^2+1}}$ of Type C.
(Compare \autoref{f-aC1}(left).) \smallskip

\item[$\bullet$] The crossing point between the two curves $\Per_1(\pm 1)$
is represented by the maps $\displaystyle{f(x)=\frac{x}{x^2\pm x\sqrt{2}+1}}$,
with fixed points of multiplier  $1$ at the origin and multiplier~$-1$ at
{$x=\pm\sqrt{2}$}. Here $\Sigma$ is 0.25 or 0.75, and $\Delta=0.60817$.

\item[$\bullet$] The crossing point between $\Per_1(1)$ and the locus of
  co-polynomial points is represented by the map
  $\displaystyle{x\mapsto \frac{x}{1-3x+x^2}}$, with a fixed point
of multiplier $1$ at $x=0$. Here $(\Sigma,\,\Delta)=(-0.11353,\, 0.88647)$.

\item[$\bullet$] The end point of the Chebyshev curve on $\Per_1(1)$
  has coordinates $(0.051576,~ 0.776737)$. This is the only conjugacy
  class for which the unique fixed point has multiplier one. 

\end{itemize}
\end{rem}

\begin{figure}[!htb]
\centerline{\includegraphics[width=6in]{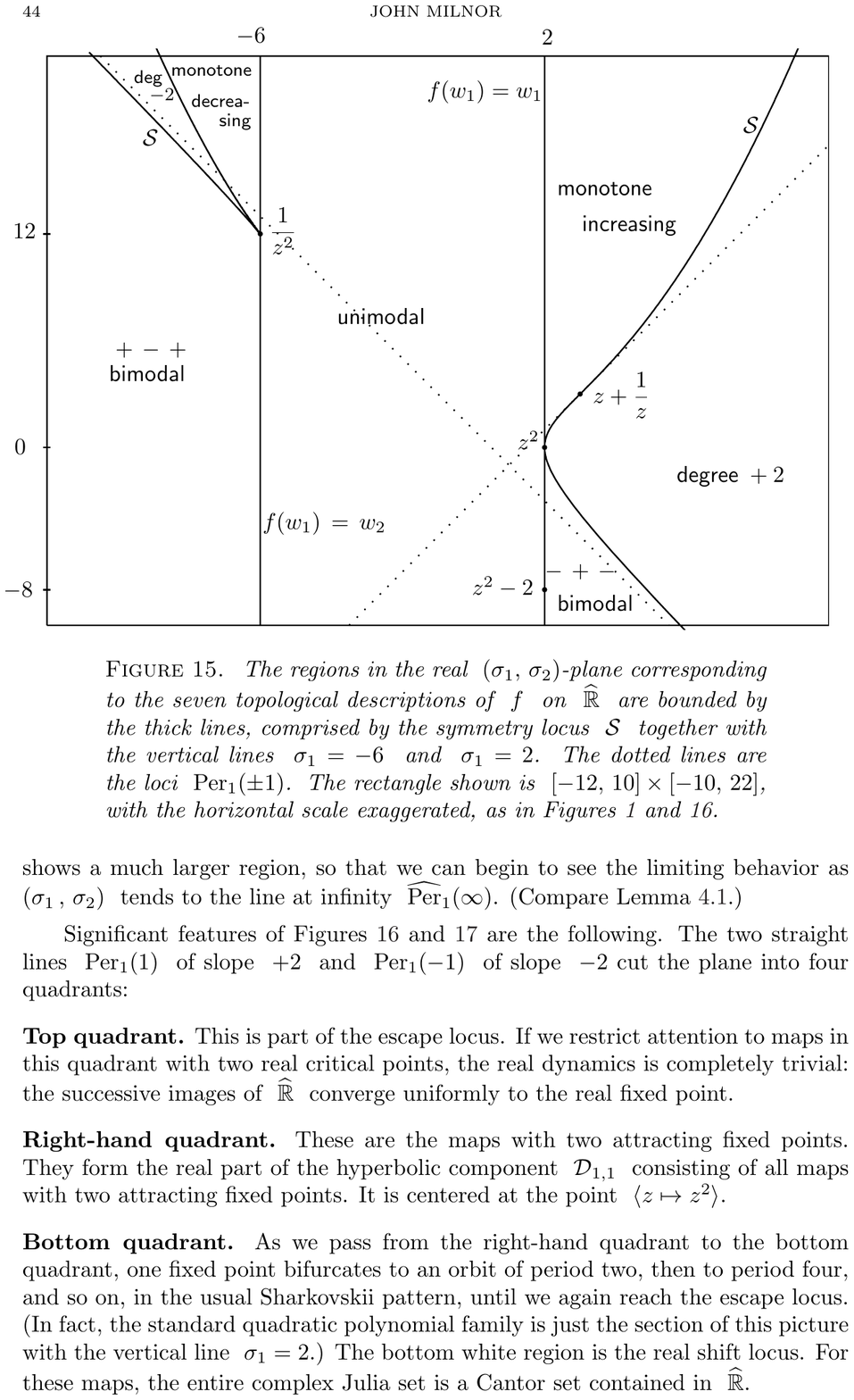}}
\caption{\label{F-old}  A composite moduli space including also maps
of degree $\pm 2$. Here the coordinates are the first two symmetric
functions of the fixed point multipliers. The vertical line through
the class of $z\mapsto z^2$ is the locus of polynomial  shape maps,
while the line through $z\mapsto 1/z^2$ is the locus of co-polynomials.
The dotted lines of slope $\pm 1$ represent maps with a fixed point
of multiplier $\pm 1$. The
figure should be cut open along each component of the symmetry
locus $\mathcal S$: There is a well defined limit as we approach $\mathcal S$
from either side; but the two limits are related only by a complex
change of coordinate. }
\end{figure} 

\begin{rem}[Comparing the two pictures of $\M/\I$] 
  \label{R-compare}
\autoref{F-M/I} can be compared
with \autoref{F-old}  (taken from \cite{M}), which shows not only
$\M/\I$, but also the corresponding moduli spaces for maps of
degree $\pm 2$, all in one figure.\footnote{For a 
  more colorful version, see \cite[Figure 1]{F} or \cite[Figure 2]{FP}.}
 Notice that \autoref{F-old}  is upside down in comparison to
\autoref{F-M/I}, so that the top of one figure corresponds to the
bottom of the other. Furthermore the change of coordinates is not
at all linear, so that small features in one figure can be quite
large in the other. Note also that \autoref{F-M/I} shows all of
$M/\I$, while \autoref{F-old} shows only a central region of
$\M/\I$.
\end{rem}

\section{Obstructions}\label{s5}
  
  The combinatorics will be called \textbf{\textit{obstructed}} if there
  is no corresponding rational map, and \textbf{\textit{unobstructed}}
  otherwise. In the unobstructed case, the corresponding rational map
  is always unique up to conjugacy. Furthermore, in most cases the
  iterated Thurston pull-back map will converge to the required rational map.
  (For the essentially 
  unique exceptional case, see \autoref{s6}.)

  By a theorem of Rees, Tan Lei, and Shishikura,
  any quadratic Thurston  map is obstructed if and only if
  it has a Levy cycle, which is a particularly simple form of
  Thurston obstruction. (See \cite{T}.)
  \bigskip
  
\begin{definition}\label{D-Levy} A \textbf{\textit{Levy Cycle}}
    of period $p$  for a Thurston map
 $\f:\widehat\C\to\widehat\C$   with postcritical set $P_\f$,
 is a list of disjoint simple closed curves 
  $\Gamma_j\subset \widehat\C\ssm P_\f$, indexed by integers $j$
  modulo $p$, with the following two properties:
\begin{itemize}
\item[$\bullet$] Each component of the complement of $\Gamma_j$
  contains at least two points of $P_\f$.

\item[$\bullet$] For each $j$ there
  is a connected component of $\f^{-1}(\Gamma_j)$ which maps
  bijectively onto $\Gamma_j$, and which is homotopic within
  $\widehat\C\ssm P_\f$ to $\Gamma_{j+1}$.
  \end{itemize}
  \end{definition}
  \bigskip
  
  Given some arbitrary combinatorics of shape $+-+$, 
  we do not know any general procedure for deciding whether or not there
  is a Levy cycle.  In practice  we will 
  proceed simply by carrying out the Thurston algorithm to see whether
  it converges.  However, for all of the minimal obstructed cases that we have
  found, 
  it is not too difficult to construct a corresponding Levy cycle, 
using \autoref{T-ara} below. 
  \ssk

   \begin{figure}[!hbt] 
  \centerline{%
    \includegraphics[height=1.25in]{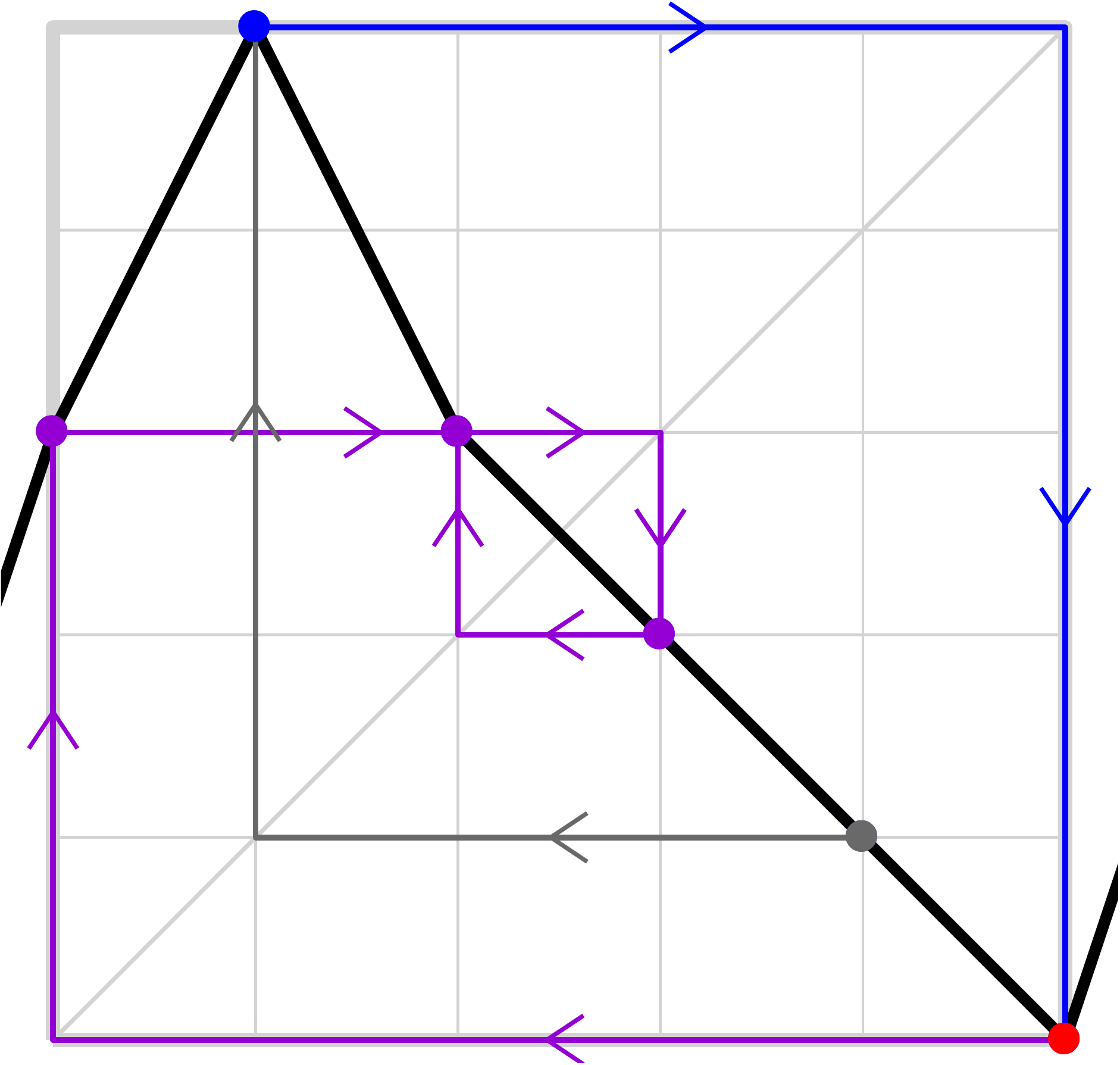}
    \includegraphics[height=1.25in]{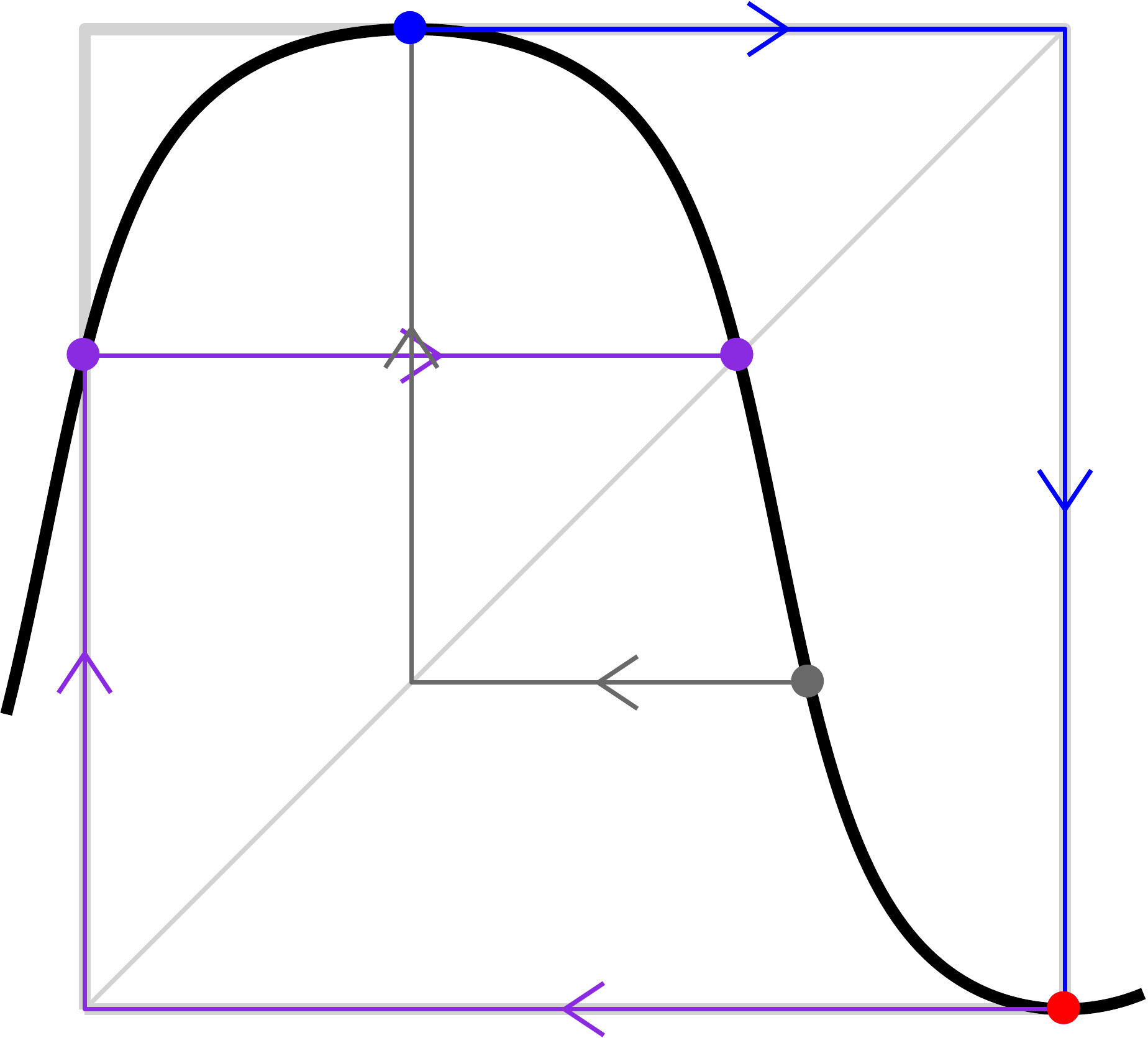} \hfill
    \includegraphics[height=1.25in]{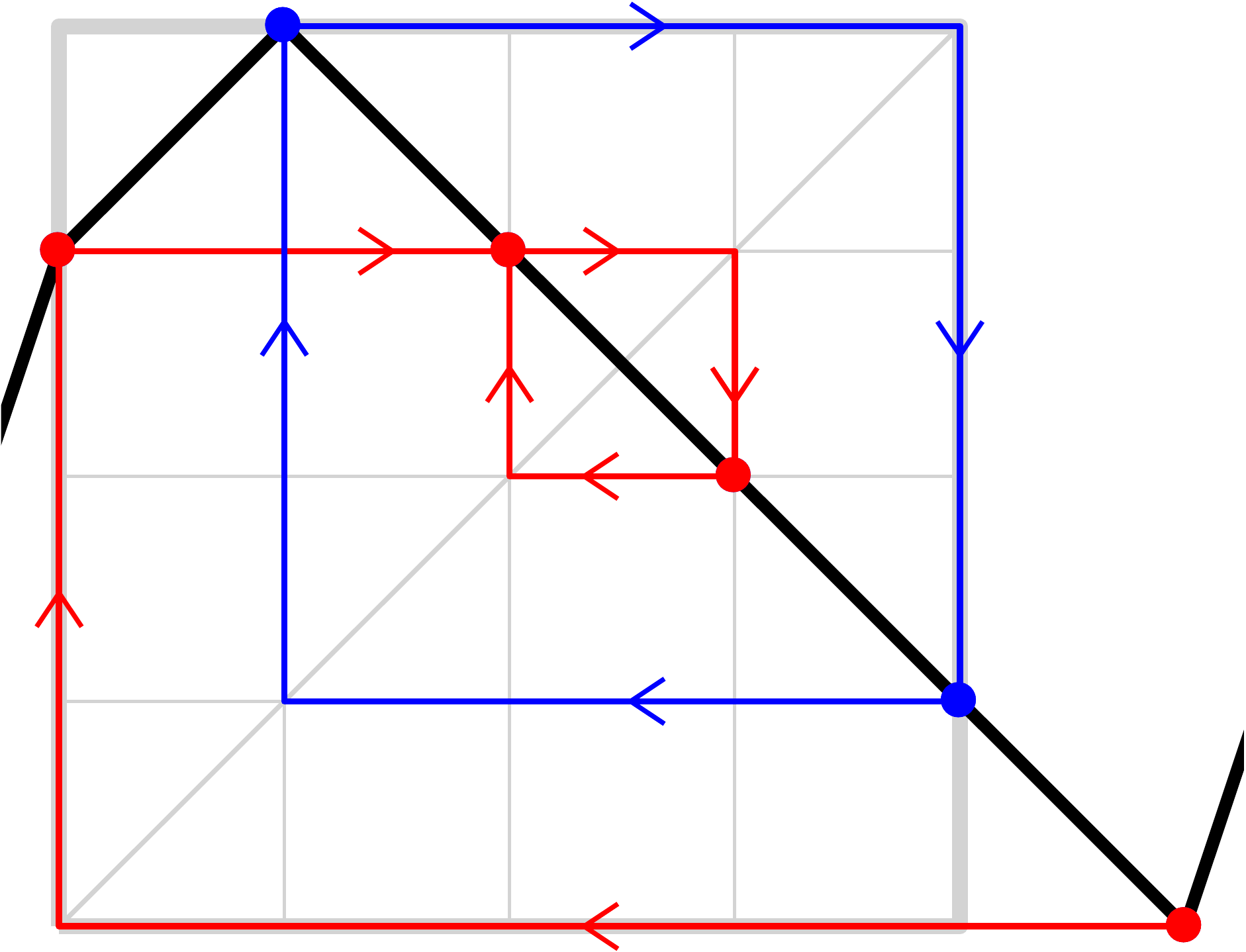}
    \includegraphics[height=1.25in]{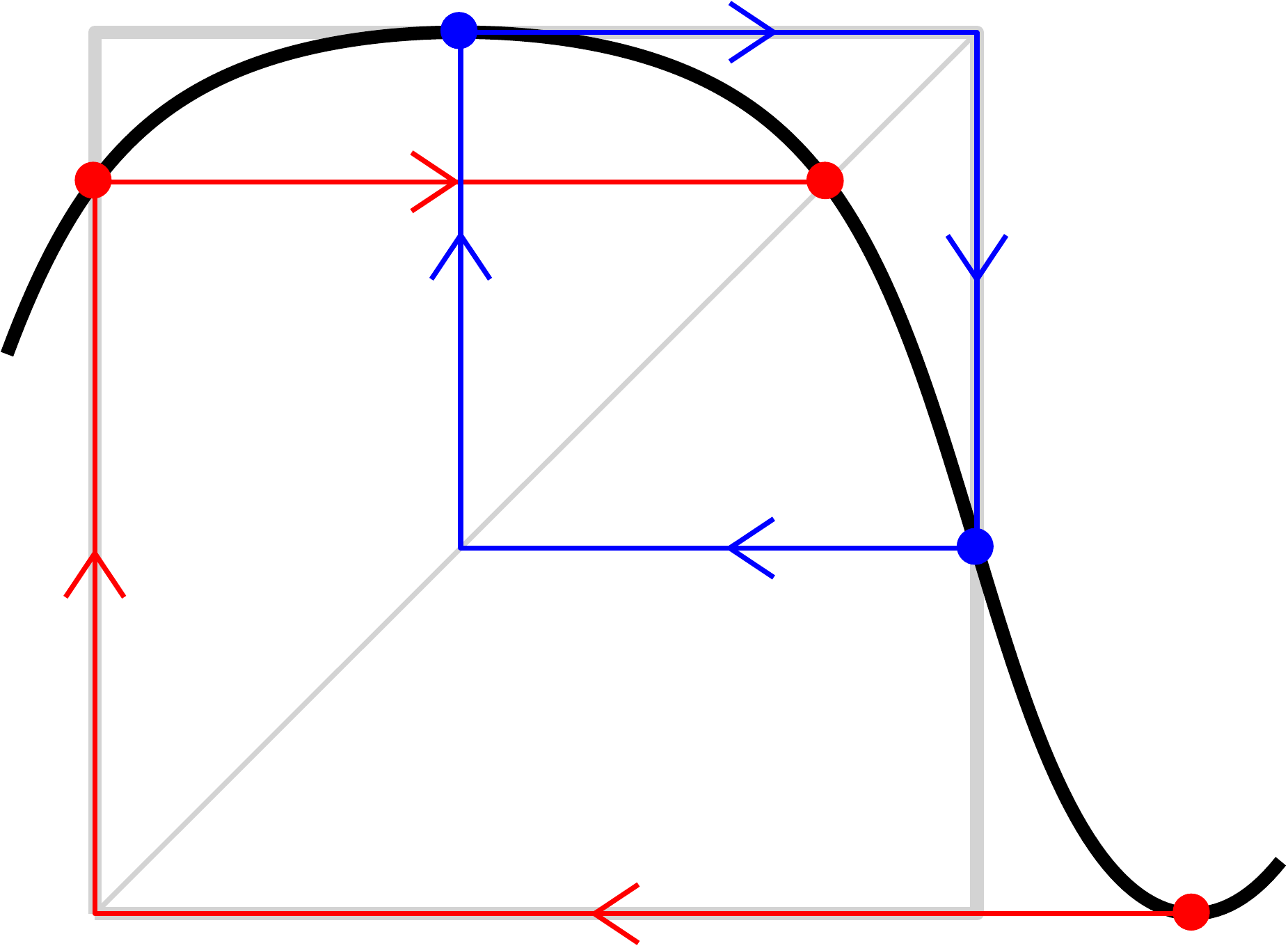}}
  \caption{\label{f-weakobs}  Piecewise linear model and the
    limiting lifted map for two  weakly obstructed maps.
    On the left is shown a map with combinatorics $\(3,\,5,\,3,\,2,\,1,\,0\)$
    and with the mapping pattern 
    $x_4\mapsto \du{x_1}\mapsto \du{x_5}\mapsto x_0
        \mapsto x_3\leftrightarrow x_2~.$  On the right, a strictly unimodal
    map with combinatorics $\(3,\,4,\,3,\,2,\,1,\,0\)$ and mapping pattern
    $\du{x_5}\mapsto x_0\mapsto x_3\leftrightarrow x_2$ and
    $\du{x_1}\leftrightarrow x_4$.}
\end{figure}
\bigskip

\begin{definition} An obstruction will be called \textbf{\textit{weak}}
if the rational maps $f_j$ constructed during the iterated pull-back
transformation converge locally uniformly to a 
critically finite   map (but with simplified combinatorics,
 as defined in \autoref{R-bad}).
Compare Figures~\ref{F-nonex} and \ref{f-weakobs}.
This can never happen if we start out with minimal combinatorics.
(In the analogous case of polynomial maps, it follows from
Selinger \cite[Prop. 6.2]{S} that weak obstructions are the only
kind which can occur; but this is far from true for quadratic
rational maps.)
\msk

 The obstruction will be called  \textbf{\textit{strong}}
 if the fixed point multipliers $\mu_j$ converge
 to $\pm\infty$. Every combinatorics of $-+-$ bimodal  shape is strongly
 obstructed (compare \autoref{L2}); and
 there are many strongly obstructed examples of $+-+$  shape
 (see \autoref{a2}). On the other  hand, for unimodal
 combinatorics we will show in \autoref{s8} that there  cannot
 be any strong obstruction except possibly in the totally
 non-hyperbolic case.
\end{definition}
\msk

 Note that $\mu_j$ always converges to $+\infty$ in the $-+-$
 case, and to $-\infty$ in the $+-+$ obstructed case.\msk

\begin{figure} [!bht]
  \centerline{%
    \includegraphics[height=1.27in]{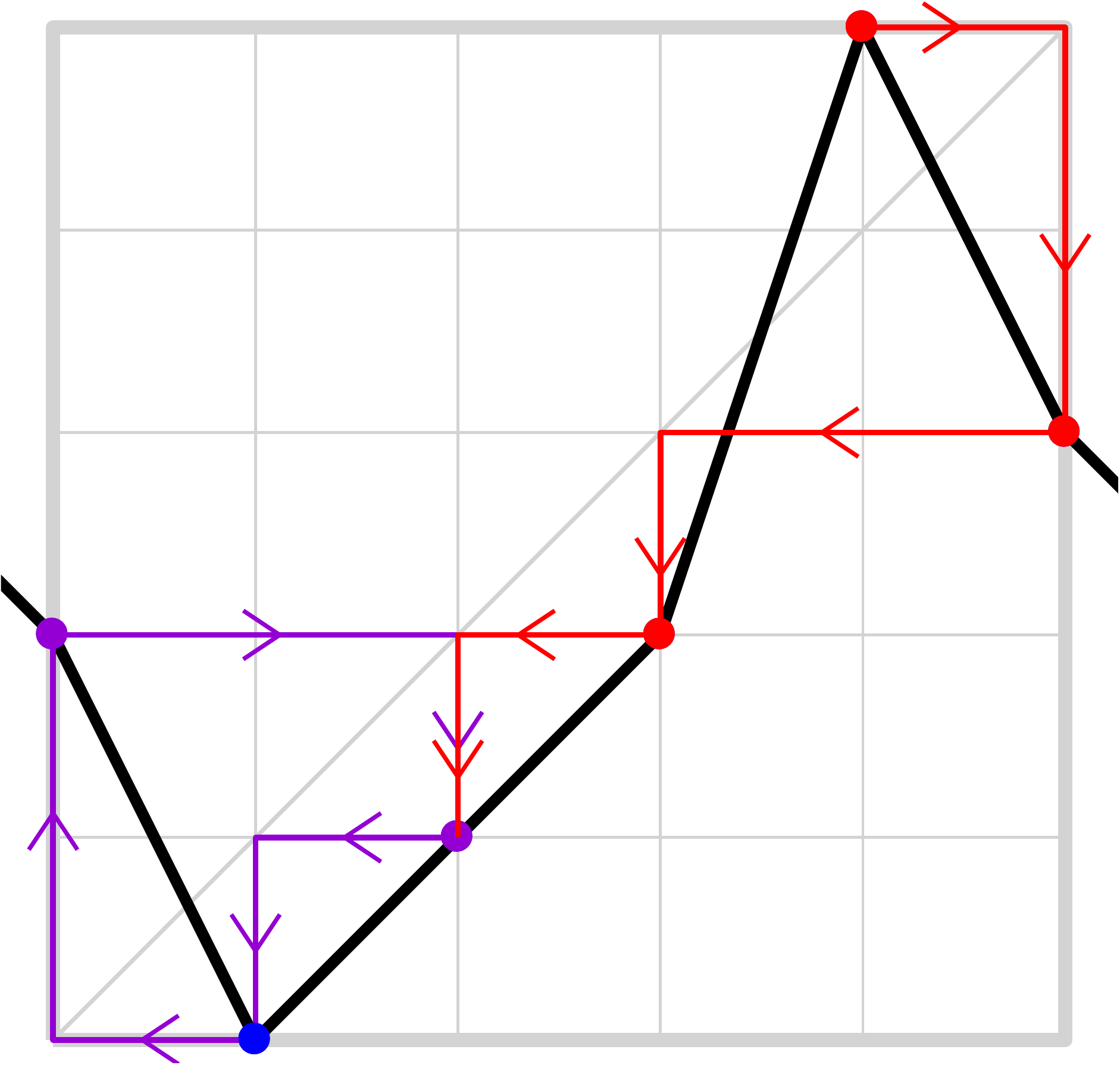} \hfil
    \includegraphics[height=1.27in]{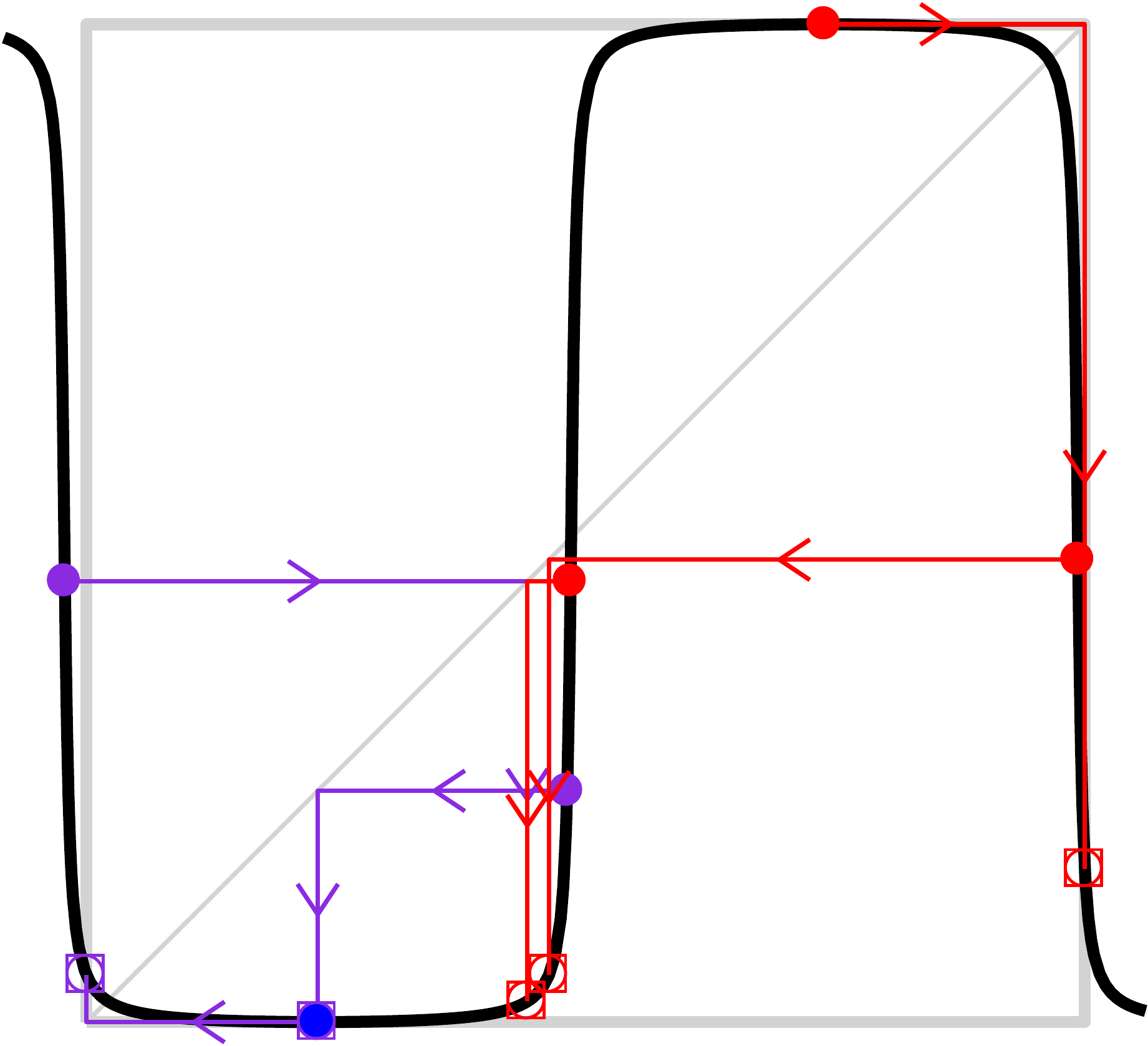} \hfil
    \includegraphics[height=1.27in]{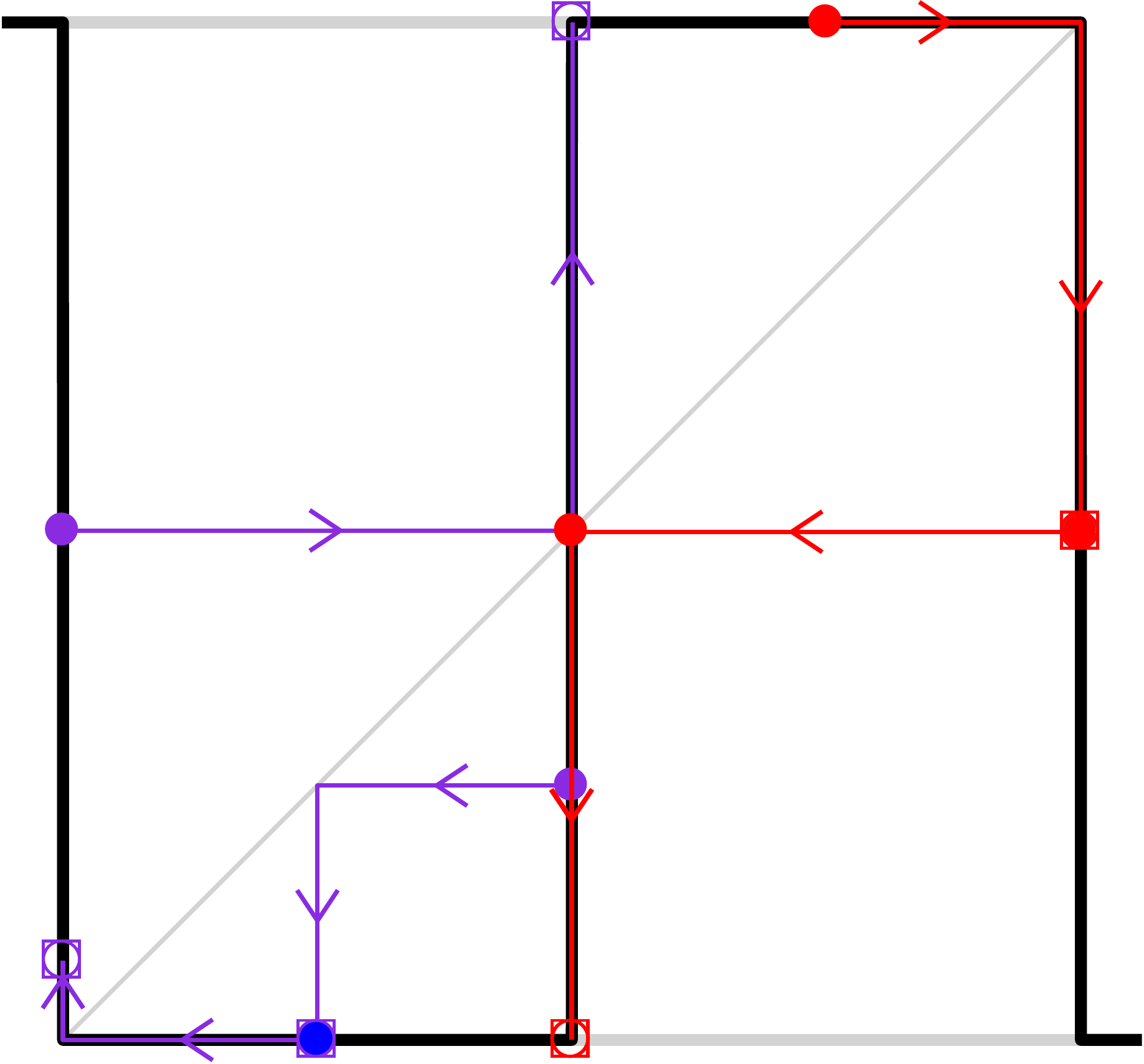}}
  \caption{\label{f-sq}   Piecewise linear model (left) and
    limiting lifted map (right) for the strongly
    obstructed combinatorics $\(2,\,0,\,1,\,2,\,5,\,3\)$.
    In the center is shown the result after two pullbacks.
    Of course what is actually shown on the right represents the
    quadratic map for some high
    iterate of the Thurston pull-back. Since these iterates are
    ``trying'' to duplicate  combinatorics which cannot be realized,
    the pullback can never approximate the required dynamics.
    As in \autoref{F-algExamp} of \autoref{s4}, the image of
    a marked point $t_j$ under $F_\ell$ is indicated by an open square.  In
    this example, the marked points $t_2$ and $t_3$ become arbitrarily close
    together while their images under the combinatorics ($t_1$ and $t_2$)
    remain distinct.
  }\end{figure}

 If we consider the ideal boundary of moduli space, as
 described in \autoref{s7}, then empirically,  in all strongly  
 obstructed cases, the following seems to be true. Either: 
 
    \begin{enumerate}[beginpenalty=10000,midpenalty=9999] 
   \item the combinatorics is $~+-+~$ bimodal, and the conjugacy
 classes  $\langle f_j\rangle$ converge to the center  point 
 $(\Sigma,\,\Delta)=(-0.5,\,1)$ of  ideal boundary of the $~+-+~$ region,
 or

   \item the combinatorics is $~-+-~$ bimodal, and the conjugacy classes
 converge to the corresponding center point $(+0.5,\, 1)$ for the
 $~-+-~$ region.
 \end{enumerate}

\begin{figure}[!htb]
   \begin{center}
     \begin{overpic}[height=.4\textwidth]{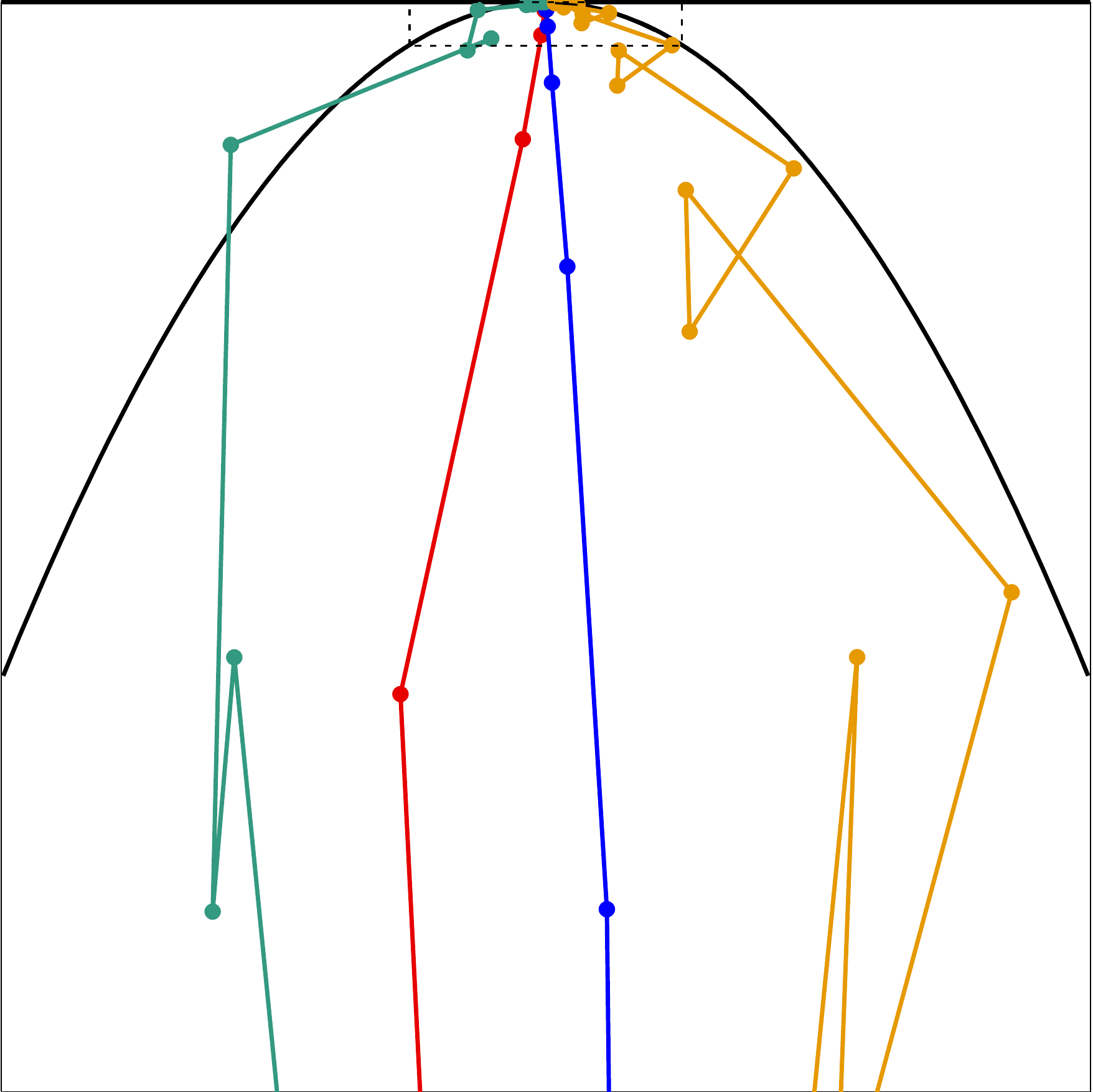}
       \put(10,165){\rm SL}
       \put(160,165){\rm SL}
       \end{overpic}\qquad
       \begin{overpic}[height=.4\textwidth]{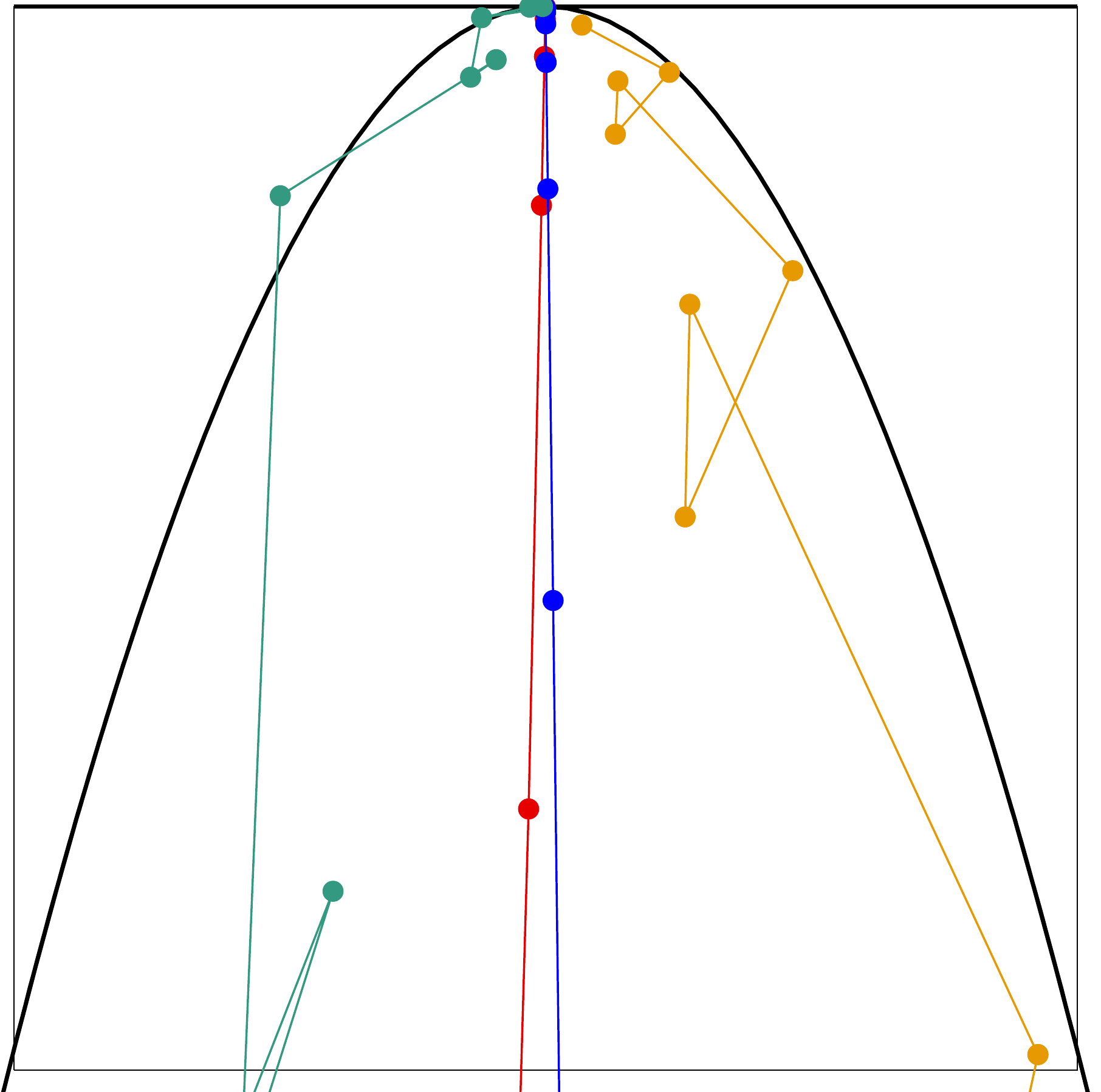}
         \put(10,165){\rm SL}
         \put(160,165){\rm SL}
       \end{overpic}
     \vspace{2ex}
     \begin{center}{\footnotesize
   \newcommand{\clne}[1]{\includegraphics[width=5em]{#1}}
   \begin{tabular}{c c c c}
$\(2,3,4,6,4,0,1\)$&$\(2,4,3,0,1\)$   &$\(3,5,4,0,1,2\)$ &$\(4,5,0,1,2,3\)$\\
\clne{greenline}   &\clne{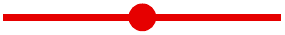}&\clne{blueline}   &\clne{orangeline}\\
$\kappa=-\infty$   &$\kappa=-1$    &$\kappa=(\sqrt{5}-1)/2$&$\kappa=+\infty$\\
                   &(\autoref{f-aDn4a}L)&               &(\autoref{f-aDn4b}R)
     \\[-2ex]  
   \end{tabular}
 }\end{center}
 \caption{\label{f-sigdelgr}
These images illustrate the convergence of four obstructed maps of
topological shape $+-+$ toward the point $(\Sigma,\Delta)=(-0.5,\,1)$ in
moduli space (see \autoref{s7}). Each colored broken line
indicates $(\Sigma,\,\Delta)$ for consecutive 
steps of the pull-back; while the black curve is $\Per_1(1)$ which is the
lower boundary of the shift locus (abbreviated as SL) in this region.
The left half of this figure
shows the window  $(\Sigma,\,\Delta)\in [.8, 1.2]\times [.95, 1)$
with the vertical scale exaggerated by a factor of 8. The right half
corresponds to the much smaller
window $[.95,\,1.05]\times[.998,\, 1)$ 
(indicated in dotted lines on the left), with the vertical scale exaggerated
by a factor of 50.
Observe that the green line enters the
shift locus several times  before converging to the point $(-0.5,\, 1)$
in moduli space.  The last line lists the conjectured limiting value of
$\kappa$ as we converge to this ideal point.
}
\end{center}
\end{figure}

 \begin{figure}[!htb] 
    \begin{center}
     \begin{overpic}[height=.4\textwidth]{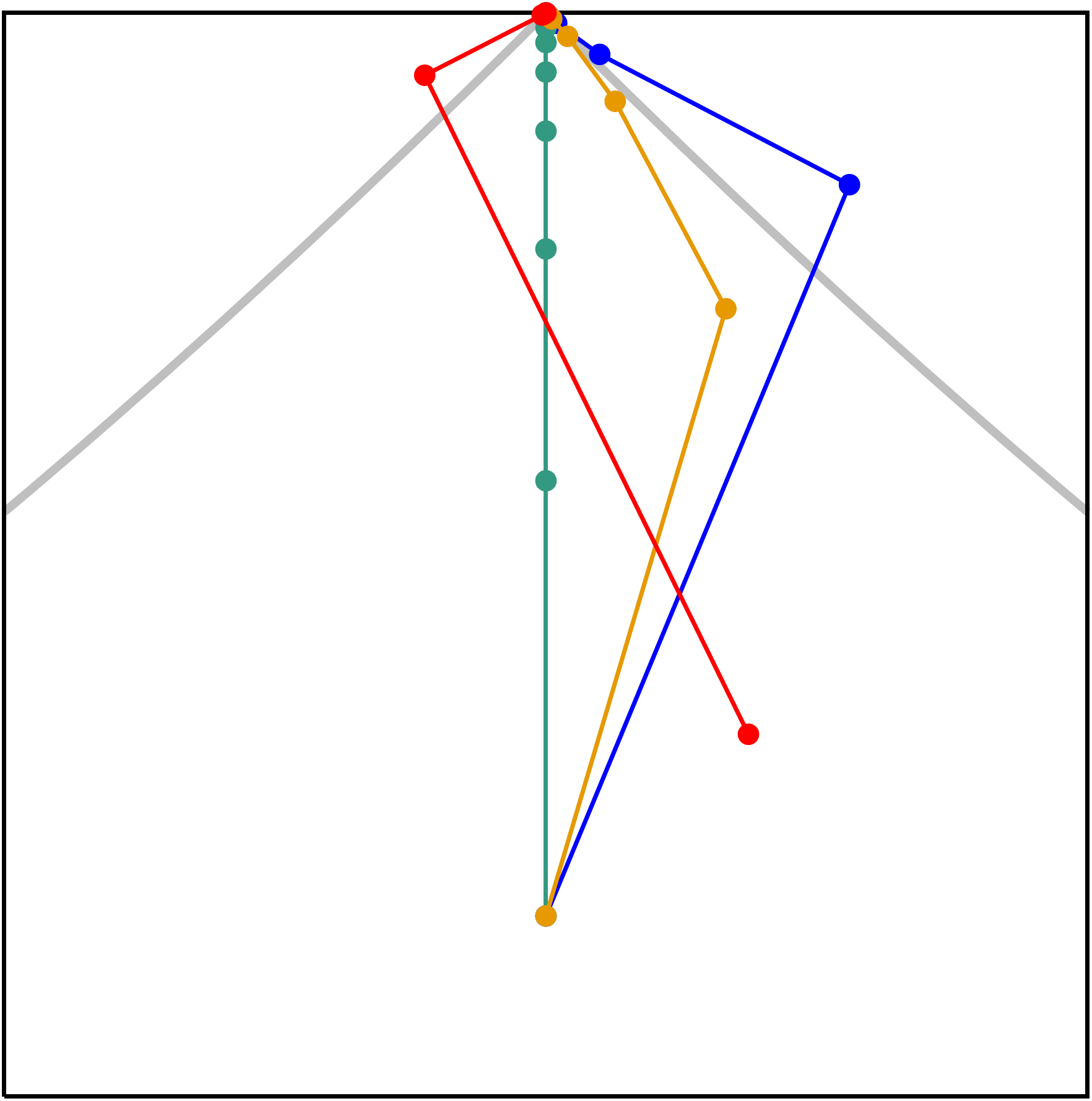}
       \put(20,165){\rm SL}
       \put(160,165){\rm SL}
     \end{overpic}\qquad
    \begin{overpic}[height=.4\textwidth]{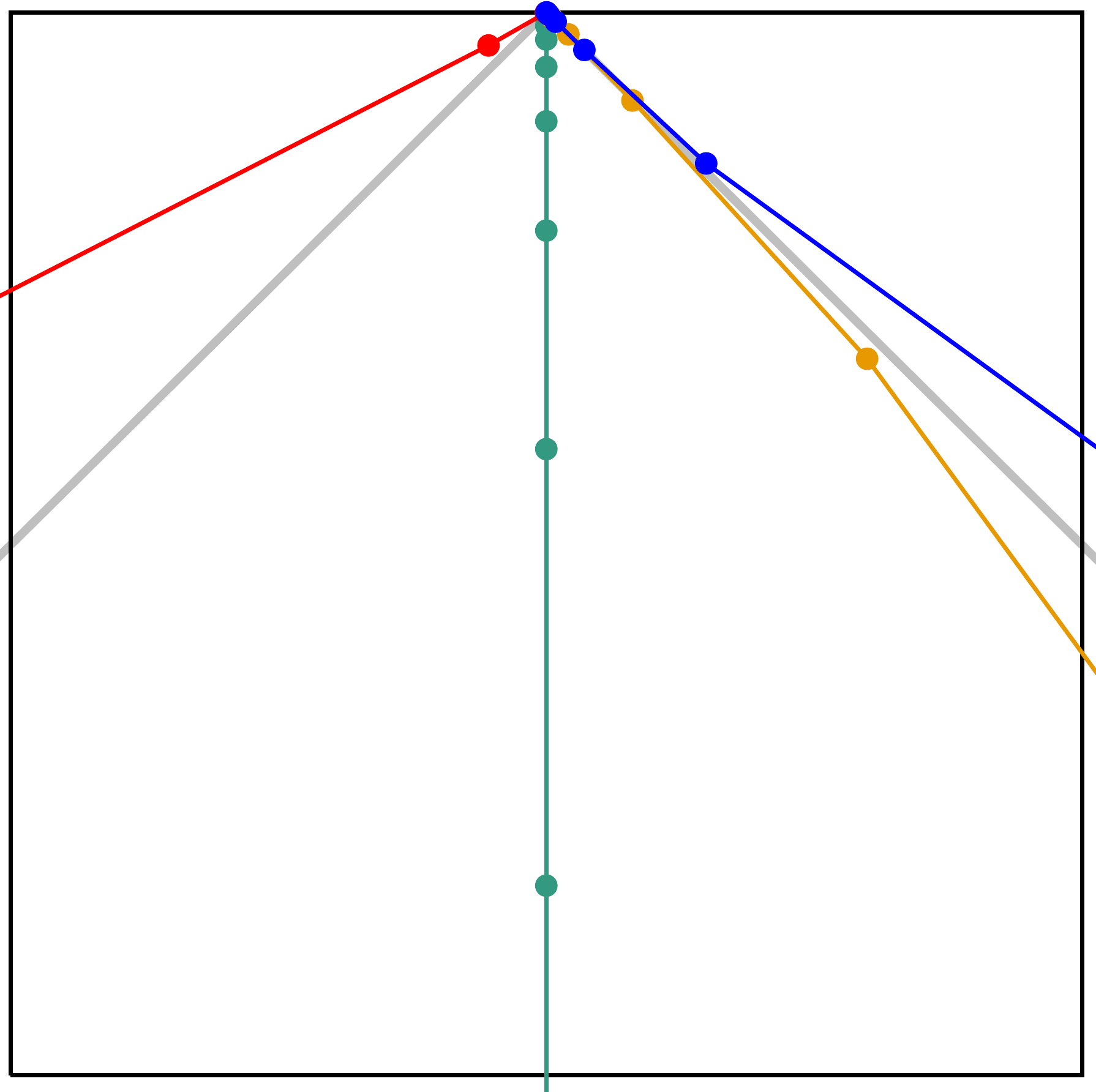}
       \put(10,165){\rm SL}
       \put(160,165){\rm SL}
     \end{overpic}\qquad
  \begin{center}{\footnotesize
  \vspace{2ex}

   \newcommand{\clne}[1]{\includegraphics[width=5em]{#1}}
   \begin{tabular}{c c c c}
$\(2,0,3,5,6,5,3\)$     &$\(1,0,3,2\)$    &$\(1,0,3,4,2\)$  &$\(1,0,1,4,2\)$ \\
\clne{redline.pdf}      &\clne{greenline} &\clne{orangeline}&\clne{blueline} \\
$\kappa=+1$             &$\kappa=0$       &$\kappa=-1$     &$\kappa=-1$\\
(\autoref{f-nonhyp-obst1}L)               &          &       &  (\autoref{f-aC3}R)   \\[-2ex]
 \end{tabular}
}\end{center}
 \caption{\label{f-sigdelgrobs}
This image illustrates the convergence of four obstructed maps of
topological shape $-+-$ toward the point $(\Sigma,\Delta)=(0.5,\,1)$ in
moduli space (see \autoref{s7}). Each colored broken line
indicates $(\Sigma,\,\Delta)$ for successive steps of the pull-back. The
Chebyshev curve $|\kappa|=1$, shown in gray, is the lower boundary of the
shift locus $|\kappa|>1$ in this region.   It satisfies
\hbox{$\Delta\,=\, 1-|\Sigma-.5|+{\mathcal O}\big((\Sigma-.5)^2\big)$;} 
see \autoref{T-asymp}. The  figure on the left
shows the window \hbox{$(\Sigma,\,\Delta)\in [.9,\,1.1]\times [.8,\, 1)$}. The
figure on the right is a zoom of the upper left by a factor of 15. Note
that both the red and the blue curves seem to  stay inside the shift locus
while converging to the ideal point  $(\Sigma,\Delta)=(0.5,\,1)$;
but that the yellow and  green curves remain outside the shift locus
while converging to the same point. (In fact the green curve is contained
in the line $\Sigma=.5$.)  Near 
  $(0.5,1)$, the blue and the yellow curves get extremely close to the
boundary of the shift locus. Again the conjectured limiting values of $\kappa$
are indicated.}
\end{center}
\end{figure}

\noindent It is interesting that  $(\Sigma,\,\Delta)=(\pm .5,~1)$ 
 are the only two points in the upper
ideal boundary which can be approximated by conjugacy classes
which are not in the shift locus. (Compare \autoref{F-M/I}.)
\bigskip

\begin{conj}\label{C-noSL} 
  For any admissible combinatorics, either one or both of the following
  two conditions must hold:\ssk
  \begin{enumerate}[topsep=-.5ex,itemsep=0ex,label=\textbf{\upshape{(\alph*)}},
    ref=(\alph*)]
  \item\label{C-noSL-finite}
    at most finitely many of the successive conjugacy classes
  generated by the Thurston algorithm belong to the shift locus, and/or

  \item\label{C-noSL-ideal} the successive  conjugacy classes converge to
    one of the two ideal points with coordinates 
    $(\Sigma,\,\Delta) =(\pm .5,~1)$.
  \end{enumerate}
\end{conj}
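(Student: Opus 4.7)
\bigskip

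The natural strategy is to prove that if condition~\ref{C-noSL-finite} fails—meaning infinitely many iterates $\<f_j\>$ lie in the open shift locus $\mathrm{SL}$—then \ref{C-noSL-ideal} must hold. I would organize the argument around \autoref{T-notexc}, \autoref{P-exc}, and the weak/strong obstruction dichotomy.

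For unobstructed and weakly obstructed combinatorics, the iterates converge to a unique critically finite conjugacy class $\<f\>$, so it suffices to show that $\<f\>$ lies in the interior of the complement of $\mathrm{SL}$. For non-polynomial shape this is immediate, since by \autoref{R-M/I} such classes sit in the open region $\F(0,1)$, which is disjoint from $\mathrm{SL}\subset\F(1,2)$. For polynomial shape, a periodic non-fixed critical cycle creates a super-attracting cycle distinct from the attracting fixed point at $\infty$, robustly violating the $\mathrm{SL}$ defining property; the delicate subcase is Misiurewicz-type maps whose critical orbit lands on a repelling cycle, since these lie on $\partial\mathrm{SL}$ (on the Chebyshev curve when the cycle is a fixed point). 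Here one must additionally verify that the pullback approaches $\<f\>$ from the complement of $\mathrm{SL}$, which I expect to follow from a local linearization of the pullback at $\<f\>$ together with the known shape of the Chebyshev curve. The exceptional case of \autoref{P-exc} reduces to checking directly that the one-parameter family $f_{v,1/v}$, $v<0$, lies outside $\mathrm{SL}$, which can be read off from the explicit formula derived there.

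For strongly obstructed combinatorics the plan is to invoke \autoref{T-asymp}. By definition $\mu_j\to-\infty$ in the $+-+$ case and $\mu_j\to+\infty$ in the $-+-$ case; if in addition $\kappa_j$ stays uniformly bounded, then \autoref{T-asymp} immediately yields $(\Sigma_j,\Delta_j)\to(\pm 0.5,\,1)$ with the correct sign, together with a prediction for the tangent direction of approach as a function of $\lim\kappa_j$—matching the empirical curves of \autoref{f-sigdelgr} and \autoref{f-sigdelgrobs}.

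The principal obstacle is precisely the required boundedness of $\kappa_j$ in the strongly obstructed case. Without an a~priori bound, nothing in the algebra prevents $\kappa_j$ from also escaping to infinity, in which case \autoref{T-asymp} no longer applies directly and the iterates could in principle drift along the whole ideal edge $\Delta=1$ rather than converging to one of the two distinguished corners. Securing such a bound would likely require either producing an invariant region of Epstein parameter space $(\mu,\kappa)$ that traps the iterates, or analyzing the Thurston pullback on a carefully chosen compactification of $\M$ using the Levy cycle structure discussed in \autoref{s5}. I view this as the deepest step and the likely reason the statement has been left as a conjecture rather than a theorem.
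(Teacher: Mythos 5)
The statement you are proving is explicitly labeled a \emph{Conjecture} in the paper; no proof is offered. The only claim made with certainty is the one-line remark that condition~\ref{C-noSL-finite} ``clearly'' holds in the unobstructed case, and the rest is supported by empirical evidence (Figures~\ref{f-sigdelgr}, \ref{f-sigdelgrobs}, and Table~\ref{t-2}). So your candid acknowledgment at the end --- that the key step is out of reach and is ``the likely reason the statement has been left as a conjecture'' --- is consistent with the paper.

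That said, one part of your plan should be revised. You hope to settle the strongly obstructed case by bounding $\kappa_j$ and then invoking \autoref{T-asymp}, whose hypothesis is ``$\mu\to\pm\infty$ with $\kappa$ fixed.'' But the paper's own data rule out a uniform bound on $\kappa_j$: Table~\ref{t-2} records the combinatorics $\(4,5,0,1,2,3\)$ (\autoref{f-aDn4b}R) with limiting values $\mu=-\infty$ and $\kappa=+\infty$, and the discussion after \autoref{C-noSL} explicitly notes that ``the convergence is much wilder \ldots\ in cases where $\kappa$ tends to infinity.'' So the project of trapping the iterates in a region with $\kappa$ bounded cannot succeed; what would actually be needed is an extension of \autoref{T-asymp} to the simultaneous regime $|\mu|,|\kappa|\to\infty$ showing that the image under $\Psi$ still converges to $(\pm .5,\,1)$. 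This is genuinely more delicate, because the proof of \autoref{T-asymp} expands in powers of $s=1/\mu$ with $\kappa$-dependent coefficients that are not uniformly controlled as $\kappa\to\infty$.

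Your treatment of the unobstructed case is sound in the non-polynomial situation --- by \autoref{L2} the unique fixed point is strictly repelling, placing $\<f\>$ in the open region $\F(0,1)$ which is disjoint from $\overline{\mathrm{SL}}\subset\overline{\F(1,2)}$, so the convergent iterates are eventually outside $\mathrm{SL}$. Your caution about Misiurewicz polynomial-shape limits lying on the boundary of $\mathrm{SL}$ (e.g.\ on the Chebyshev curve) is also warranted; the paper does not address this either, and closing it would require the local analysis of the pullback near such boundary points that you sketch. The identification of that subcase, and the recognition that the boundedness of $\kappa_j$ cannot be taken for granted, are the substantive contributions of your write-up beyond what the paper says.
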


Clearly \ref{C-noSL-finite} must be satisfied in the unobstructed
case. However, Figures~\ref{f-aC3}R and \ref{f-nonhyp-obst1}L 
illustrate an obstructed case of shape $-+-$ where the successive conjugacy
classes appear to converge to $(.5,\,1)$ through the shift locus.
Figure~\ref{f-sigdelgr} illustrates an obstructed case of shape $+-+$ where the
successive conjugacy classes enter the shift locus at least three times.\msk

In Case~\ref{C-noSL-ideal}, the direction of approach to the ideal point
depends of the limiting behavior of  the Epstein parameter $\kappa$.
(Compare \autoref{T-asymp}.) 
In many strongly obstructed cases, $\kappa$ tends to a finite limit, 
and the convergence seems quite orderly. However there are also
cases where the convergence is much wilder, and this is particularly
true in cases where $\kappa$ tends to infinity. Various possibilities are
illustrated in Figure~\ref{f-sigdelgr}. 
\msk

Clearly \autoref{C-noSL} would have the following consequence.

\begin{conj}\label{C-2lims} For strongly obstructed combinatorics,
  the only possible
  limit is $(-.5,\,1)$ in the $+-+$ case, or $(+.5,\,1)$ in the $-+-$ case.
But in the unimodal case, every minimal 
 combinatorics is unobstructed.
  \end{conj}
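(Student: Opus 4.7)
The plan is to split Conjecture \ref{C-2lims} into its two assertions and attack them separately. The first (identification of the ideal limit in the strongly obstructed case) should follow from Theorem \ref{T-asymp} once one controls the Epstein parameter $\kappa$ along the pullback orbit; the second (unimodal minimal combinatorics are unobstructed) should follow from combining the kneading-theoretic results promised in \autoref{s8} with the observation that weak obstructions cannot arise from minimal combinatorics.

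For the first assertion, strong obstruction means, by definition, that the multipliers $\mu_j$ of the primary fixed point along the pullback orbit diverge to $\pm\infty$. The sign is dictated by the topological shape: for $+-+$ combinatorics the middle lap of the PL model is decreasing, forcing the primary fixed point of every $F_j$ to sit in a decreasing lap, so $\mu_j = F'_j(0) < 0$ and therefore $\mu_j \to -\infty$; the analogous argument for $-+-$ gives $\mu_j \to +\infty$. Assuming $\kappa_j$ stays bounded, \autoref{T-asymp} then yields $(\Sigma_j,\Delta_j)\to(\pm .5,1)$ with the matching sign. The main obstacle, in my view, is controlling $\kappa_j$: one must rule out the possibility that $\kappa_j$ grows fast enough to move the limit off the predicted ideal point. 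A careful inspection of the proof of \autoref{T-asymp} shows that the relevant correction terms are of order $\kappa/\mu$, so the required bound is essentially $\kappa_j = o(\mu_j)$; establishing this along an orbit that is, by hypothesis, escaping every compact subset of $X_n/G$ looks delicate and may require a more quantitative analysis of the pullback in Epstein coordinates than what is currently available in the paper.

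For the second assertion, I would proceed in three steps. First, for minimal combinatorics no weak obstruction can occur, since by definition a weak obstruction produces simplified combinatorics strictly smaller than the original; thus ``unobstructed'' reduces to ``not strongly obstructed.'' Second, I invoke the results announced for \autoref{s8}, based on Filom's work and kneading theory, which rule out strong obstructions in the hyperbolic and half-hyperbolic unimodal cases. Third, I address the remaining totally non-hyperbolic unimodal case, which is not covered by the statement quoted from \autoref{s8}. Here I would attempt either a direct Levy-cycle argument using \autoref{D-Levy}, showing that any hypothetical Levy cycle in a unimodal configuration forces the two postcritical sets to be separated in a way inconsistent with the unimodal kneading sequence; or an approximation argument, realizing the TNH unimodal map as a limit of hyperbolic unimodal maps whose $(\mu,\kappa)$ parameters lie in a compact subset of the unimodal region of moduli space (guaranteed by the first two steps) and passing to the limit.

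The two main obstacles, then, are the $o(\mu_j)$ growth bound on $\kappa_j$ in Part~1, which is a genuinely dynamical statement about the pullback transformation and does not appear to follow from the asymptotic expansion alone, and the TNH unimodal case in Part~2, where the kneading-theoretic tools of Filom likely need to be extended beyond the (half-)hyperbolic setting before a Levy-cycle obstruction can be either constructed or definitively excluded.
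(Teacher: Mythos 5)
The statement you were asked to prove is a \emph{conjecture} in the paper, not a theorem: the authors present \autoref{C-2lims} immediately after the sentence ``Clearly \autoref{C-noSL} would have the following consequence,'' so the paper's only ``proof'' is a conditional reduction to another open conjecture. Consequently there is no argument in the paper for you to have matched; what matters is whether your analysis correctly identifies what remains to be shown, and on that score your proposal is essentially in agreement with the paper's own assessment.

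For the first half (the ideal limit), your route via \autoref{T-asymp} plus a bound on $\kappa_j$ is genuinely different from the paper's route via \autoref{C-noSL}, which is phrased in terms of the shift locus rather than Epstein coordinates. Your approach has the virtue of isolating the quantitative issue concretely --- one needs $\kappa_j$ not to grow in a way that steers the limit away from $(\pm.5,1)$. You should note, however, that the paper explicitly records (in the paragraph before \autoref{C-noSL}, and in \autoref{t-2}, e.g.\ the $\(4,5,0,1,2,3\)$ entry) cases where $\kappa_j\to\infty$ while the limit is still conjectured to be $(-.5,1)$; so \autoref{T-asymp} as stated (fixed $\kappa$) does not cover the general case, and your $\kappa_j=o(\mu_j)$ heuristic would require a sharper two-variable asymptotic than the one the paper proves. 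The sign analysis ($\mu_j\to-\infty$ for $+-+$, $\mu_j\to+\infty$ for $-+-$) is correct and matches the remark following the definition of strong obstruction.

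For the second half (unimodal minimal is unobstructed), your decomposition into ``minimal $\Rightarrow$ no weak obstruction,'' then ``hyperbolic/half-hyperbolic handled by the kneading argument,'' then ``TNH remains,'' matches the paper exactly: the first two points are precisely \autoref{C-no-obs}, and the TNH gap is singled out by the paper as \autoref{RC-TNH}, where the authors themselves say it \emph{would follow} from \autoref{C-noSL} or probably from \autoref{C-NH}, but offer no proof. Of your two suggested attacks on the TNH case, the approximation argument is circular as sketched: the first two steps give unobstructedness of the approximating hyperbolic combinatorics, but they do not give a compactness bound on the corresponding $(\mu,\kappa)$ parameters, which is exactly the kind of a priori estimate the conjecture is asking for. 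The Levy-cycle exclusion route is more promising in spirit, but note that the paper's only Levy-cycle machinery (\autoref{T-ara}) is built for $+-+$ shape; nothing analogous is developed for unimodal configurations, so this would be new work. In short, you have correctly located both open gaps, and neither is closed in the paper.
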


For a strongly obstructed case the lifted form of the limit map
will be a square wave; as illustrated  in \autoref{f-sq} in the
$~-+-~$ case, or as illustrated in \autoref{f-aBobst} in the $~+-+~$ case.
(See also Figures~\ref{f-aC3}, \ref{f-aDn4a}, \ref{f-aDn4b}.)

\begin{figure}[htb!]
  \centerline{\includegraphics[height=2.2in]{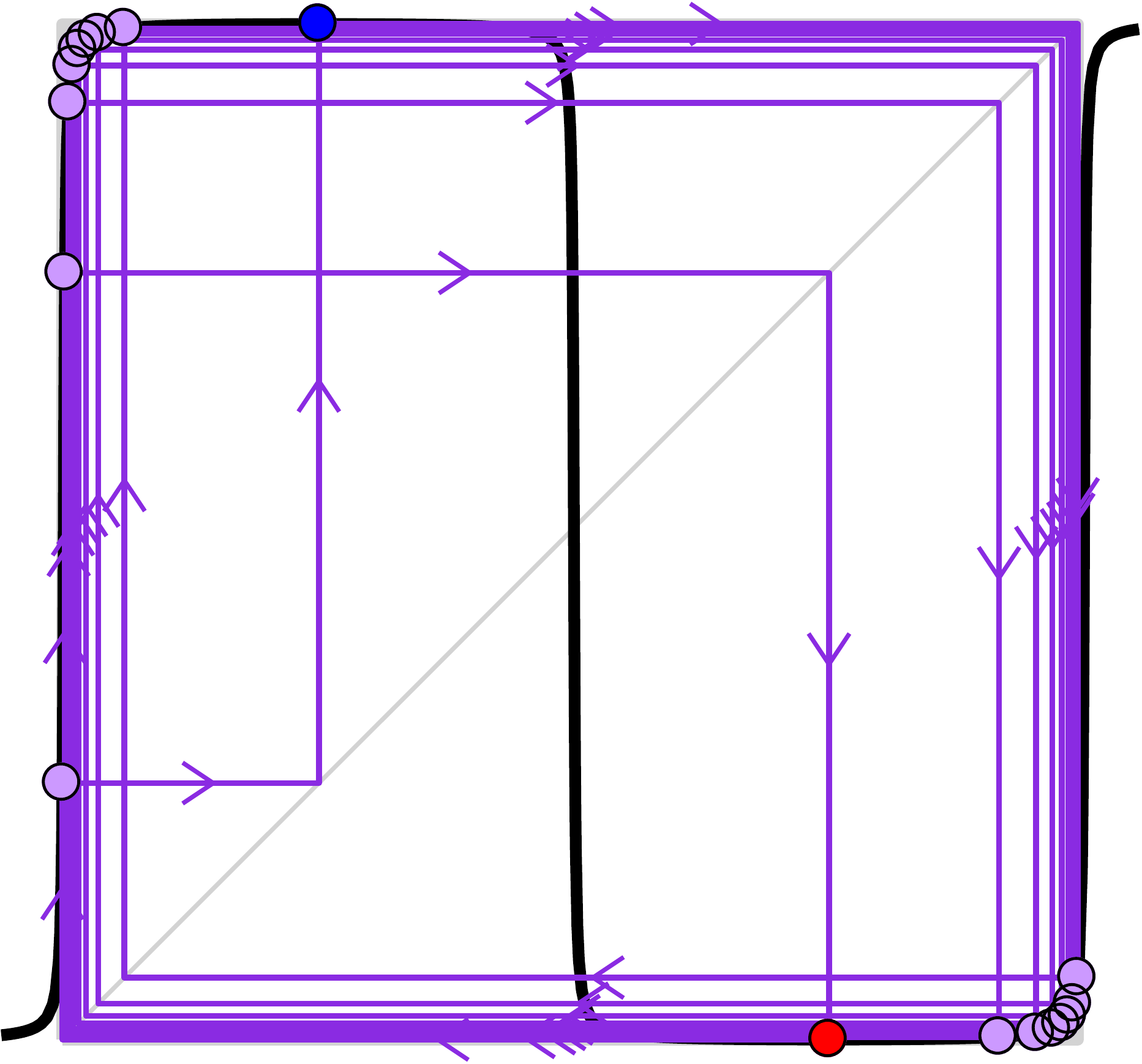}}
  \caption{\label{F-FP2}   This figure is the graph of a Filom-Pilgrim
    map of class $\FP(8,\,9)$. (Compare \autoref{R-FP}.) This is an
    honest smooth critically finite rational map of Type B and shape
    $+-+$. However it is so close to the ideal point $(\Sigma,\Delta)=
    (-.5,\,,1)$ of moduli space, that its graph looks very much like the graph
    of an obstructed map.}    \end{figure}

\msk
\textbf{Caution.} One can't be sure by looking at a graph that the
combinatorics is obstructed. In fact any
critically finite conjugacy class which is sufficiently close
to the ideal boundary will have a graph which cannot be distinguished
from a square wave. See \autoref{F-FP2}.
\msk

\subsection*{Constructing Levy Cycles}
Every combinatorics $\vecm=\(m_0,\cdots, m_n\)$ gives
rise to a corresponding
Markov partition of the interval $[0,n]$ into $n$ subintervals
$I(j)=[j-1,~j]$. The associated piecewise linear map~$\f$  sends
each $I(j)$ linearly onto some union of one or more consecutive
subintervals. By a \textbf{\textit{periodic orbit}}
of period $p$ for the associated
Markov shift we will mean a list of $p$  of these $I(j)$, indexed by
integers $k$ modulo $p$, and satisfying the condition that
$$ \f\big(I(j_k)\big)~\supset~I(j_{k+1}) $$
for each $k$.
\ssk

\begin{theo}\label{T-ara}
  Let $\vecm$ be an admissible combinatorics
  of  shape $+-+$. If there exists a
  periodic orbit $\{I(j_k)\}$ of period $p\ge 2$ for
  the associated Markov shift which
  satisfies the following three conditions, then there is a Levy
  cycle, and hence the combinatorics is strongly obstructed.

  \begin{enumerate}[label=\textbf{\upshape Condition \arabic*.},
    leftmargin=*, itemindent=2.5\parindent]
 \item Every $I(j_k)$ is contained in an increasing lap of $\f$.

 \item These intervals $I(j_k)$ are disjoint: In particular,
no two have a common end point.

\item If we think of $[0,~n]$ as a subset of the circle $\Rhat$, then the
  correspondence $I(j_k)\to I(j_{k+1})$ has a well defined rotation number.
  In particular, this correspondence preserves the cyclic order of the
  intervals $I(j_k)$   within $\Rhat$.
  \end{enumerate}
\end{theo}

\begin{rem}\label{R-shift} Here any period $p\ge 2$ can actually occur.
  Consider the combinatorics
$\(2,\,3,\,4,\,5,\, \ldots,\, n-1,\, n,\, 0,\,1\)$. If $n$ is odd, 
then this combinatorics has Type D, 
and it is not hard to check that
$$\f : I(1) \stackrel{\cong}{\longrightarrow}
I(3) \stackrel{\cong}{\longrightarrow}
I(5) \stackrel{\cong}{\longrightarrow}\cdots \stackrel{\cong}{\longrightarrow}
I(n) \stackrel{\cong}{\longrightarrow}  I(1) ~,$$
with period $p=(n+1)/2$.
Thus there is a Levy cycle, and the combinatorics is
strongly obstructed. See Figures \ref{f-aBn3a}L and
\ref{f-aDn4b}R for the cases $\(2,3,0,1\)$ and \(4,5,0,1,2,3\).
\end{rem}

\begin{rem}\label{R-evencase}
On the other hand, for $n$ even the combinatorics is of  Type B, and
is  always unobstructed. In fact, in the notation of \autoref{R-FP},
these are just the Filom-Pilgrim conjugacy classes of the form $\<f_{2,\,q}\>$,
  with $q>2$ odd. In \autoref{F-FP}, these are the orange
dots in the left half of the figure (for $q\ge 5$), together with one red dot
on the right corresponding to $\<f_{2,3}\>$.
  (See \autoref{f-aBn4a}R for the case $q=5$  and \autoref{f-aBn5a} for
the case $q=7$~.)
\end{rem}
\bsk 

We will first prove $\autoref{T-ara}$ for the case $p=2$, 
and then for $p>2$.
  The first step for any period $p$ is to
  consider the interval $[0,n]$ as a subset of $\Rhat$, which we think
  of as the equator of the Riemann sphere $\widehat\C$. (Compare
 \autoref{F-RS1}.)  Then extend
  the piecewise linear map on $[0,\,n]$ to a Thurston map $\f:\widehat\C\to
  \widehat\C$ as described in \autoref{L1}.
  \msk
  
\begin{figure}[t!]
  \centerline{ \includegraphics[width=2.8in]{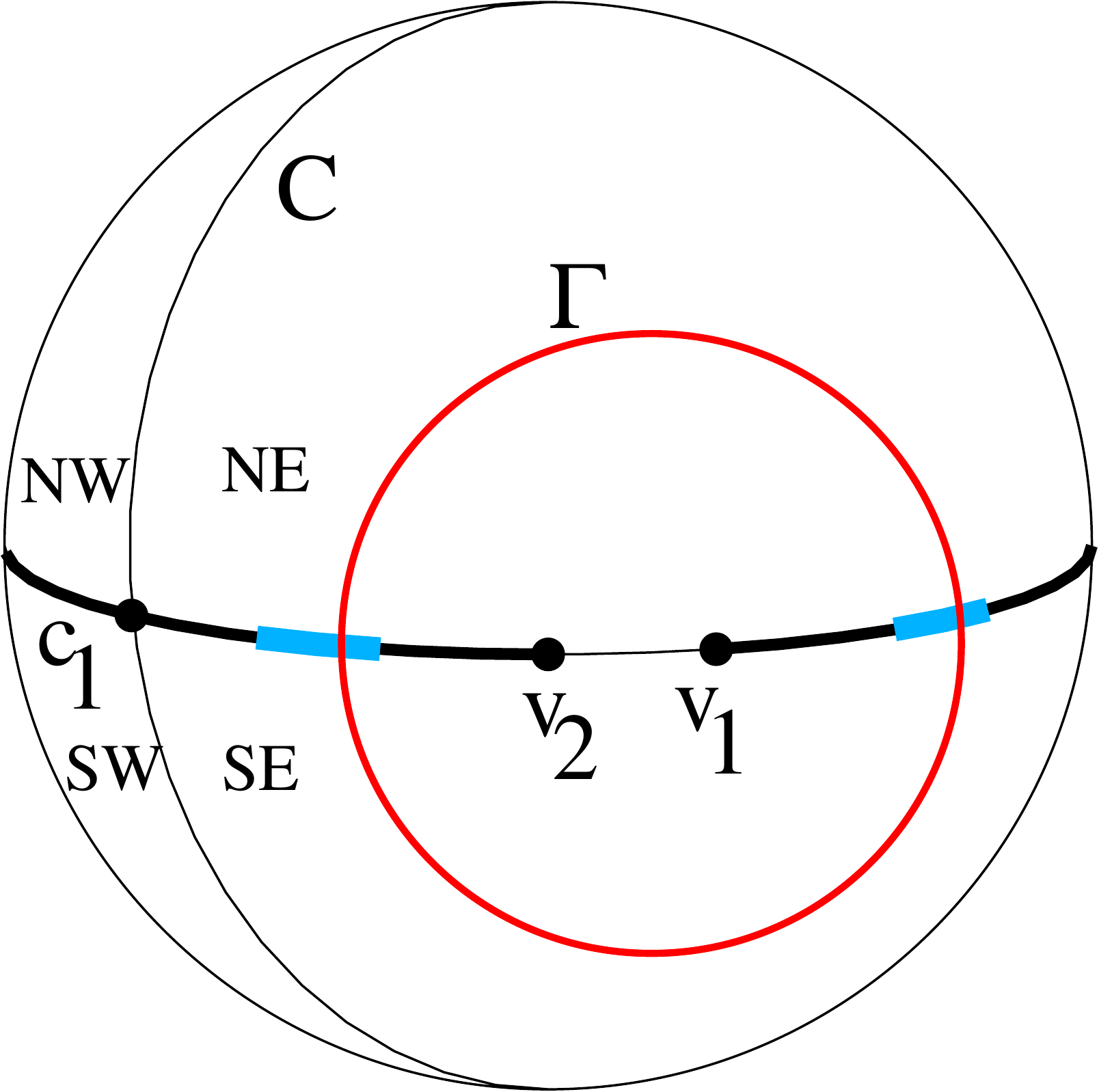}}
  \caption{\label{F-RS}  A picture of the Riemann sphere, illustrating
    the case $p=2$. Here
    $\Rhat$ is represented as the equator, and the pure imaginary axis
    is represented as an orthogonal great circle which intersects the
    equator at the two critical points. The Thurston map $\f$ sends
    $\Rhat$ two-to-one onto $\f(\Rhat)$, which is colored  heavy black,
    except for the two intervals $I(j_0)$ and $I(j_1)$ which are blue.
    The great circle $C$ maps
    two-to-one onto the gap $\Rhat\ssm\f(\Rhat)$ between the two
    critical values. The Levy cycle $\Gamma$, which is colored red,
    passes through $I(j_0)$ and $I(j_1)$. It is important that $\Gamma$
    does not separate the two critical values, or the two critical points.
    Note that  the northeast and
    southwest quadrants both map bijectively onto the northern
    hemisphere under $\f$, while the northwest and southeast
    quadrants both
map bijectively onto the southern hemisphere.}
\end{figure}
\msk

\begin{proof}[{\bf Proof of \autoref{T-ara} for the case $p=2$}]
  In this case, we have two intervals with
  $$\f\big(I(j_0)\big) \supset I(j_1)\quad{\rm and}\quad
  \f\big(I(j_1)\big) \supset I(j_0)~.$$
  (Compare  \autoref{F-RS}.) 
  Therefore there exist interior points $r\in I(j_0)$ and
  $s\in I(j_1)$   which map respectively to interior points
  $r'\in I(j_1)$ and $s'\in I(j_0)$. Our Levy cycle will consist
  of a single simple closed curve $\Gamma$ which is the union of:
  \begin{enumerate}[beginpenalty=10000,midpenalty=9999] 
  \item a path from $r'$ to $s'$ lying in the northeast
    quadrant, above the equator and to the right of $C$,
    
  \item the image of this path under complex conjugation,
    which lies in the southeast quadrant.
  \end{enumerate}\ssk

  \noindent It is not hard to check that the set $\f^{-1}(\Gamma)$ has
  two connected components. One also lies in the eastern
  hemisphere, and is
  homotopic\footnote{This homotopy sends 
 the component onto $\Gamma$
    with degree $-1$ but that is not a problem: The definition of
    Levy cycle makes no reference to orientation.}
  to $\Gamma$ within the complement of the postcritical set,
  as required. The other component lies in the western hemisphere,
  and can be ignored. This completes the proof in the period two
  case.
  \end{proof}\msk

Here is a corollary to the period two case.

\begin{coro}\label{C-per2obs} Every admissible
  combinatorics of  shape $+ - +$
  with a period two critical orbit is strongly obstructed.
\end{coro}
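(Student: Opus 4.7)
The plan is to invoke \autoref{T-ara} in the simplest case $p=2$ by exhibiting, for any such combinatorics, an explicit pair of disjoint Markov intervals sitting in the two increasing laps and forming a period-two cycle under the Markov shift.

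First I would extract what the hypotheses force. The $+-+$ shape yields critical indices $1\le j_1<j_2\le n-1$ and puts both critical points inside $f(\Rhat)$, so we are in Case~1 of \autoref{D-admissible}, with $m_{j_1}=n$ (global maximum) and $m_{j_2}=0$ (global minimum). Without loss of generality (otherwise swap the roles of $c_1$ and $c_2$ via orientation reversal), assume the left critical point $c_1=x_{j_1}$ lies in some period-two orbit. Then $f(c_1)=x_n$ is automatic from $m_{j_1}=n$, and the periodicity $f(x_n)=c_1$ reads combinatorially as $m_n=j_1$.

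My candidate Markov cycle is $\{I(j_1),\,I(n)\}$, which by construction sits inside the two increasing laps $[0,j_1]$ and $[j_2,n]$. Strict monotonicity of $\f$ on each increasing lap, together with the integrality of the $m_k$, gives $m_{j_1-1}\le n-1$ and $m_{n-1}\le j_1-1$, from which
\[
  \f\bigl(I(j_1)\bigr)=[m_{j_1-1},\,n]\supset I(n),\qquad
  \f\bigl(I(n)\bigr)=[m_{n-1},\,j_1]\supset I(j_1).
\]
The three hypotheses of \autoref{T-ara} are now immediate: both intervals lie in increasing laps by construction; they have disjoint closures because $j_1\le j_2-1\le n-2<n-1$ forces the right endpoint of $I(j_1)$ to lie strictly to the left of the left endpoint of $I(n)$; and the cyclic-order/rotation-number requirement is vacuous for a cycle of length two. \autoref{T-ara} then produces a Levy cycle, establishing strong obstruction. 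In the mirror-image situation where $c_2$ is the critical point with a period-two orbit (so $m_0=j_2$), exactly the same verification works with the pair $\{I(1),\,I(j_2+1)\}$, so no genuinely separate argument is needed. There is no real obstacle in this proof once the two hypotheses are unpacked: the only place where a little care is required is confirming that the $+-+$ shape conventions really force $1\le j_1<j_2\le n-1$, so that both increasing laps are non-degenerate and the two chosen Markov intervals have disjoint closures.
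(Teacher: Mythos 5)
Your proof is correct and takes essentially the same route as the paper: both identify the Markov cycle $\{I(j_1),\,I(n)\}$ (or its mirror image $\{I(1),\,I(j_2+1)\}$ when the other critical point carries the period-two orbit) and then invoke \autoref{T-ara} with $p=2$. You spell out the inclusions $\f(I(j_1))\supset I(n)$ and $\f(I(n))\supset I(j_1)$ that the paper leaves as ``easy to check,'' and you verify disjointness and the (vacuous) rotation-number condition explicitly; this also quietly corrects what appears to be a typo in the paper's proof, where ``$F(I_j)\subset I_n$'' should read ``$F(I_j)\supset I_n$.''
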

\ssk

\begin{proof}
Let $v_1<c_2<c_1<v_2$ be the critical points and corresponding critical
  values; and suppose for example that $c_2\leftrightarrow v_2$ is
  the period two critical orbit. Let  $I_j$ be the last interval in the
  first lap, with right hand endpoint $c_2$;  and let  $I_n$ be the
  last subinterval, with right hand endpoint $v_2$. Since
  $c_2\leftrightarrow v_2$ it is easy to check that $F(I_j)\subset I_n$
  and that $F(I_n)\supset I_j$. The conclusion follows.   \end{proof}\ssk

As examples, see Figures~\ref{f-aC3}(left), \ref{f-aBn3a}(left),
\ref{f-aDn4a}(left),   \ref{f-aDn4b}(left), and \ref{f-semihyp}(left).
There are also examples with a period two orbit for the Markov shift  but with
no period two critical orbit.
Consider \autoref{f-aBobst},  with combinatorics $\(3,5,4,1,0,2\)$ of
Type B. In this case it is not hard to check that  $F(I_1)\supset I_5$
and that $F(I_5)\supset I_1$. Hence again it follows
that the combinatorics is strongly obstructed. There are even examples
with no periodic critical orbit of any period. 
See \autoref{f-nonhyp-obst}, a  totally non-hyperbolic map with 
combinatorics $\(3,5,4,3,0,2\)$, also satisfying $F(I_1)\supset I_5$
and  $F(I_5)\supset I_1$, and hence also strongly obstructed.
\msk

\begin{figure}[t!]
  \centerline{\includegraphics[width=4in]{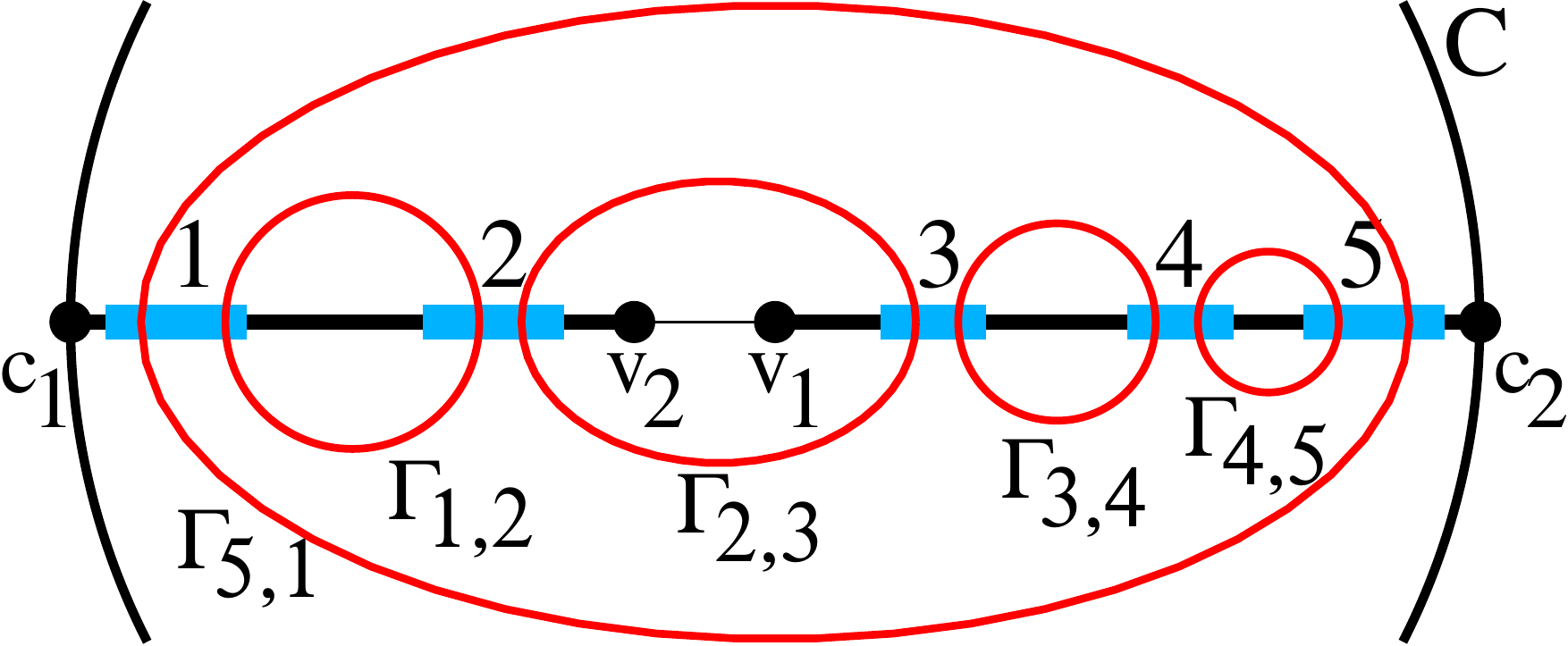}}
  \caption{\label{F-RSp}  A  neighborhood of the
    increasing half-circle within the eastern hemisphere
    of the Riemann sphere, illustrating a Levy cycle of period
    $p=5$. 
    The blue line segments represent
    the  periodic intervals, numbered from left to right by
    the numbers one through $p$. For any $p$ there must be at least
    one and at most $p-1$ such intervals in the increasing lap
    $[c_1,~v_2]$, with the remaining periodic intervals in the
other    increasing lap $[v_1,~c_2]$. The loops $\Gamma_{i,j}$ of the
    Levy cycle are numbered according to the intervals where they
    cross the real axis (= equator).}
  \end{figure}

  \begin{proof}[{\bf Proof of \autoref{T-ara} for the case $p>2$}]
    To simplify the notation, we will simply label the $p$ periodic
    intervals $I(j_k)$ by integers from one to $p$ in positive cyclic
    order within the increasing half-circle. If the rotation number
    is $m/p$, then the image of interval $j$ under the Thurston map
    $\f$ will contain interval with number $j+m~~({\rm mod}~p)$.
    Thus we can choose two points $r_j<s_j$ in each interval $j$ with images
    $r'_{j+m}<s'_{j+m}$ in the interior of interval $j+m$. Now join
    each $s'_j$ to $r'_{j+1}$ by a path within the northern hemisphere
    and also by the conjugate path within the southern hemisphere. The
    result will be a loop $\Gamma_{j,\,j+1}$. It is always possible
    to do this so that the loops are disjoint, and contained in the
    eastern hemisphere, as illustrated in \autoref{F-RSp}. Furthermore,
    it is not hard to see that there is a branch of $\f^{-1}$ which
    carries each $\Gamma_{j,\,j+1}$ to a loop which is homotopic
    to $\Gamma_{j-m,\,j+1-m}$ within $\widehat\C\ssm P$, where $P$ is
    the postcritical set. This completes the proof of \autoref{T-ara}.
    \end{proof}

  \section{Unimodal Maps}\label{s8}
  Let $\U$ be the open unimodal region of $\M/\I$  (see \autoref{F-M/I}),
  and let  $\overline \U$
  be its topological closure, consisting not only of unimodal classes
  but also of polynomial and co-polynomial classes. In this section, we
  will always choose the orientation so the the maps have shape $-+$.
  Thus for all $\<f\>$ in $\overline\U$ there is a
  \textbf{\textit{primary critical point}} $c_1$
  where $f$ takes its minimum value, and a \textbf{\textit{secondary
      critical point}} $c_2$ where $f$ takes its maximum value. If $\<f\>$
  belongs to the open set
  $\U$, then the critical point $c_1$ is in the interior of $f(\Rhat)$
  while $c_2$ is in the complement of $f(\Rhat)$.

  \subsection*{Bones}
  By definition, a \textbf{\textit{bone}} in $\overline\U$ is a connected
  component of the locus of $\<f\>$ 
for which the primary critical point $c_1$
  is periodic, with some specified period $p\ge 2$ and specified
 order type.\footnote{Compare \cite{DGMT} and \cite{MTr}. The
    \textbf{\textit{order type}} is the order of the $p$ successive images
    $f^{\circ j}(c_1)$ within the interval $f(\Rhat)$.} Filom \cite{F},
  making use of Kiwi and Rees \cite{KR}, 
  shows that every bone in $\overline\U$ is a smooth manifold, which is
  either a \textbf{\textit{bone-arc}},  diffeomorphic to a closed interval,
  or a \textbf{\textit{bone-loop}}, diffeomorphic to
  a circle.\footnote{Using Filom's work, we will prove in
    \autoref{C-no-loop} that there are no bone-loops in $\overline \U$.
  (See also Gao \cite{G}.)}
He proves the following. (See \cite[6.2]{F}.)

\begin{lem}\label{L-Fil} Every bone-arc in $\overline\U$ has one
  endpoint in the polynomial boundary and one endpoint in the
  co-polynomial boundary. Furthermore, for every polynomial or
  co-polynomial class for which $c_1$
    is periodic of period $p\ge 2$, there is a corresponding bone-arc.
    \end{lem}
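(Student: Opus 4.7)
The plan is to treat a bone-arc $B\subset\overline\U$ as the trace of a one-parameter deformation in which the orbit type of the primary critical point $c_1$ is held fixed, and to identify its endpoints via a case analysis on how this deformation can degenerate at $\partial\overline\U$. I would proceed in three steps, leaning heavily on the Kiwi--Rees transversality machinery \cite{KR} and Filom's unimodal analysis \cite{F}.

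First, I would exclude endpoints at the ideal boundary of $\overline\U$. The locus $B$ sits inside a fixed level set of the kneading invariant of $c_1$, hence (by Milnor--Thurston) in a fixed level set of topological entropy. Since topological entropy is continuous on $\M/\I$ (\cite[Prop. 3.6]{F}) and \autoref{T-asymp} shows that approach to either ideal point $(\pm .5,\,1)$ forces the fixed-point geometry to degenerate in a way incompatible with a pre-assigned finite periodic orbit of length $p\ge 2$, the arc $B$ must be bounded in $\overline\U$.

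Second, I would classify the possible finite endpoints. At a boundary point $\langle f_*\rangle\in\partial\overline\U\setminus\U$, the secondary critical point $c_2$ leaves the open region where it sits strictly outside $f_*(\Rhat)$. The only two options consistent with the periodic orbit of $c_1$ being preserved are (a) $c_2 = f_*(c_2)$, putting $\langle f_*\rangle$ on the polynomial locus, or (b) $f_*(c_2) = c_1$, putting $\langle f_*\rangle$ on the co-polynomial locus. To show that each bone-arc $B$ has exactly one endpoint of each kind, I would parametrize $B$ by the real parameter $f(c_2)\in\partial f(\Rhat)$ (a legitimate smooth coordinate, by the Kiwi--Rees transversality of $c_2$'s itinerary along $B$). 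Moving $f(c_2)$ in one direction drives $c_2$ toward its own image, producing the polynomial endpoint; moving it in the opposite direction drives $f(c_2)$ toward $c_1$, producing the co-polynomial endpoint. Monotonicity precludes revisiting the same type of endpoint twice.

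For the converse direction, given a polynomial or co-polynomial class $\langle f_0\rangle$ with $c_1$ of period $p\ge 2$ and prescribed order type, I would invoke an implicit function theorem at $\langle f_0\rangle$ to extend this class into $\overline\U$ as a germ of a smooth arc along which $c_1$ remains periodic of the same order type; the non-degeneracy of the relevant Jacobian again follows from the Kiwi--Rees structure theorems. The compactness of step one and the endpoint classification of step two then ensure this germ continues to a full bone-arc. The main obstacle is the monotonicity claim for the secondary critical data along $B$, which is precisely what Filom establishes in the relevant section of \cite{F}; the lemma as stated is essentially a bookkeeping corollary of that transversality analysis.
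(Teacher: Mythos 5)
The paper does not prove this lemma itself: it is quoted directly from Filom, with the citation \cite[6.2]{F}. So there is no ``paper's proof'' against which to compare your sketch; the only question is whether your outline is a sound reconstruction of Filom's argument.

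Your overall shape (exclude escape to the ideal boundary, classify finite endpoints as polynomial/co-polynomial, produce bone-germs at such endpoints by an implicit function theorem) is plausible. But the load-bearing claim in your step two --- that $f(c_2)$ is a globally monotone smooth coordinate along $B$, so that pushing it one way lands on the polynomial locus and pushing it the other way lands on the co-polynomial locus --- is very nearly the statement that $\K_2$ is strictly monotone along each bone-arc. In this paper that monotonicity statement is stated as a \emph{consequence} of \autoref{L-Fil} (``Making use of Filom's proof that each bone-arc in $\overline\U$ terminates at a co-polynomial at one end and a polynomial at the other end, we have the following\ldots''). So if you try to wire your argument into the paper's logical structure it becomes circular, and if you try to stand it on Kiwi--Rees transversality alone, you are asserting without evidence exactly the hard global ingredient of Filom's \S 6. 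Transversality of the periodicity condition does give you that $B$ is a smooth one-manifold, but it does not by itself tell you that $v_2$ restricted to $B$ is proper, injective, or monotone, nor that $B$ cannot turn around and hit the same boundary component twice or run off to the ideal boundary. Your step one is also thinner than it looks: constancy of entropy on a bone plus continuity of entropy does not immediately rule out a bone diverging toward the point $(\pm.5,\,1)$, because near that ideal point entropy takes every value in $(0,\log 2]$ (the bone could in principle approach along a level set of entropy) --- some quantitative input about the geometry of the periodic orbit as $\Delta\to 1$ is needed. In short, you have correctly located the key obstacle (the global monotonicity), but you are importing it rather than proving it, and the lemma is not merely bookkeeping on top of transversality.
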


    We can understand this statement on a purely combinatorial level as follows.

  \begin{prop} \label{P-copoly} There is a natural one-to-one correspondence
    between combinatorics \break
    {$\(m_0,\ldots, m_n\)$}  of polynomial  shape
    and combinatorics {$\(m_0,\ldots,m_{n-1}\)$} of 
    {co-polynomial}  shape, except in two extreme cases:
    For the polynomial combinatorics $\(2,0,2\)$, corresponding
    to the Chebyshev map $f(x)=x^2-2$, and the polynomial 
    combinatorics $\(0,1\)$ corresponding to $f(x)=x^2$,
there is no corresponding co-polynomial combinatorics.\end{prop}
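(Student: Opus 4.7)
My plan is to build an explicit combinatorial bijection by \emph{detaching} the polynomial's critical fixed point. After applying the orientation reversing involution $\I$ of \autoref{R-I} if necessary, I may normalize so that in the polynomial combinatorics $\vecm=(m_0,\ldots,m_n)$ the critical fixed point sits at the left end, i.e.\ $m_0=0$ and $x_0$ represents $\infty$. Because $\infty$ has only itself as a preimage under any quadratic polynomial, no other marked point can map to $x_0$, so $m_j\ge 1$ for every $j\ge 1$. I then take the candidate co-polynomial combinatorics to be $\vecm':=(m_1-1,m_2-1,\ldots,m_n-1)$, a length-$n$ sequence with entries in $\{0,\ldots,n-1\}$; the inverse sends a co-polynomial combinatorics to the polynomial one obtained by prepending a $0$ and shifting every remaining entry up by $1$.

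The main verification is admissibility. In the polynomial direction, admissibility places $(0,m_1,\ldots,m_n)$ directly in canonical form: starting from the unique smallest value $0$ it is strictly increasing to a maximum and then strictly decreasing, with the decreasing tail never returning to $0$; in particular $m_1,m_n\ge 1$. The shifted tail $(m_1-1,\ldots,m_n-1)$ inherits this unimodal profile, and a cyclic rotation putting its smallest entry first then produces an admissible sequence. The co-polynomial shape is automatic: the new maximum $n-1$ is attained at the image (under the shift) of the critical index $j_+$ of the polynomial, whose image under the polynomial was $x_n$; after dropping $x_0$ and shifting, that image becomes the new critical index at the boundary, so one critical maps to the other. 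The \emph{only} clause of admissibility that can fail in this transfer is ``the decreasing tail never gets as far as the smallest,'' which fails exactly when $m_1=m_n$.

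The hard part is then to identify when $m_1=m_n$ can actually occur among minimal polynomial combinatorics. This equality says that the two marked points cyclically adjacent to $x_0=\infty$ on $\Rhat$ share a common image under any polynomial realization $p$. Writing $p(z)=z^2+c$, preimages are symmetric about the finite critical point $0$, so $x_1$ and $x_n$ must be $\pm\alpha$ for some $\alpha>0$. Since $\pm\alpha$ both lie in the forward orbit of $0$ and are its real extremes, and since that orbit is constrained to lie in $[-\beta,\beta]$ where $\beta$ is the positive real fixed point of $p$, we get $\alpha=\beta$, so $\beta$ itself belongs to the critical orbit. Checking directly when $p^k(0)=\beta$ for small $k$, combined with the requirement that $-\beta$ also lie in the orbit and map to $\beta$, produces exactly two solutions: $(c,\beta)=(0,0)$, giving $z\mapsto z^2$ with combinatorics $(0,1)$, and $(c,\beta)=(-2,2)$, giving the Chebyshev map $z\mapsto z^2-2$ with combinatorics $(2,0,2)$ in the notation of this proposition.

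The inverse direction is the mirror image: an admissible co-polynomial combinatorics has its unique ``critical-that-maps-to-the-other'' at one of the two monotonicity extremes, and cyclically rotating so that this extreme is at the right end before prepending $0$ and shifting up by $1$ produces an admissible polynomial combinatorics by the reverse monotonicity inheritance argument. Mutual inversion of the two maps is immediate from the explicit formulas, so the natural one-to-one correspondence is established in all cases except the two exceptional combinatorics listed.
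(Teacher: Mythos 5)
Your construction is essentially the paper's: detach the critical fixed point and drop its index from the list. The paper normalizes to $-+$ shape so that the fixed critical point is $x_n$ with $m_n=n$, and then simply deletes the final entry; no renumbering is required, and the result is literally $\(m_0,\ldots,m_{n-1}\)$ as an initial segment, matching the wording of the proposition. You instead normalize to $m_0=0$, drop the first entry, and subtract one from the rest, which produces the orientation-reversed counterpart. Either way the mechanism is the same; the paper dispatches the verification that the truncation is admissible and co-polynomial with ``it is not hard to check,'' so most of what you have written is filling in what the paper leaves to the reader.

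That verification has one genuine gap, in the step analyzing when $m_1=m_n$ can occur. After reducing to $x_1=-\alpha$, $x_n=\alpha$, $c=-\alpha$, with $\alpha=p^k(0)$ for some $k\ge 2$, you assert ``we get $\alpha=\beta$'' because the critical orbit lies in $[-\beta,\beta]$, $\beta$ being the positive fixed point. That inference does not hold: containment only yields $\alpha\le\beta$, not equality, and indeed for $z\mapsto z^2$ the orbit extreme is $\alpha=0$ while $\beta=1$. A correct completion: since $\alpha$ is the orbit maximum, $p(\alpha)=\alpha^2-\alpha\le\alpha$, giving $\alpha\le 2$; and $p^{k-1}(0)$, being a preimage of $\alpha$, equals $\pm\sqrt{\alpha-c}=\pm\sqrt{2\alpha}$, yet also lies in the orbit, so $\sqrt{2\alpha}\le\alpha$, forcing $\alpha\ge 2$ unless $\alpha=0$. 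Thus $\alpha\in\{0,2\}$, i.e.\ $c\in\{0,-2\}$, recovering exactly the two exceptional cases. You should also make explicit that $\min(m_1,m_n)=1$ (needed so the shifted sequence attains $0$ at all); this follows from minimality, since $x_1$ must be critical or postcritical and $m_j=1$ forces $j\in\{1,n\}$ by the strict monotonicity on each lap. With these repairs your argument is sound.
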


\begin{proof} After reversing orientation if necessary, we may assume
that both combinatorics are of $-+$ shape. Thus
  the critical fixed point in the polynomial case (corresponding to
  the point at infinity for an actual polynomial) will be
to the right. Then the
  polynomial combinatorics takes the form $\(m_0,m_1,\ldots, m_{n-1}, n\)$.
  By definition, the associated co-polynomial combinatorics
  $\vecm=\(m_0,m_1,\ldots, m_{n-1}\)$ is obtained simply by deleting the last
  entry $m_{n}=n$. If we exclude the Chebyshev case and the $\(0,1\)$ case,
  then  it is not hard to check that this resulting $\vecm$
does indeed have co-polynomial  shape. Similarly it is not hard
to check that every $\vecm$ of co-polynomial  shape
can be uniquely  augmented to obtain a combinatorics of polynomial 
shape.
\end{proof}
\msk

For a typical hyperbolic example see Figure~\ref{f-3201}, while for a
typical non-hyperbolic example see Figure~\ref{f-43013}.

\begin{figure}[!htb]
    \centerline{
      \includegraphics[height=1.3in]{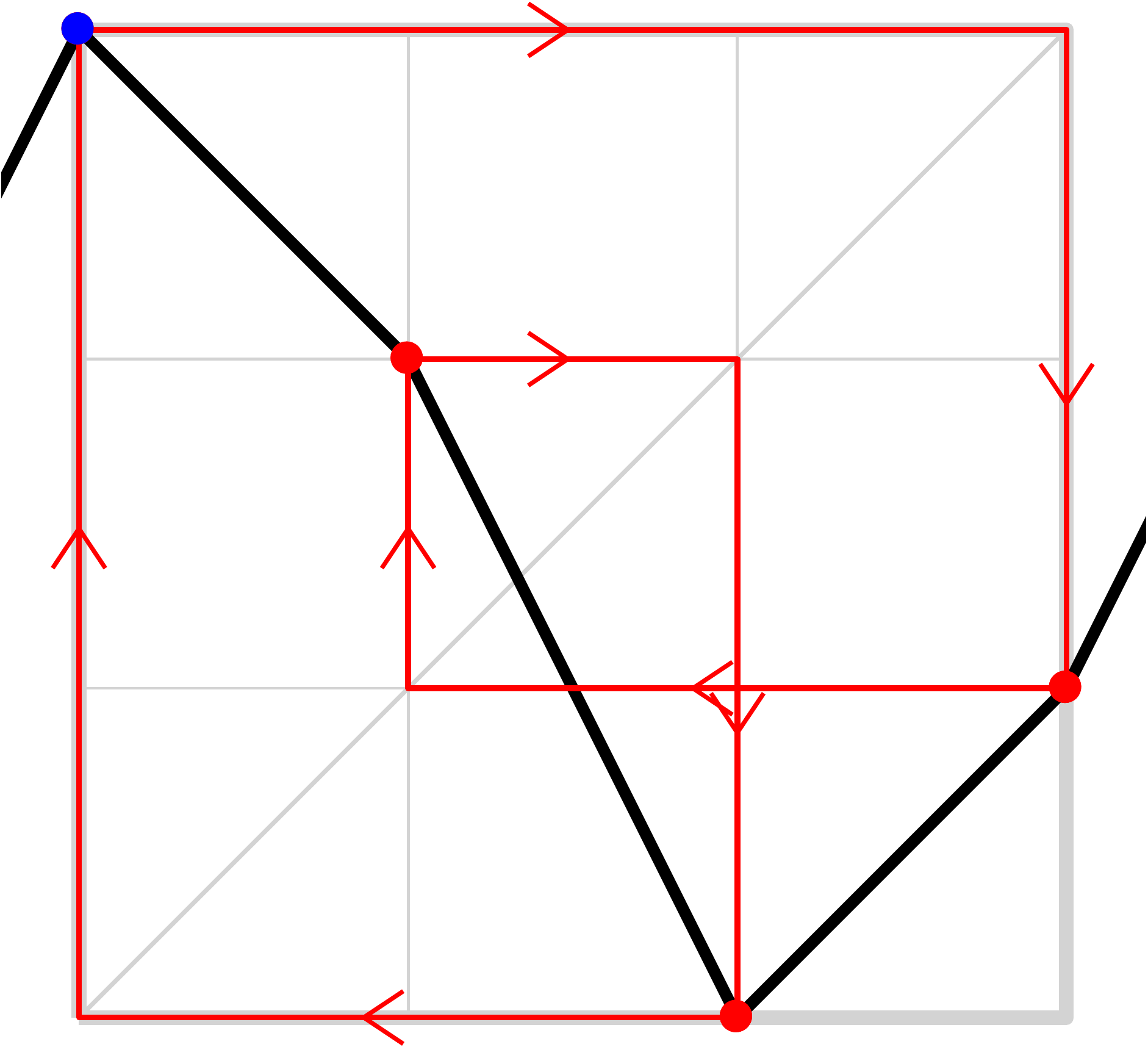} \quad
      \includegraphics[height=1.3in]{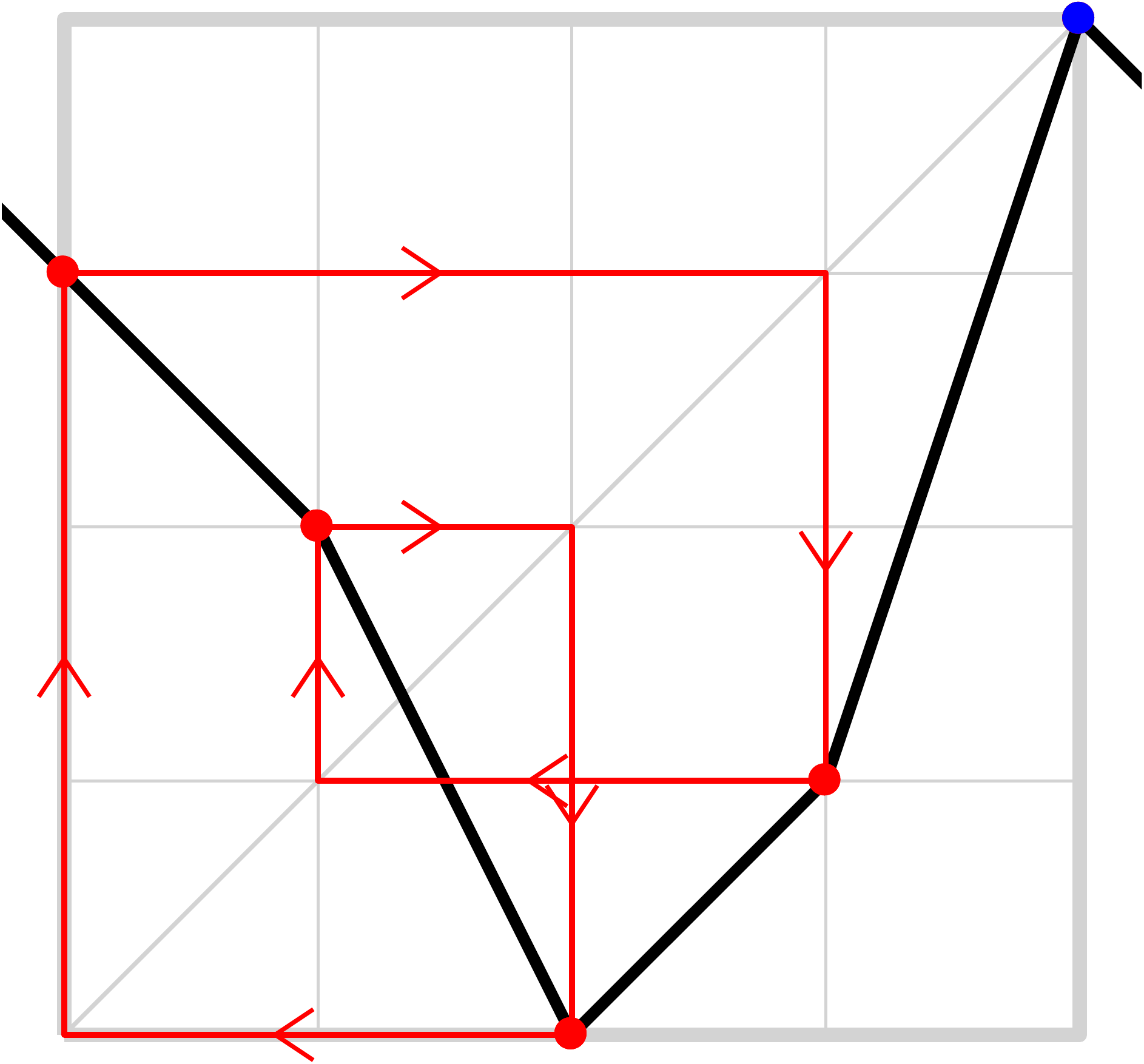} \hfill
      \includegraphics[height=1.3in]{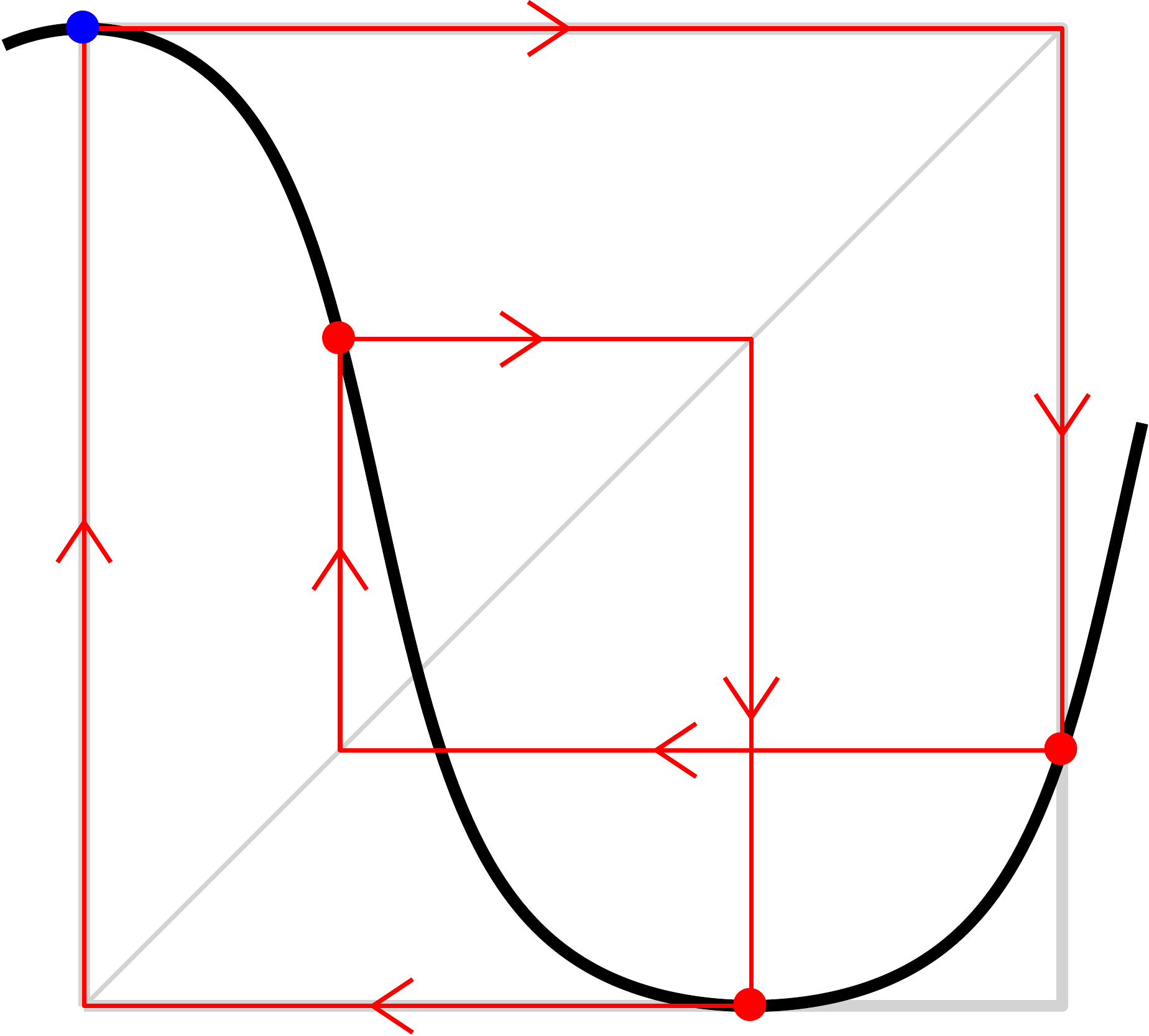} \quad
      \includegraphics[height=1.3in]{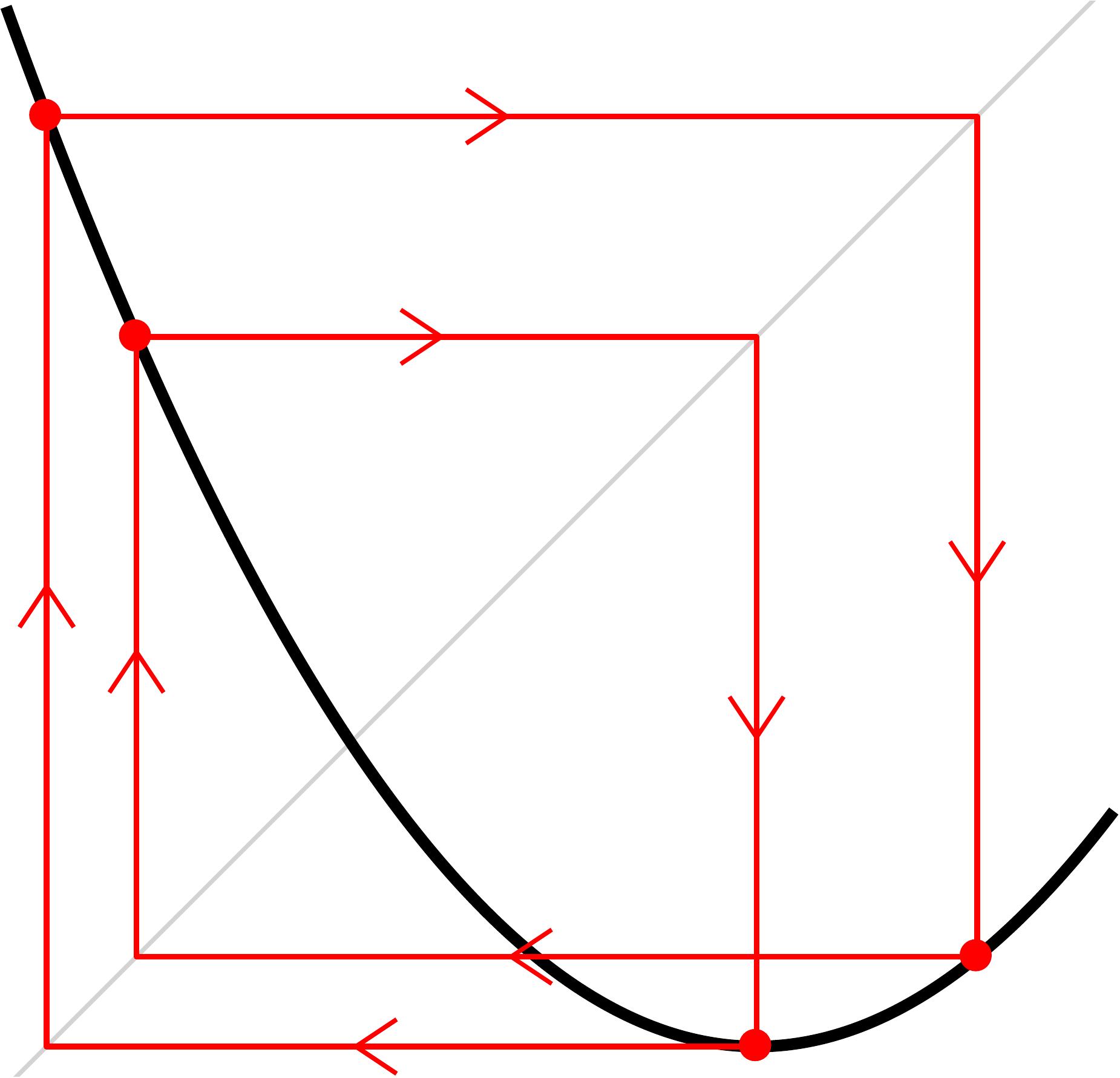}
    }
    \caption{ On the left, the  combinatorics $\(3,2,0,1\)$ has
      co-polynomial shape.
      with mapping pattern $\du{x_2}\mapsto \du{x_0}\mapsto x_3\mapsto
       x_1\mapsto\du{x_2}$.  Next to it, the corresponding polynomial
 combinatorics  $\(3,2,0,1,4\)$,  with mapping pattern
  $\du{x_2}\mapsto x_0\mapsto x_3\mapsto
  x_1\mapsto\du{x_2}$ together with $\du{x_4}\mapstoself$. Note that $x_0$ is a
  critical  point for the co-polynomial shape, but not for the polynomial
  shape.
  On the right center, the lifted rational map for the co-polynomial
      combinatorics  with  $\mu=-3.79681$ and $\kappa=0.898403$.
      To the right of it, the corresponding polynomial map
      {$f(x)=x^2-1.3107\cdots$}
  with combinatorics  $\(3,2,0,1,4\)$.
  \label{f-3201}}
    \end{figure}

  \begin{figure}[!htb]
    \centerline{
      \includegraphics[height=1.3in]{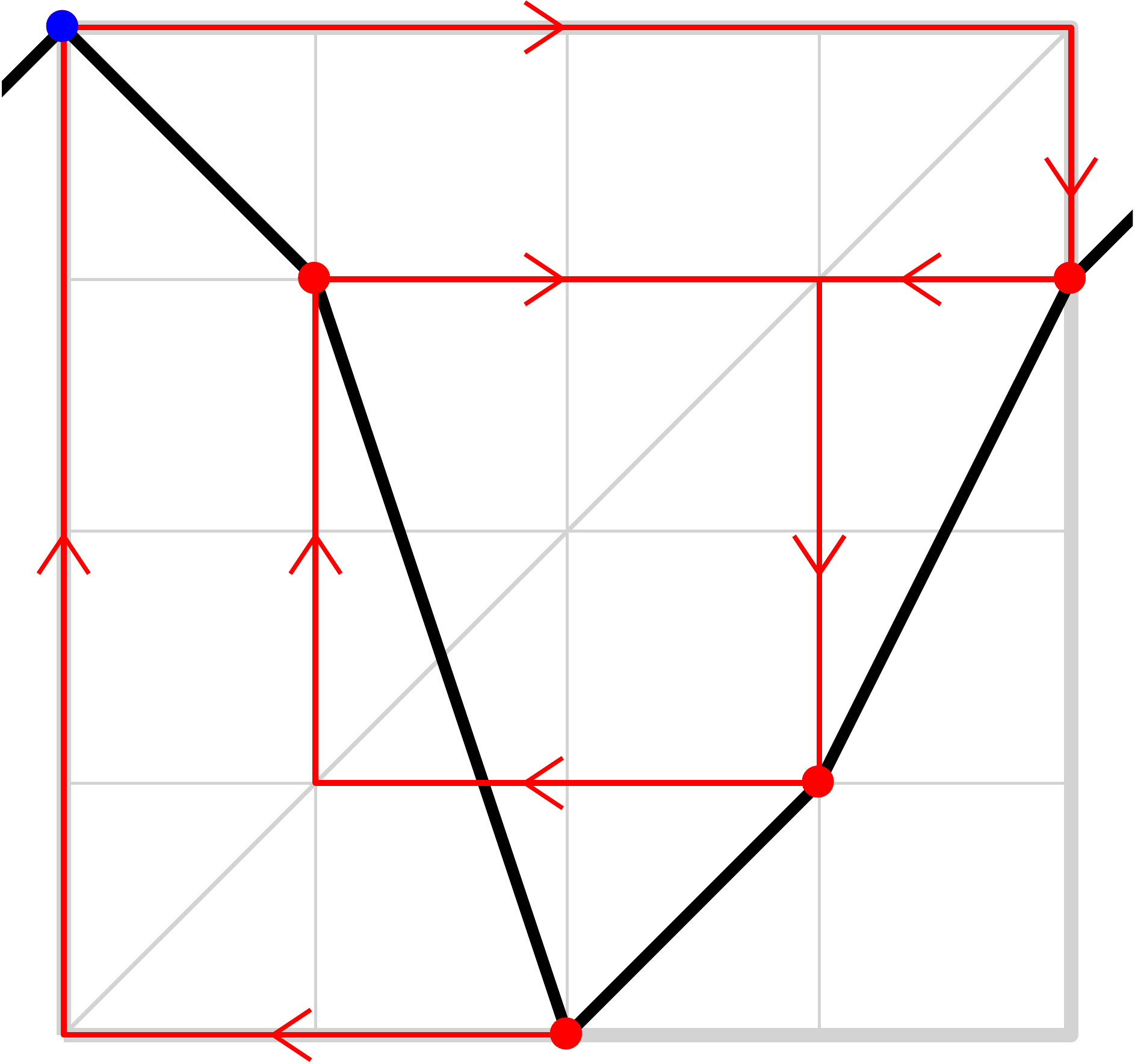} \quad
      \includegraphics[height=1.3in]{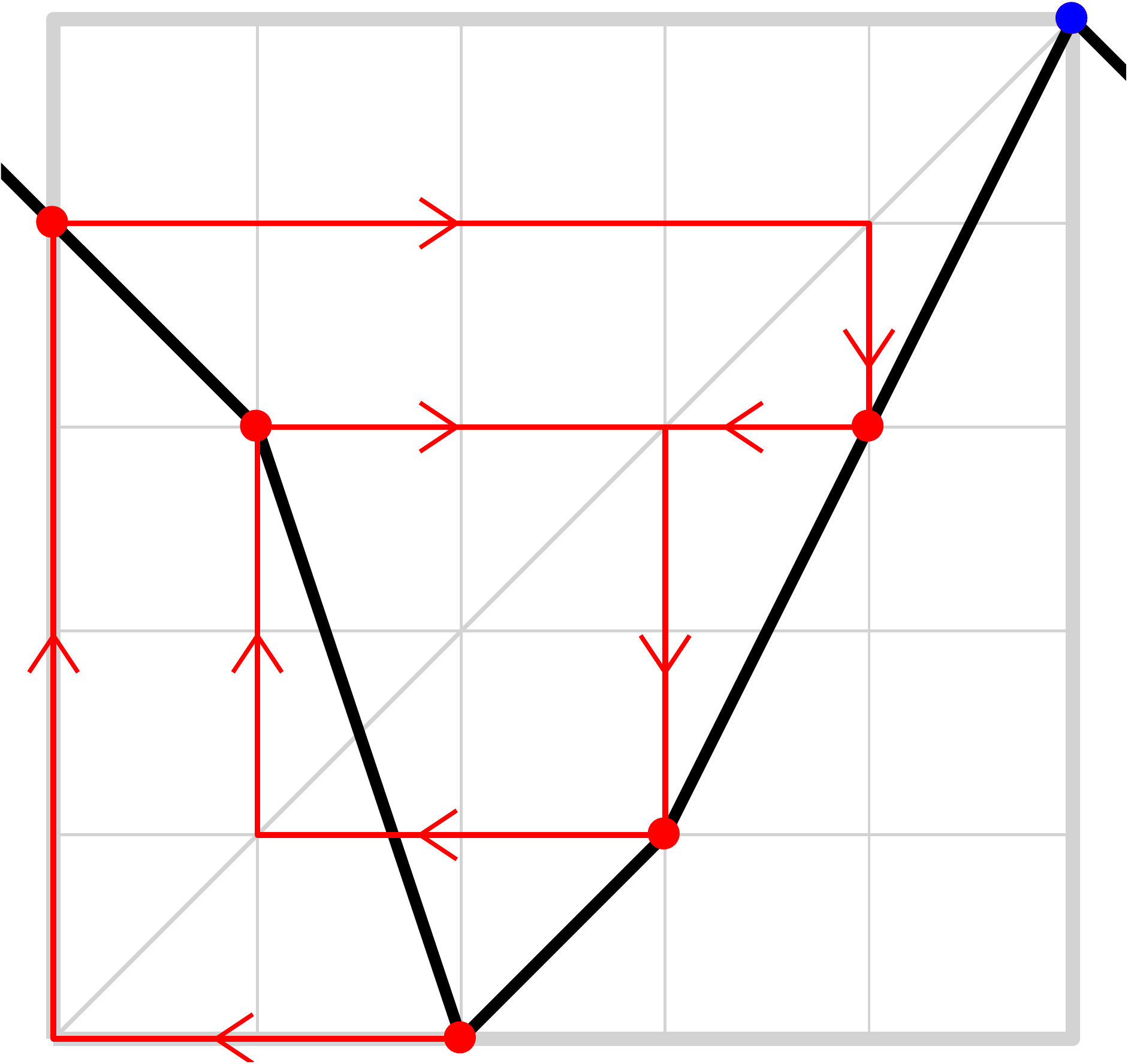} \hfill
      \includegraphics[height=1.3in]{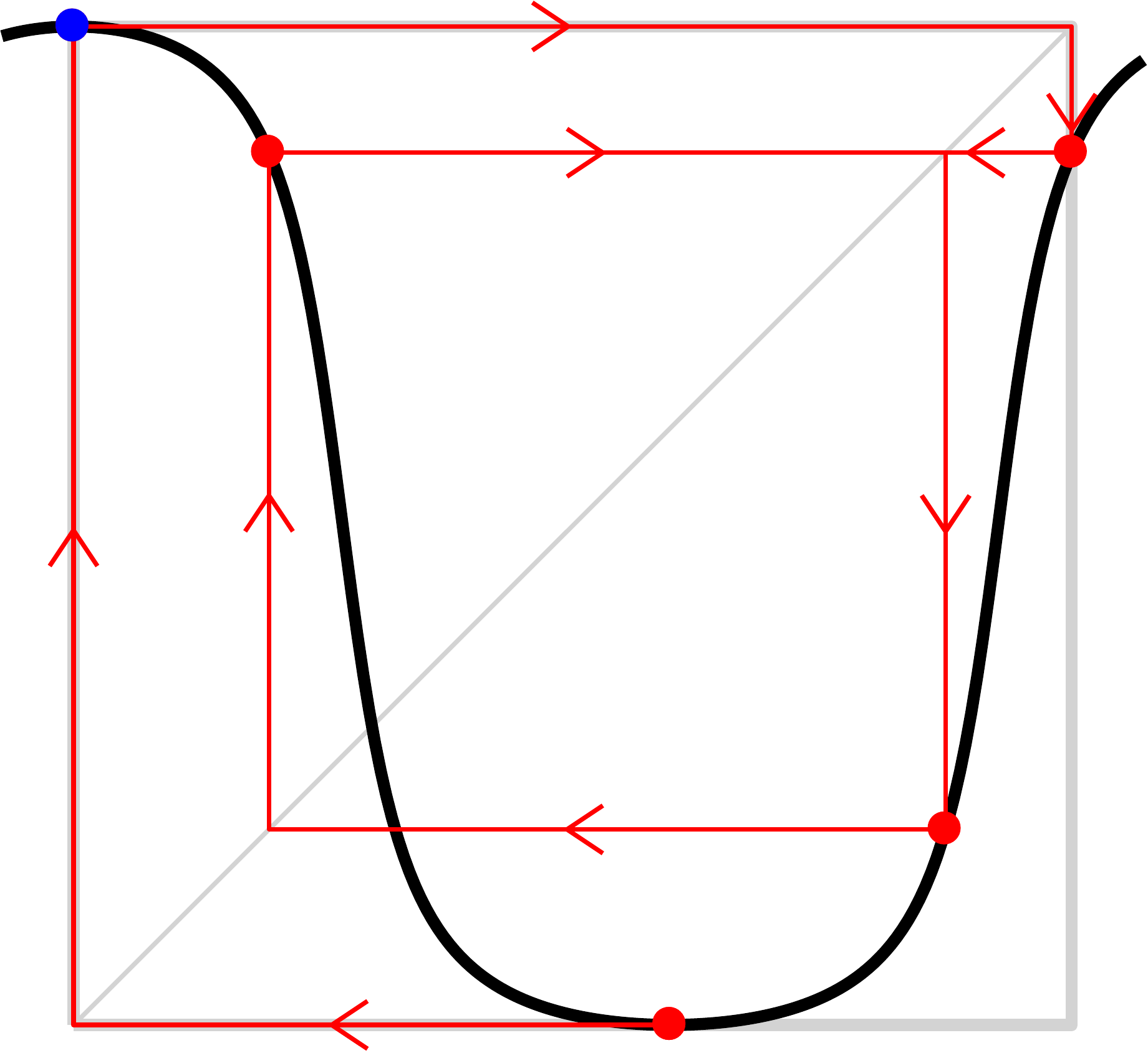} \quad
      \includegraphics[height=1.3in]{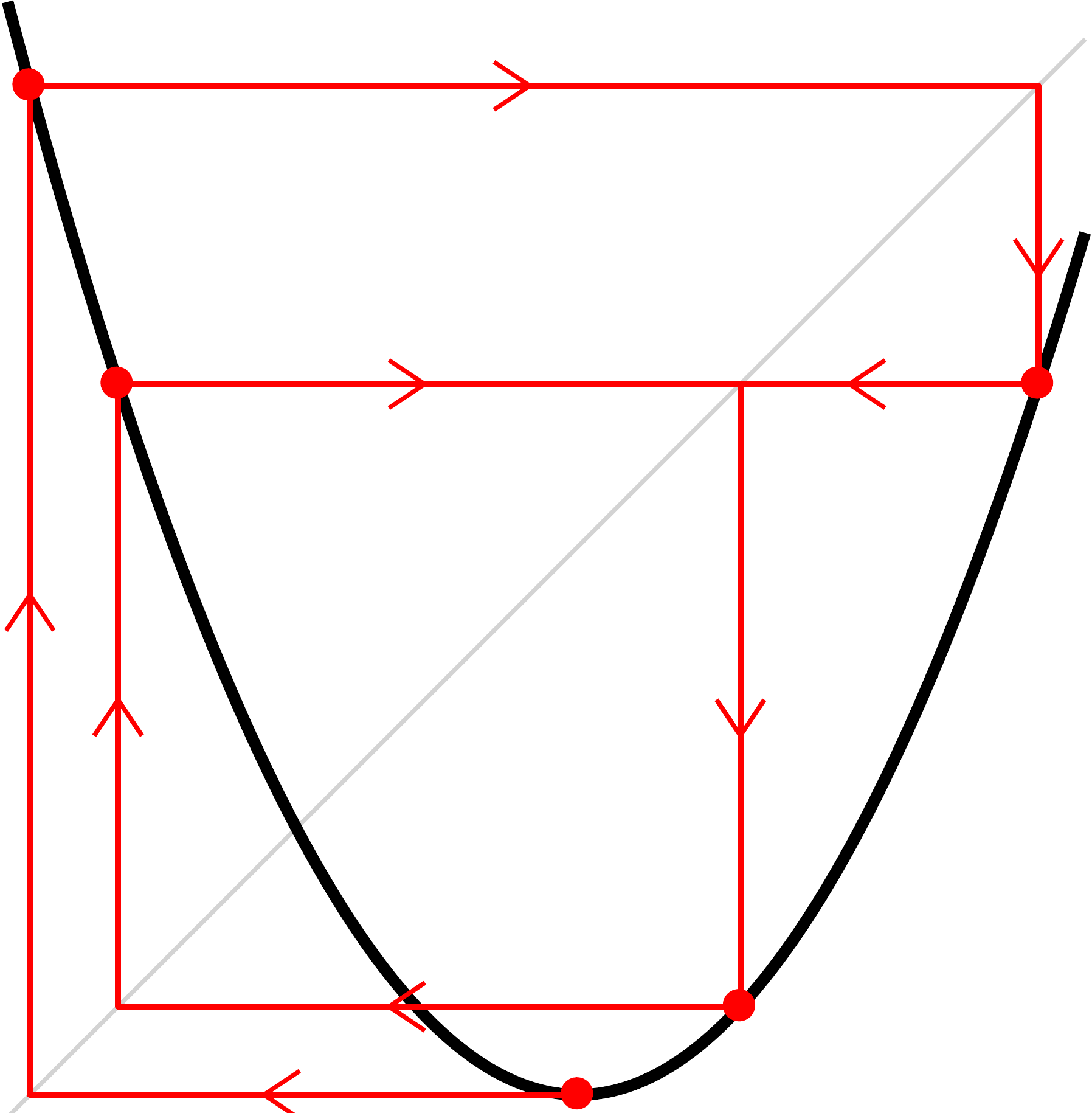}
         }
    \caption{ A non-hyperbolic example. First on the left, co-polynomial
 combinatorics $\(4,3,0,1,3\)$ with mapping pattern
 {$\du{x_2}\mapsto \du{x_0}\mapsto x_4 \mapsto x_3 \leftrightarrow x_1$}.
 Next to it, the corresponding polynomial combinatorics $\(4,3,0,1,3,5\)$ with
 mapping pattern {$\du{x_2}\mapsto x_0\mapsto x_4 \mapsto x_3
 \leftrightarrow x_1$} and $\du{x_5} \mapstoself$. Note that $\du{x_0}$ is a
      critical point on the left but not on the right.  The right center
      shows  the  lifted map for the co-polynomial combinatorics
      $\(4,3,0,1,3\)$  with $\mu=-5.53846$ and $\kappa=1.76923$. To the
       right of it is the  graph of the corresponding polynomial
      $f(x)=x^2-1.839287$.  \label{f-43013}} 
  \end{figure}
  
\msk

Thus, the correspondence between critically finite polynomial dynamics
and co-polynomial dynamics works just as well in the 
non-hyperbolic case. This suggests the  following {definition}
and conjecture. \msk

 \begin{definition}\label{D-NHb} By a ``non-hyperbolic bone'', or briefly
   \textbf{\textit{NH-bone}}, in $\overline\U$ we will mean a connected
   component of the locus of points for which $c_1$  is eventually
   periodic
   repelling,\footnote{It is necessary to be careful, since such an 
     NH-bone may terminate at a point where the repelling orbit 
     becomes  parabolic.}
   with specified eventual period $p\ge 1$ and pre-period $q\ge 2$. (Here
   by the ``pre-period'' we mean the smallest $q$ for which $f^{\circ q}(c_1)$
   is periodic.)
 \end{definition}

 \begin{conj}\label{C-NH}
   \sf Such NH-bones behave very much like the usual bones.   In particular
     they are smooth manifolds and (with one exception) every NH-bone-arc joins
     a point on the polynomial locus to a point on the co-polynomial locus.
     The unique exception is the ``Chebyshev'' NH-bone, which starts on the
     polynomial locus at $\langle x\mapsto x^2-2\rangle$, and  forms
     part of the boundary of the shift locus until it hits $\Per_1(1)$
     tangentially. (Compare \autoref{F-M/I}.) At this point the repelling
     fixed point becomes attracting. The analytically continued curve
is contained in the shift  locus and diverges towards the 
     ideal boundary point without ever reaching the co-polynomial locus.
   \end{conj}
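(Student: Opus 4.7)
The plan is to imitate Filom's treatment of ordinary bones, transposing his analytic machinery to the defining equations for pre-periodic critical orbits. Concretely, an NH-bone of pre-period $q$ and eventual period $p$ is the zero set of
\[
\Phi(\<f\>)~=~f^{\circ(p+q)}(c_1)-f^{\circ q}(c_1)
\]
restricted to the open locus where $(f^{\circ p})'\bigl(f^{\circ q}(c_1)\bigr)\ne 1$ and where the combinatorial order type of the orbit is fixed. I would proceed in three stages: (i) prove smoothness and one-dimensionality of NH-bones, (ii) identify their endpoints on $\partial\overline\U$, and (iii) analyze the Chebyshev exception.

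For stage (i), I would follow the Kiwi--Rees--Filom strategy: $\Phi$ is real-analytic on $\overline \U$, and within the non-parabolic locus the required transversality is supplied by the repelling multiplier of the terminal cycle. The motion of $f^{\circ q}(c_1)$ and of the cycle itself, as $\<f\>$ varies along the one free parameter of $\overline\U$ (say the position of $c_2$), are decoupled precisely when the multiplier has absolute value different from $1$; this is the NH-analog of Filom's transversality lemma and yields a smooth one-manifold structure. Connectivity components must be diffeomorphic either to arcs or to circles, so bone-loops are a priori possible and must be excluded later (as in \autoref{C-no-loop} for ordinary bones).

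For stage (ii), I would combine \autoref{P-copoly} with a boundary-limit argument. As $\<f\>$ moves along an NH-bone-arc toward $\partial\overline\U$, the secondary critical point $c_2$ migrates to the boundary of $f(\Rhat)$, which is exactly the polynomial or co-polynomial locus. \autoref{P-copoly} pairs each polynomial combinatorics carrying the prescribed $c_1$-orbit type with a unique co-polynomial combinatorics carrying the same data, with the Chebyshev and $\(0,1\)$ cases as the sole exceptions. Consequently, generic NH-bone-arcs have one polynomial and one co-polynomial endpoint, matched by the \autoref{P-copoly} correspondence.

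For stage (iii)---which I expect to be the main obstacle---the Chebyshev combinatorics has no co-polynomial partner, so the arc originating at $\<x\mapsto x^2-2\>$ has no legitimate co-polynomial terminus. I would track this arc across the moduli space and show that it must cross $\Per_1(1)$ tangentially: before the crossing the landing fixed point is repelling, so the arc lies in $\F(0,1)\cup\F(1,2)$ and participates in the boundary of the shift locus (since the Chebyshev orbit type is exactly the ``critical-to-fixed'' condition defining the Chebyshev curve of \autoref{R-M/I}); at the crossing the multiplier equals $1$, giving the tangency; after the crossing the same real-analytic equation $f^{\circ q}(c_1)=$ (landing fixed point) continues to define a smooth curve, but the landed fixed point is now attracting, forcing the continued critical orbit into $J_\R$, hence into the shift locus. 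Finiteness of $\overline \U$ plus the nonexistence of any co-polynomial terminus would then force the analytically continued curve to escape to the ideal boundary. The hardest point here is proving tangency at the crossing and ruling out other termination behaviors (collision with another bone, reversal, or hitting another component of $\Per_1(1)$); I would attack this by pulling the multiplier of the landing cycle back to a local coordinate on the arc and showing that its derivative vanishes exactly when the multiplier passes through $1$, which is a direct consequence of the reflection symmetry of $\Phi$ around the Chebyshev point.
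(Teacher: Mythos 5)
This statement is a \emph{conjecture} in the paper, not a theorem: the authors explicitly label it \texttt{\textbackslash begin\{conj\}}, motivate it only by the combinatorial correspondence of \autoref{P-copoly}, and offer no proof. So there is no ``paper's own proof'' to compare against --- the authors themselves left this open.

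Your sketch is a reasonable program in the Kiwi--Rees--Filom spirit, but each stage has a gap substantial enough that the whole cannot be called a proof. In stage (i), transversality of $\Phi(\<f\>)=f^{\circ(p+q)}(c_1)-f^{\circ q}(c_1)$ is not supplied by the landing cycle being repelling: what is needed is that the parameter derivative of $\Phi$ does not vanish, a Misiurewicz-type rigidity statement that Filom's bone argument does \emph{not} directly provide, since his use of Kiwi--Rees counts hyperbolic components, and NH-bones are not hyperbolic. You would need a genuinely new transversality lemma here, and merely asserting the two motions ``are decoupled'' is not a computation. In stage (ii), the claim that an NH-bone-arc has endpoints on $\partial\overline\U$ is exactly what must be proved; a priori the arc could be a compact loop, escape to the ideal boundary (as the Chebyshev one is conjectured to do), or terminate at a parabolic point --- and indeed the paper's own footnote warns about parabolic termination. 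Citing \autoref{P-copoly} pairs up combinatorics, but does not show that the geometric curve actually lands on either boundary component. In stage (iii), which you correctly flag as the main obstacle, the proposed mechanism for the tangency --- ``reflection symmetry of $\Phi$ around the Chebyshev point'' --- is not a defined object in the paper or in Filom's work, and it is not clear what such a symmetry would be or why it would force the multiplier's derivative to vanish along the arc; you also do not rule out the continued curve re-crossing $\Per_1(1)$ or meeting another bone. Finally, the exclusion of NH-bone-loops is deferred, but for ordinary bones that exclusion (\autoref{C-no-loop}) itself rests on Filom's nontrivial Proposition 6.8 plus \autoref{C-no-obs}, neither of which is yet available in the NH setting. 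In short: the outline identifies the right ingredients and the right shape of argument, but the crucial analytic inputs (transversality, endpoint landing, tangency) remain unproved, which is precisely why the paper states this only as a conjecture.
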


   The ten simplest examples of the  correspondence between
   co-polynomial combinatorics and polynomial combinatorics are shown in
   \autoref{t-kninv}, and the ten corresponding bone-arcs or NH-bone-arcs
   are shown in \autoref{F-bonearc}.

\subsection*{Kneading Numbers} 
    Kneading theory is a useful tool for all piecewise monotone maps
 of the interval (compare \cite{MTh}). However, in the unimodal case it
 can be described in a particularly simple and easy to use form. 
    
 For any $f$ with $\langle f\rangle\in\overline\U$ 
there is a \textbf{\textit{dynamic kneading function}}
$$ \k~=~\k_f:f(\Rhat) \to [-1,\,1]~,$$
defined in two steps 
as follows. The image $\k(x)$  can be thought
of as an invariantly defined coordinate for the point $x\in f(\Rhat)$.

\begin{definition}\label{D-kn} 
  For each $x\in f(\Rhat)$ let: \hspace{-.4cm}
  $$\sigma(x)= \begin{cases}\mbox{\quad} 0\quad {\rm if}\quad x\quad{\rm is\;\, equal
      \;\, to\;\, the\,\; primary \;\, critical \;\, point}\;\, c_1~, \\
    \mbox{\quad} 1\quad  {\rm if}\quad x \quad {\rm is\;\, in \;\, the\;\,
      increasing\;\,lap}~,\\
  -1\quad {\rm if}\quad x\quad {\rm is \,\; in \;\, the\;\, decreasing\;\, lap}~.
  \end{cases}$$

  \noindent  Note that $\sigma(x)$ can be identified with the sign of the
 derivative $f'(x)$, except in the case of a pole, with $f(x)=\infty$.
 (Of course if $f(\Rhat)\subset\R$, then there are no poles.)
 Now for any $x_0\in f(\Rhat)$ with orbit
$~~f:x_0\mapsto x_1\mapsto\cdots$,~~ define
$$\k(x_0)~=~\sum_{h\ge 0}
\frac{\sigma(x_0)\sigma(x_1)\cdots\sigma(x_h)}{2^{h+1}}~.$$
\end{definition}

\begin{table}[!ht]
  \caption{\label{t-kninv} Examples of co-polynomial maps, showing the
    kneading invariant 
    as defined below, and listing the corresponding $c$ values
    for $f_c(x) = x^2 -c$~. The notation $B(p)$ means that both critical
    points are contained in a common orbit of period $p$. 
    }
{\newcommand{\Tsk}{3pt}
\newcommand{\lb}[1]{\raisebox{-1.7ex}[0pt][0pt]{#1}}
\newcommand{\rb}[1]{\raisebox{1.8ex}[0pt][0pt]{#1}}
\newcommand{\hd}[3]{\multicolumn{1}{#1}%
   {$\stackrel{\hbox{\textsf{#2}}\Tstrut}{\hbox{\textsf{#3}}\Bstrut}$}}
\begin{center}
\begin{tabular}{|c|c:c:c:l:c|} 
\hline
  \hd{|c}{figure}{number} & \hd{|c}{topological}{shape}
& \hd{|c}{co-polynomial}{combinatorics} & \hd{|c}{dynamic}{type} 
& \hd{|c}{kneading}{invariant} & \hd{|c|}{corresponding}{polynomial} \\
  \hline \hline
{\ref{f-aBn4}L} & $ +\, -$& $\(1,\,2,\,3,\,4,\,0\)$&\lb{$B(5)$} && \Tstrut\\[\Tsk]
    & $-\, +$& $\(4,\, 0,\, 1,\, 2,\, 3\)$&&\rb{0.9375}&\rb{$c=1.985424253$}\Bstrut\\
  \hline
{\ref{f-aBn3}L}& $ +\, -$& $\(1,\, 2,\, 3,\, 0\)$&\lb{$B(4)$}& &\Tstrut \\[\Tsk]
   & $-\, +$& $\(3,\, 0,\, 1,\, 2\)$& &\rb{0.875}&\rb{$c=1.940799807$}\Bstrut\\
  \hline
{\ref{f-nonhyp1}}& $+\, -$& $\(1,\, 3,\, 4,\, 3,\, 0\)$&\lb{$NH$}& & \Tstrut\\[\Tsk]
 & $-\, +$& $\(4,\, 1,\, 0,\, 1,\, 3\)$& &\rb{0.833333}&\rb{$c=1.892910988$}\Bstrut\\
  \hline
{\ref{f-aBn4a}L}&$+\, -$& $\(1,\, 3,\, 4,\, 2,\, 0\)$&\lb{$B(5)$}&  & \Tstrut\\[\Tsk]
 & $-\, +$& $\(4,\, 2,\, 0,\, 1,\, 3\)$& &\rb{0.8125}&\rb{$c=1.860782522$}\Bstrut\\
  \hline
\ref{f-nonhyp}R&$+\, -$& $\(1,\, 3,\, 4,\, 1,\, 0\)$&\lb{$NH$}&   &\Tstrut\\[\Tsk]
\ref{f-43013} & $-\, +$& $\(4,\, 3,\, 0,\, 1,\, 3\)$& &\rb{0.8}&\rb{$c=1.839286755$}\Bstrut\\
  \hline
{\ref{f-aBn2}R}&$+\, -$& $\(1,\, 2,\, 0\)$&\lb{$B(3)$}&  & \Tstrut\\[\Tsk]
           &$-\, +$& $\(2,\, 0,\, 1\)$& &\rb{0.75}&\rb{$c=1.754877666$}\Bstrut\\
  \hline
{\ref{f-aBn4}R}&$+\, -$& $\(2,\, 4,\, 3,\, 1,\, 0\)$&\lb{$B(5)$}&   &\Tstrut\\[\Tsk]
              &$-\, +$& $\(4,\, 3,\, 1,\, 0,\, 2\)$& &\rb{0.6875}&\rb{$c=1.625413725$}\Bstrut\\
  \hline
{\ref{f-nonhyp}L}&$+\, -$& $\(2,\, 3,\, 2,\, 0\)$&\lb{$NH$}& &\Tstrut \\[\Tsk]
  &$-\, +$& $\(3,\, 1,\, 0,\, 1\)$& &\rb{0.666667}&\rb{$c=1.543689013$}\Bstrut\\
  \hline
  \ref{f-aBn3}R&$+\, -$& $\(2,\, 3,\, 1,\, 0\)$&\lb{$B(4)$}&   &\Tstrut \\[\Tsk]
\ref{f-3201} &$-\, +$& $\(3,\, 2,\, 0,\, 1\)$& &\rb{0.625}&\rb{$c=1.310702641$}\Bstrut\\
  \hline
  \ref{f-aBn2}L&$ - $&$\(1,\,0\)$&$B(2)$& 0.5  &   $c=1$\Tstrut\Bstrut\\
  \hline 
  \hline
\end{tabular}
\end{center}
}
\end{table}

\begin{lem}\label{L-kn1}   If $f$ has shape $-+$, then $\k(x)$ 
 is a monotone increasing function. That is,
  $$~~x<y~\Rightarrow~ \k(x)\le\k(y)~. $$
Similarly, in the $+-$ case it is monotone decreasing.
This function $\k(x)$   is not continuous: It 
 has a  jump discontinuity at $c_1$ and at every iterated
pre-image of $c_1$; but is continuous everywhere else.
\end{lem}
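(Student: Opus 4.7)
The plan is to establish monotonicity via a functional equation for $\k$, then deduce the continuity and jump structure as direct consequences. Factoring out the leading term in the series defining $\k$ gives the recursive identity
\[
\k(x) \;=\; \frac{\sigma(x)}{2}\bigl(1+\k(f(x))\bigr)
\]
on all of $f(\Rhat)$; since $|\k|\le 1$ the factor $1+\k(f(x))$ is always nonnegative. For each $N\ge 0$, the truncated partial sum $\k_N$ satisfies the same recursion shifted by one, $\k_{N+1}(x)=\sigma(x)\bigl(1+\k_N(f(x))\bigr)/2$, and converges uniformly to $\k$ with $|\k-\k_N|\le 1/2^N$.

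The monotonicity statement follows by induction on $N$. Assuming $\k_N$ is weakly increasing on $f(\Rhat)$ in the $-+$ case (trivial for $N=0$), I would verify three subcases for $x<y$ in $f(\Rhat)$. If $x$ and $y$ are on opposite sides of $c_1$, then $\k_{N+1}(x)\le 0\le \k_{N+1}(y)$ just from the sign of $\sigma$. If $x<y\le c_1$, then $\sigma=-1$ on both and $f$ is orientation-reversing on this lap, so $f(x)>f(y)$; the induction hypothesis gives $\k_N(f(x))\ge \k_N(f(y))$, which, combined with the minus sign in the recurrence, yields $\k_{N+1}(x)\le \k_{N+1}(y)$. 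If $c_1\le x<y$, then $\sigma=+1$ and $f$ is orientation-preserving, so $f(x)<f(y)$ and the induction hypothesis transfers directly. Uniform convergence then carries the monotonicity to $\k$ itself; the $+-$ case is obtained by precomposing with an orientation-reversing change of coordinate.

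For the continuity claim, suppose $f^h(x)\ne c_1$ for every $h\ge 0$. Continuity of each iterate $f^h$ at $x$ ensures that for $y$ sufficiently close to $x$ one has $\sigma(f^h(y))=\sigma(f^h(x))$ whenever $h<H(y)$, with $H(y)\to\infty$ as $y\to x$; the truncations $\k_{H(y)}(x)$ and $\k_{H(y)}(y)$ coincide, so the tail bound gives $|\k(x)-\k(y)|\le 2^{1-H(y)}\to 0$. Conversely, if $H\ge 0$ is the least index with $f^H(x)=c_1$, iterating the functional equation $H$ times yields
\[
\k(y) \;=\; \k_H(y) \;+\; \frac{\sigma(y)\,\sigma(f(y))\cdots\sigma(f^{H-1}(y))}{2^H}\,\k(f^H(y)),
\]
and since $f^H$ is a local diffeomorphism at $x$ (no intermediate iterate hits the critical point), as $y$ crosses $x$ the prefactor keeps its common value $\pm 1/2^H$ on both sides while $f^H(y)$ approaches $c_1$ from opposite sides; the jump of $\k$ at $x$ is therefore $\pm 1/2^H$ times the jump $1+\k(f(c_1))$ of $\k$ at $c_1$.

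I expect the main obstacle to be verifying that this last jump is genuinely nonzero, so that $\k$ has an honest discontinuity at every iterated preimage of $c_1$. Ruling out the degenerate equality $\k(f(c_1))=-1$ amounts, via the functional equation, to excluding the possibility that the forward itinerary of $f(c_1)$ is the specific strictly alternating pattern forcing $\k=-1$; a separate combinatorial inspection (using that $c_1$ lies in the interior of $f(\Rhat)$ and that $f(c_1)$ is a critical value rather than an interior point of a lap producing such a pattern) completes the argument. The induction for monotonicity and the tail estimate for continuity are by contrast essentially routine.
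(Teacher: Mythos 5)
For monotonicity, your inductive argument via the recursion $\k(x)=\tfrac{\sigma(x)}{2}\bigl(1+\k(f(x))\bigr)$ is correct but takes a genuinely different route from the paper. The paper argues directly by contradiction: assuming $\k(x)>\k(y)$, it locates the first index $n$ at which the itineraries of $x$ and $y$ differ, notes that the common prefix sign $s=\prod_{j<n}\sigma(x_j)$ is also the sign of the derivative of $f^{\circ n}$ on the interval joining $x$ to $y$, and concludes $x>y$. Both proofs work; the paper's is shorter and purely combinatorial, while yours isolates the functional equation explicitly and propagates monotonicity through the truncations $\k_N$.

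The gap is in your treatment of the jump at $c_1$. First, the degenerate case $\k(v_1)=-1$ is not produced by a ``strictly alternating'' itinerary: $\k(v_1)=-1$ forces every partial product $\sigma(v_1)\cdots\sigma(f^h(v_1))$ to equal $-1$, i.e.\ the sign sequence is $(-1,+1,+1,+1,\ldots)$ --- a single $-1$ followed by all $+1$'s. (A strictly alternating sign sequence gives $\k=-3/5$; you may be conflating this with the pattern $(\pm1,\mp1,+1,+1,\ldots)$ from \autoref{L-kn2}, which yields $\k=0$ rather than $\k=-1$.) Second, and more seriously, the ``combinatorial inspection'' you invoke --- that $f(c_1)$ is a critical value rather than an interior lap point --- does not rule out this pattern. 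The real obstruction is dynamical: the itinerary $(-1,+1,+1,\ldots)$ means the orbit of $v_1$ is eventually trapped in the increasing lap $(c_1,v_2]$, where $f$ is monotone, so the orbit is monotone and bounded and must converge to a fixed point of $f$ in $[c_1,v_2]$. For a critically finite map not of polynomial shape, \autoref{L2} places the unique real fixed point in the decreasing lap, so no such fixed point exists and the degeneracy is excluded. However, if $f$ has an attracting fixed point above $c_1$ --- which does occur for some $\<f\>\in\overline\U\cap\F(1,2)$ --- the orbit of $v_1$ can be captured by it and the jump at $c_1$ really does vanish; the lemma implicitly carries the same hypothesis that \autoref{L-kn2} makes explicit. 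Your argument needs to replace the claimed combinatorial inspection with this fixed-point argument.
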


\begin{proof}[Proof by contradiction] First consider the $-+$ case so
  that
  $$ \sigma(x) < \sigma(y)\quad\longrightarrow\quad x<y~.$$
Let $x=x_0\mapsto x_1\cdots$ and $y=y_0\mapsto y_1\cdots$ be the orbits.
If {$\k(x)> \k(y)$,} let $n$ be the smallest integer
with $\sigma(x_n)\ne \sigma(y_n)$, and let
$$ s=\prod_{0\le j<n}\sigma(x_j)=\prod_{0\le j<n}\sigma(y_j)~.$$
Then  $s\,\sigma(x_n)> s\,\sigma(y_n)$, hence $s\, x_n>s\, y_n$. But $s$ is the
sign of the derivative  of $f^{\circ n}$ on the interval between $x$ and $y$,
so it follows that $x>y$, contradicting our hypothesis. The $+-$ case
is similar, and the rest of the proof is straightforward.
\end{proof}
\msk

\begin{lem}\label{L-kn2} If we are in the $\F(0,1)$ region of $\U$,  with
  no attracting   or parabolic fixed points, then the real number
  $\k(x_0)$  completely determines the sequence of signs $\sigma(x_j)$. 
  In particular, it follows that the sign of $\k(x)$ is equal to
   $~\sigma(x)$.\end{lem}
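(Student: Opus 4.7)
The plan is to view $\k$ as a $\pm 1$-weighted dyadic expansion and reduce injectivity to ruling out certain eventual sign patterns. Write $\epsilon_h = \sigma(x_0)\sigma(x_1)\cdots\sigma(x_h) \in \{\pm 1\}$, so that
$$\k(x_0) ~=~ \sum_{h\ge 0} \frac{\epsilon_h}{2^{h+1}},$$
and the sign sequence $(\sigma(x_h))$ is recovered from $(\epsilon_h)$ via $\sigma(x_h) = \epsilon_h/\epsilon_{h-1}$ (with the convention $\epsilon_{-1}=1$). Hence the task reduces to proving that $\k(x_0)$ uniquely determines the $\pm 1$-sequence $(\epsilon_h)$.

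A short computation reveals the familiar dyadic ambiguity: if two such sequences $(\epsilon_h)$ and $(\epsilon_h')$ yield the same $\k$-value and first disagree at some index $M$, then for all $h > M$ one sequence is identically $-1$ while the other is identically $+1$. In either case $\sigma(x_h) = \epsilon_h/\epsilon_{h-1} = +1$ for all sufficiently large $h$. So $\k$ fails to determine the sign sequence only for orbits eventually trapped in the open increasing lap $J_+\subset f(\Rhat)$, and it suffices to exclude such orbits under our hypotheses.

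For this I would exploit the monotonicity of $f|_{J_+}$. Suppose $x_h \in J_+$ for all $h \ge N$. Since $f$ is strictly increasing on $J_+$, an easy induction shows that the orbit $(x_h)_{h \ge N}$ is strictly monotone (whichever direction the first step goes, subsequent steps follow, and equality would produce a fixed point inside $J_+$). As $\overline{J_+}$ is a compact subinterval of $f(\Rhat)$, the orbit converges to some $\bar x \in \overline{J_+}$, and continuity of $f$ forces $\bar x = f(\bar x)$. However, by our hypotheses all real fixed points are strictly repelling, and since $\<f\>\in\U$ is not of polynomial shape, \autoref{L2} places the unique real fixed point strictly in the decreasing lap. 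The critical endpoint $c_1$ of $J_+$ cannot be $\bar x$ either, since $f(c_1)$ equals the left endpoint $L$ of $f(\Rhat)$ and $L < c_1$; and the remaining endpoint of $J_+$ cannot be fixed without violating uniqueness of the real fixed point. This contradiction shows that no orbit satisfies $\sigma(x_h) = +1$ eventually, so $\k$ uniquely determines the sign sequence.

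For the final statement, the tail $\sum_{h\ge 1}\epsilon_h/2^{h+1}$ has absolute value at most $1/2$, so the sign of $\k(x_0)$ agrees with that of its leading term $\sigma(x_0)/2$ unless $\k(x_0) = 0$. But $\k(x_0) = 0$ would force equality in this bound, meaning $\epsilon_h$ is constant for $h \ge 1$, which as before implies $\sigma(x_h) = +1$ for all sufficiently large $h$---already excluded. The main obstacle is the monotone-convergence step, where the limit $\bar x$ must be shown to avoid the two endpoints of $J_+$ as well as its interior; this is handled cleanly by combining \autoref{L2} with the uniqueness of the real fixed point characterizing the region $\F(0,1)$.
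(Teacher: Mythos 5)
Your argument is correct and takes essentially the same approach as the paper's proof: both reduce to ruling out orbits whose sign sequence is eventually $+1$, using the fact that monotonicity of $f$ on the increasing lap forces such an orbit to converge to a real fixed point, contrary to the hypotheses. You make the dyadic-expansion reduction explicit (the paper merely quotes the resulting sign pattern) and close by locating the would-be limit via Lemma~\ref{L2}, whereas the paper concludes slightly more directly by noting that a non-constant orbit converging to a fixed point forces that fixed point to be attracting or parabolic; the underlying idea is the same.
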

 \ssk
 
 \begin{proof}
   This could fail only if the sequence of signs $\sigma(x_j)$
      had the form  $(\pm 1, \mp 1, 1, 1, 1, \cdots)$ for some $x$,
      so that $\k(x) =\pm (1/2-1/4-1/8-\cdots)$ would be zero
      although the signs are all non-zero. But this would imply that
      the orbit of $x$ converges to an attracting or parabolic fixed
      point.\footnote{This argument
    is needed only because of our definition of $\k$. If we chose
    to divide by $3^{h+1}$ instead of $2^{h+1}$ in the defining  
formula,  then attracting fixed points would not cause any problem.}
\end{proof}

For the two dimensional set $\overline \U$ we can define two 
invariant coordinates
$\K_1$ and $\K_2$, each sending $\overline \U$ to $[-1,\,1]$. These
\textbf{\textit{kneading functions on moduli space}} 
are defined by the formula
$$\K_j(f)~=~-\k_f(v_j)\qquad{\rm where}\quad v_j\quad{\rm is~the~critical~
  value}\quad f(c_j)~.   $$
Here the minus sign is inserted for convenience, so that
we will have $\K_1(f)\ge 0$ 
in dynamically interesting cases (although $\K_2(f)\le 0$).
 It is not hard to check that
$\K_1(f)$ is constant along each bone, and serves to distinguish one bone
from another (provided that there are no bone-loops). 
On the other hand, $\K_2(f)$ can be thought of as an invariantly defined
coordinate along each bone. Note that the definition of $\K_1(f)$ ignores
the critical points $c_2$, while definition of $\K_2(f)$ involves the interplay
between the orbits of $c_1$ and $c_2$. 
\msk

  Evidently  $\K_1(f)$ is a sum of only finitely many terms
  if and only if the critical point $c_1$ is periodic, so that $f$ belongs to
  a bone.  In this case,  $\K_1(f)$ will jump discontinuously under perturbation
  of $f$. However, whenever the critical point $c_1$ 
  is not periodic,   $\K_1(f)$ will vary continuously.
  Note also that   $\K_1(f)$ is a rational number  whenever $f$
  belongs either to a bone or to an NH-bone. 
  (Compare  \autoref{t-kninv}.)
 \msk 
 
  \begin{lem}\label{L-cfK} Every critically finite map $f$ is uniquely
    determined up to $\pm$-conjugacy by the pair of rational numbers 
    $\big(\K_1(f),\,\K_2(f)\big)$.
    \end{lem}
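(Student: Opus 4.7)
The plan is to show that the pair $\big(\mathbf{K}_1(f),\,\mathbf{K}_2(f)\big)$ determines the combinatorics of $f$, and then invoke \autoref{T-notexc} to conclude uniqueness up to conjugacy. Since the exceptional combinatorics $\(1,\,3,\,4,\,3,\,1,\,0\)$ is bimodal and hence does not occur in $\overline{\U}$, that theorem applies without qualification.

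First, I would show that $\mathbf{K}_1(f)$ recovers the entire combinatorial orbit of $c_1$. Because $f$ is critically finite, the orbit $v_1,\,f(v_1),\,f^2(v_1),\ldots$ is preperiodic, so the series defining $\mathbf{K}_1(f)=-\ka_f(v_1)$ is eventually periodic in the binary expansion. Using the recursion implicit in \autoref{D-kn}, namely $\ka_f(x)=\sigma(x)\big(1+\ka_f(f(x))\big)/2$, I can read off $\sigma(f^j(v_1))$ inductively for every $j\ge 0$, except possibly in the borderline situation (handled in the proof of \autoref{L-kn2}) where the orbit would accumulate on an attracting or parabolic cycle --- which cannot happen for critically finite $f$. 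This gives the itinerary of the $c_1$-orbit with respect to the lap partition. Combined with the monotonicity of $\ka_f$ from \autoref{L-kn1}, the values $\ka_f(f^j(v_1))$ then determine the linear order of the points of the $c_1$-orbit inside $f(\Rhat)$, i.e.\ the order type labeling the (NH-)bone.

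Second, I would apply the same procedure with $v_2=f(c_2)$ in place of $v_1$: from $\mathbf{K}_2(f)$ I recover the signs $\sigma(f^j(v_2))$ and the kneading values $\ka_f(f^j(v_2))$. Another application of \autoref{L-kn1} places these points in correct order relative both to each other and to the $c_1$-orbit points whose $\ka_f$-values were determined by $\mathbf{K}_1(f)$. In the polynomial endpoint case $c_2$ is fixed so $\mathbf{K}_2(f)\in\{\pm 1\}$ is automatic; in the co-polynomial endpoint case $c_1\mapsto c_2$ so the $c_2$-orbit sits inside the $c_1$-orbit and the two kneading values are constrained accordingly; in the strictly unimodal case the convention of \ref{combi-case2} in \autoref{s2} places $c_2$ unambiguously at one end of the ordered list. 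In every case the pair $\big(\mathbf{K}_1(f),\,\mathbf{K}_2(f)\big)$ reconstructs the full ordered list of marked points, hence the combinatorics $\(m_0,\ldots,m_n\)$.

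The main obstacle will be the edge-case analysis for the sign-recovery step, particularly around points where $\ka_f$ has jump discontinuities at iterated preimages of $c_1$ (see \autoref{L-kn1}): one must verify that the binary-shift recovery does not confuse $+\tfrac{1}{2}-\tfrac{1}{4}-\tfrac{1}{4}-\cdots$ with $-\tfrac{1}{2}+\tfrac{1}{4}+\tfrac{1}{4}+\cdots$, which is precisely the obstruction ruled out in the proof of \autoref{L-kn2} by the absence of attracting or parabolic fixed points for a critically finite unimodal map. Once that is settled, the rest of the argument is bookkeeping: the pair of kneading invariants encodes the whole postcritical ordering, and \autoref{T-notexc} closes the proof.
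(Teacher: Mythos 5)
Your proof takes essentially the same approach as the paper's: recover the sign itinerary of the $c_1$-orbit from $\K_1(f)$ via \autoref{L-kn2}, locate the $v_2$-orbit relative to it via $\K_2(f)$ and \autoref{L-kn1}, conclude that the combinatorics is determined, and invoke \autoref{T-notexc}. The paper gives this in three lines; your version fills in the bookkeeping (the recursion $\ka_f(x)=\sigma(x)\bigl(1+\ka_f(f(x))\bigr)/2$ and the edge-case analysis), which is a faithful expansion of the same argument.

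One factual slip worth correcting: the exceptional combinatorics $\(1,\,3,\,4,\,3,\,1,\,0\)$ is \emph{not} bimodal, and it \emph{does} occur in $\overline{\U}$. With $n=5$ we have $\max m_j - \min m_j = 4 = n-1$ and the last entry is $0$, so this is \ref{combi-case2} strictly unimodal of shape $+-$ (indeed \autoref{F-exc} and \autoref{t-1} list it under the unimodal, totally non-hyperbolic, unobstructed column). Fortunately the slip is harmless here: the first sentence of \autoref{T-notexc}, which is the only part the present lemma needs, asserts uniqueness of the conjugacy class realizing a given combinatorics \emph{in all cases}; the exceptional combinatorics only affects the convergence behavior of the pull-back iteration, not uniqueness. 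So you can simply delete the aside about ruling out the exceptional case.
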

    \ssk
    
    \begin{proof} It follows from \autoref{L-kn2} that
      the order of the points in the orbit of $c_1$ is determined
   by $\K_1(f)$, and the relative position of the orbit of $v_2$ is then
   determined by $\K_2(f)$. Thus the combinatorics is uniquely determined;
   and the conclusion follows from \autoref{T-notexc}.\end{proof}
  \bigskip

Some typical examples of level sets of $\K_1$ are shown in
\autoref{F-bonearc}. The level sets of $\K_2$ are more complicated. 
(See \autoref{f-K2curves}.)\bigskip

\begin{figure}[!h]
      \begin{center}
        \begin{overpic}[width=3in]{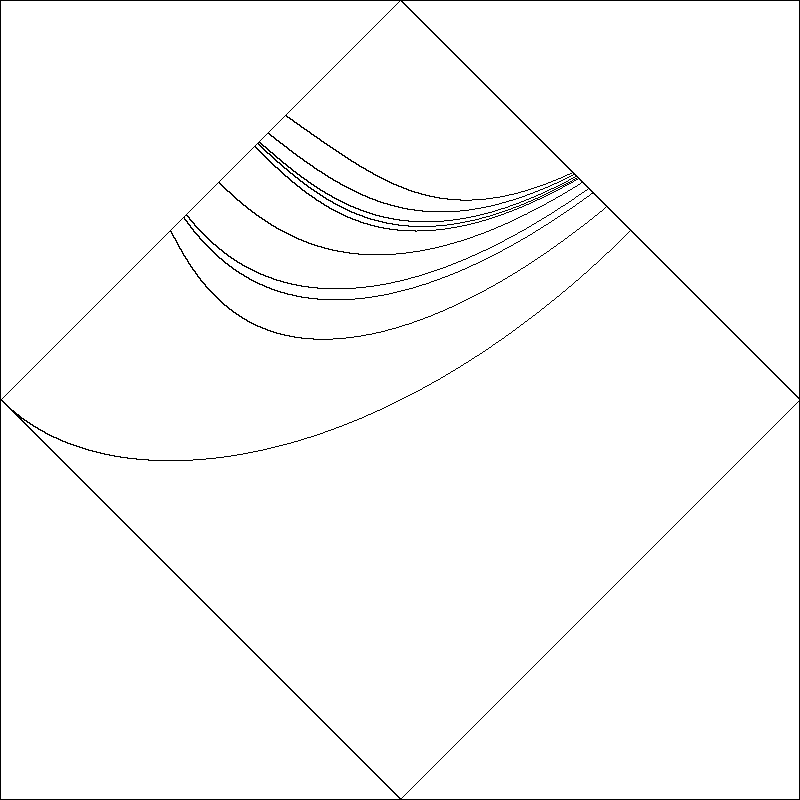}
          \put(100,75){$\U$}
       \end{overpic}
      \caption{\label{F-bonearc}  Illustrating ten
 bone-arcs or NH-bone-arcs
        joining a co-polynomial on the left to a polynomial
        on the right, corresponding to the ten cases listed
        in \autoref{t-kninv}. Here, as in Figures~\ref{F-M/I}
        and \ref{F-entpic}, the
        space $\U$ of unimodal conjugacy classes is represented
        by a right-angled rhombus. This figure is closely related
        to the plot of topological entropy in \autoref{F-entpic}.
}
      \end{center}
      \end{figure}

\begin{figure}[!htb]
\centerline{\includegraphics[width=2.6in]{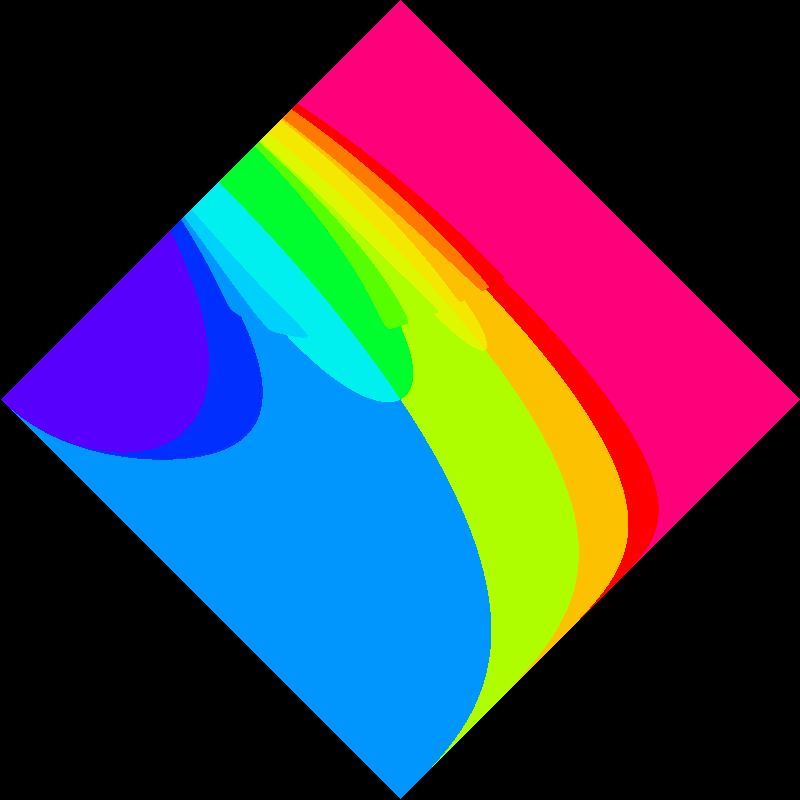}\qquad
\includegraphics[width=2.6in]{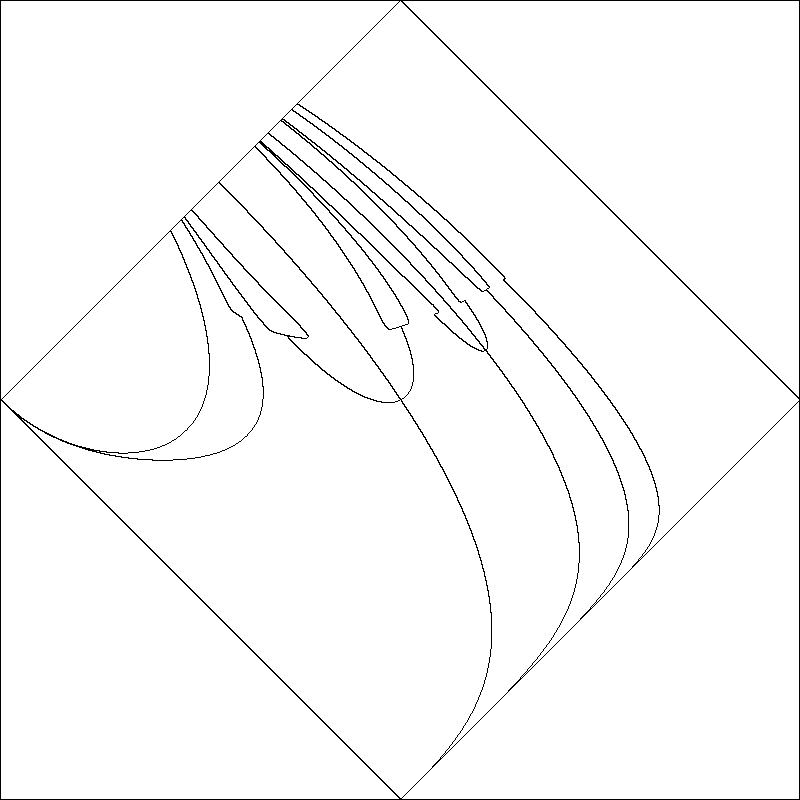}}
\caption{ The region $\U$ divided into fourteen colored regions 
  according to the value of $\K_2(f)$. On the right, the boundaries between
  these regions. \label{f-K2curves}}
\end{figure}\msk
      
Here is simple application of kneading.

\begin{lem}\label{L-co-p} Minimal co-polynomial  combinatorics
  are unobstructed in all cases.\end{lem}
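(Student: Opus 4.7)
The plan is to reduce to the polynomial case via the combinatorial bijection of \autoref{P-copoly} and then invoke Filom's bone structure (\autoref{L-Fil}). Let $\vecm$ be a minimal co-polynomial combinatorics. By \autoref{R-rel}, $\vecm$ must be of Type B or Totally Non-Hyperbolic, so the primary critical point $c_1$ is either periodic or strictly pre-periodic. In either case, \autoref{P-copoly} associates to $\vecm$ a unique minimal polynomial combinatorics $\vecm^+$ whose $c_1$-orbit is identical to that of $\vecm$. The two polynomial exceptions in \autoref{P-copoly} (Chebyshev and $x \mapsto x^2$) cause no trouble here since they correspond to no co-polynomial combinatorics.

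First I would handle the Type B case, where $c_1$ is periodic of some period $p \ge 2$. Here $\vecm^+$ is realized by a critically periodic quadratic polynomial (classical, via Bielefeld-Fisher-Hubbard; see also \cite{BMS}), placing a point on the polynomial boundary of $\overline{\U}$. By \autoref{L-Fil}, this point is the polynomial endpoint of a bone-arc whose other endpoint lies on the co-polynomial boundary. Since $\K_1$ depends only on the orbit of $c_1$, it takes the same rational value at both endpoints of the bone-arc; and since for a co-polynomial map $c_2$ is forced to coincide with a prescribed critical value along the orbit, $\K_2$ is also pinned down at the co-polynomial endpoint. By \autoref{L-cfK}, that endpoint must realize $\vecm$ exactly.

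For the Totally Non-Hyperbolic case the argument runs in parallel, but using NH-bone-arcs in place of bone-arcs: the polynomial endpoint is realized by a Misiurewicz quadratic polynomial (Thurston rigidity for critically finite polynomials), and the corresponding NH-bone-arc terminates on the co-polynomial locus at a point realizing $\vecm$. The hardest part will be making the NH-bone machinery rigorous, since \autoref{C-NH} is stated as conjectural; however, Filom's kneading-theoretic argument for bone-arcs should extend to this setting, because in the NH case the orbit of $c_1$ eventually lands on a repelling cycle that persists under small perturbation, providing the required smooth arc in $\overline{\U}$ joining polynomial and co-polynomial Misiurewicz endpoints. Once this transfer is verified, the constancy of $\K_1$ along the arc plus \autoref{L-cfK} completes the proof.
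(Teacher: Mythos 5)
Your proposal takes a genuinely different and considerably more elaborate route than the paper, and it has a real gap in the totally non-hyperbolic case. The paper's proof is a one-line one-parameter family argument: traverse the family $g_c(x)=1/(x^2+c)$, observe via \autoref{P-copoly} that the critically finite kneading sequences encountered along this family are in bijection with those encountered along $x\mapsto x^2+c$, and invoke \cite[Thm.~12.2]{MTh} (all critically finite kneading sequences occur for the polynomial family) to conclude that every minimal admissible co-polynomial combinatorics is realized by some $g_c$. No bone structure, no separation into Type B versus Totally Non-Hyperbolic, and no use of \autoref{L-Fil} at all. This handles every case at once.

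Your Type~B argument is plausible but roundabout: it is true that a realized polynomial combinatorics lies at the polynomial end of a Filom bone-arc whose co-polynomial end inherits the $c_1$-order type, and (since in Type~B both critical points lie in a single cycle) the co-polynomial combinatorics at that end is then forced. But the step where you conclude that ``$\K_2$ is also pinned down'' needs more care than you give it, and in any case none of this machinery is necessary for the statement.

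The genuine failure is in the Totally Non-Hyperbolic case. Your argument there depends on the existence of NH-bone-arcs joining polynomial Misiurewicz endpoints to co-polynomial ones. But the paper states this property precisely as \autoref{C-NH}, an open conjecture --- and the conjecture even records an exception (the Chebyshev NH-bone, which never reaches the co-polynomial locus). You acknowledge the issue (``should extend'') but do not resolve it; the persistence of the repelling cycle under perturbation does not by itself produce a compact arc with a prescribed co-polynomial endpoint, and ``providing the required smooth arc'' is exactly what would have to be proved. The paper deliberately routes around this: working with the explicit one-parameter family of co-polynomials entirely sidesteps the question of what the two-dimensional NH-bone locus looks like. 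As written, your proof proves less than the lemma claims in the NH case.
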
 
\ssk

\begin{proof} As we traverse the one-parameter family of polynomials,
  we encounter every possible critically finite kneading sequence.
  (See \cite[Thm. 12.2]{MTh}.) But as we follow the one parameter family
  of co-polynomials 
  $$f(x)=1/(x^2+c)\,,$$ it follows from \autoref{P-copoly} that we encounter
  exactly the same  critically finite kneading sequences. Thus every minimal
  admissible co-polynomial combinatorics is realized by an actual
  co-polynomial, and there is no obstruction.
  \end{proof}
\msk

Making use of Filom's proof that each bone-arc in $\overline \U$
terminates at a co-polynomial at one end and a polynomial
at the other end, we have the following.\msk

\begin{theo} The function $\K_1(f)$ is constant
  along each bone-arc; while the function $\K_2(f)$ is monotone
  increasing as we follow each bone-arc from its polynomial
  endpoint to its co-polynomial endpoint.
\end{theo}
\ssk

\begin{proof} The first statement is clear. For the second statement,
note that we need only consider the upper half of the polynomial
boundary (corresponding to conjugacy classes of the form
$\langle x\mapsto x^2+v_1\rangle$ with $v_1<0$), since the lower
half does not have any \PCF points. For $\<f\>$ in the
upper half of the polynomial boundary, the critical point $c_2$ is fixed,
so that $\K_2(f)=-1$.
As $f$ varies along a bone, note that the correspondence $f\mapsto
\K_2(f)$ can change its value only when the orbit of $c_2$ passes
through $c_1$, which means that we pass through a \PCF capture configuration.
Furthermore each such capture combinatorics can occur only once, since
a \PCF map is determined by its combinatorics. This proves 
that the map which sends each $f$ on the bone to $\K_2(f)\in[-1,1]$
is monotonic.
\end{proof}
\msk

\begin{table}[!h]
  \begin{center}
    \caption{ On the left, seven critically finite points along the
  bone-arc for which the critical point $c_1$ has period two.
  On the right,  seven points   along the bone-arc for which
  $c_1$ has period three. Note that the values of $\K_2$
are rational, with small denominators since  $n$ is small.}
\small

  \begin{tabular}{|lll||lll|}
    \hline
    \multicolumn{3}{|c||}{$\K_1=0.5$}
    &\multicolumn{3}{|c|}{$\K_1=0.75$}\Tstrut\Bstrut\\\hline \hline
{\sf shape}& {\sf combinatorics} &$ \K_2$&{\sf shape}& {\sf combinatorics} &$ \K_2$\Tstrut\Bstrut\\\hline
{\rm polynomial (D)} &$\(1,0,2\)$ & $-1$ & {\rm polynomial (D)} &$ \(2,0,1,3\)$ & $-1$ \Tstrut\\
{\rm unimodal (C)}&$\(7,2,1,2,3,4,5,6\)$&$-0.96875$& {\rm unimodal (C)} & $\(7,3,1,2,3,4,5, 6 \)$&$-0.96875$\Tstrut\\
{\rm unimodal (C)}&$\(6,2,1,2,3,4,5\)$&$-0.9375$& {\rm unimodal (C)} & $\(6,3,1,2,3,4,5\)$& $-0.9375$\Tstrut\\
    {\rm unimodal (C)}& $\(5,2,1,2,3,4\)$&$-0.875$& {\rm unimodal (C)} &$\(5,3,1,2,3,4\)$ & $-0.875$\Tstrut\\
{\rm unimodal (HH)}&$\(6,3,2,1,2,4,5\)$&$-0.83333$&{\rm unimodal (HH)}&$\(7,5,2,1,2,3,4,6\)$ &$-0.8333$ \Tstrut\\
{\rm unimodal (C)}&$\(4,2,1,2,3\)$&$-0.75$& {\rm unimodal (C)} & $\(7,5,3,1,2,3,4,6\)$ & $-0.8125$\Tstrut\\
{\rm unimodal (HH)}&$\(5,3,2,1,2,4\)$&$-0.66667$& {\rm unimodal (C)}   &$\(4,3,1,2,3\)$&$-0.75$ \Tstrut \\
{\rm co-poly (B)}&$\(1,0\)$&\quad$0$&{\rm co-poly (B)} &$\(2,0,1\)$ &$-0.5$ \Tstrut\Bstrut\\ \hline  
\end{tabular}
\end{center}
\end{table}
\msk

\begin{theo}\label{P-FIE} For any hyperbolic unimodal combinatorics 
  $\vecm$, let $\<f\>$ be the unique \hbox{co-polynomial}
  conjugacy class  for which the orbit of the critical point $c_1$
  has the same order type.   Then $~~ \K_2(\vecm) ~<~\K_2(f)$. 
 \end{theo}
 \ssk
 
 \begin{proof}
   Recall that $\K_2$ measures the location of the critical value $v_2$
   with respect to the grand orbit of $c_1$. Since $c_1$ has the same periodic
   order type for $f$ and for $\vecm$, we can make a direct comparison.
   For the co-polynomial $f$, the critical value is the rightmost point
   of the orbit of $c_1$. On the other hand, for any strictly unimodal
   combinatorics, the critical value will be strictly to the right of the
   entire periodic orbit. This means that $\k(v_1)$ for the unimodal
   case $\vecm$ will be strictly larger that $\k(v_1)$ for the
   co-polynomial case $f$.
   Therefore $\K_2(\vecm)<\K_2(f)$, as required.
 \end{proof}
 \msk
 
 \begin{coro}\label{C-no-obs} Minimal unimodal combinatorics of
   hyperbolic or half-hyperbolic type are never obstructed. In
   fact every such combinatorics is represented by an actual
   quadratic map lying on the associated bone-arc. 
 \end{coro}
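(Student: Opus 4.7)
The plan is to locate the realization of $\vecm$ as an interior point of the bone-arc associated with the periodic order type of $c_1$. First note that in the unimodal case the secondary critical point $c_2$ lies outside $f(\Rhat)$ and therefore cannot be periodic; hence for any hyperbolic or half-hyperbolic $\vecm$ the periodic critical point must be $c_1$, yielding a well-defined periodic order type. By Filom's Lemma~\ref{L-Fil}, this order type singles out a bone-arc $\mathcal{B} \subseteq \overline{\U}$ with polynomial endpoint $\<g_p\>$ (where $c_2$ is a critical fixed point, giving $\K_2(g_p) = -1$) and co-polynomial endpoint $\<g_c\>$ (where $v_2 = c_1$, giving a specific rational value $K_c := \K_2(g_c)$).

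Next, I would combine two ingredients to locate $\K_2(\vecm)$ within the range of $\K_2$ on $\mathcal{B}$. By the monotonicity theorem just proved, $\K_2$ is monotone increasing along $\mathcal{B}$ from $-1$ to $K_c$. By Theorem~\ref{P-FIE}, $\K_2(\vecm) < K_c$; this argument extends verbatim to the half-hyperbolic case, since the only fact used is that $v_2$ is the right endpoint of $f(\Rhat)$ and hence strictly to the right of every point in the orbit of $c_1$---a conclusion independent of where the forward orbit of $v_2$ eventually lands. Since $\vecm$ is strictly unimodal, one also has $\K_2(\vecm) > -1$, and so $\K_2(\vecm) \in (-1, K_c)$.

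The main obstacle is to show that the value $\K_2(\vecm)$ is actually attained on $\mathcal{B}$. Because $\K_2$ has jump discontinuities along $\mathcal{B}$ at the Type~C critically finite points (where $v_2$ coincides with some iterated preimage of $c_1$), the intermediate value theorem does not apply directly; one must verify that no admissible rational kneading value is skipped across such a jump. Adapting the Milnor--Thurston kneading completeness result~\cite{MTh} to the bone setting, as implemented by Filom~\cite{F}, I would approximate $\K_2(\vecm)$ from above and from below by critically finite classes on $\mathcal{B}$. Compactness of the bone-arc (which does not reach the ideal boundary of $\overline{\U}$) together with monotonicity forces these one-sided approximations to converge to a common class $\<f\> \in \mathcal{B}$ with $\K_2(f) = \K_2(\vecm)$. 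Since $\K_1(f) = \K_1(\vecm)$ is automatic on $\mathcal{B}$, Lemma~\ref{L-cfK} then identifies $\<f\>$ as the unique $\pm$-conjugacy realization of $\vecm$, showing that $\vecm$ is unobstructed and its realization lies on the associated bone-arc.
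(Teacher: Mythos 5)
Your proof takes essentially the same route as the paper's: locate $\vecm$ on the bone-arc determined by the periodic order type of $c_1$ (via Filom's Lemma~\ref{L-Fil}), use the monotonicity theorem for $\K_2$, and invoke a kneading completeness argument to conclude that the required $\K_2$-value is actually attained by a critically finite class on the arc. The differences are in explicitness rather than substance. You add the bound $\K_2(\vecm) \in (-1, K_c)$ via Theorem~\ref{P-FIE} (noting, correctly, that its proof carries over to the half-hyperbolic case), you cite Lemma~\ref{L-cfK} for uniqueness, and you flag the jump-discontinuity concern which the paper glides over when it simply asserts that ``every possible critically finite combinatorics for this critical order type must occur at some point.'' That concern is legitimate; but note that your resolution---approximating $\K_2(\vecm)$ from both sides by critically finite classes on $\mathcal{B}$---presupposes something very close to what is being proved, and in the end rests on the same Milnor--Thurston kneading completeness result (which the paper invokes explicitly only in Lemma~\ref{L-co-p}) applied to the bone-arc as a one-parameter family. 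So the proposal is in the same spirit as the paper's proof, somewhat more careful in its accounting, and it inherits the same reliance on the kneading intermediate-value theorem.
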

 \ssk
 
 \begin{proof} For such combinatorics, the primary critical point will
   be periodic of some order type. As we follow the
   corresponding bone-arc from the polynomial
   end to the co-polynomial end, the value of $\K_2(f)$ can change only
   as we pass through a critically finite point. In fact every possible
   critically finite combinatorics for this critical order type must
 occur at some point. Thus every possible combinatorics is realized
   by some critically finite quadratic map, and there can be
   no obstruction.
 \end{proof}
 \msk

 \begin{conj}\label{RC-TNH}
   Every minimal unimodal combinatorics
   is unobstructed, even in the totally non-hyperbolic case.
 \msk
 
 {\rm This would certainly follow from \autoref{C-noSL}, since
   it is    impossible to reach the ideal boundary within the
   unimodal region without passing through the shift locus.
It would probably follow also from \autoref{C-NH}, using
an appropriate modification of the argument above.}
 \end{conj}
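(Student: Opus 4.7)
The plan is to follow the second strategy indicated right after the statement and deduce the result from Conjecture \ref{C-NH} by adapting the proof of Corollary \ref{C-no-obs}. Let $\vecm$ be a minimal unimodal combinatorics of totally non-hyperbolic type, normalized to shape $-+$. Since $\vecm$ is TNH, both critical orbits are eventually periodic but neither is periodic; in particular the primary critical point $c_1$ has a well-defined eventual period $p\ge 1$ and pre-period $q\ge 2$, and a well-defined order type for its (eventually periodic) orbit.

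First I would identify the NH-bone-arc in $\overline\U$ associated to $\vecm$. The combinatorial order type of the orbit of $c_1$ alone (ignoring $c_2$) matches a unique co-polynomial combinatorics via \autoref{P-copoly}, and by \autoref{L-co-p} this is realized by an honest critically finite co-polynomial map; the standard polynomial Thurston theory supplies the matching polynomial endpoint. Invoking \autoref{C-NH}, these two endpoints are joined by a smooth NH-bone-arc $\gamma\subset\overline\U$ (one checks separately that the exceptional Chebyshev NH-bone does not arise here, since the Chebyshev order type is a specific isolated one and it is easy to realize its TNH unimodal refinements directly). Along $\gamma$, the kneading function $\K_1$ is constant, equal to the rational value determined by the fixed order type of $c_1$'s orbit.

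Next I would argue that $\K_2$ is continuous and monotone along $\gamma$, taking every intermediate rational value. Continuity follows because $c_1$ is never periodic along $\gamma$, so no jump discontinuity of $\k_f(v_2)$ is triggered except when $v_2$ itself crosses the grand orbit of $c_1$, and such a crossing is precisely a critically finite event. At the polynomial endpoint $c_2$ is a fixed critical point so $\K_2=-1$; at the co-polynomial endpoint $c_2$ maps into the orbit of $c_1$ and $\K_2$ attains a specific rational value strictly greater than $-1$, by the same comparison as in \autoref{P-FIE}. Monotonicity follows by the argument in the proof of the theorem preceding \autoref{C-no-obs}: $\K_2$ can change only as $v_2$ crosses the grand orbit of $c_1$, each such crossing is critically finite, and by \autoref{L-cfK} each critically finite class occurs at most once along $\gamma$. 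Consequently the rational number $\K_2(\vecm)$ prescribed by $\vecm$ lies in the range of $\K_2|_\gamma$ and is attained at a unique $f\in\gamma$, which realizes $\vecm$ by \autoref{L-cfK}.

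The hard part will be making the monotonicity and continuity of $\K_2$ along an NH-bone as rigorous as in the bone-arc case, since the existing theorem is proved only for bones, relying on Filom's \autoref{L-Fil}. Substituting \autoref{C-NH} is the gateway, but a technical point is that along an NH-bone the orbit of $c_1$ is merely pre-periodic, so one must track the pre-periodic initial segment together with its eventual periodic tail and verify that this combined order type is constant along $\gamma$ and that no degeneration occurs at the ``elbow'' where tail meets cycle. A secondary, easier obstacle is the analogue of \autoref{P-FIE} in the eventually-periodic setting; this should reduce, as before, to the observation that for a co-polynomial the critical value $v_2$ sits at the rightmost point of the (grand) orbit of $c_1$, whereas for a strictly unimodal TNH map $v_2$ lies strictly to the right of that orbit, pinning down the relative sign of $\K_2$ at the endpoint. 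Given these ingredients, the argument closes exactly as in \autoref{C-no-obs}.
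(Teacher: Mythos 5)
The statement you were asked to prove is labeled a \emph{conjecture} in the paper, and the paper supplies no proof of it. The two-sentence remark inside the conjecture environment only sketches two conditional routes, each resting on other open conjectures. The first route, which the authors flag as the one that ``would certainly'' establish the result, deduces it from \autoref{C-noSL}: a strongly obstructed unimodal combinatorics would force the pull-back sequence $\langle f_j\rangle$ to diverge to the ideal boundary ($\Delta\to 1$), and within the unimodal region that boundary is unreachable without passing through the shift locus, contradicting \autoref{C-noSL}. The second route, which the authors say would only ``probably'' work, is the one you chose: derive the result from \autoref{C-NH} by modifying the bone-arc argument of \autoref{C-no-obs}.

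Your elaboration of the second route is faithful to what the paper intends by ``an appropriate modification of the argument above'': replace bone-arcs by NH-bone-arcs, locate the NH-bone-arc $\gamma$ via the eventual-order-type of $c_1$, argue that $\K_1$ is constant and $\K_2$ monotone along $\gamma$, and close via \autoref{L-cfK}. You also correctly name the places where that modification would require genuine new work --- establishing continuity and monotonicity of $\K_2$ along an NH-bone-arc (the proved version relies on Filom's \autoref{L-Fil}, which is stated only for bones, i.e.\ for periodic $c_1$, not for eventually periodic $c_1$); handling the exceptional Chebyshev NH-bone, which \autoref{C-NH} asserts does not terminate on the co-polynomial locus and so would break your endpoint-to-endpoint argument for the corresponding order type; and proving a TNH analogue of \autoref{P-FIE}. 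The paper neither supplies nor claims to have supplied that work, which is exactly why the statement remains a conjecture.

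Two cautions. First, you picked the route the authors explicitly were less confident about; the \autoref{C-noSL} route is both shorter and flagged with stronger language, since it entirely bypasses kneading theory and reduces to a geometric observation about where the ideal boundary of the unimodal region meets the shift locus. Second, either route is at best a \emph{reduction} to an open conjecture (\autoref{C-noSL} or \autoref{C-NH}); neither yields an unconditional proof of \autoref{RC-TNH}, and you should say so explicitly rather than presenting your outline as a proof.
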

\msk

Making use of  Filom  \cite[Prop. 6.8]{F}, we have the following further
conclusion.
\msk

\begin{coro}\label{C-no-loop} There are no bone-loops in the 
  unimodal   region $\U$. \end{coro}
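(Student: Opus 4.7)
My plan is to argue by contradiction: suppose $L\subset\overline\U$ is a bone-loop, so $L$ is diffeomorphic to $S^{1}$ and the primary critical point $c_1$ has a common periodic order type for every $\langle f\rangle\in L$. By definition of a bone, $\K_1$ is constant on $L$. The theorem immediately preceding this corollary shows that along any bone the function $\K_2$ can change only when the forward orbit of $c_2$ crosses $c_1$, that is, only at a \PCF capture configuration; and by \autoref{T-notexc} each such capture combinatorics can occur at most once along the bone. The same local step-function analysis that produced monotonicity of $\K_2$ along bone-arcs shows that all jumps of $\K_2$ go in a single direction as $v_2$ sweeps across the grand orbit of $c_1$.

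Next I would parametrise $L$ by $S^{1}$ and observe that after one full traversal $\K_2$ must return to its starting value. But a one-sided step function on the circle with only finitely many distinct nonzero jumps cannot close up unless it makes no jumps at all. Hence $\K_2$ would have to be constant on $L$ and $L$ would contain no \PCF capture points; combined with \autoref{L-cfK}, which asserts that a critically finite conjugacy class is determined by the pair $(\K_1,\K_2)$, this forces $L$ to contain at most one \PCF conjugacy class.

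To close the argument I would invoke Filom \cite[Prop.~6.8]{F}, which supplies the analytic input describing how bones sit inside $\overline\U$ and how they must meet the polynomial and co-polynomial boundaries. That proposition implies that a bone which avoids both boundaries and carries essentially no \PCF capture structure must be degenerate, contradicting the hypothesis that $L$ is a non-trivial loop. In particular, since \autoref{L-Fil} already tells us that every bone-arc must run from the polynomial boundary to the co-polynomial boundary, Prop.~6.8 upgrades this to the statement that no alternative component-shape (i.e., a loop) is possible.

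The main obstacle, and the reason the paper cites Filom's work rather than giving a fully self-contained proof, is precisely this last step. The kneading-theoretic reasoning alone leaves open the possibility of a degenerate ``ghost'' loop on which $\K_2$ is constant and which evades every capture locus; it is the analytic content of Prop.~6.8, rather than any purely combinatorial input from kneading theory or from \autoref{L-Fil} itself, that is needed to rule out such configurations.
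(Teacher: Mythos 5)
Your use of Filom's Prop.~6.8 doesn't match what that proposition actually supplies, and this creates a gap that the kneading argument can't fill. Filom's proposition (as the paper uses it) is a reduction step: \emph{if} a bone-loop exists in the unimodal region, \emph{then} some bone-loop contains a critically finite point. It does not assert, as you claim, that a loop ``which avoids both boundaries and carries essentially no PCF capture structure must be degenerate'' --- if it said that, the corollary would follow from Prop.~6.8 alone, with no need for the kneading machinery or anything else. The actual proof goes the other way around: grant by Prop.~6.8 a bone-loop $L'$ containing a critically finite point $\langle f\rangle$; since $\langle f\rangle$ is unimodal and hyperbolic or half-hyperbolic, \autoref{C-no-obs} places it on a \emph{bone-arc}; since a bone is a connected component of the relevant periodic-critical locus, $\langle f\rangle$ cannot lie simultaneously on a bone-arc and a bone-loop, and the contradiction is immediate. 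You never invoke \autoref{C-no-obs}, which is in fact the paper's essential input.

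Your kneading argument is an attractive idea, but it also has an unaddressed gap: the monotonicity of $\K_2$ is proved (in the theorem just above) only for bone-\emph{arcs}, and that proof explicitly uses the existence of a polynomial endpoint where $c_2$ is fixed and $\K_2=-1$. You assert that ``the same local step-function analysis'' gives a uniformly directed jump along a loop, but nothing in the paper establishes such a local monotonicity statement, and it is not obvious that $v_2$ sweeps monotonically across the grand orbit of $c_1$ as one traverses a closed loop. Even granting it, you only conclude $\K_2$ is constant on $L$ and $L$ carries at most one PCF class --- which, without \autoref{C-no-obs}, produces no contradiction. The route you sketch therefore both overreaches (extending monotonicity to loops) and misplaces the burden (onto a misread Prop.~6.8), while omitting the lemma that actually closes the paper's argument.
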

\ssk

\begin{proof} Filom shows that if there exists 
  a bone-loop in the unimodal region, then there must be one
  containing a critically finite point. But it follows from
  \autoref{C-no-obs} that any such critically finite 
  point is already contained in a bone-arc, and hence cannot be
  contained in a bone-loop.\end{proof}
\msk

\begin{figure}[t!]
  \centerline{\includegraphics[width=3in]{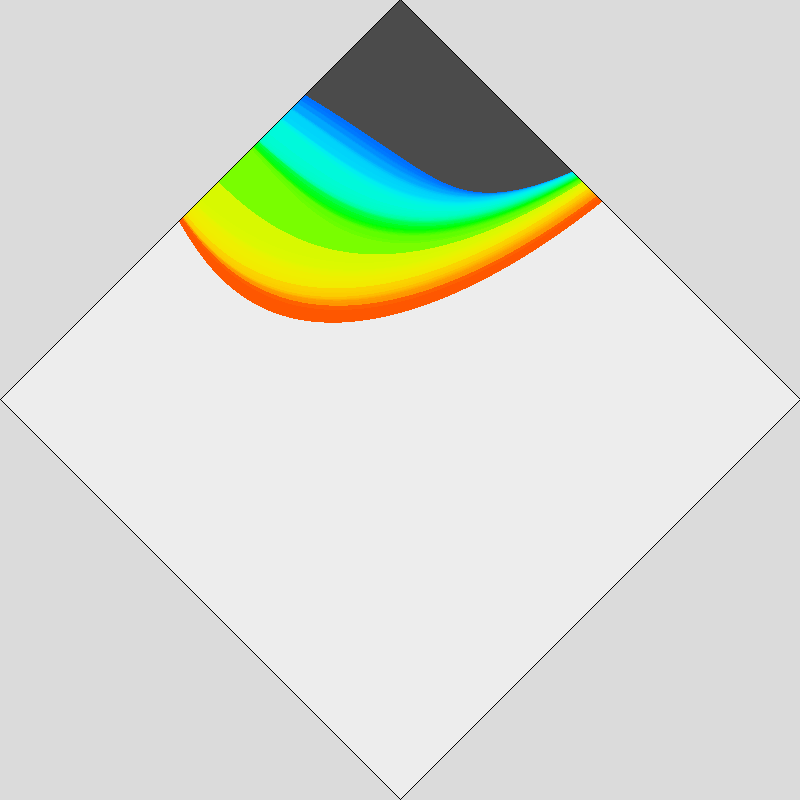}}
    \caption{\label{F-entpic}  A rough plot of topological entropy
      throughout the unimodal region. Here the black area is the
   real shift locus with $\h=\log(2)$, the white area is the 
      locus $\h=0$,  and the  colors represent intermediate values.}
\end{figure}

This has important consequences for topological entropy.
\msk

  \begin{rem}[Filom's work on Topological Entropy]\label{R-TENT}
    The topological entropy $\h(f)\ge 0$  is defined for much more
    general dynamical systems. For quadratic maps, it depends
    continuously on $f$, taking values in the interval $[0,\,\log(2)]$.
For the relationship between entropy and kneading, see 
 Remark~\ref{R-K1&h} below. \ssk

  Filom \cite{F},  showed that each  locus of constant 
  topological entropy is connected  within the $~- + -~$ bimodal region, and 
  also within  the part of the unimodal region for which $f$ has an attracting
  periodic orbit. 
(This is the region {\it above} the curve $\Per_1(1)$ in
  \autoref{F-M/I}, or {\it below} the line $\Per_1(1)$ in  \autoref{F-old}.)
 He conjectured that this is true throughout the unimodal region; and was
 able to prove this under the hypothesis that there are no bone-loops.
 Thus, using his work together with \autoref{C-no-loop},
 we have the following.
 \msk
 
 \begin{coro}\label{C-entropy-mono} Each locus of constant entropy
   in $\overline\U$ is connected.\end{coro}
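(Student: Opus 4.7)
The plan is direct: the corollary follows by combining \autoref{C-no-loop} with the conditional theorem of Filom \cite{F}. Filom has shown that each locus of constant topological entropy is connected throughout the unimodal region, provided $\U$ contains no bone-loops. Since \autoref{C-no-loop} rules out bone-loops in $\U$, his hypothesis is satisfied, and the connectedness statement for $\h^{-1}(c)\cap\U$ is immediate for every $c\in[0,\log 2]$.

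To promote the statement from $\U$ to its closure $\overline\U$, I would argue as follows. The boundary $\overline\U\setminus\U$ consists of the polynomial locus and the co-polynomial locus, each a one-parameter family. On the polynomial family $x\mapsto x^2-c$, topological entropy is continuous and monotone in $c$ by classical Milnor--Thurston theory \cite{MTh}, so each level set meets this boundary in a single point or a closed sub-interval; by \autoref{P-copoly}, the same monotonicity applies to the co-polynomial boundary via the kneading correspondence (entropy is determined by the kneading invariant, which matches on corresponding parameters of the two families). Combining this with the continuity of $\h$ on $\overline\U$ and the fact (\autoref{L-Fil}) that every bone-arc joins a polynomial endpoint to a co-polynomial endpoint, any boundary sub-arc of a level set attaches to the interior level set at the endpoints of appropriate bone-arcs, preserving connectedness of the whole.

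I do not anticipate any real mathematical obstacle beyond having \autoref{C-no-loop} in hand, which is the substantive input. The only thing to check carefully is the bookkeeping: verifying that Filom's argument is stated in $\overline\U$ (not just $\U$) and that his notion of bone-loop coincides with the one excluded by \autoref{C-no-loop}. Everything else is a matter of citing \cite{F} and gluing the boundary pieces in by continuity and monotonicity.
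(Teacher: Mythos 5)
Your proposal is essentially identical to the paper's argument: the paper also deduces the corollary directly from Filom's conditional theorem (connectedness of entropy level sets in the unimodal region modulo the non-existence of bone-loops) together with \autoref{C-no-loop}. The extra care you take about promoting the statement from $\U$ to $\overline\U$ is a reasonable bookkeeping check, but it does not change the route; the paper treats Filom's result as already applying to $\overline\U$, and your sketch of the boundary gluing via monotonicity on the polynomial and co-polynomial arcs and \autoref{L-Fil} is consistent with that reading.
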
\ssk

  For a different proof of this statement, see Yan Gao (\cite{G}).
 His proof is
based on the similar  idea of showing that there is no bone-loop, but
utilizes a completely  different technique developed by Levin, Shen and
van Strien in \cite{LSS}.
 \ssk
 
In  the $~+ - +~$ bimodal region Filom conjectured that the corresponding
loci can be badly disconnected;
and this was later proved by Filom and Pilgrim  (see \cite{FP}).\end{rem}
\ssk

\begin{rem}[Entropy computed from Kneading]\label{R-K1&h} 
  In the unimodal case, the topological entropy $\h(f)$ is
  uniquely determined by the kneading invariant $\K_1(f)$;
  and hence is constant along each bone.
In fact $\h(f)$ can easily be computed as a continuous monotone function 
of $\K_1(f)$: Simply note that a constant 
slope map, such as $F_s(x)=s|x|-1$ has topological entropy
$\h(F_s)=\log(s)$ for $1\le s\le 2$  (see \cite{MS}).
Since the correspondence
$s\mapsto\K_1(F_s)$ is strictly monotone and easily
computable, it is not hard to locate the unique $s_0$ such that
$$ \sup_{s<s_0}\,\K_1(F_s)~\le ~\K_1(f)~\le~ \inf_{s>s_0}\,\K_1(F_s)$$
to any required degree of accuracy; and it follows that the
topological entropy is just \hbox{$\h(f)=\log(s_0)$.}
For a rough plot of entropy see \autoref{F-entpic}; which can be compared
with \autoref{F-bonearc}.)

The entropy $0\le \h\le\log(2)$ takes 
its maximal value of $\log(2)$ if and only
if $\K_1=1$; 
and $\h=0$ if and only if $\K_1\le ~0.649816\ldots$.

Kneading theory is more complicated in the bimodal case, and
computation of entropy from kneading theory is more difficult; 
but still quite feasible. (See Block and Keesling \cite{BK}.)
\end{rem}

\vspace*{1ex plus 1fil}
\appendix
\section{The Simplest Examples.}\label{a2}
This appendix will illustrate all possible combinatorics with
$n\le 4$,  plus  a few  cases with $n=5$  and $n=6$,
subject to a few restrictions. Specifically, we consider only
those cases which are minimal and nonpolynomial (compare
\autoref{s2}), and for which $\kappa\le 0$. Cases with $\kappa>0$ can be
obtained from these by orientation reversal (i.e.,\ by a   $180^\circ$ rotation
of the graph.) Compare \autoref{s3}, together with \autoref{R-I}.
\msk

This appendix is subdivided into  two main sections:
\textbf{\textit{unobstructed}} and \textbf{\textit{obstructed maps}}. Each
section is divided into five parts: first the hyperbolic 
combinatorics of types B, C, and  D, and then the half-hyperbolic
and totally non-hyperbolic cases.  The parameters $\mu$, $\kappa$,
$\Sigma$ and $\Delta$ for all figures in the appendix are in Table~\ref{t-2}.

\phantom{menace}
\ifthenelse{\IsThereSpaceOnPage{.3\textheight}}{\clearpage}{\relax} 
\subsection*{Unobstructed Combinatorics}\label{s-unobst} 

\subsubsection*{Type B: Both critical points in a common periodic orbit.}
\newcommand{\figHt}{.21\textwidth}
\newcommand{\bigfigHt}{1.7in}

  \begin{figure}[!htb]
  \centerline{%
    \includegraphics[height=\figHt]{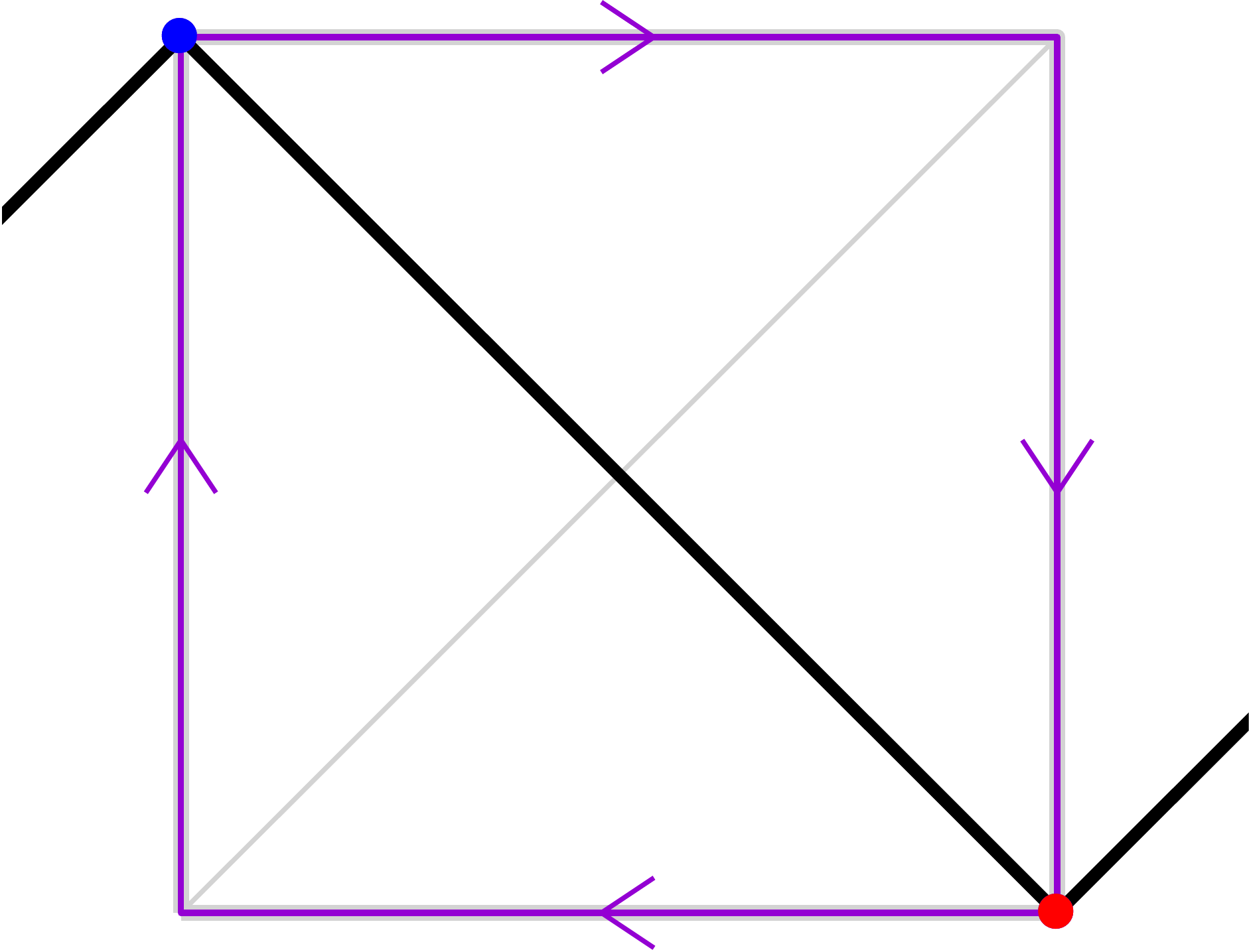}
    \includegraphics[height=\figHt]{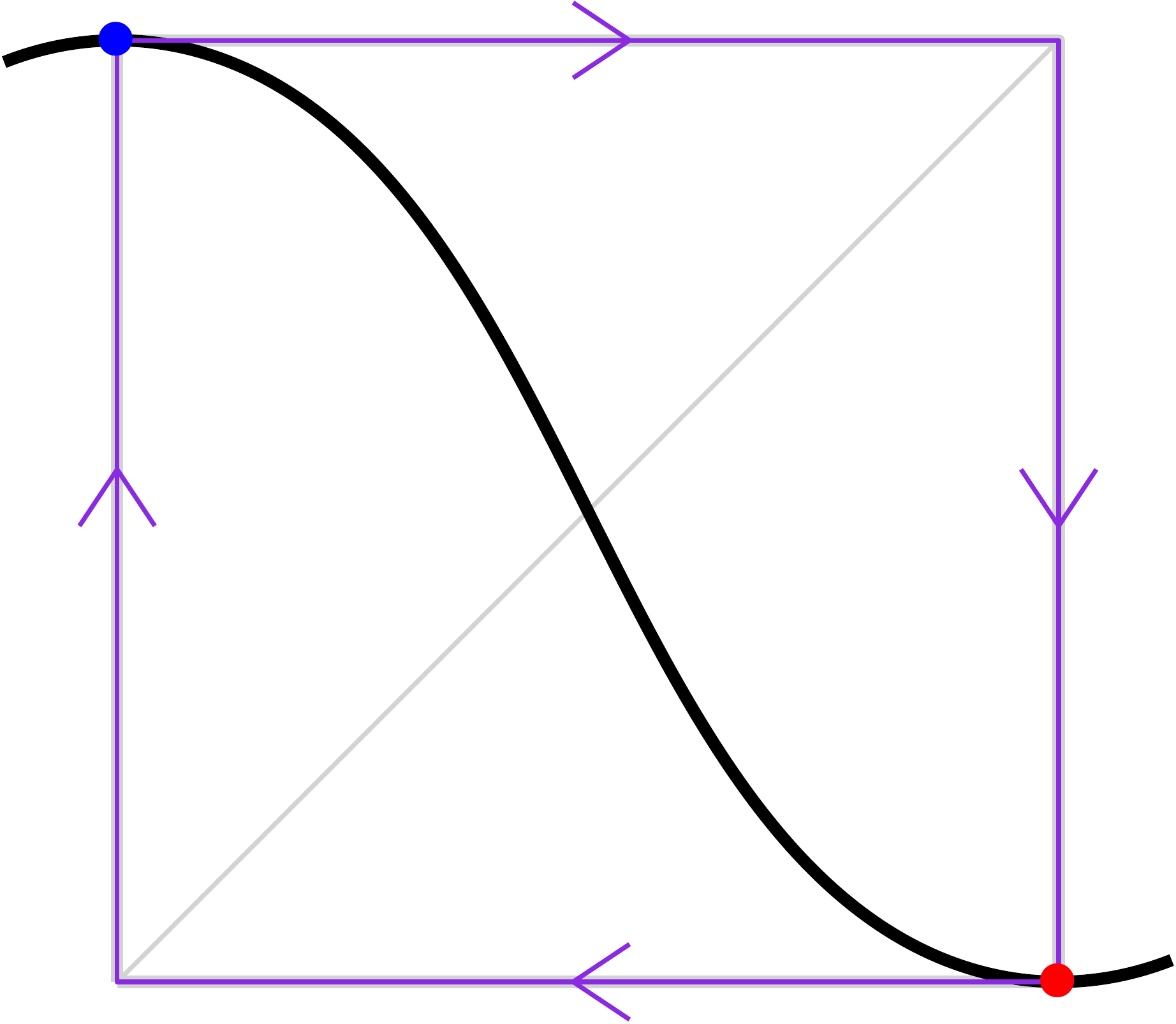}\hfill
    \includegraphics[height=\figHt]{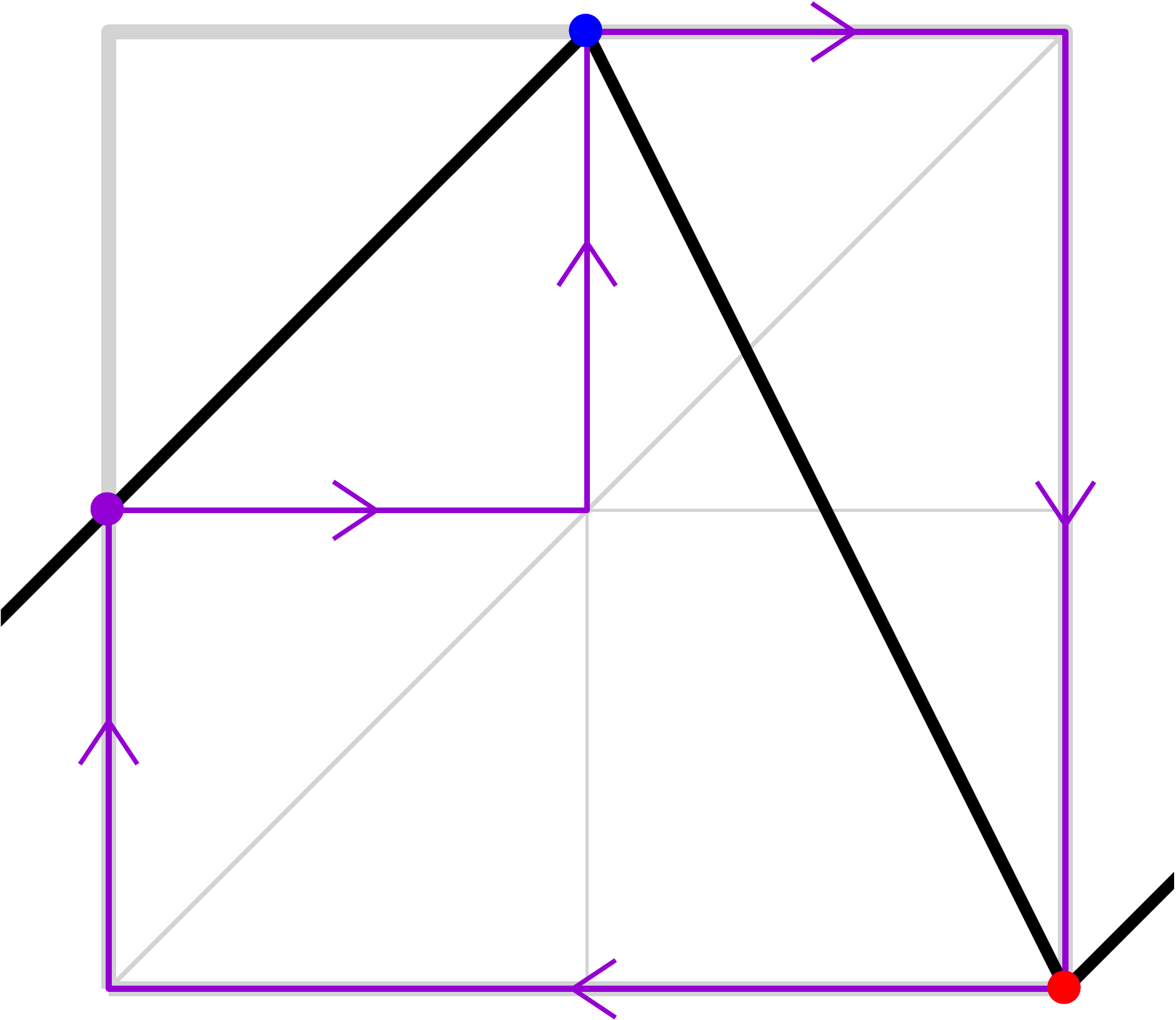}
    \includegraphics[height=\figHt]{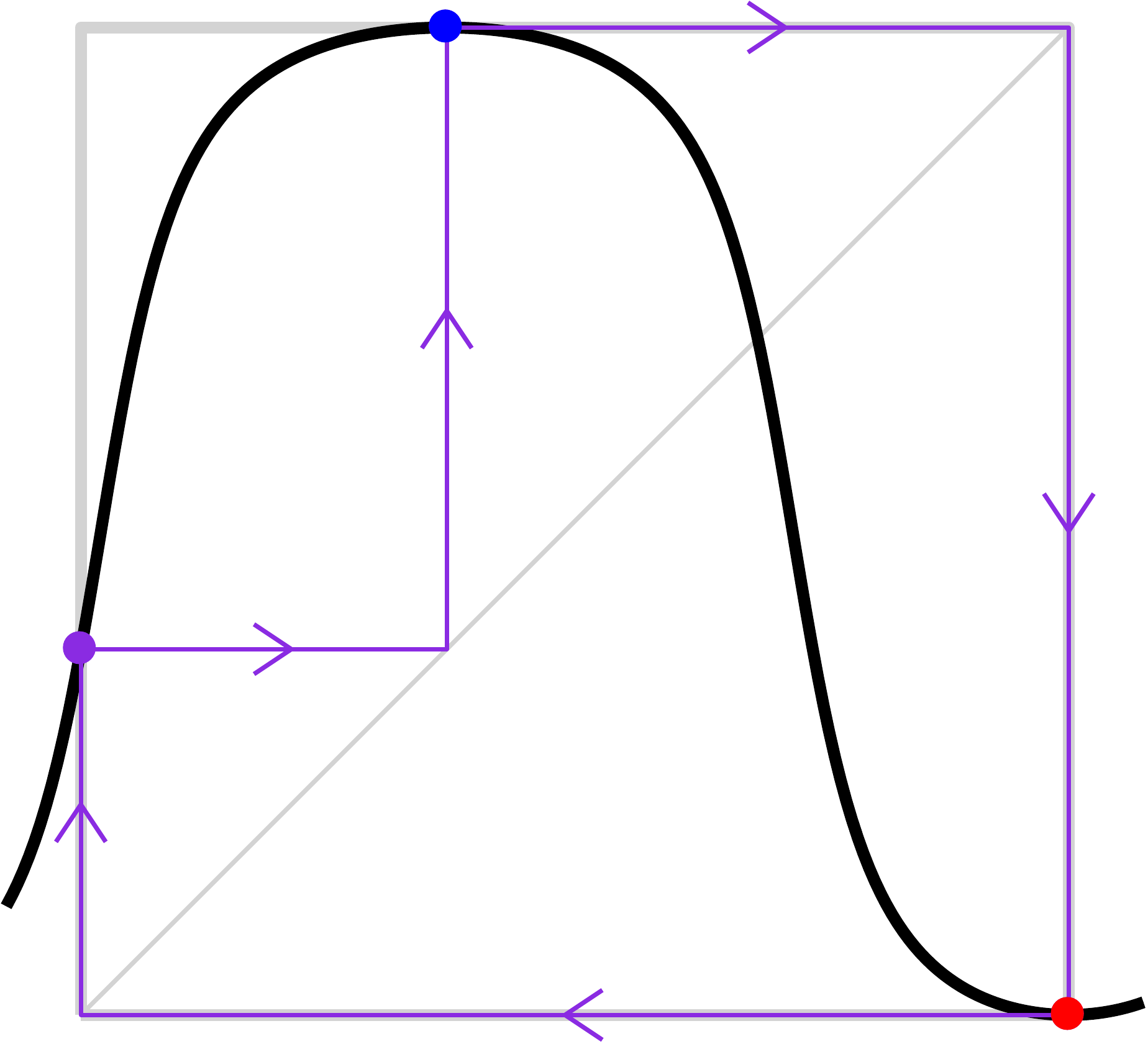}}
 \caption{ \label{f-aBn2}   Co-polynomial maps. On the left are illustrations
   of a piece-wise linear map and its corresponding rational map for the
   combinatorics $\(1,\,0\)$  of topological shape $-$~and mapping pattern
   $\du{x_0}\leftrightarrow \du{x_1}$~. On the right are illustrations of
   a map of topological  shape
   $+-$~ for   combinatorics $\(1,\,2,\,0\)$ with  mapping pattern
   $\du{x_2} \mapsto x_0\mapsto \du{x_1} \mapsto \du{x_2}$~.}
\end{figure} 

\begin{figure}[!htb]
  \centerline{%
    \includegraphics[height=\figHt]{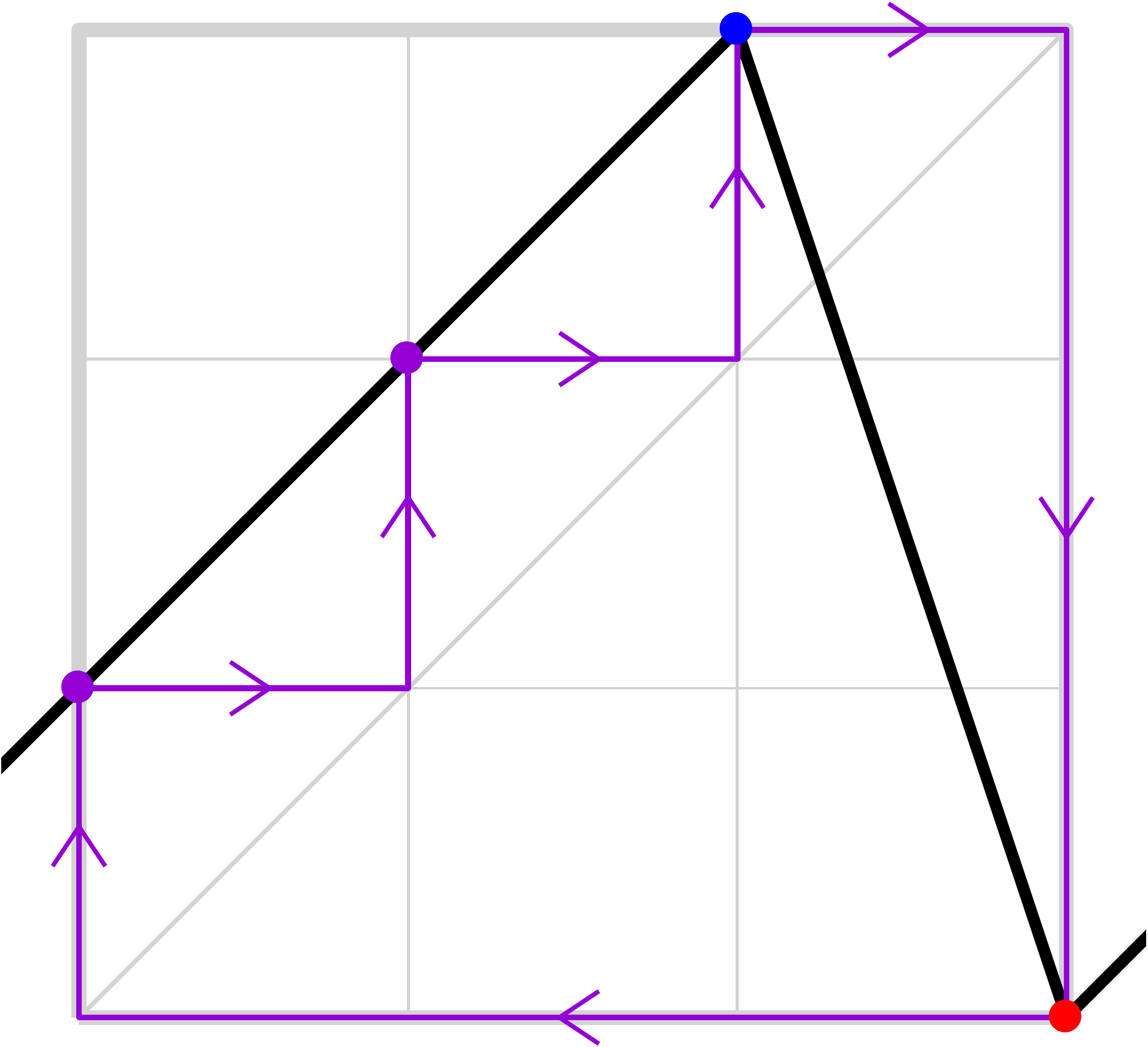} \;
    \includegraphics[height=\figHt]{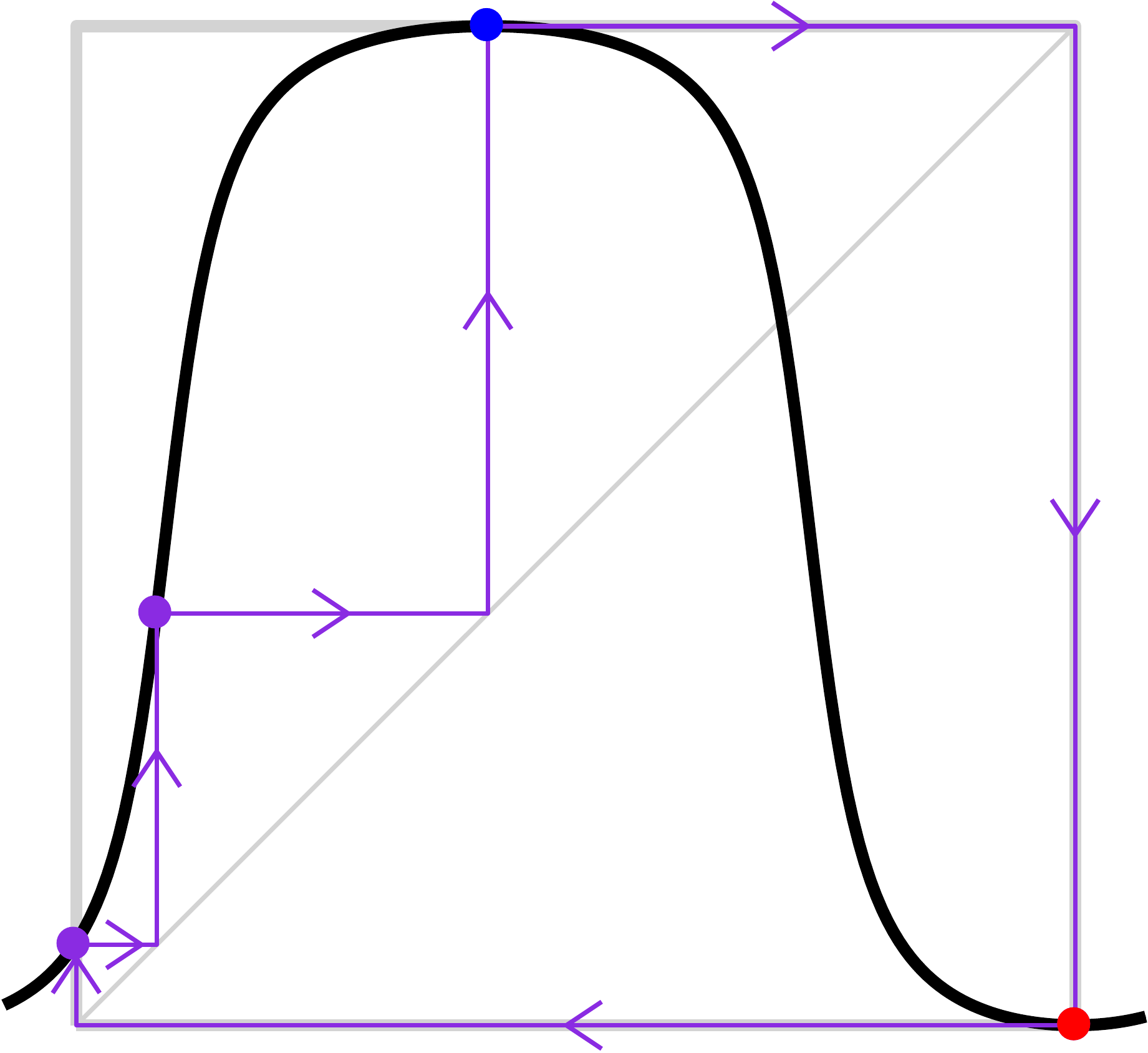}\hfill
    \includegraphics[height=\figHt]{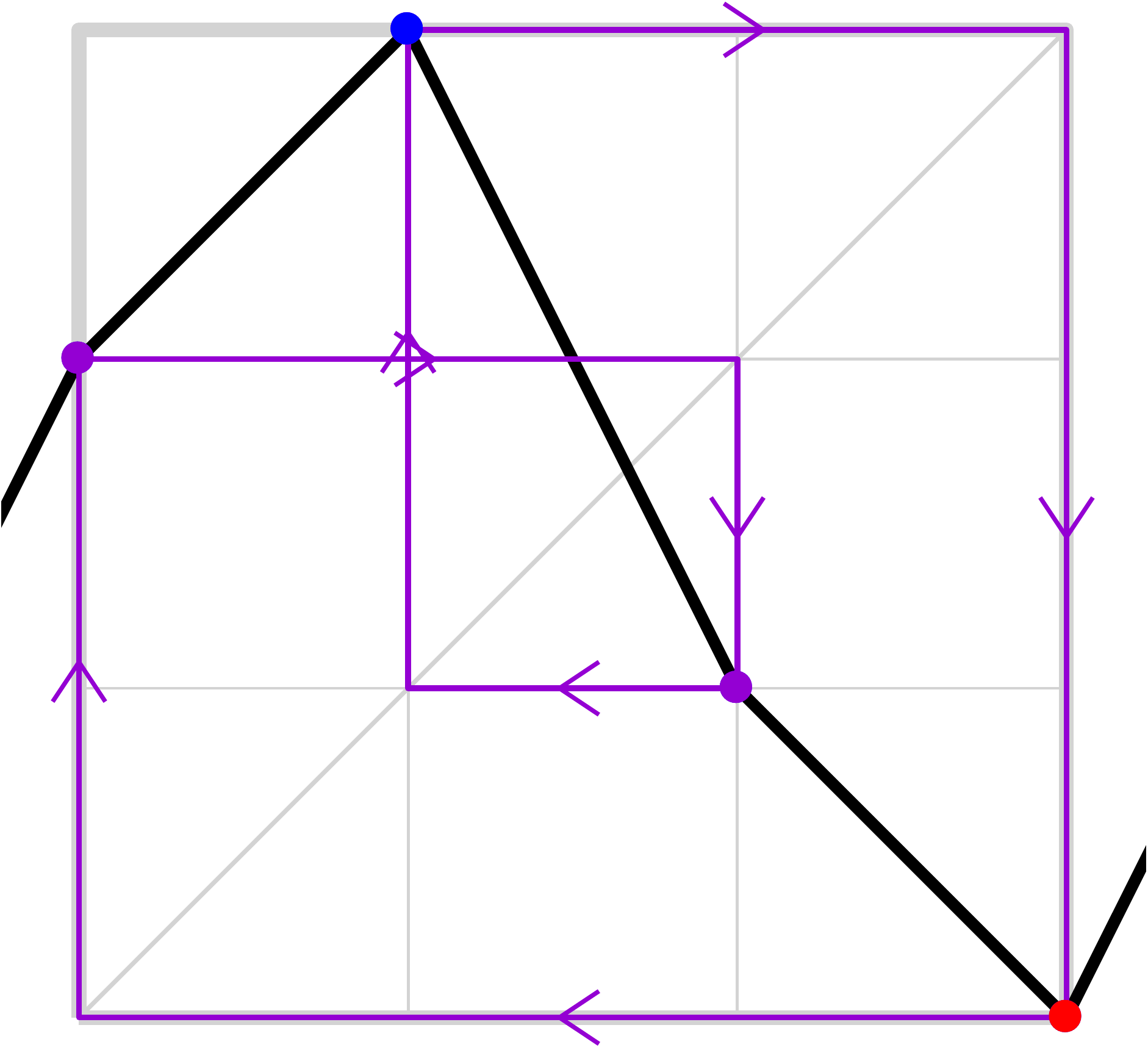} \;
    \includegraphics[height=\figHt]{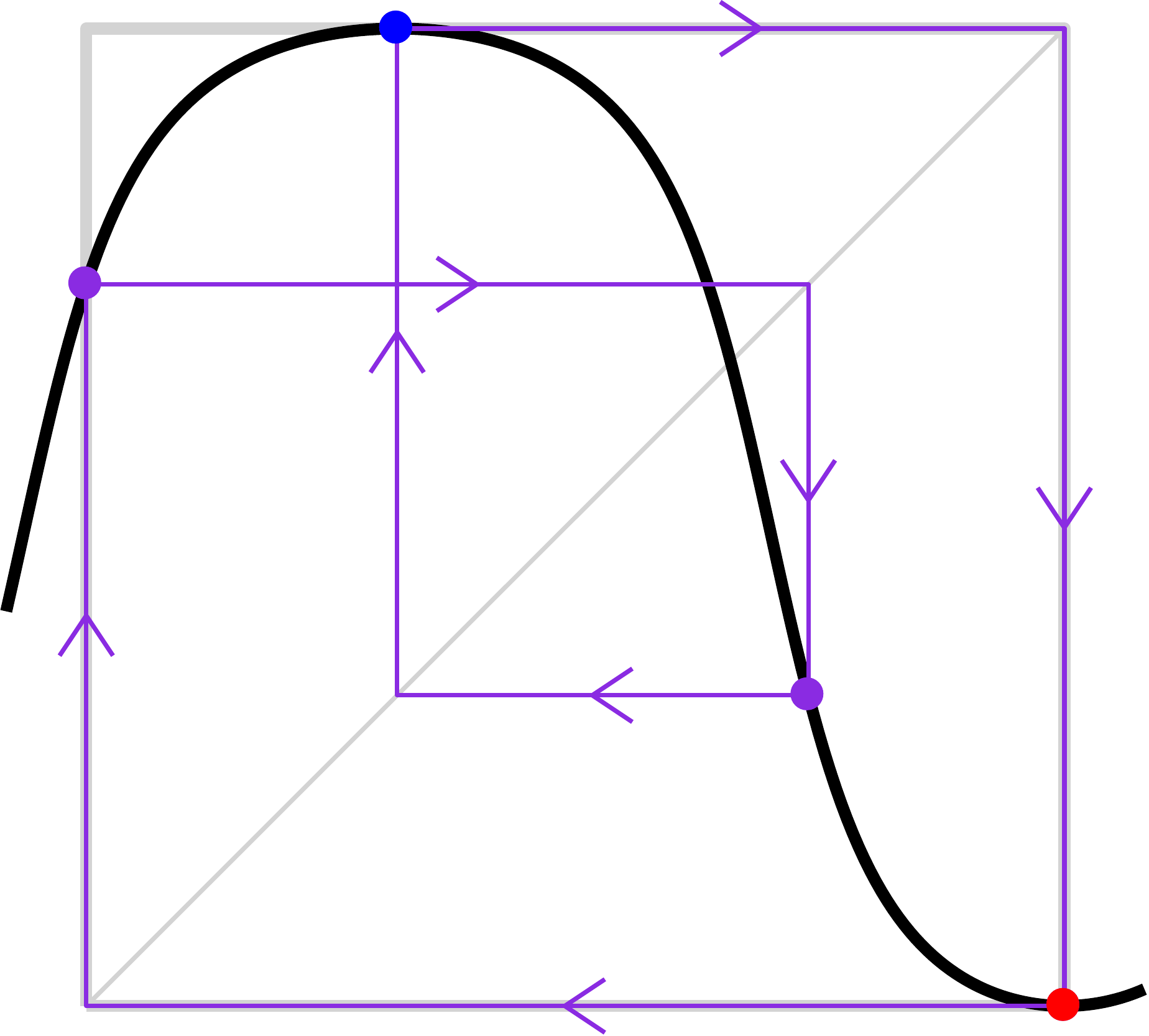}}
 \caption{ \label{f-aBn3}  Co-polynomial maps of topological  shape
   $+-$~. On the left are illustrations of a piecewise linear map and its
   corresponding rational map for the
  combinatorics $\(1,\,2,\,3,\,0\)$ with mapping pattern $\du{x_3}\mapsto x_0
  \mapsto x_1 \mapsto \du{x_2} \mapsto \du{x_3}$~. On the right are
  illustrations for  the combinatorics
  $\(2,\,3,\,1,\,0\)$ with mapping pattern $\du{x_3}\mapsto x_0 \mapsto x_2
  \mapsto \du{x_1}\mapsto \du{x_3}$~.} 
\end{figure}

\begin{figure}[!htb]
  \centerline{%
    \includegraphics[height=\figHt]{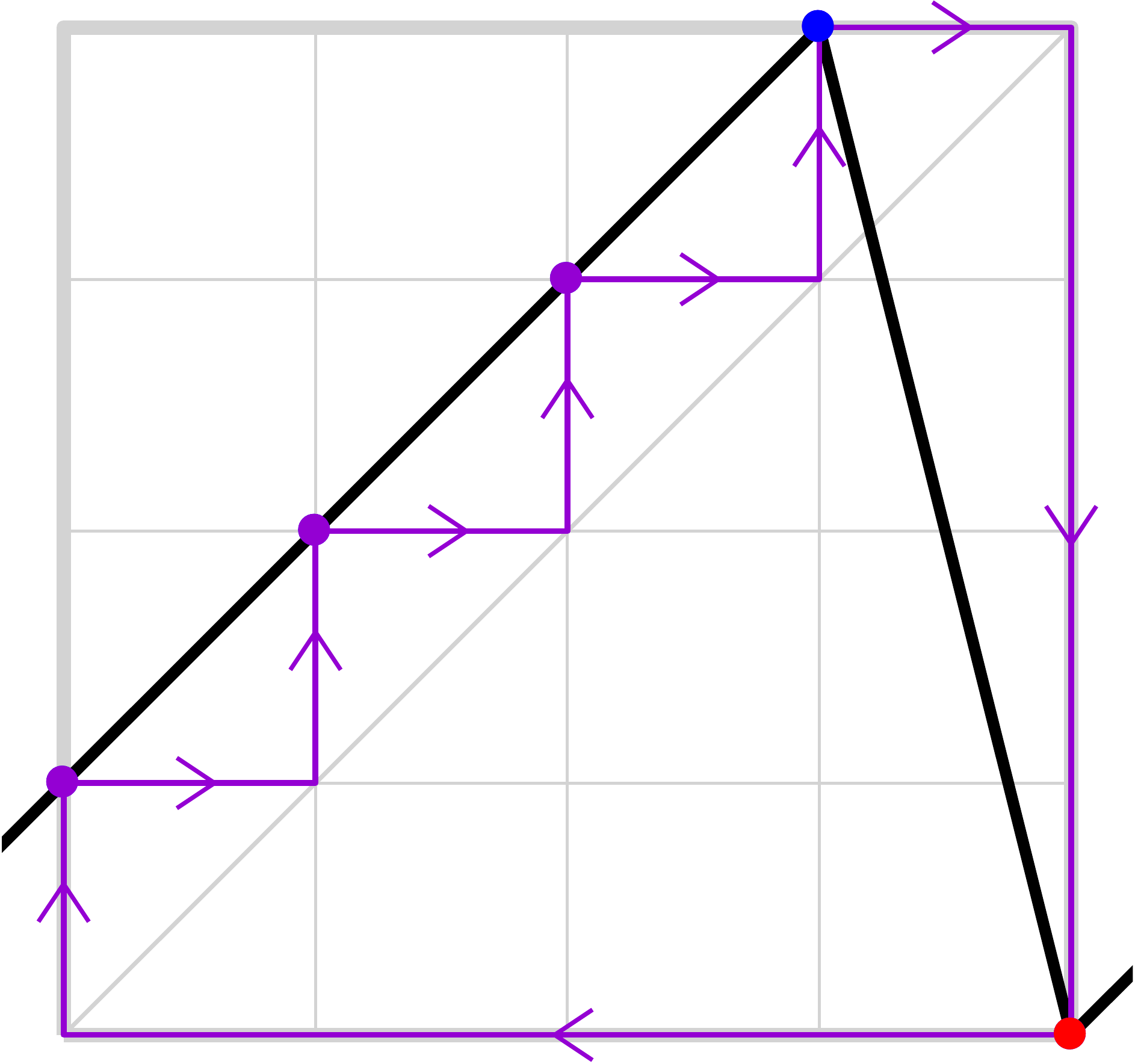} \,
    \includegraphics[height=\figHt]{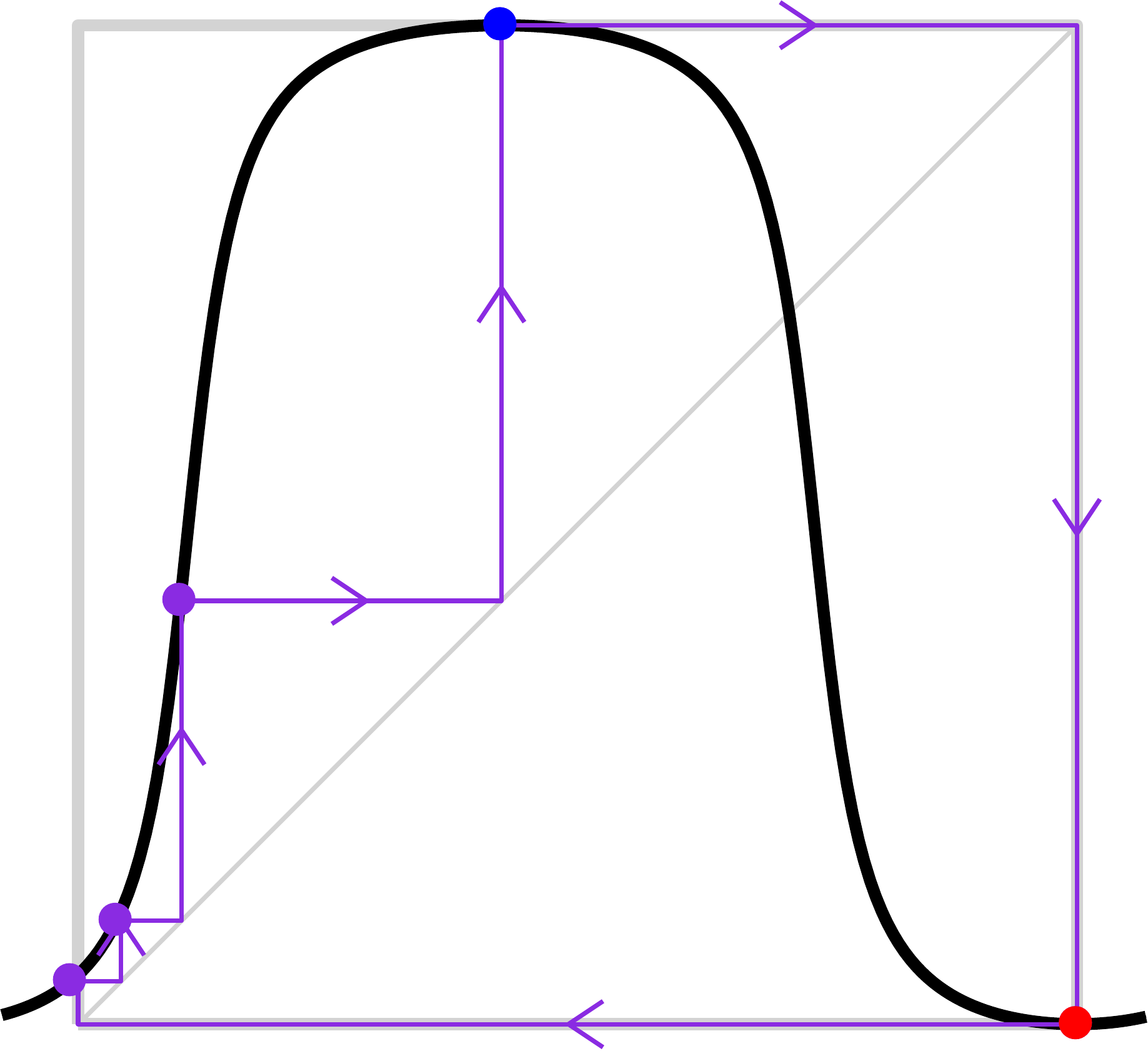}\hfill
    \includegraphics[height=\figHt]{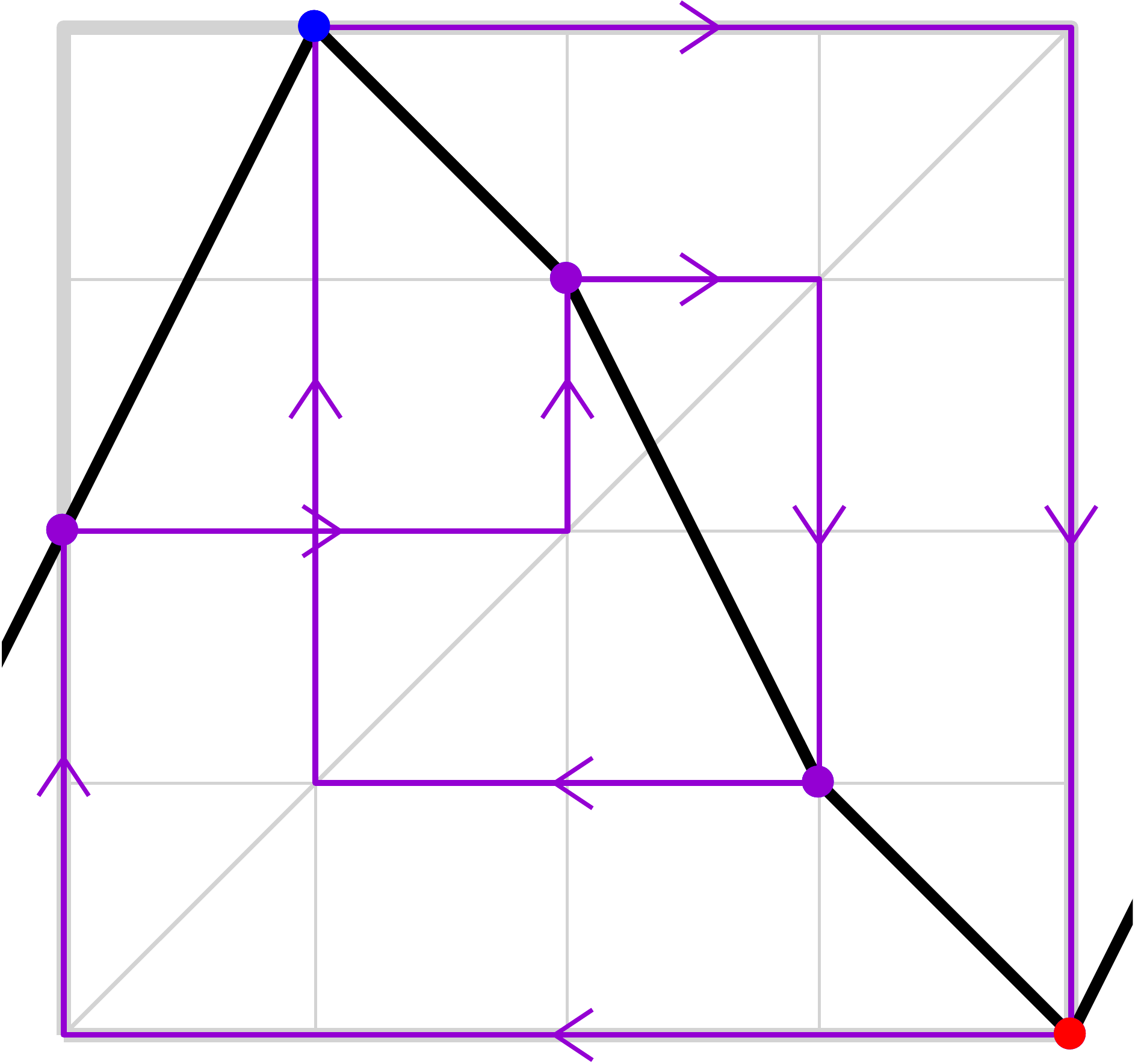} \,
    \includegraphics[height=\figHt]{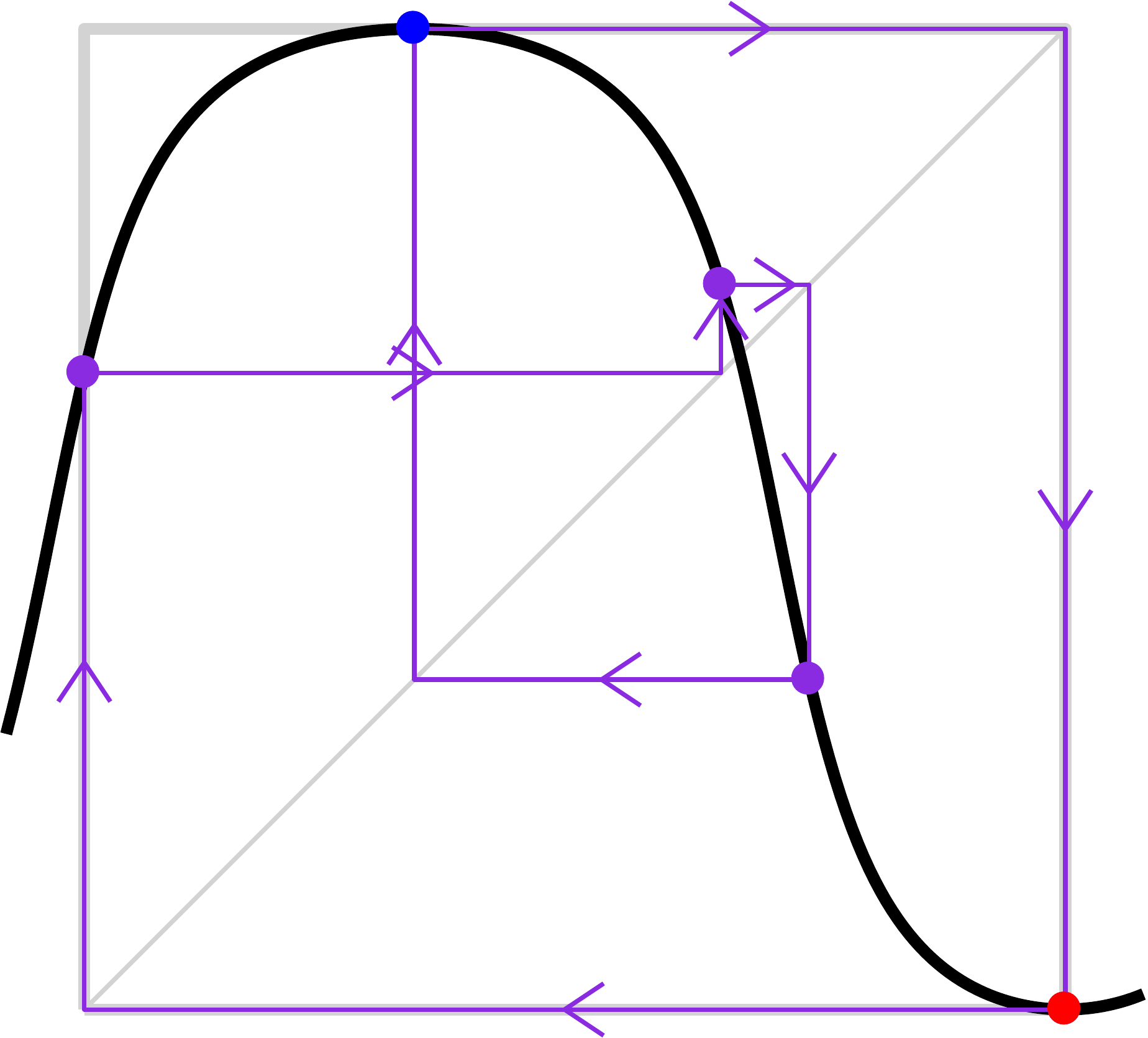}}
  \caption{\label{f-aBn4}   Co-polynomial maps of topological 
    shape $+-$~. On the left are illustrations of a piecewise linear map
    and its corresponding  rational map for the  combinatorics
    $\(1,\,2,\,3,\,4,\,0\)$ with mapping pattern {$\du{x_4}
      \mapsto x_0\mapsto x_1 \mapsto x_2 \mapsto \du{x_3} \mapsto \du{x_4}$}.
    On the right are illustrations for the
    combinatorics $\(2,\,4,\,3,\,1,\,0\)$ with mapping pattern
 $\du{x_4} \mapsto x_0 \mapsto x_2 \mapsto x_3 \mapsto \du{x_1} \mapsto \du{x_4}$~.}
\end{figure} 

\begin{figure}[!htb]
   \centerline{%
    \includegraphics[height=\figHt]{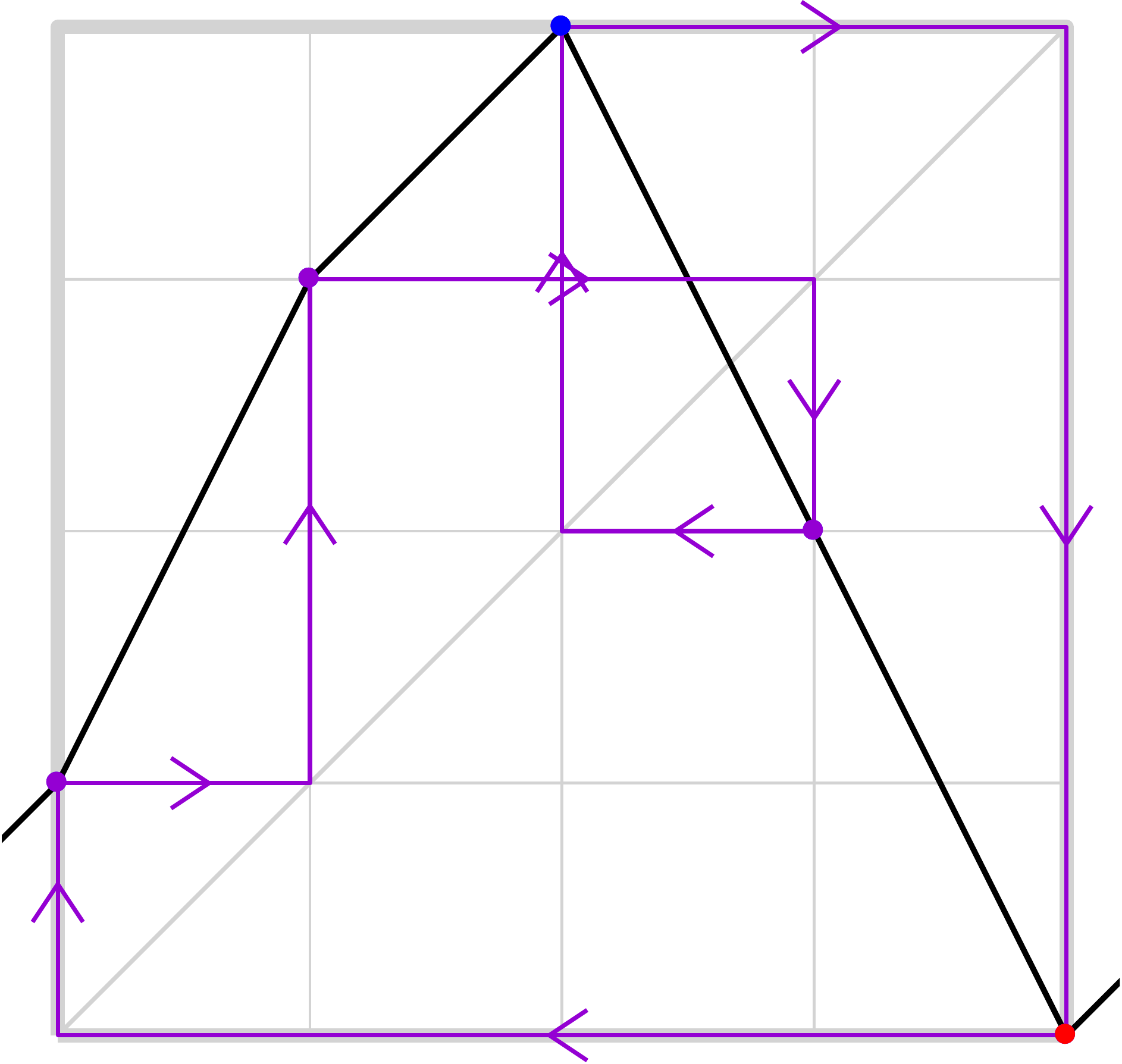} \,
    \includegraphics[height=\figHt]{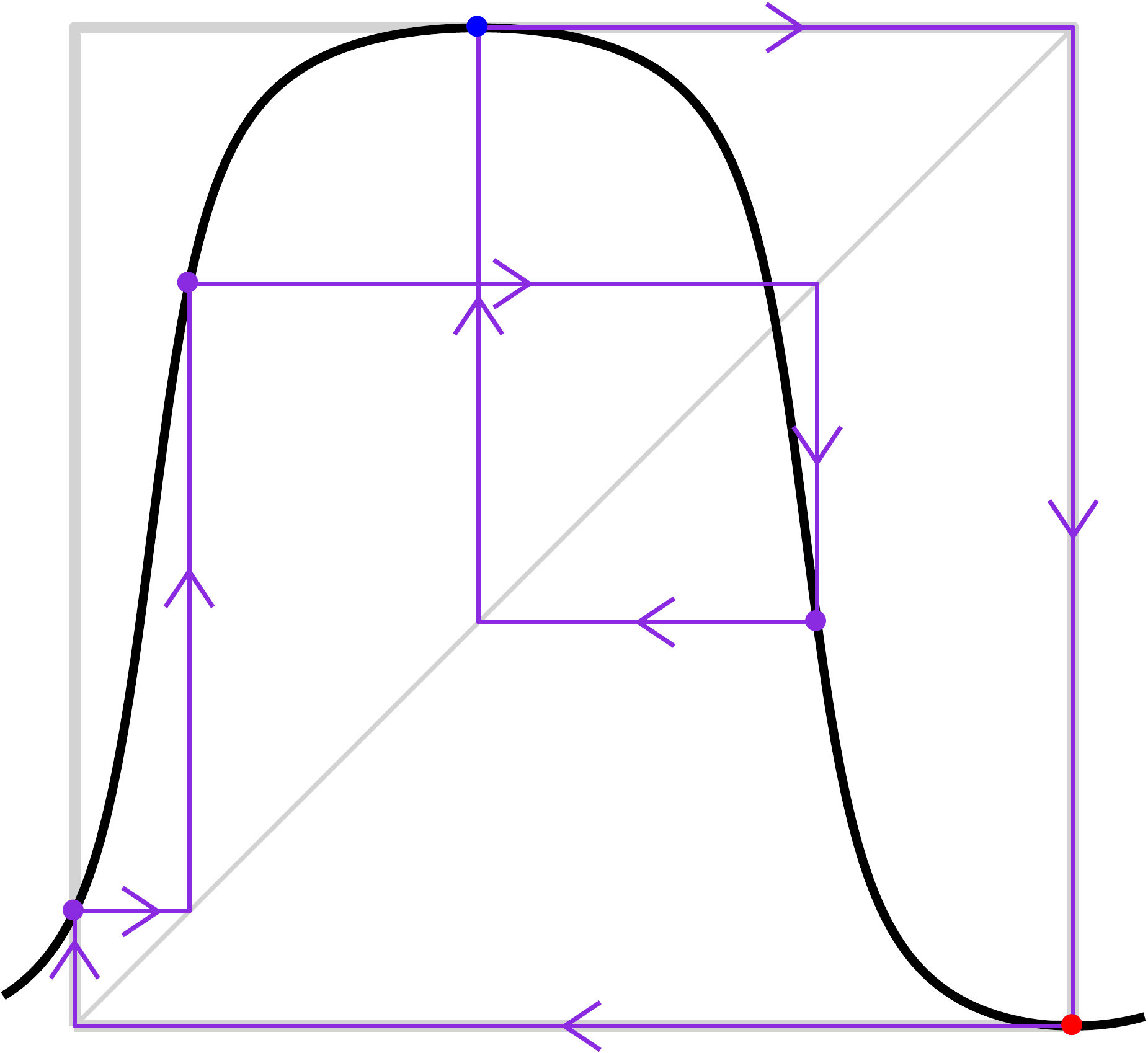}\hfill
    \includegraphics[height=\figHt]{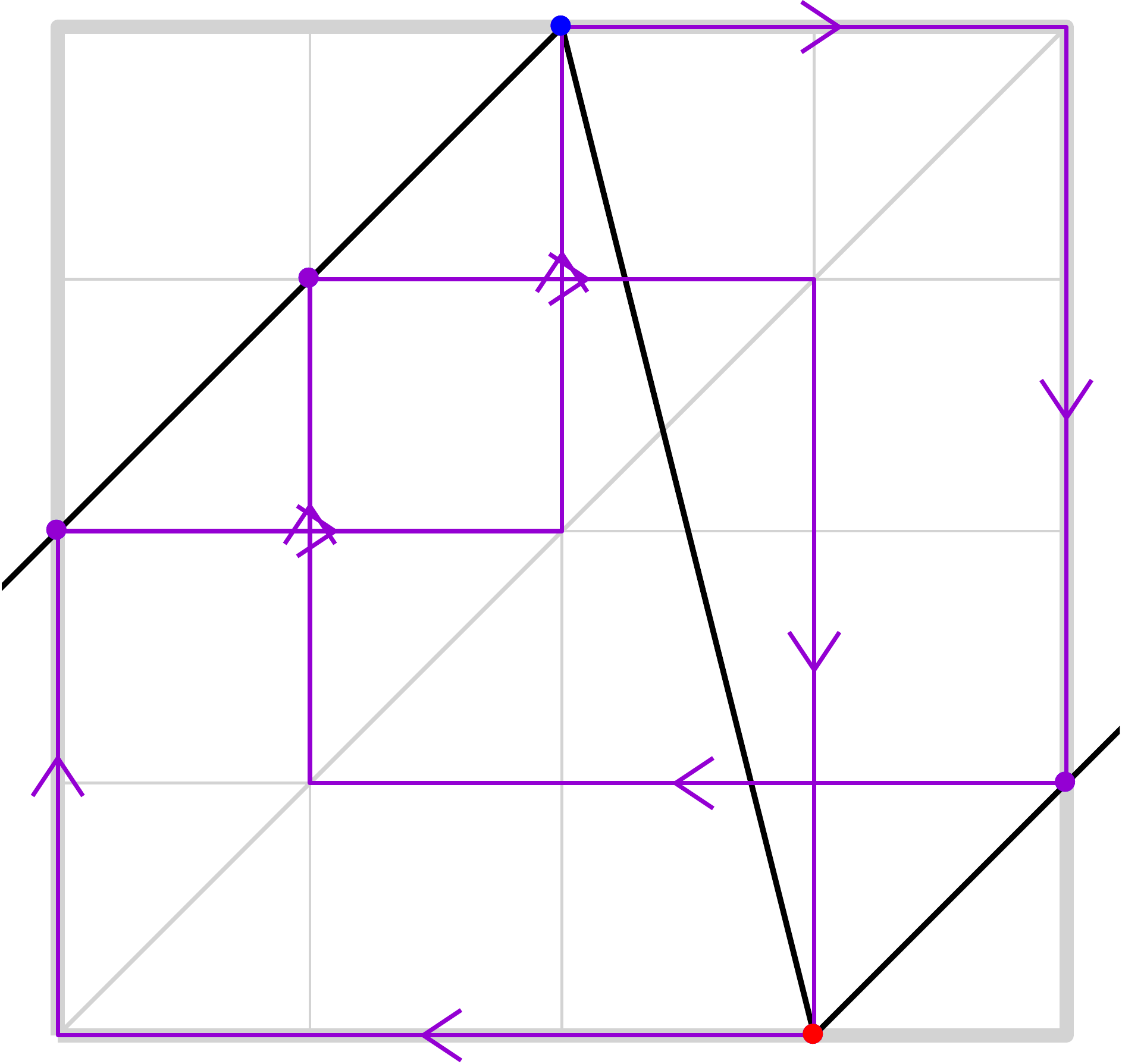} \,
    \includegraphics[height=\figHt]{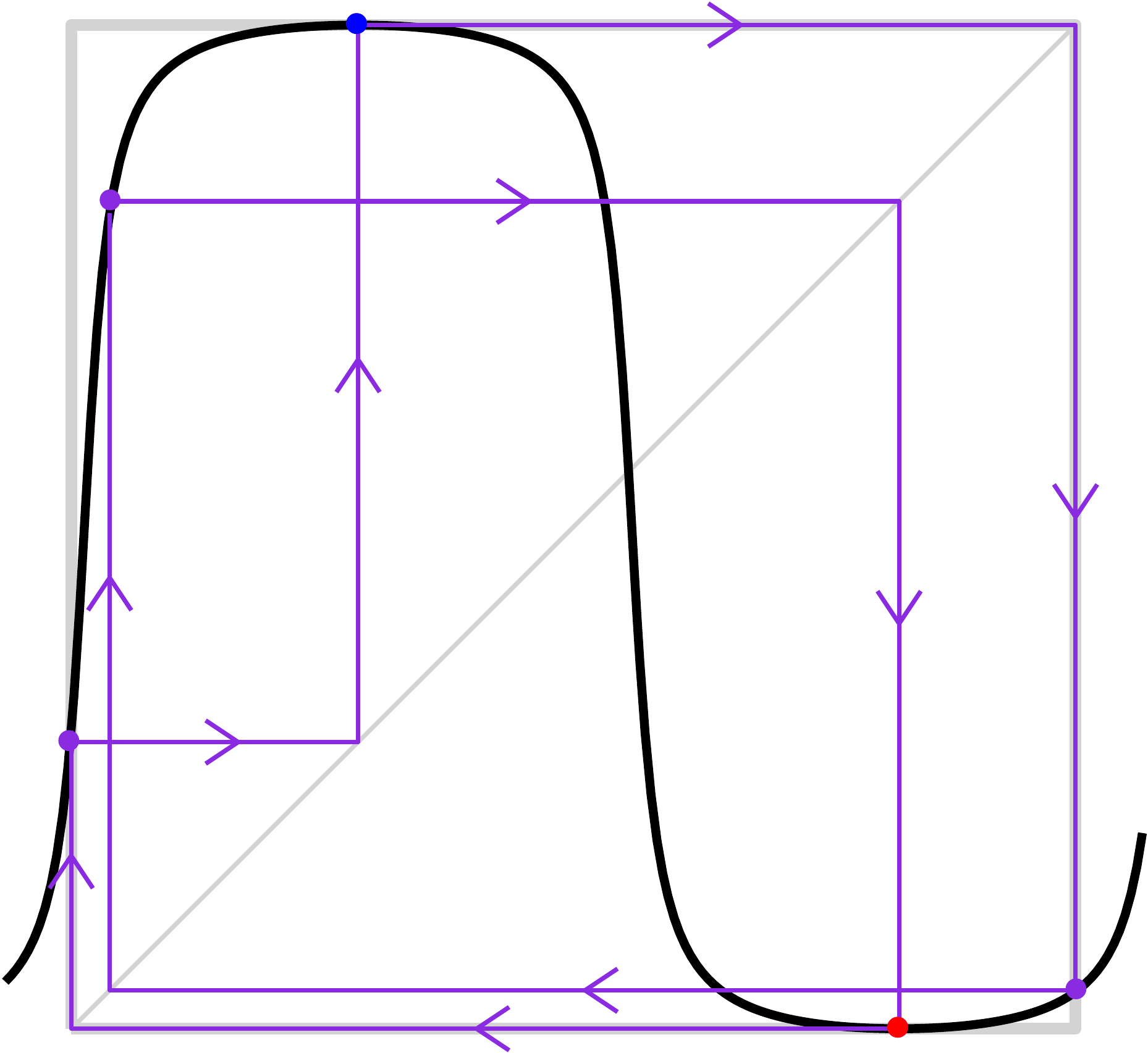}}
  \caption{\label{f-aBn4a}  Left: the co-polynomial with combinatorics
    $\(1,\,3,\,4,\,2,\,0\)$, mapping pattern 
    \hbox{$\du{x_4}\mapsto x_0 \mapsto x_1 \mapsto x_3 \mapsto \du{x_2} \mapsto
      \du{x_4}$} and topological shape $+-$  .
    Right: combinatorics $\(2,\,3,\,4,\,0,\,1\)$,
    mapping pattern $\du{x_3}\mapsto x_0\mapsto \du{x_2} \mapsto x_4
    \mapsto x_1 \mapsto \du{x_3}$, and shape  $+-+$~. }
\end{figure}

\begin{figure}[!htb]
\centerline{
    \includegraphics[height=\figHt]{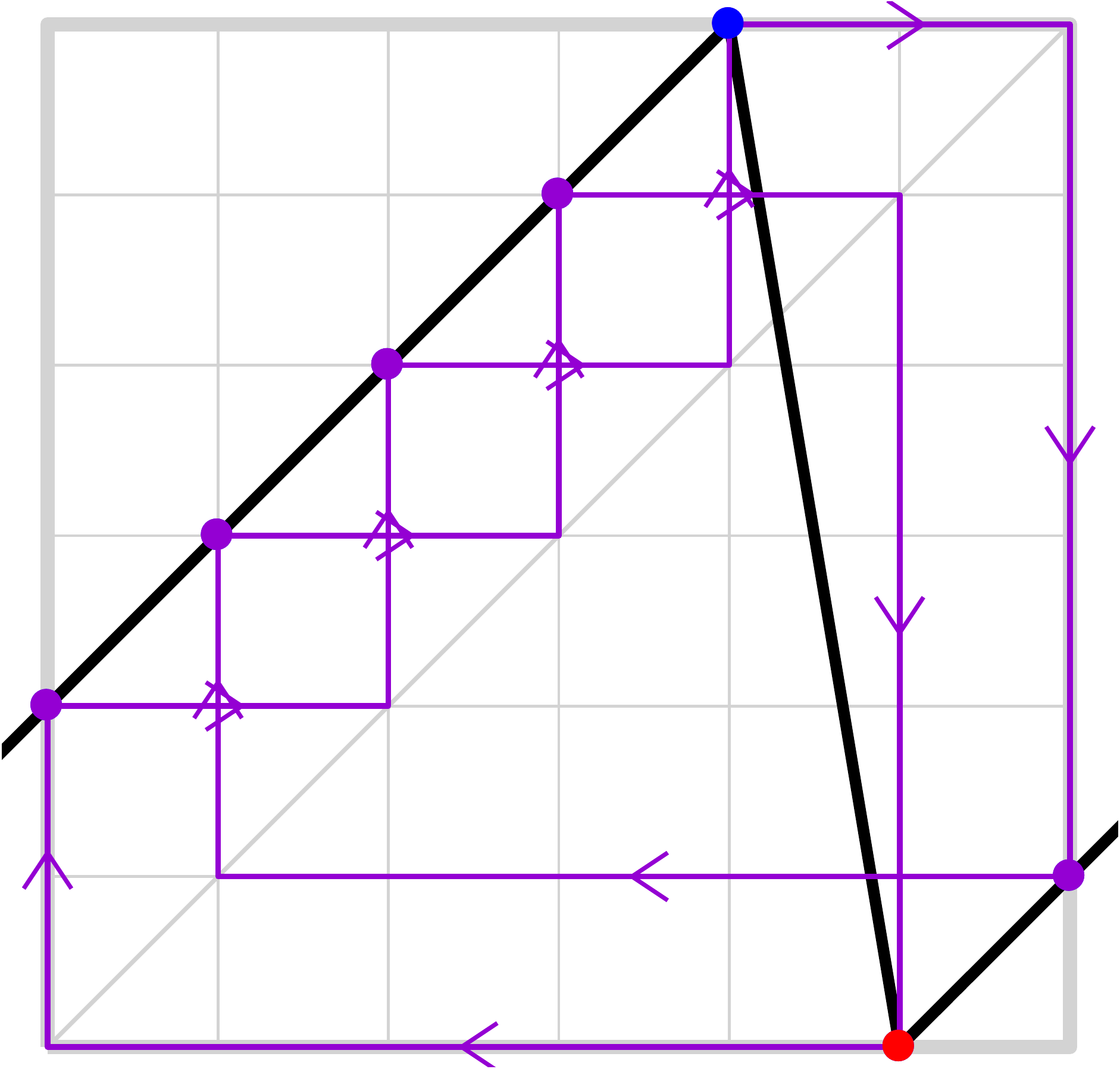}
    \includegraphics[height=\figHt]{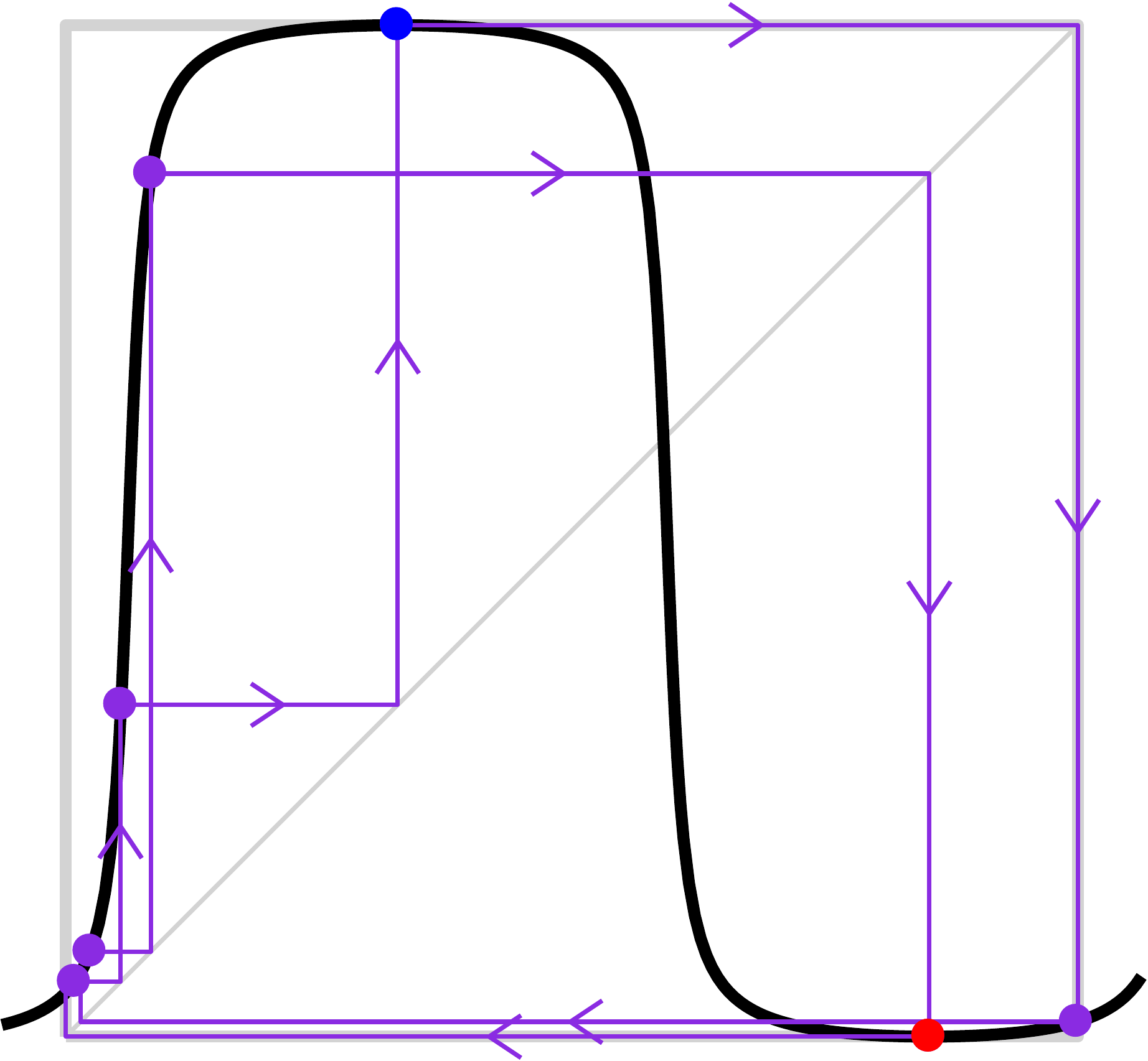}}
\caption{\label{f-aBn5a}  An illustration of the  piecewise
  linear map and its corresponding rational map for the 
combinatorics $\(2,\,3,\,4,\,5,\,6,\,0,1\)$ with  shape $~+-+~$ and 
  mapping pattern
    \hbox{$\du{x_5}\mapsto x_0 \mapsto x_2 \mapsto \du{x_4} \mapsto x_6 \mapsto
      x_1\mapsto x_3\mapsto   \du{x_5}$}~.}
\end{figure}

\FloatBarrier
\phantom{menace}
\ifthenelse{\IsThereSpaceOnPage{.3\textheight}}{\clearpage}{\relax} 
\subsubsection*{ Type C (Capture): 
 One critical orbit lands in a cycle containing the  other.}

\begin{figure}[!htb]
  \centerline{%
    \includegraphics[height=1.1in]{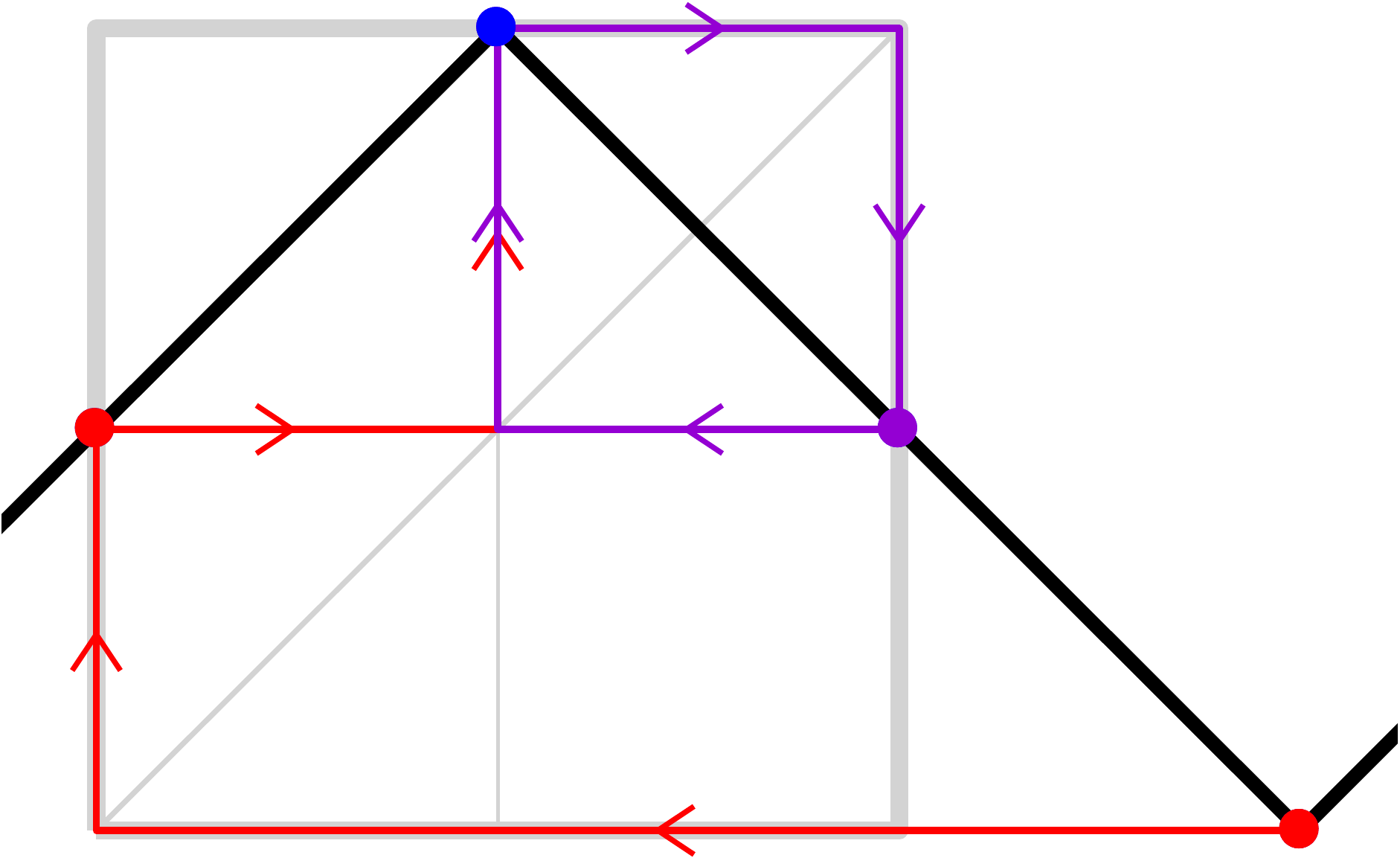} \,
    \includegraphics[height=1.1in]{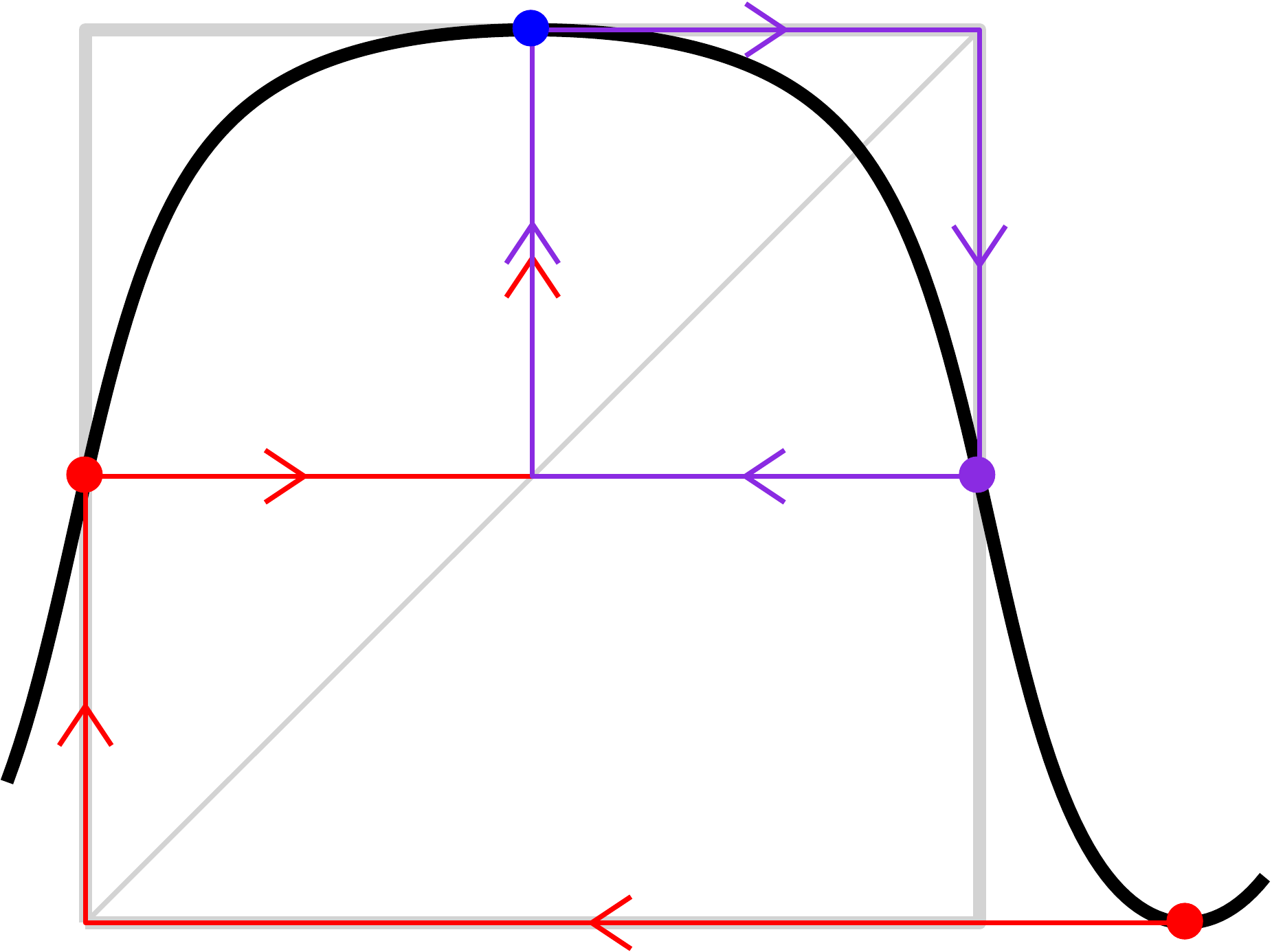}\hfill
    \includegraphics[height=1.1in]{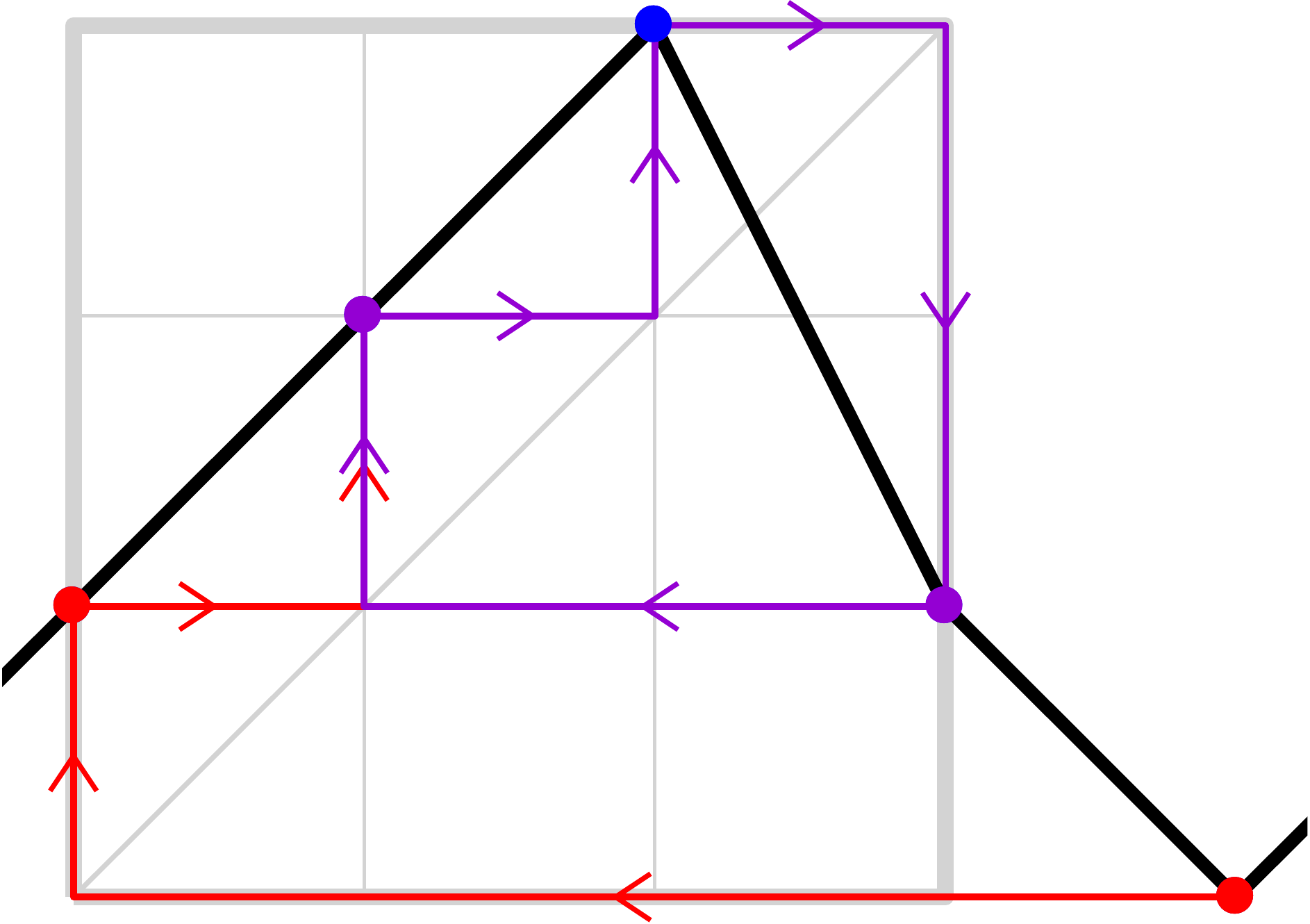} \,
    \includegraphics[height=1.1in]{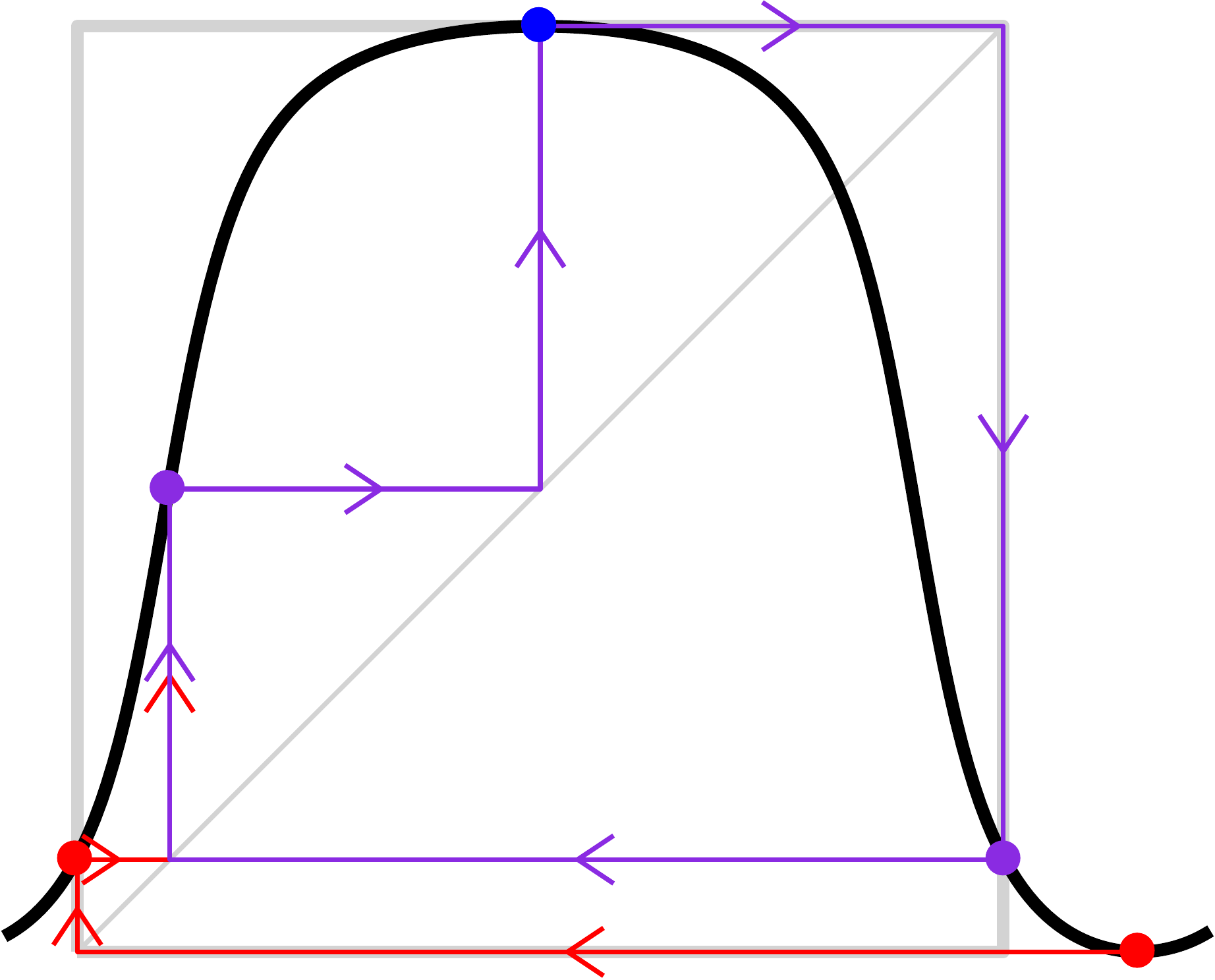}}
\caption{\label{f-aC1}   Strictly unimodal maps on $f(\Rhat)$ of
    topological shape $+-$~. On the left are illustrations of a piecewise
    linear map and its corresponding rational map with $n=3$ and combinatorics
    $\(1,\,2,\,1,\,0\)$. This is a
    period 2 capture case with mapping pattern $\du{x_3}\mapsto x_0 \mapsto
    \du{x_1}\leftrightarrow x_2$. On the right are illustrations of  a map with
    $n=4$ and  combinatorics  $\(1,\,2,\,3,\,1,\,0\)$. This is capture with
    eventual period three. The mapping pattern is
    \hbox{$\du{x_4}\mapsto x_0 \rightarrow x_1 \mapsto \du{x_2}
    \mapsto x_3 \mapsto x_1$}.}
\end{figure}

\begin{figure}[!htb]
\centerline{%
    \includegraphics[height=1.3in]{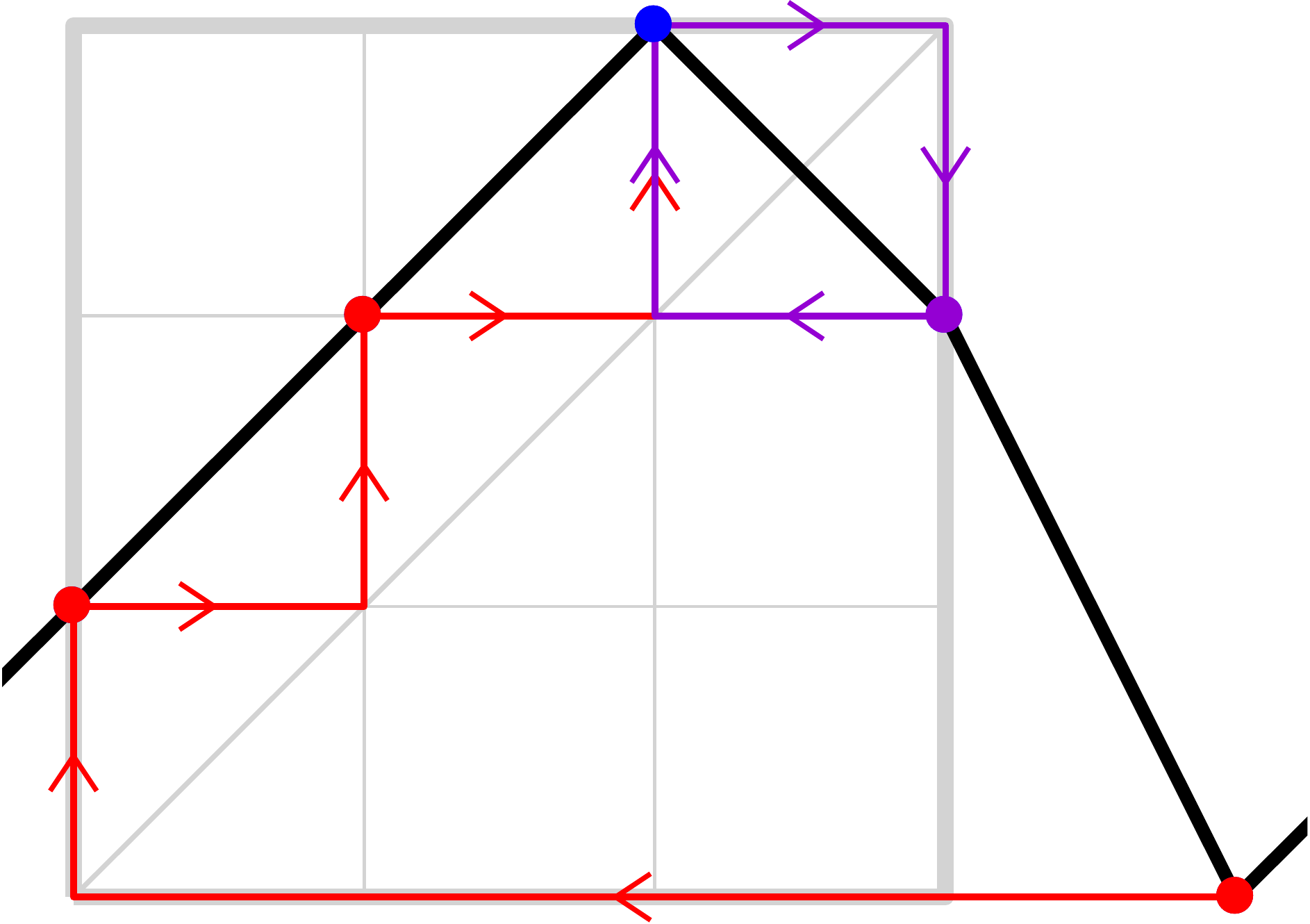} \,
    \includegraphics[height=1.3in]{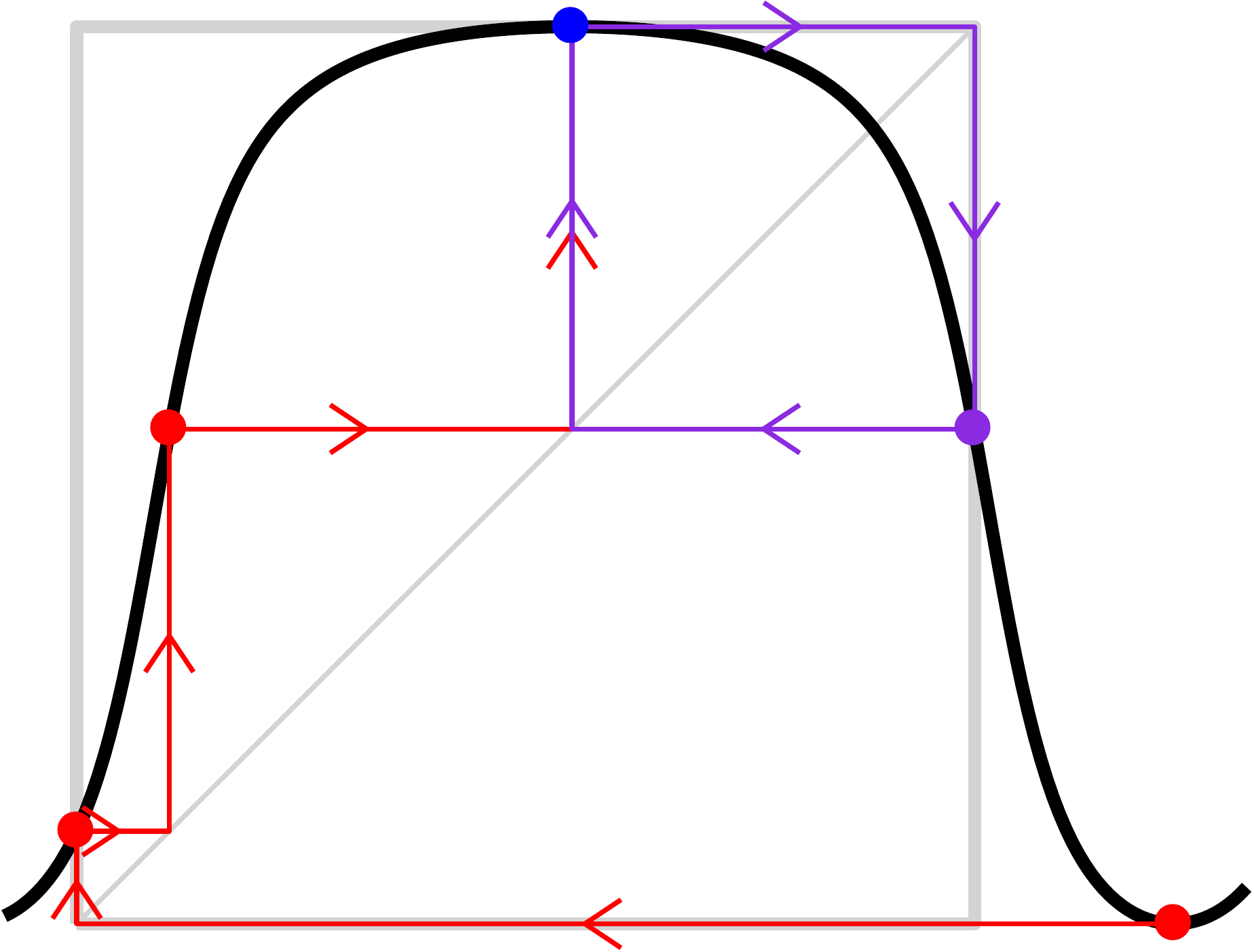}\hfill
    \includegraphics[height=1.3in]{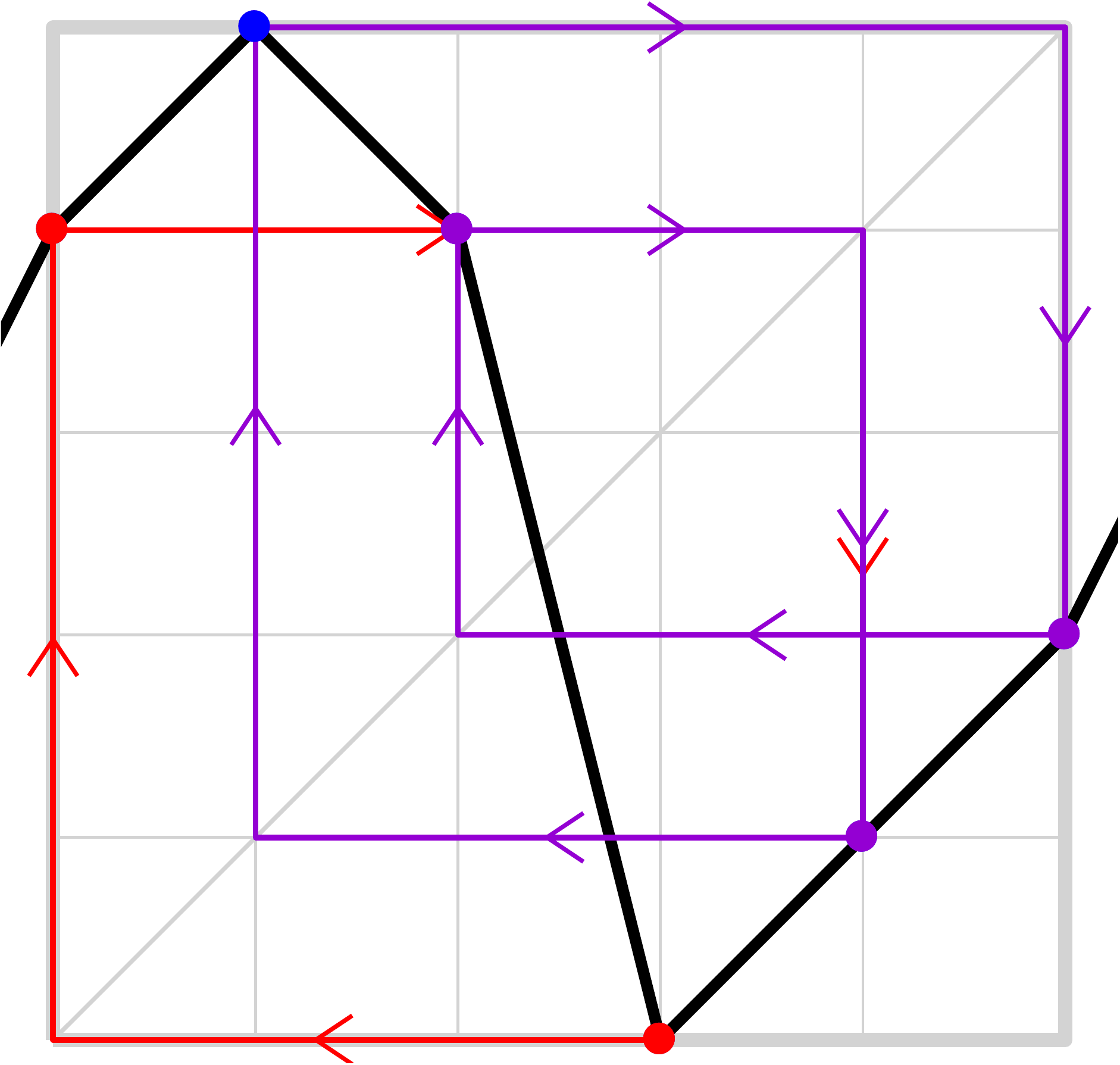} \,
    \includegraphics[height=1.3in]{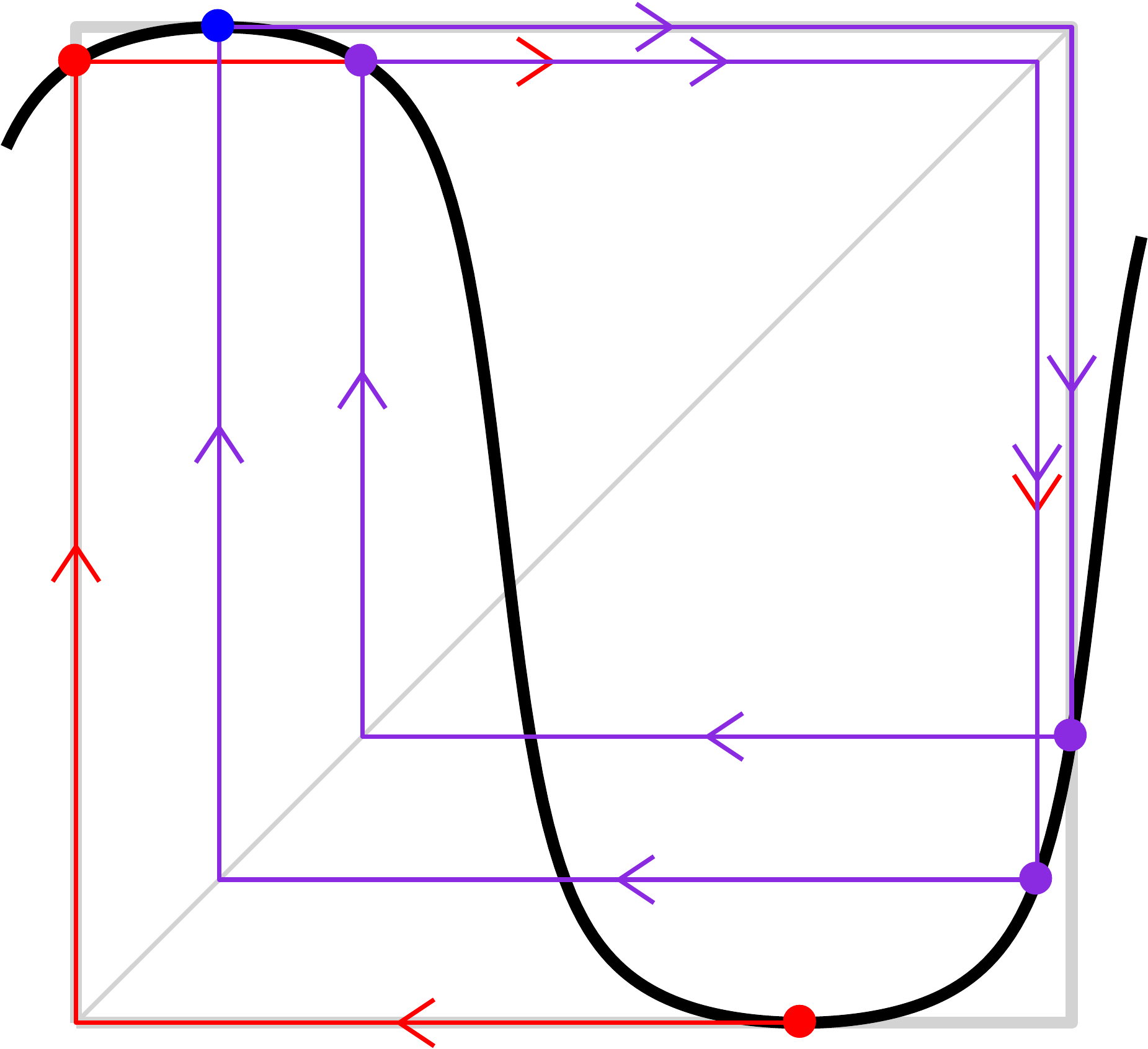}}
  \caption{\label{f-aC2}   On the left are illustrations of 
  a period two capture case  of a strictly unimodal map of  topological shape
  $+-$~,  with combinatorics $\(1,\,2,\,3,\,2,\,0\)$ and  with mapping pattern
  $\du{x_4}\mapsto x_0 \mapsto  x_1\mapsto \du{x_2}\leftrightarrow x_3$.
  As in the previous two cases, one critical point lies outside of $f(\Rhat)$.
  On the right are illustrations of a  capture component for $n=5$ with both
  critical points in
  $f(\widehat{\R})$,  of topological shape $+-+$~,  combinatorics
  $\(4,\,5,\,4,\,0,\,1,\,2\)$,
and mapping pattern \hbox{$\du{x_3} \mapsto x_0 \mapsto x_4 \mapsto \du{x_1}
  \mapsto x_5 \mapsto x_2 \mapsto x_4$}  with a periodic critical orbit of
period 4.}            
\end{figure}

\FloatBarrier
\phantom{menace}
\ifthenelse{\IsThereSpaceOnPage{.3\textheight}}{\clearpage}{\relax}
\subsubsection*{Type D: Two disjoint critical orbits.}

 The only example of unobstructed combinatorics for $n\le 6$ (up to
orientation reversal) is the Wittner example (See figures~\ref{f1} and
\ref{f5}).
Furthermore, as a consequence of \autoref{C-per2obs}, there can be no
unobstructed Type~D examples with a period~2 orbit. An example 
of Type~D with $n=8$ is shown in  \autoref{f-typeD}.

\begin{figure}[!htb]
 \centerline{\includegraphics[height=\bigfigHt]{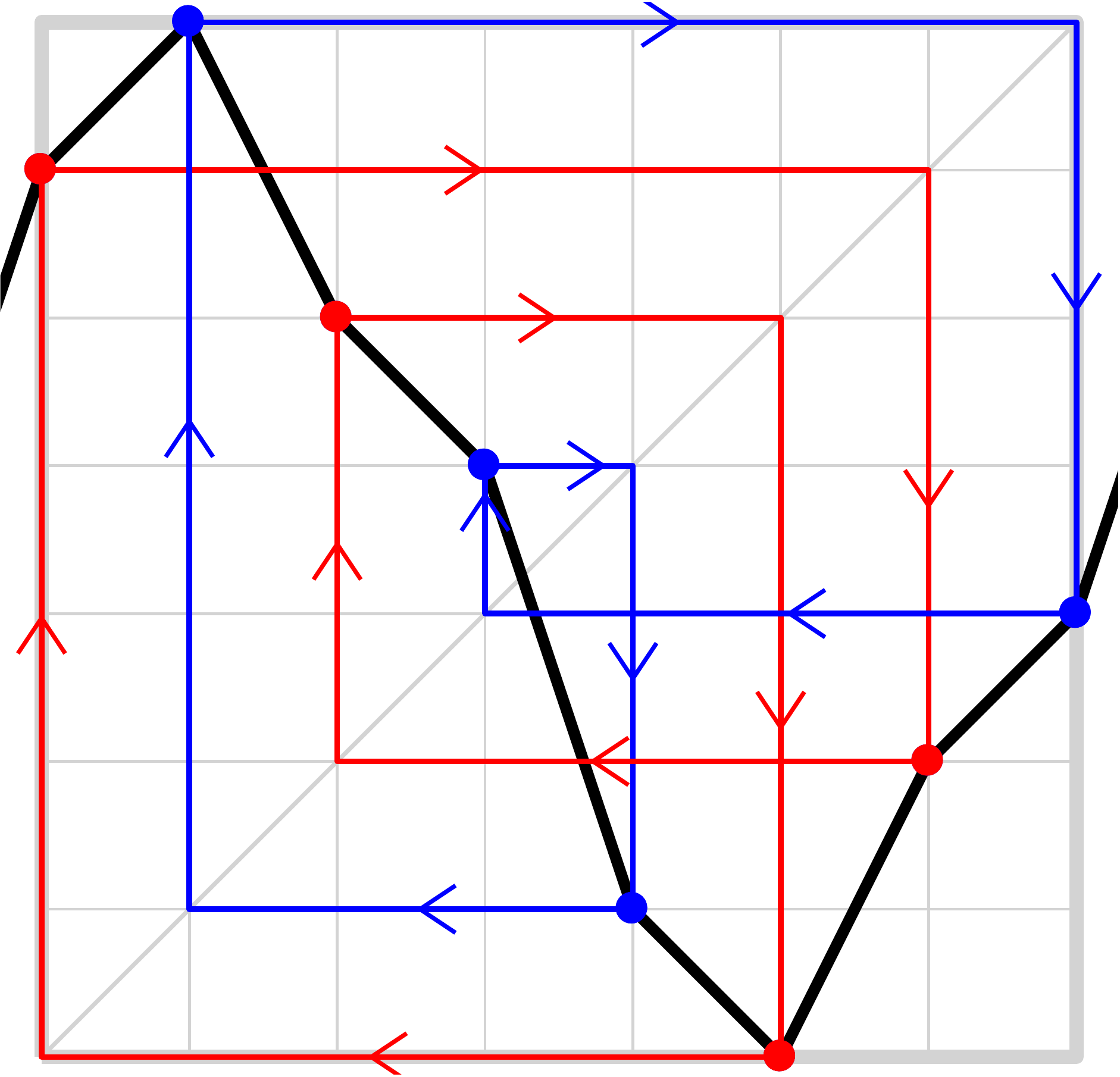} \qquad
             \includegraphics[height=\bigfigHt]{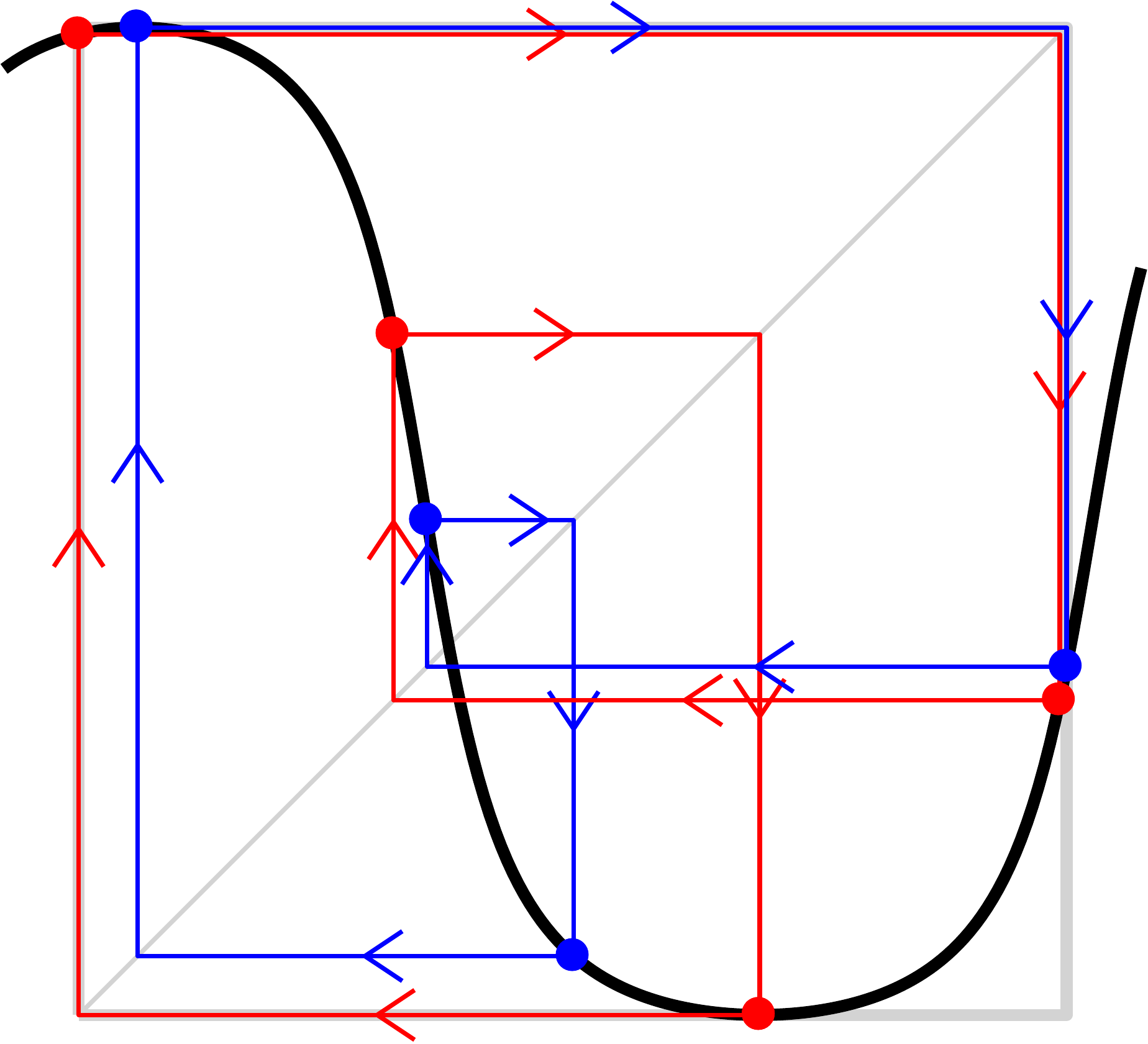}}
 \caption{\label{f-typeD}  
   On the left, the PL model for the type~D map of shape $+-+$ and
   combinatorics
   $\(6,\,7,\,5,\,4,\,1,\,0,\,2,\,3\)$, which has two period four critical
   orbits.  The mapping pattern is
   \hbox{$\du{x_1}\mapsto x_7 \mapsto x_3 \mapsto x_4 \mapsto \du{x_1}$} and
   \hbox{$\du{x_5}\mapsto x_0 \mapsto x_6 \mapsto x_2 \mapsto \du{x_5}$}.
   Its realization as a lifted map is shown on the right. }
 \end{figure}

\FloatBarrier
\phantom{menace}
\ifthenelse{\IsThereSpaceOnPage{.3\textheight}}{\clearpage}{\relax}
\subsubsection*{Half-Hyperbolic: One critical orbit is periodic, 
  the other is eventually repelling. }

\begin{figure}[!h]
  \centerline{%
    \includegraphics[height=1.25in]{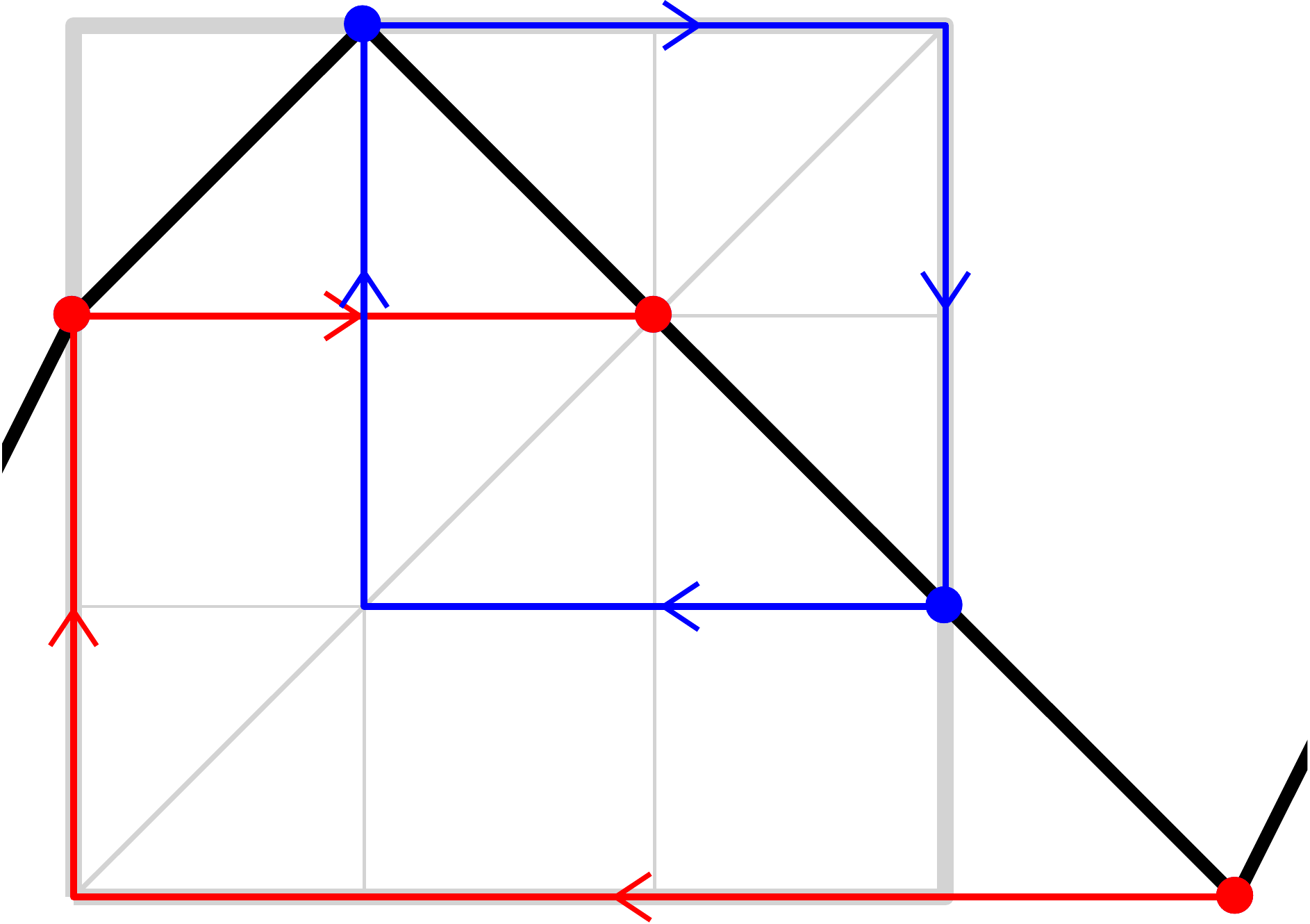} 
    \includegraphics[height=1.25in]{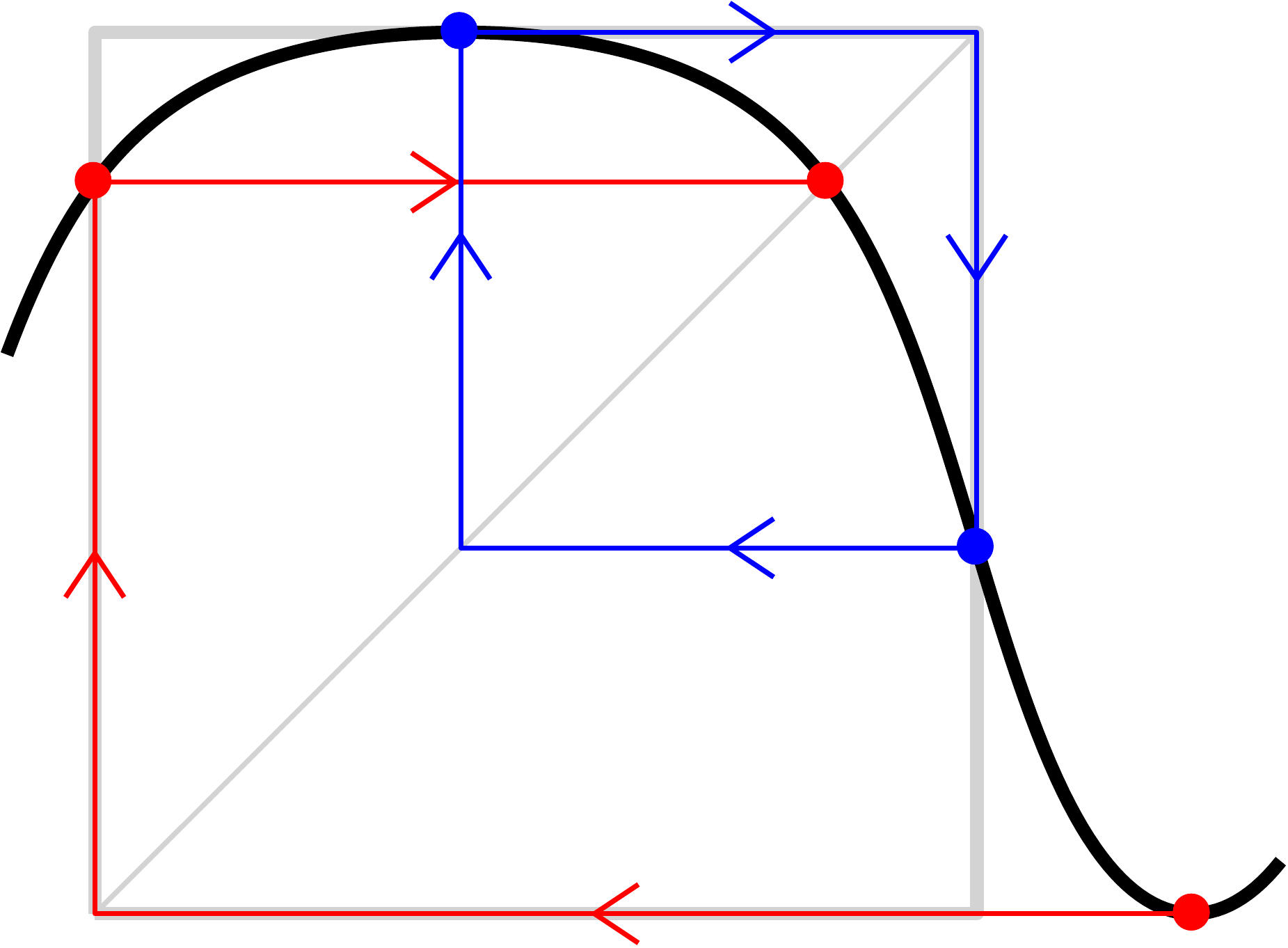}\hfill
    \includegraphics[height=1.25in]{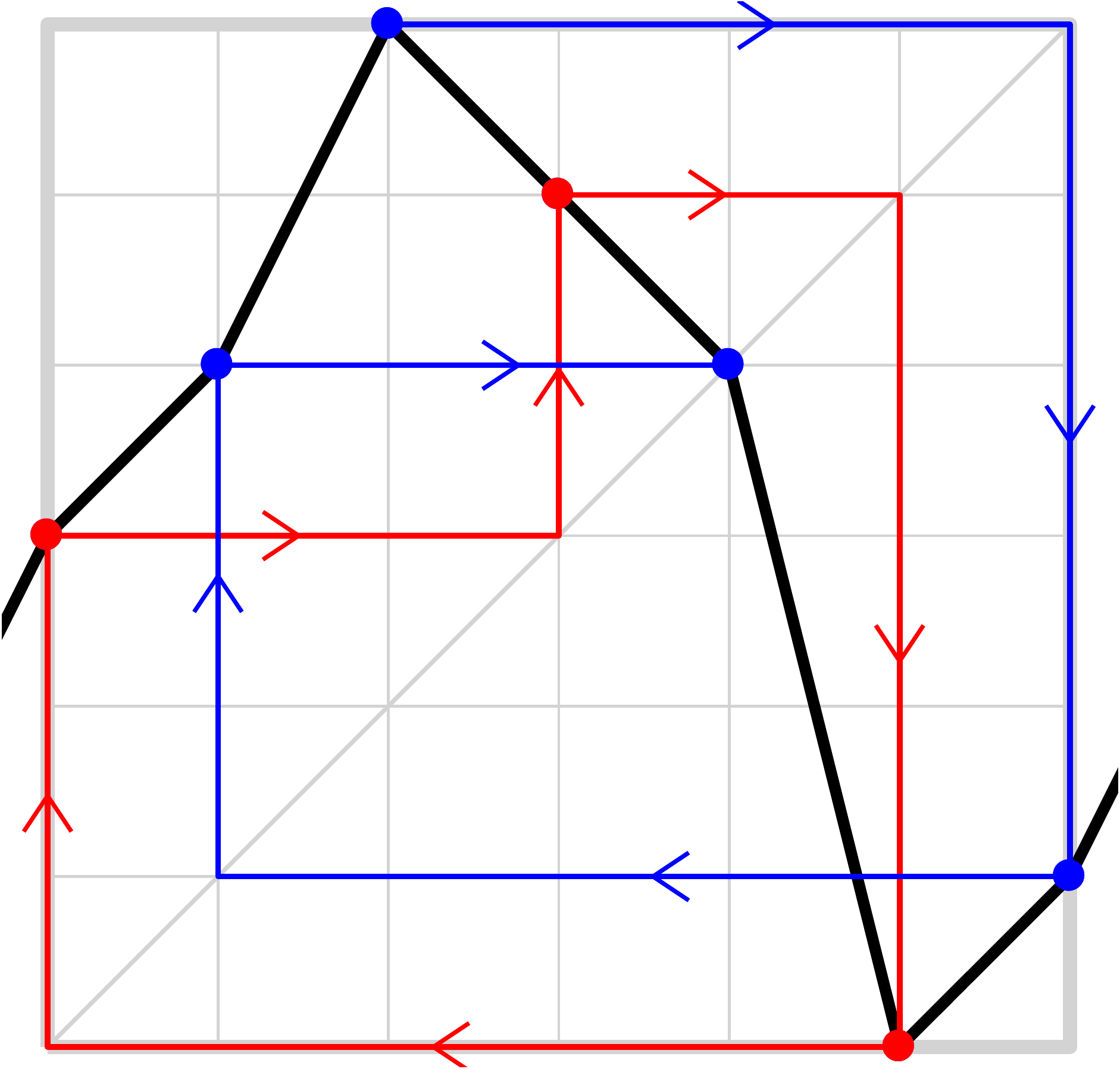} 
    \includegraphics[height=1.25in]{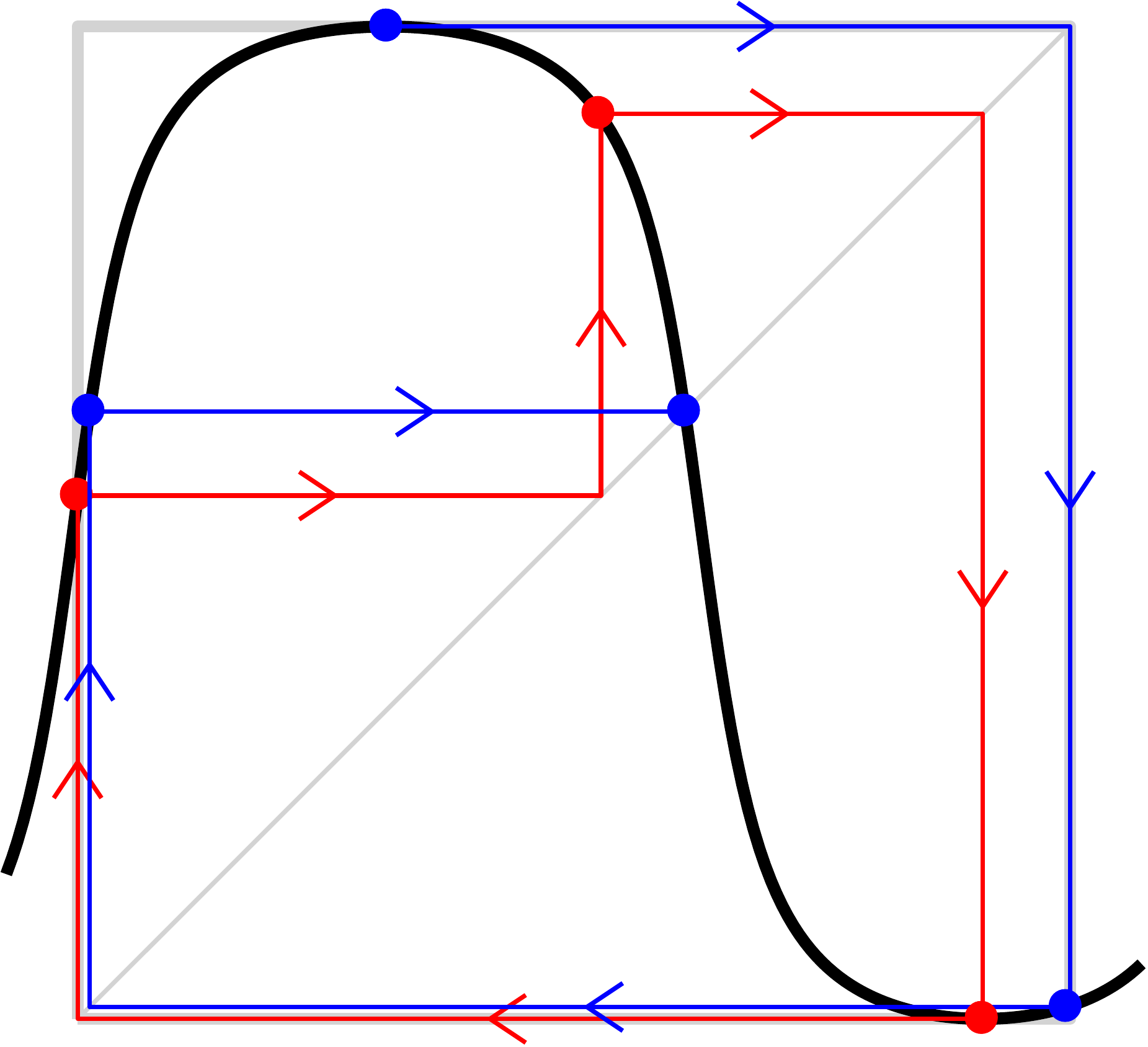}}
  \caption{\label{f-semihyp1}   On the left:
    A strictly unimodal map of topological shape $+-$~ and 
    combinatorics  $\(2,\,3,\,2,\,1,\,0\)$, 
    with a periodic critical orbit $\du{x_1}\leftrightarrow x_3$ and an
    eventually  repelling critical orbit 
    $~\du{x_4}\mapsto x_0 \mapsto x_2 \mapstoself$.    
    Here one critical point lies outside of $f(\Rhat)$.  On the
    right:  topological shape $+-+$ and combinatorics
    $\(3,\,4,\,6,\,5,\,4,\,0,\,1\)$, with
    periodic orbit $\du{x_5}\mapsto x_0 \mapsto x_3 \mapsto \du{x_5}$
    and repelling critical orbit
    {$\du{x_2}\mapsto x_6\mapsto x_1 \mapsto x_4 \mapstoself$}.
    Note that both critical points lie in $f(\Rhat)$.
  }
  \end{figure}

\FloatBarrier
\phantom{menace}
\vspace{0pt plus 2\baselineskip minus 2\baselineskip}
\ifthenelse{\IsThereSpaceOnPage{.3\textheight}}{\clearpage}{\relax}
\subsubsection*{Totally Non-Hyperbolic: Every postcritical cycle is repelling.}
In these cases,  since both critical orbits are eventually repelling, it
follows  that the complex Julia set is   the entire Riemann sphere. 

\noindent
\begin{minipage}{\textwidth}
  \centerline{%
    \includegraphics[height=\figHt]{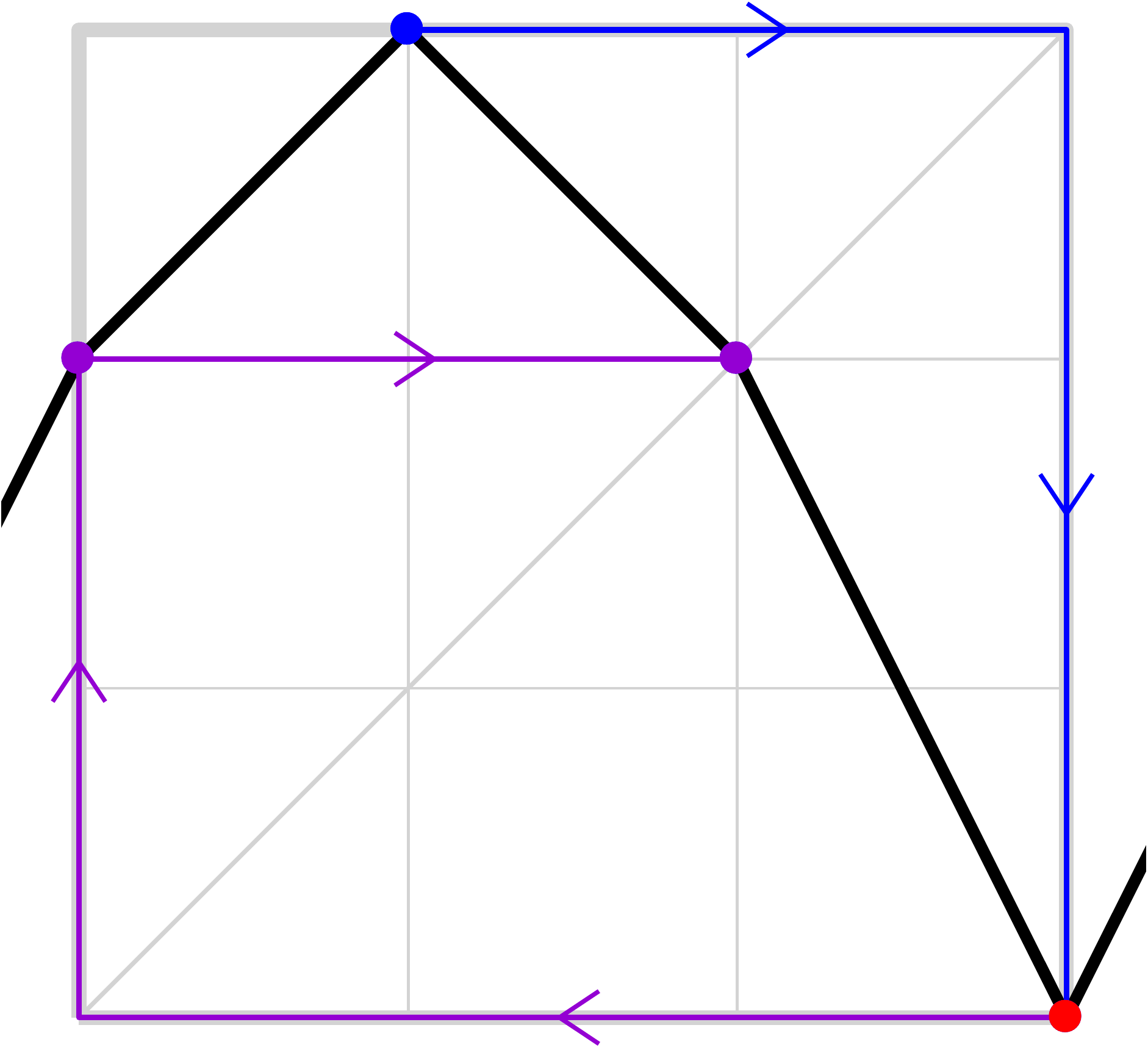} \,
    \includegraphics[height=\figHt]{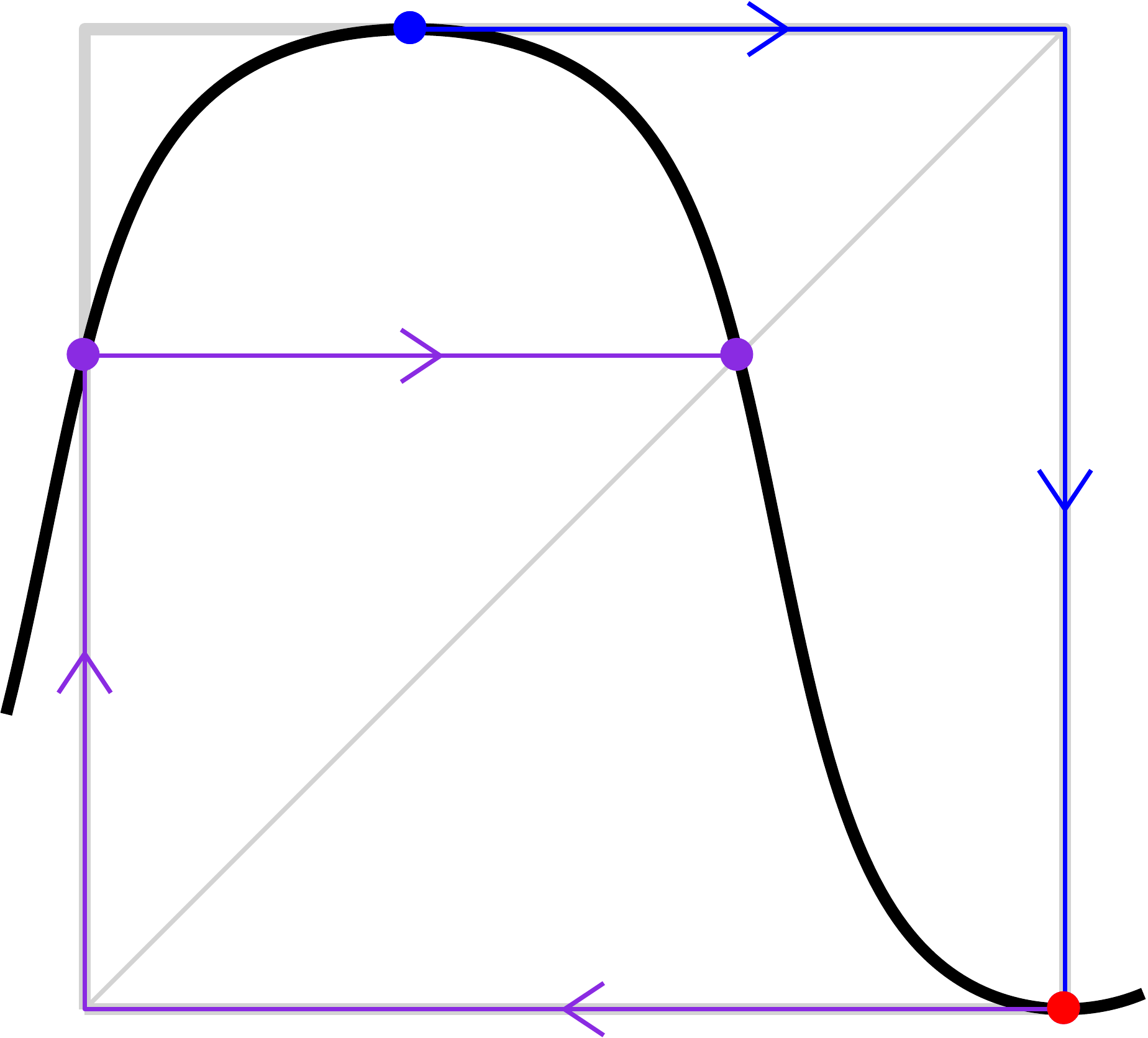}\hfill
    \includegraphics[height=\figHt]{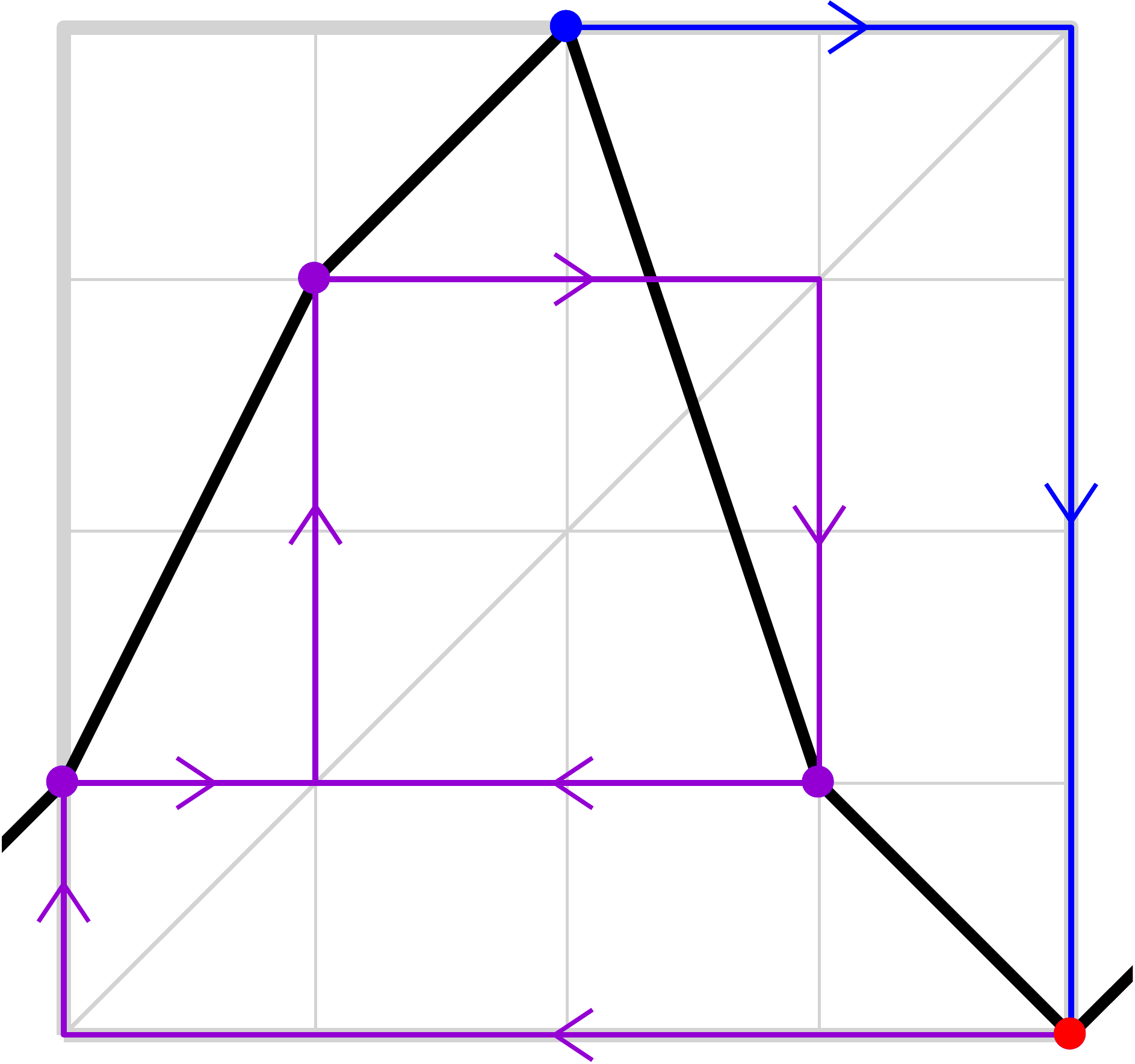}\,
    \includegraphics[height=\figHt]{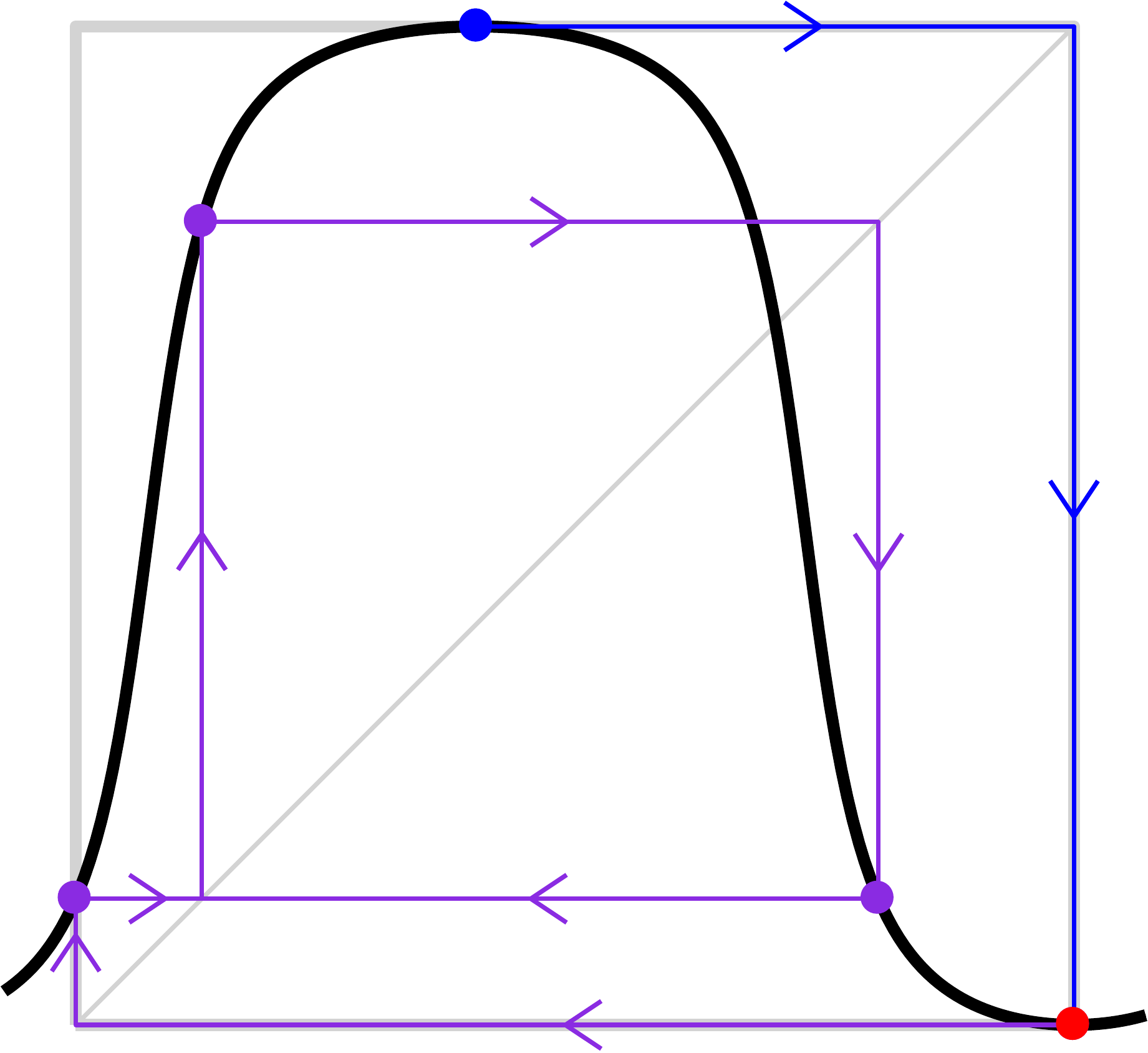}}
\captionof{figure}{\label{f-nonhyp}   Maps of co-polynomial shape. 
    On the left are  illustrations of the piecewise linear map and its
    corresponding
    rational map for the combinatorics
    $\(2,\, 3,\, 2,\, 0\)$ with dynamical pattern 
    \hbox{$\du{x_1}\mapsto \du{x_3} \mapsto x_0 \mapsto x_2 \mapstoself$}. On
    the right are illustrations of the
   combinatorics  $\(1,\,3,\,4,\,1,\,0\)$ with  mapping pattern 
    $\du{x_2}\mapsto \du{x_4} \mapsto x_0 \mapsto x_1\leftrightarrow x_3$~.}
\vspace{2ex plus 10ex }
  \centerline{%
    \includegraphics[height=\figHt]{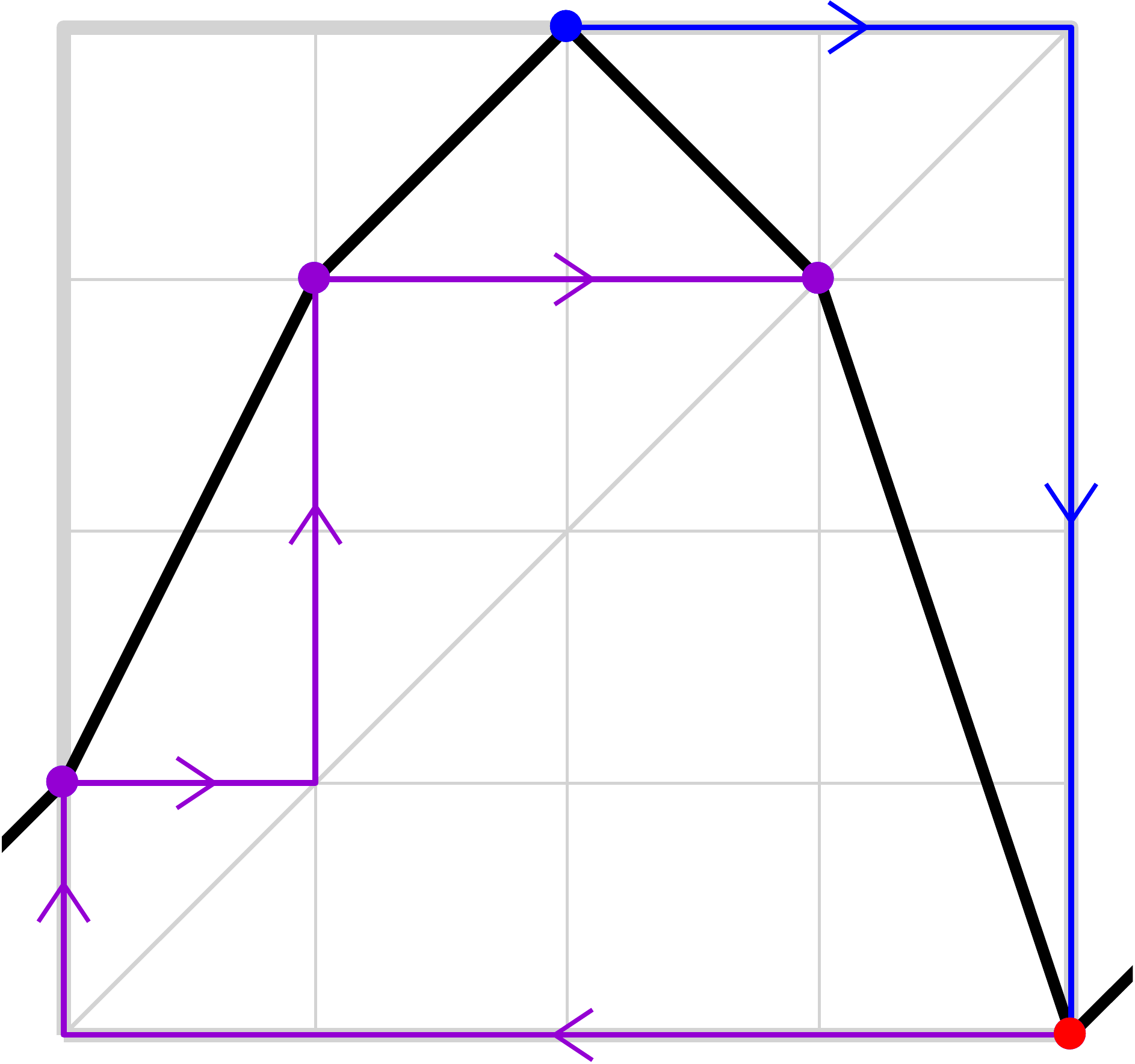} \,
    \includegraphics[height=\figHt]{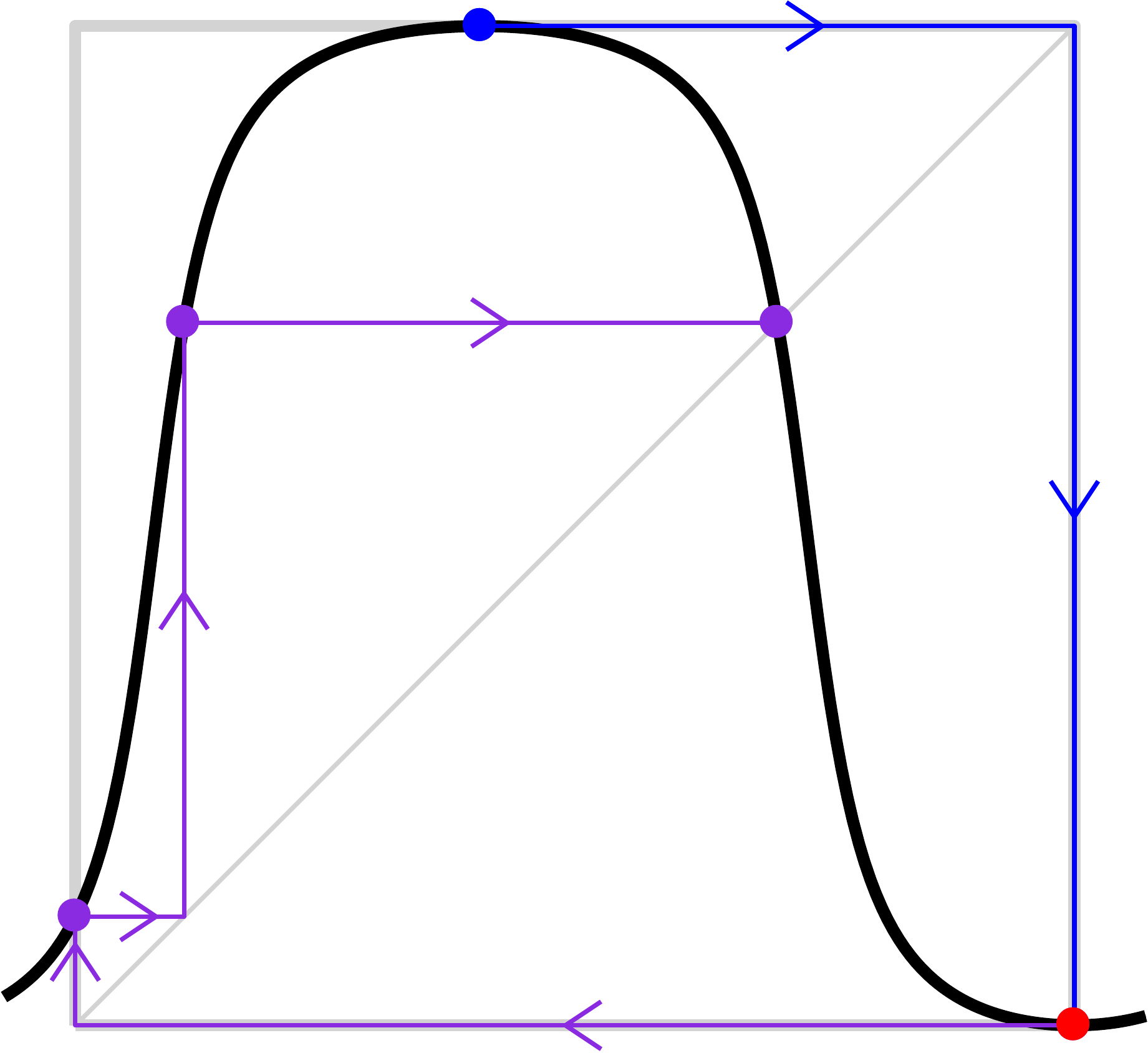}\hfill
    \includegraphics[height=\figHt]{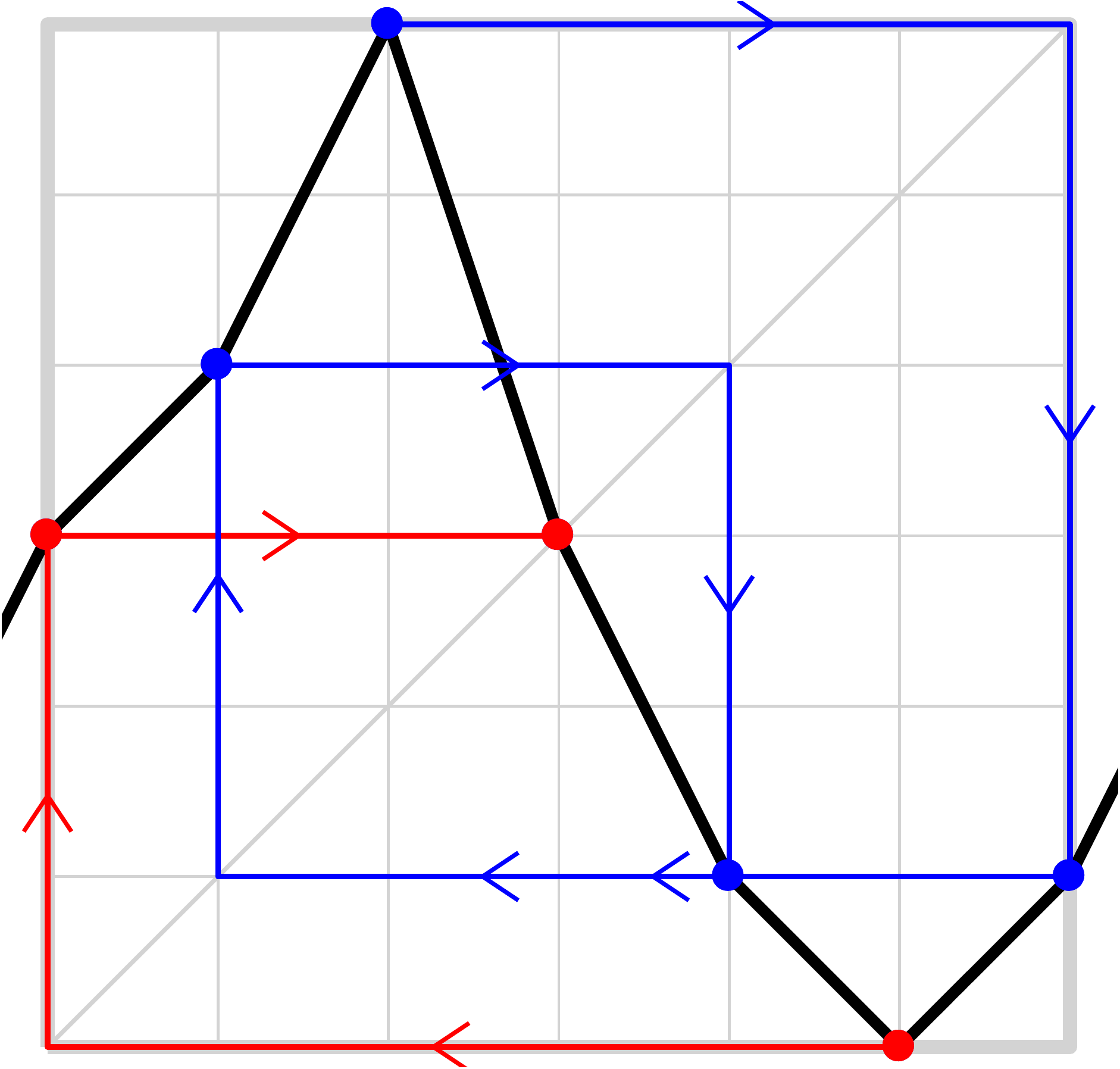}\,
    \includegraphics[height=\figHt]{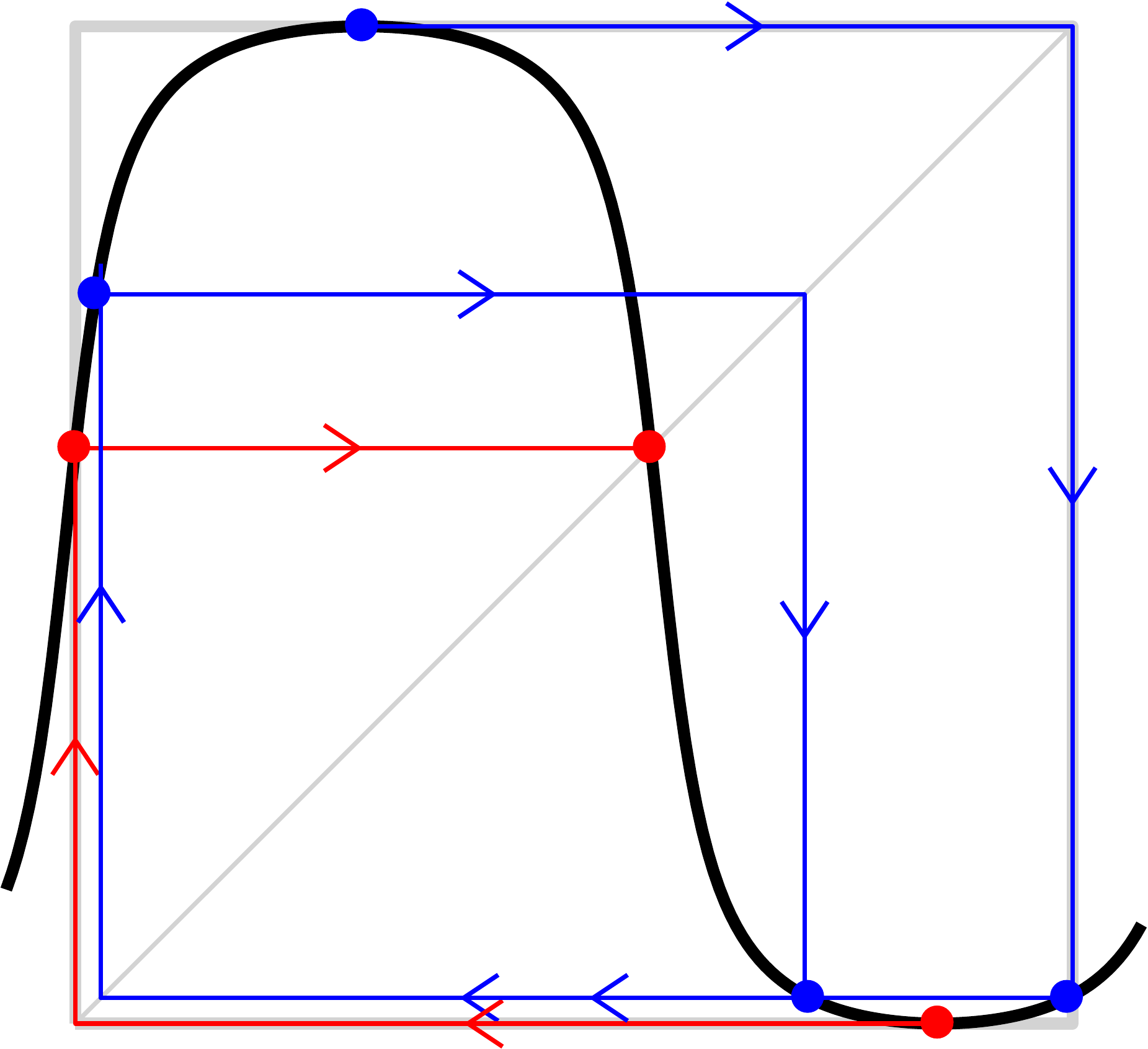}}
  \captionof{figure}{\label{f-nonhyp1}  On the left are illustrations of
    a map of
    co-polynomial shape with combinatorics $\(1,\, 3,\, 4,\, 3,\, 0\)$ and
    dynamical pattern
    \hbox{$\du{x_2}\mapsto \du{x_4} \mapsto x_0 \mapsto x_1\mapsto x_3
      \mapstoself$}~.  On the right are illustrations of a map of
    topological
    shape $+ - +$, with combinatorics $\(3,\,4,\,6,\,3,\,1,\,0,\,1\)$.
    The mapping pattern is $\du{x_2}\mapsto x_6 \mapsto x_1
  \leftrightarrow x_4$ and $\du{x_5}\mapsto x_0 \mapsto x_3 \mapstoself$}~.
\end{minipage}

\FloatBarrier
\phantom{menace}
\ifthenelse{\IsThereSpaceOnPage{.3\textheight}}{\clearpage}{\relax}
\subsection*{Strongly Obstructed Combinatorics}\label{s-obst} 

Since for strongly obstructed combinatorics the limit cannot be realized as
a rational map, the images of limiting maps will use the convention of
\autoref{F-algExamp} in \autoref{s4}, indicating the marked points by
disks and their images under the map by open squares.  In some cases, the
square will appear to be filled, because the image is another marked point.
\renewcommand{\figHt}{.22\textwidth}

\subsubsection*{Type B: Both critical points in a common periodic orbit.}

For Type~B, strongly obstructed examples seem to be uncommon.  The smallest
example  we were able to find has $n=6$.

\begin{figure}[!htb]
  \centerline{%
    \includegraphics[height=\figHt]{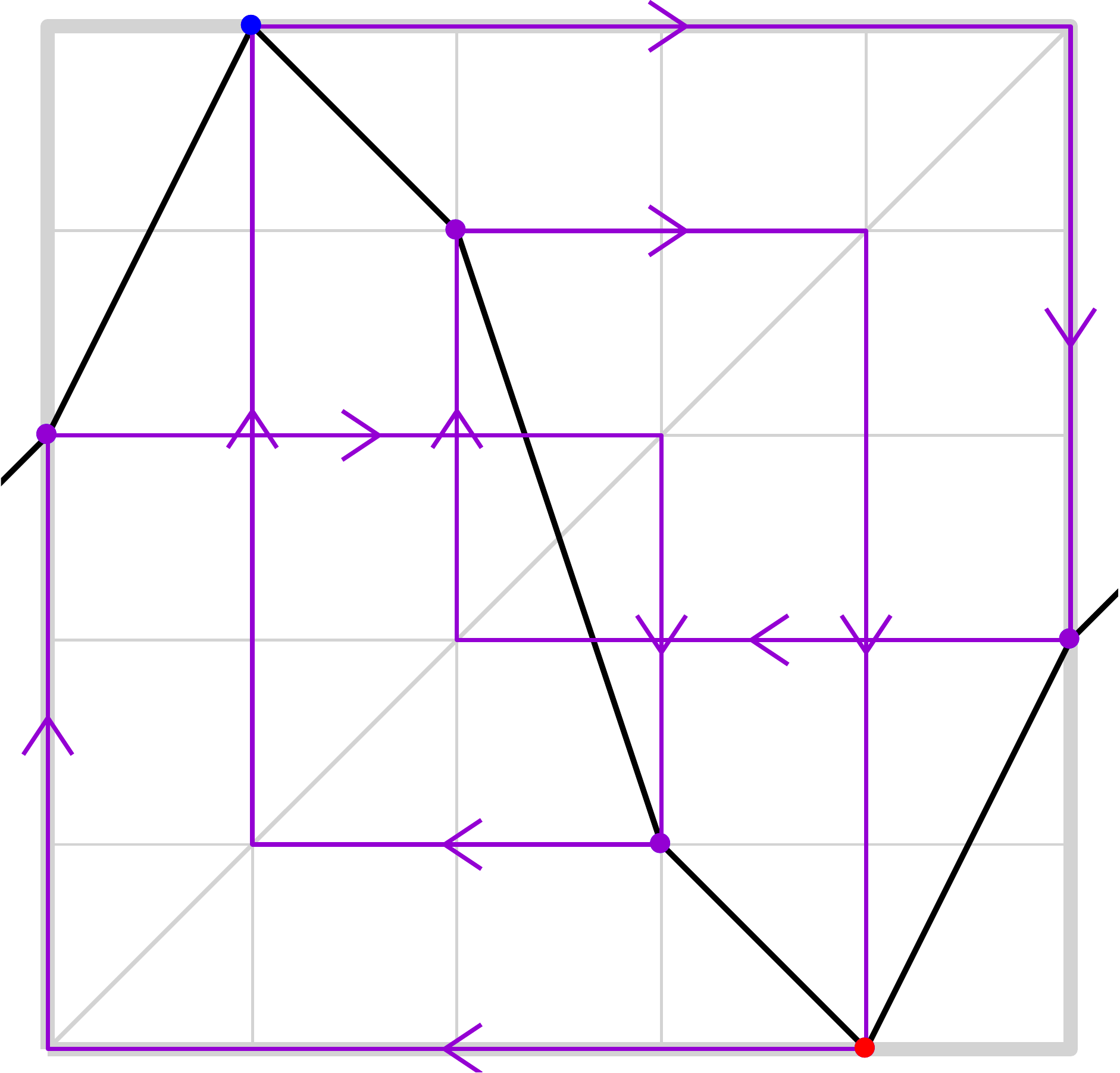}\qquad
    \includegraphics[height=\figHt]{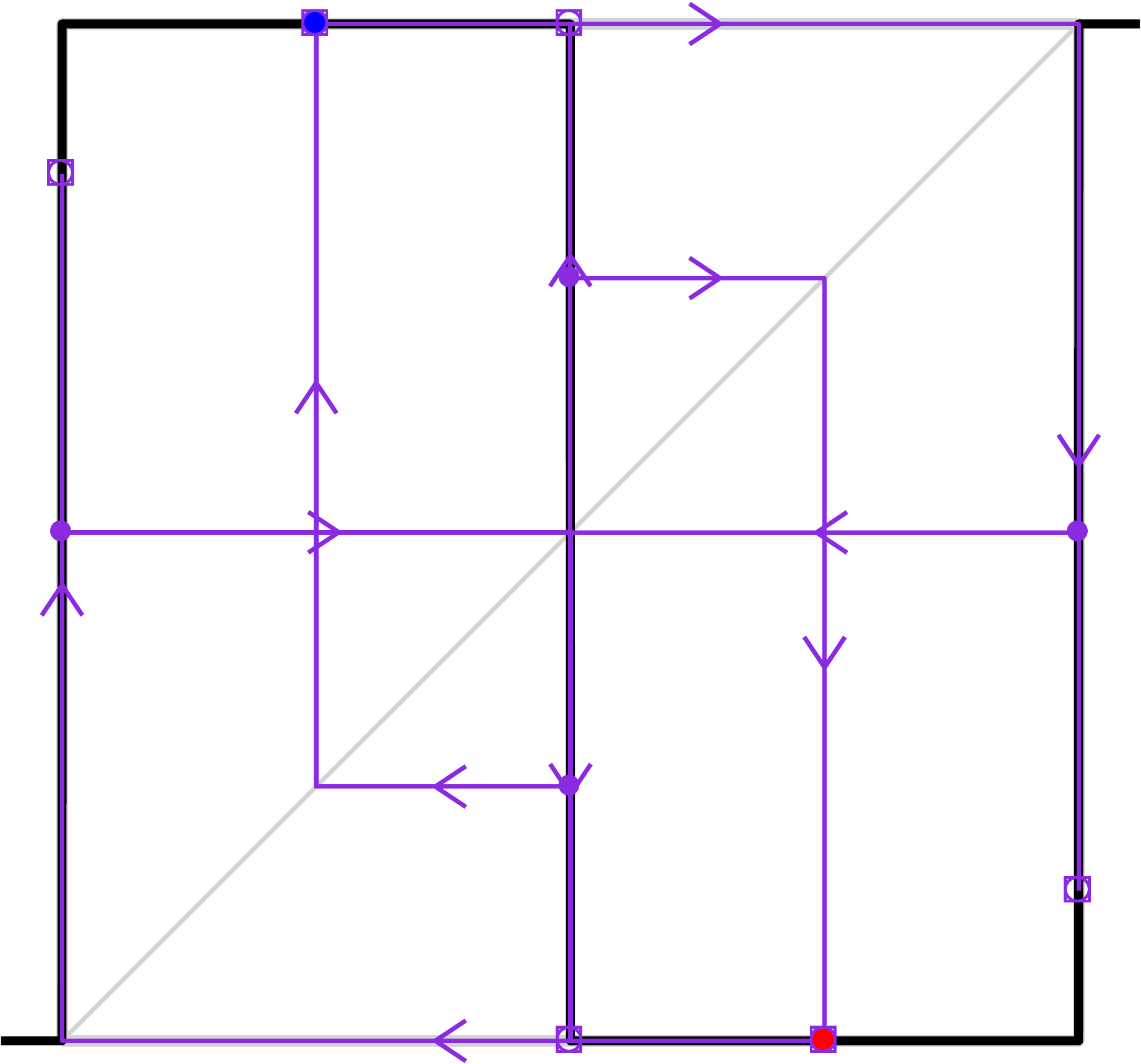}}
\caption { \label{f-aBobst}  
  The piecewise linear map and the the limit of the iterated
  pull-back  maps are shown for the strongly obstructed
  combinatorics $\(3,\,5,\,4,\,1,\,0,\,2\)$  of topological shape $+-+$~ and
  mapping pattern
  \hbox{$\du{x_4}\mapsto x_0\mapsto x_3 \mapsto \du{x_1} \mapsto x_5 \mapsto x_2
    \mapsto \du{x_4}$}~. }
\end{figure} 

\FloatBarrier
\phantom{menace}
\ifthenelse{\IsThereSpaceOnPage{.15\textheight}}{\clearpage}{\relax} 
\subsubsection*{Type C: One critical orbit lands in a cycle containing the  other.}

\begin{figure}[!htb]
  \centerline{
   \includegraphics[height=\figHt]{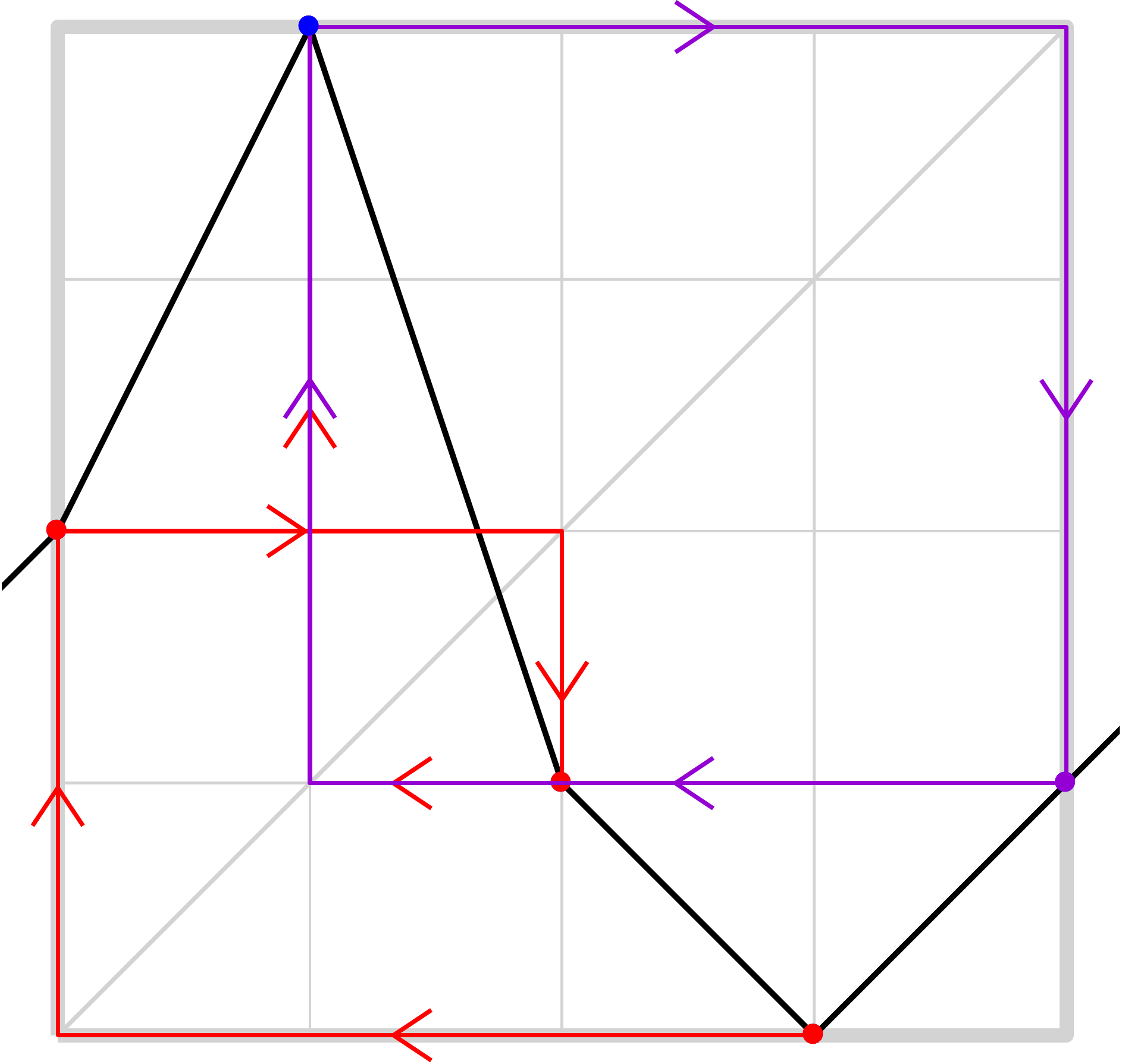}
   \includegraphics[height=\figHt]{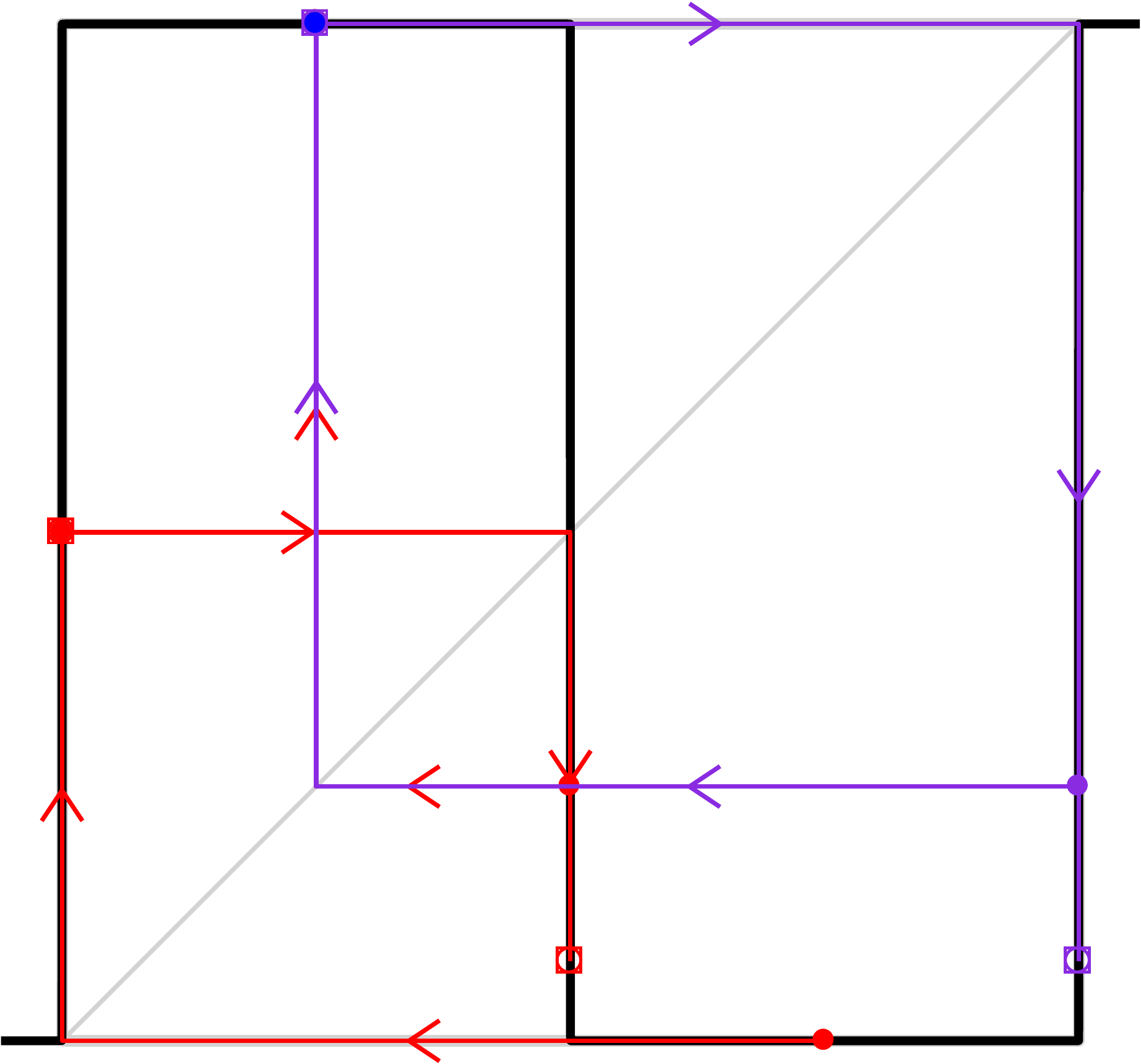} \quad
   \includegraphics[height=\figHt]{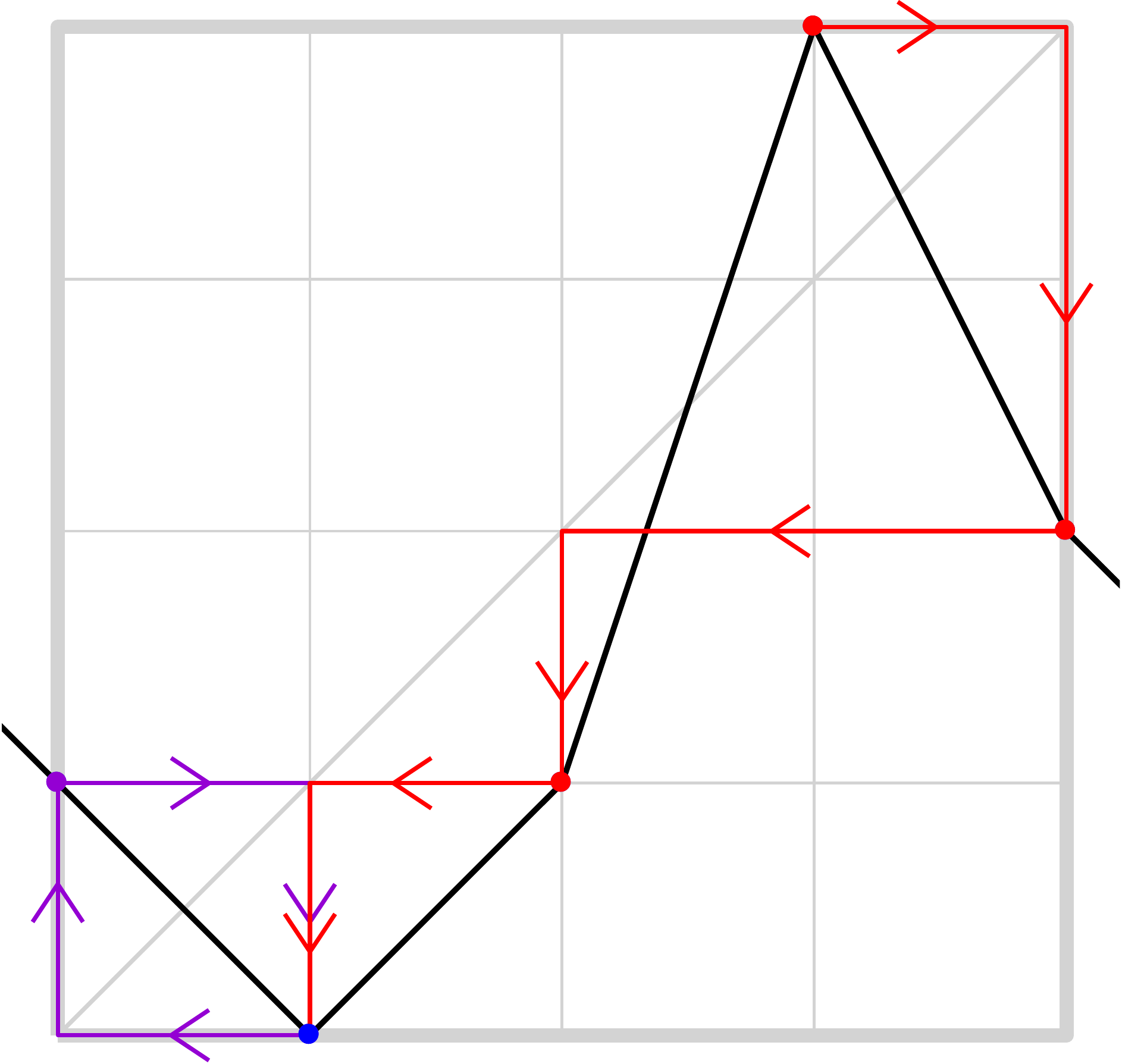}
   \includegraphics[height=\figHt]{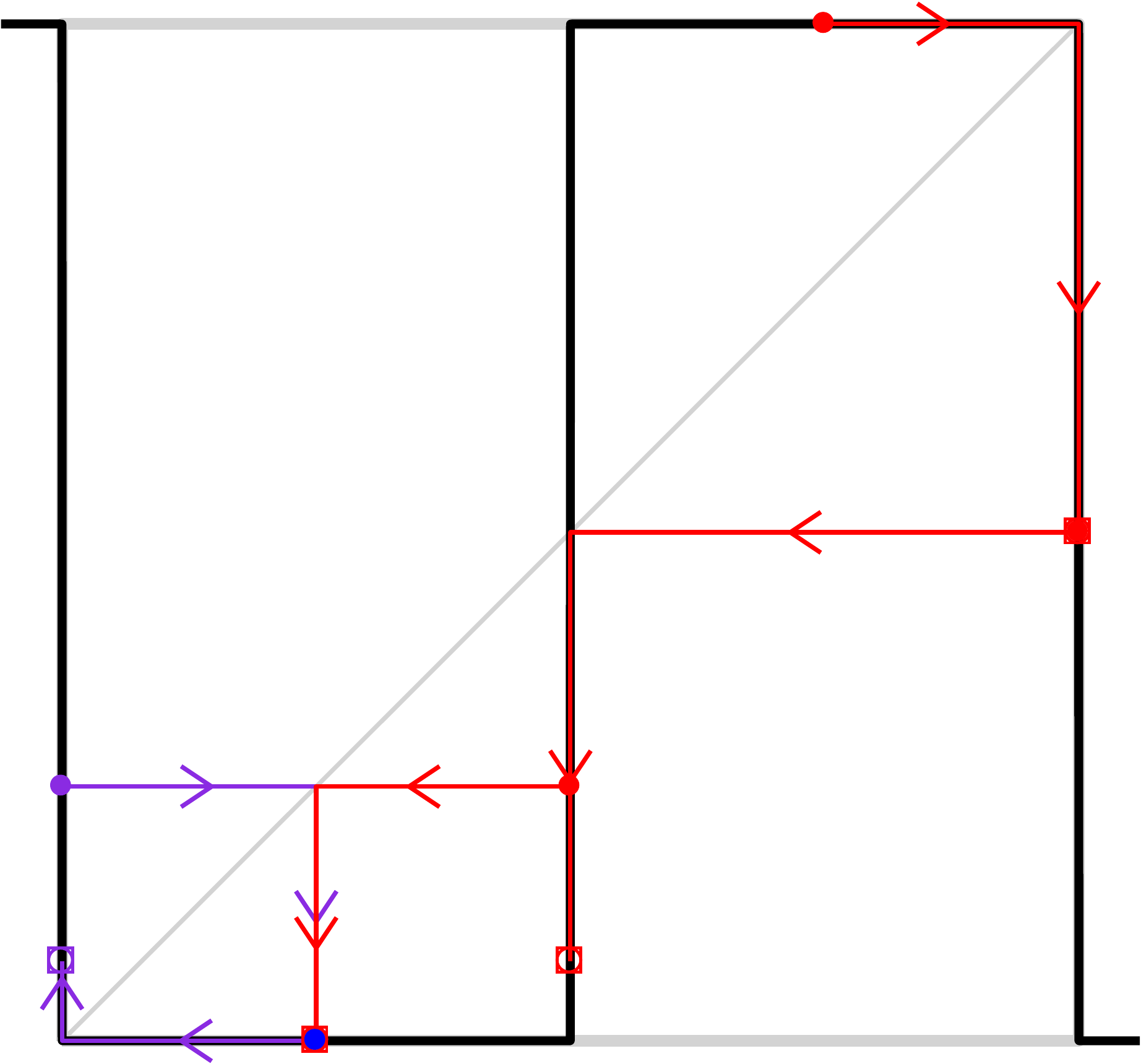}}
 \caption{\label{f-aC3}   On the left an illustration of a strongly
   obstructed
   capture case with combinatorics $\(2,\,4,\,1,\,0,\,1\)$,  topological shape
   $+-+$~,  and  mapping pattern
 \hbox{$\du{x_3}\mapsto x_0 \mapsto x_2 \mapsto \du{x_1}\leftrightarrow x_4$}. 
  On the right,  the ``left-right reflected''
  version of the  figure on the left,  of topological shape $-+-$  with
  very similar dynamics,
  and very similar parameters. Like the figure on the left, it is the illustration
of a strongly obstructed period two capture component.
The combinatorics is  $\(1,\,0,\,1,\,4,\,2\)$ and  the  mapping pattern is
$\du{x_3}\mapsto x_4 \mapsto x_2 \mapsto \du{x_1}\leftrightarrow x_0$.
}  
\end{figure}

\phantom{menace}
\ifthenelse{\IsThereSpaceOnPage{.3\textheight}}{\clearpage}{\relax} 
\subsubsection*{Type D: Disjoint periodic critical orbits.}
 Unlike the Type B case, strongly obstructed dynamics of Type~D are
common. 
\begin{figure}[!htb]
  \centerline{
    \includegraphics[height=\figHt]{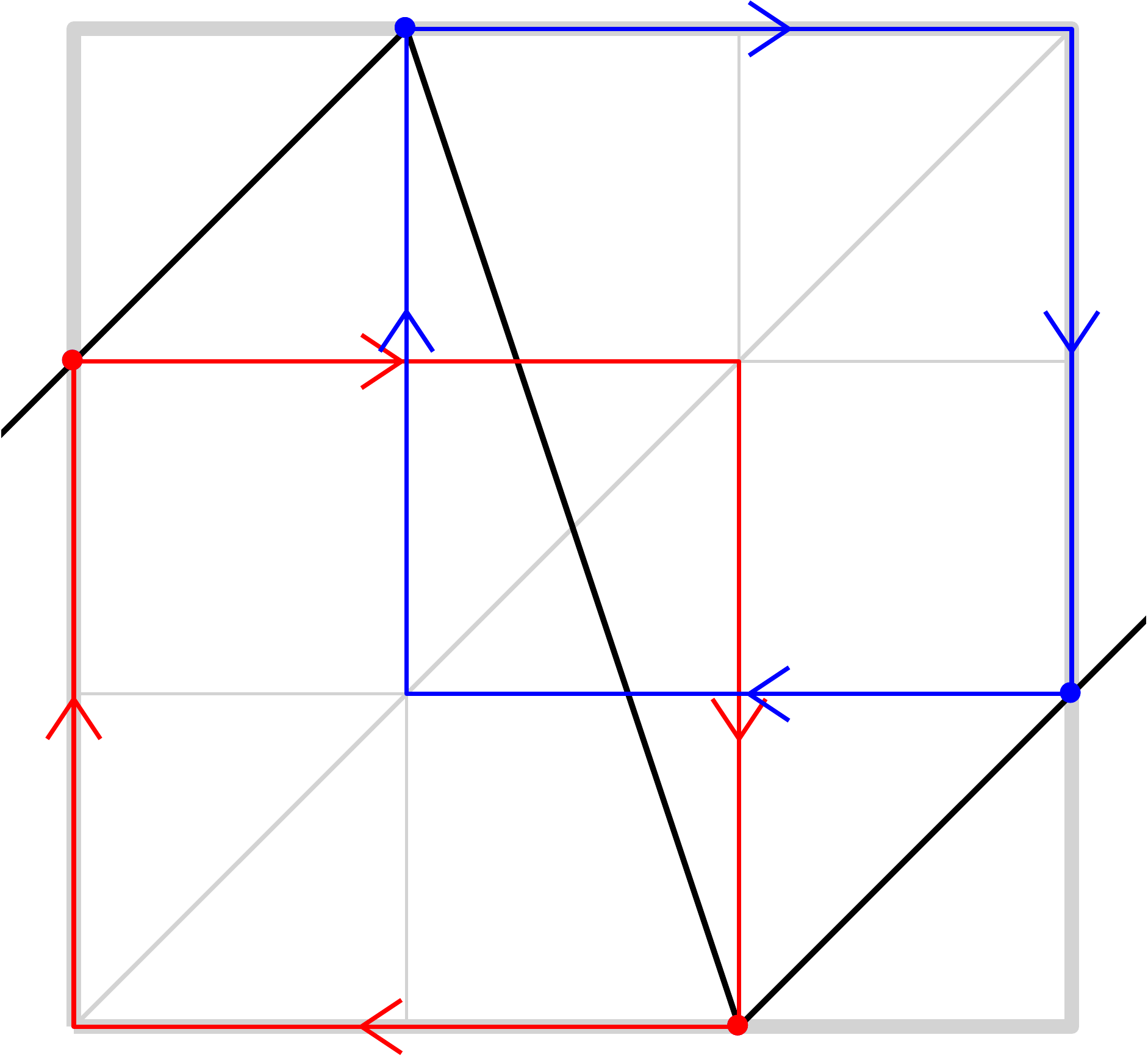}
    \includegraphics[height=\figHt]{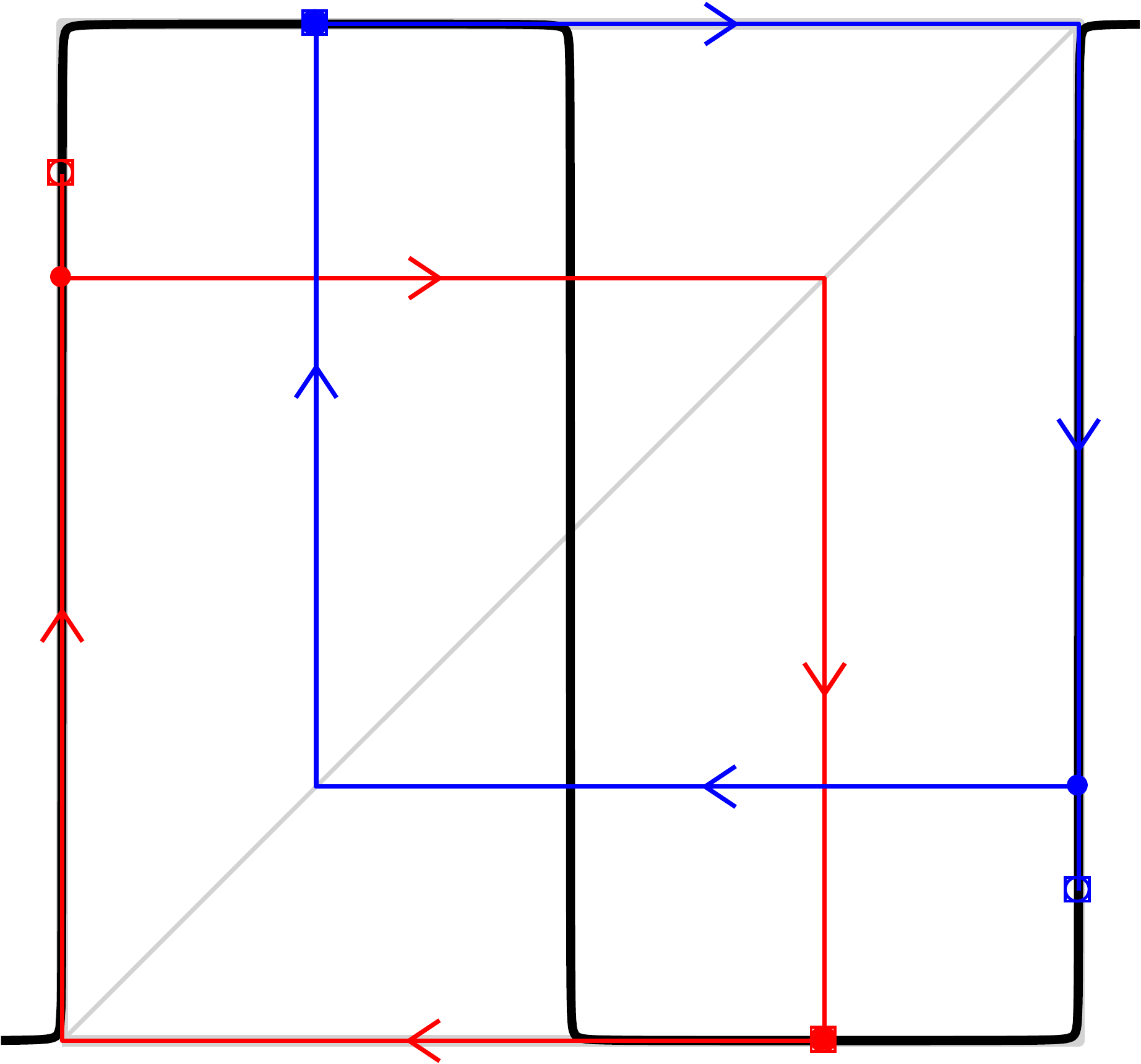}\quad
    \includegraphics[height=\figHt]{qr-1032-pl.pdf}
    \includegraphics[height=\figHt]{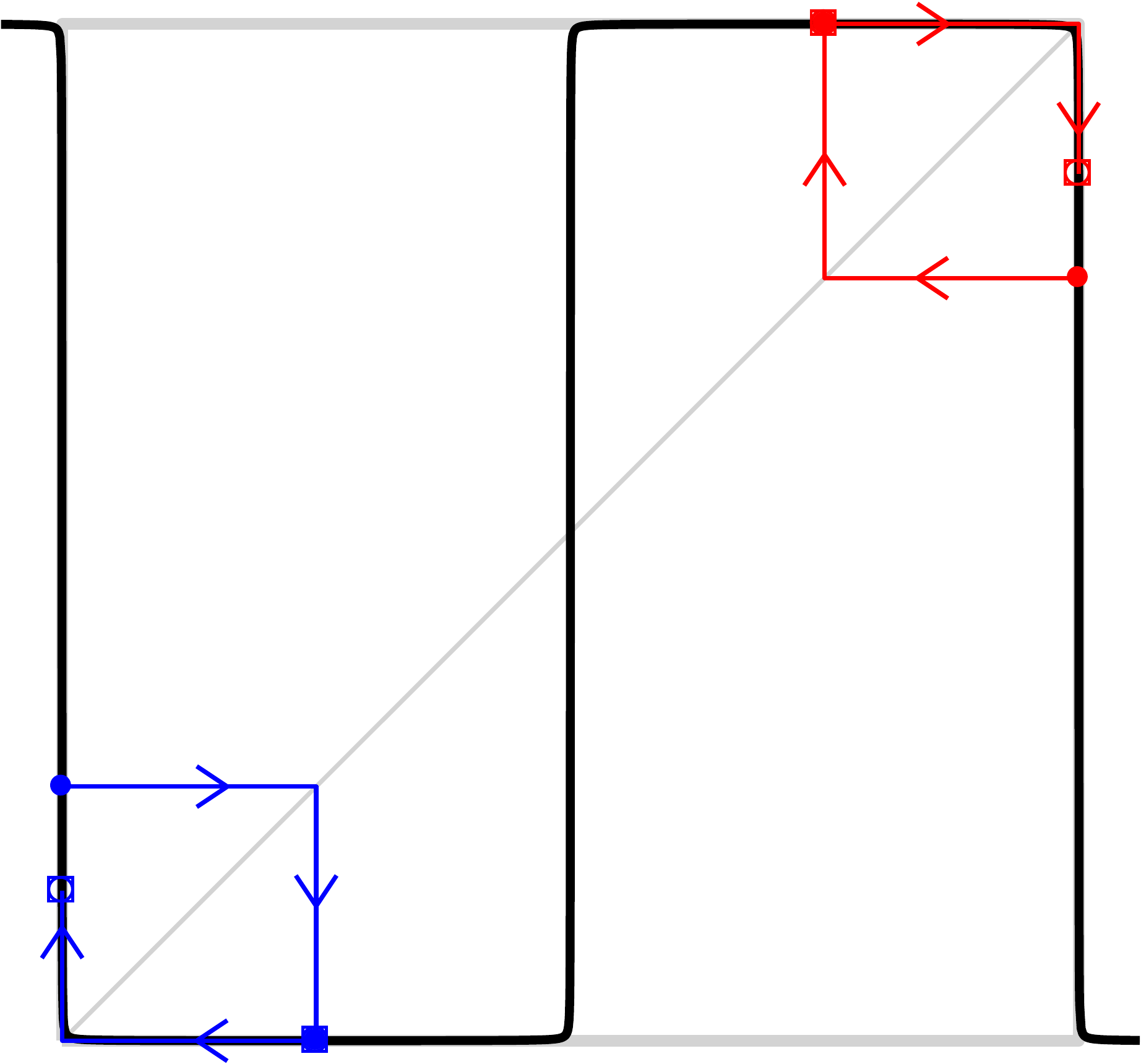}}
\caption{\label{f-aBn3a}
  On the left are illustrations of a piecewise linear map and limiting graph of
  a map of
  topological shape $+-+$~ with combinatorics $\(2,\,3,\,0,\,1\)$, mapping
  pattern $\du{x_2}\leftrightarrow x_0$, $\du{x_1} \leftrightarrow x_3$.
  On the right is the  ``left-right reflected version''
  which is of topological shape
  $-+-$~ with combinatorics $\(1,\,0,\,3,\,2\)$ and mapping pattern
  $\du{x_1}\leftrightarrow x_0$, $\du{x_2}\leftrightarrow x_3$.
  }
\end{figure}

\begin{figure}[!htb]
\centerline{
    \includegraphics[height=\figHt]{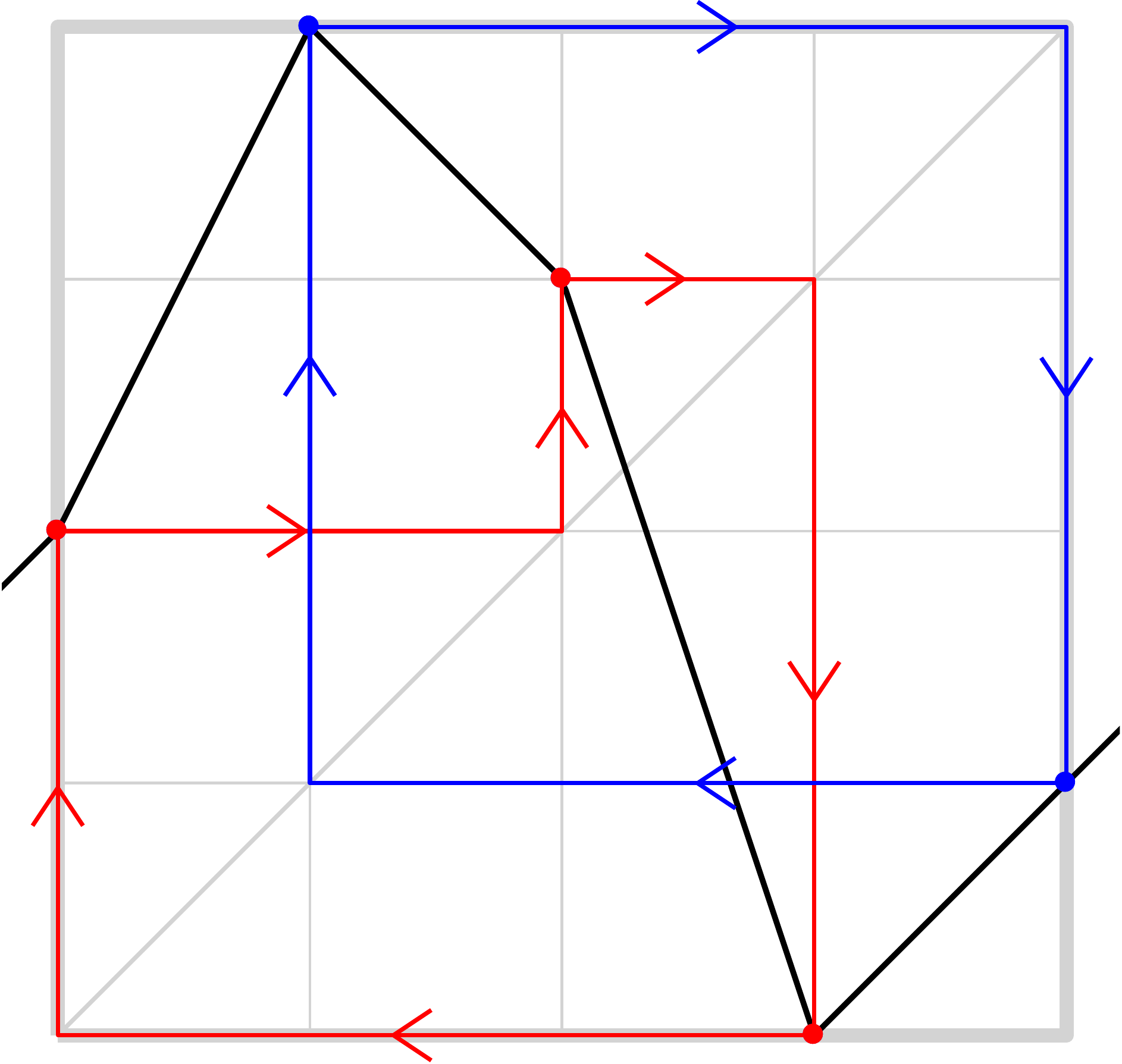}
    \includegraphics[height=\figHt]{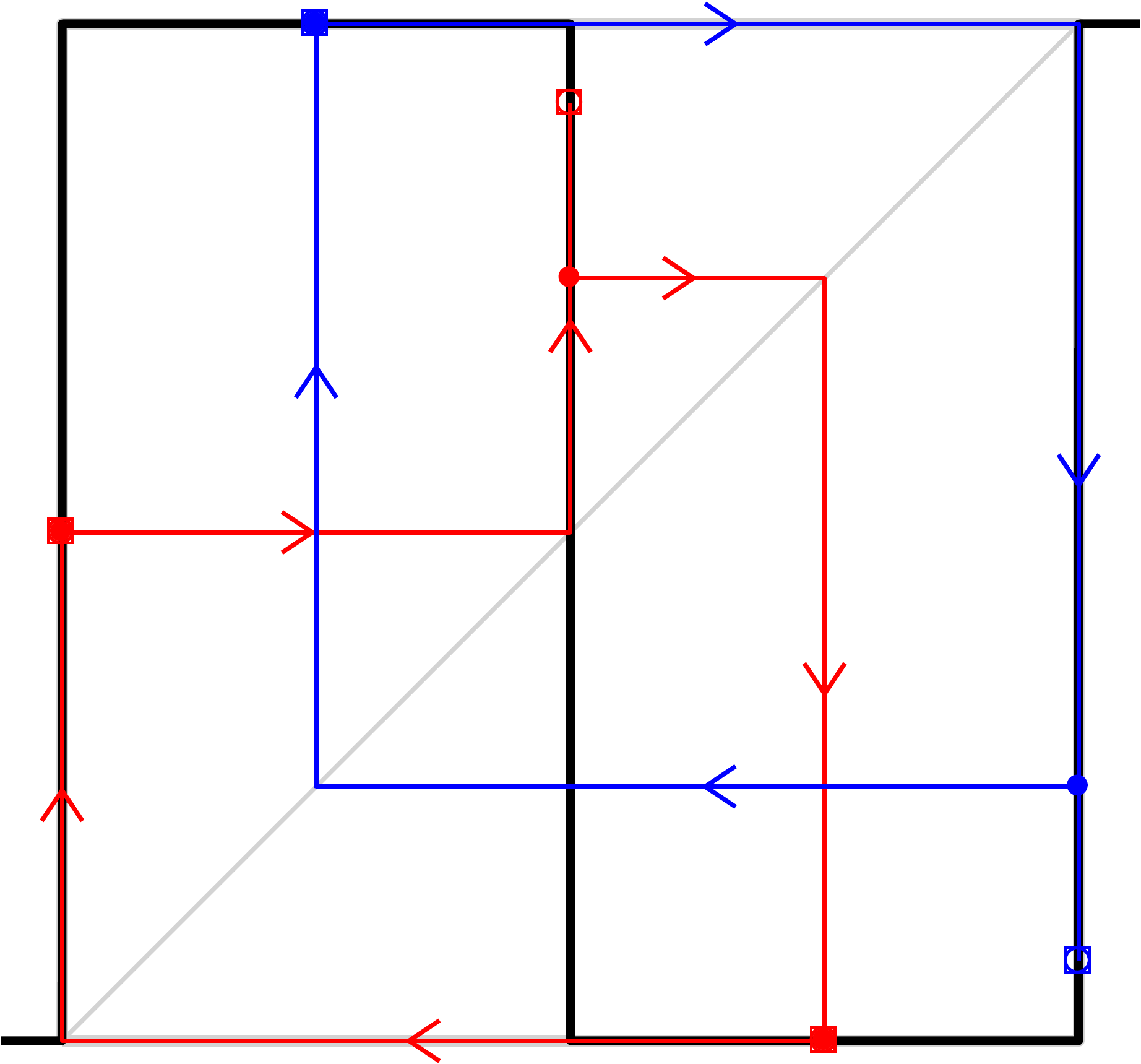} \quad
    \includegraphics[height=\figHt]{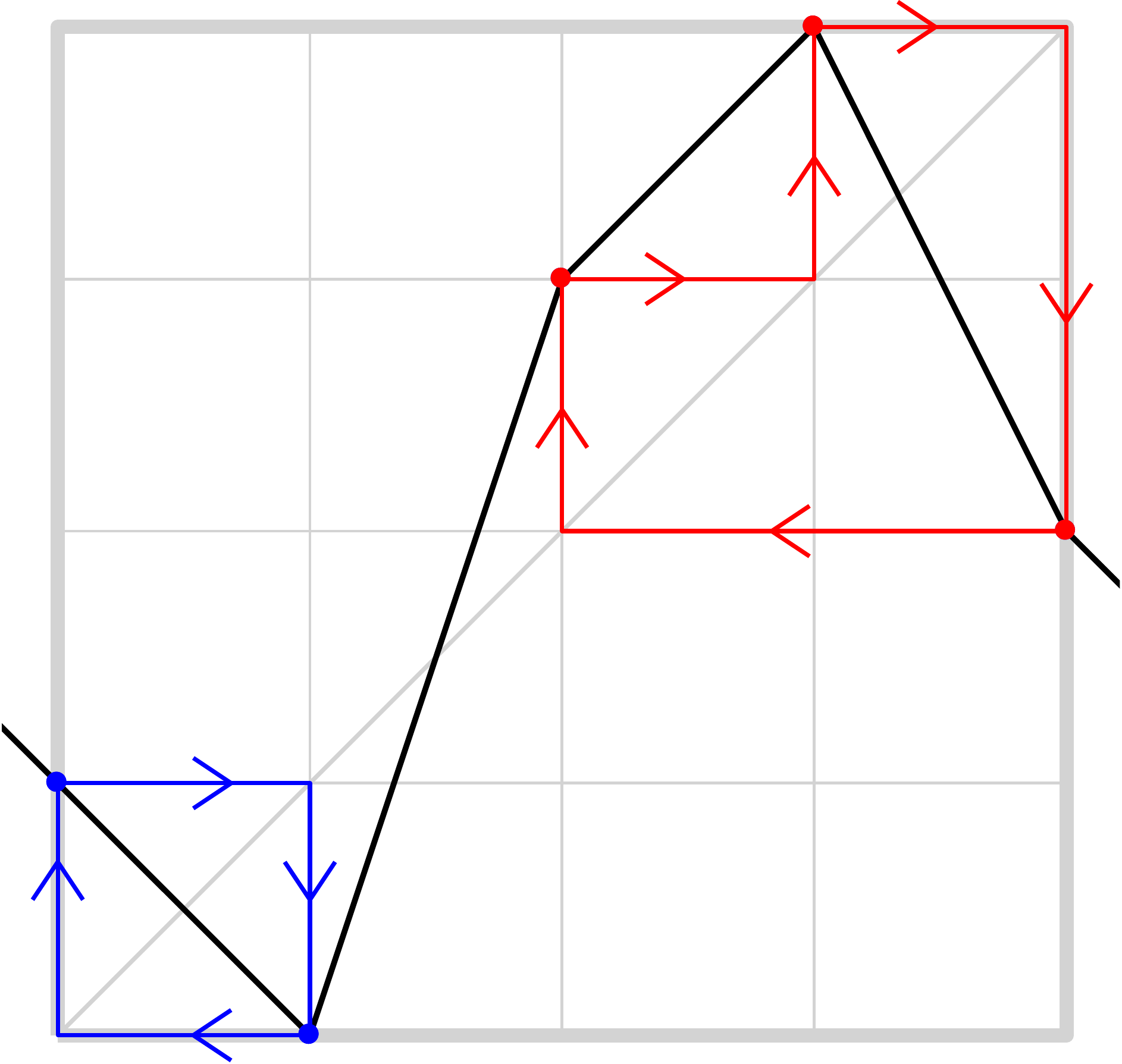}
    \includegraphics[height=\figHt]{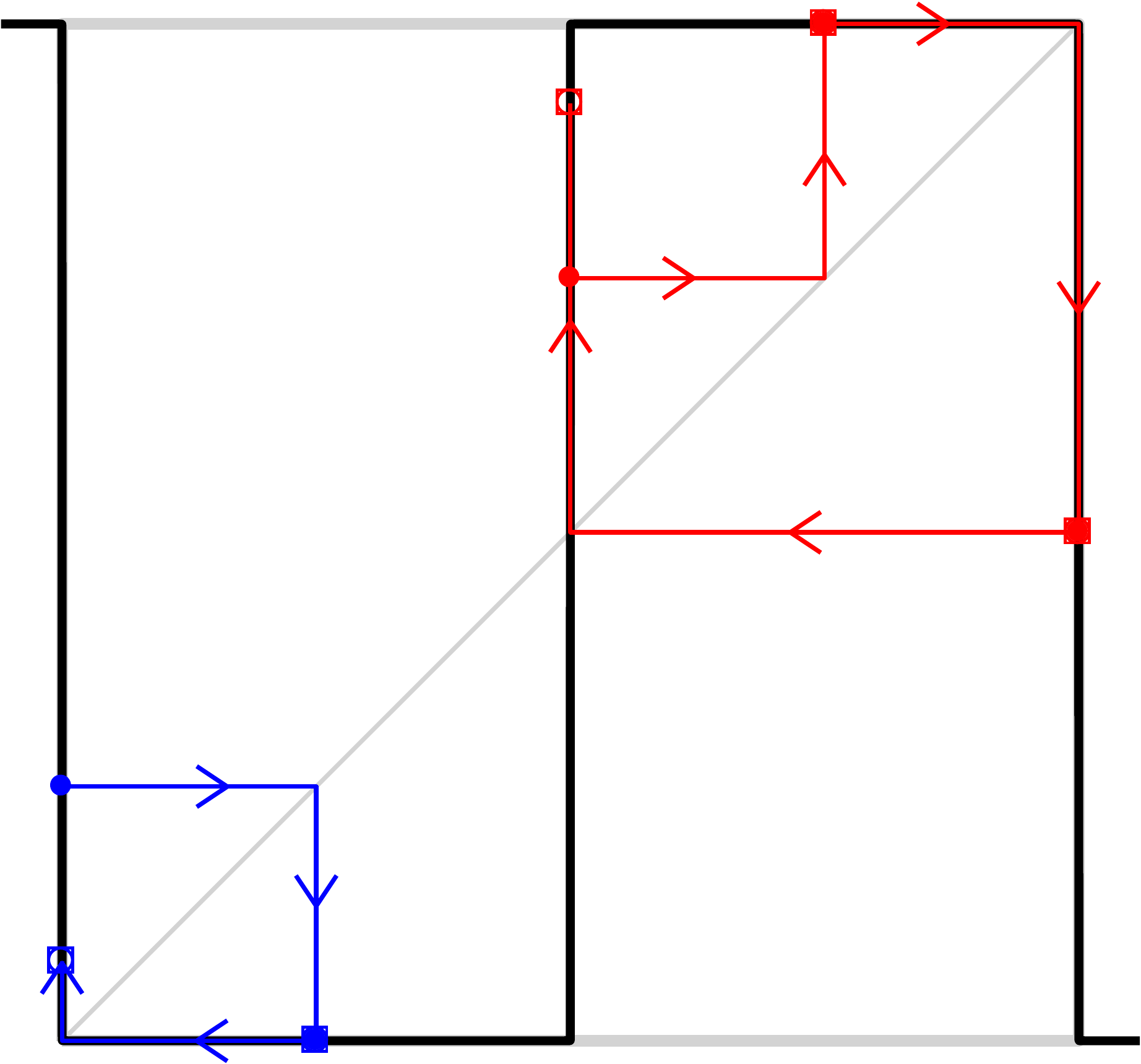}}   
  \caption{\label{f-aDn4a}   On the left are shown the  piecewise
    linear map and limiting graph for the 
    obstructed combinatorics
    $\(2,\,4,\,3,\,0,\,1\)$  of topological shape $+-+$~,
    with mapping pattern
    $\du{x_1}\leftrightarrow x_4$,
    $\du{x_3}\mapsto x_0 \mapsto x_2 \mapsto \du{x_3}$.
    On the right is the ``left-right reflected version''
    with 
    combinatorics $\(1,\,0,\,3,\,4,\,2\)$,  topological shape $-+-$~
    and mapping pattern  $\du{x_1}\leftrightarrow x_0$,
    $\du{x_3}\mapsto x_4 \mapsto x_2 \mapsto \du{x_3}$~.}
\end{figure}

\begin{figure}[!htb]
\centerline{
    \includegraphics[height=\figHt]{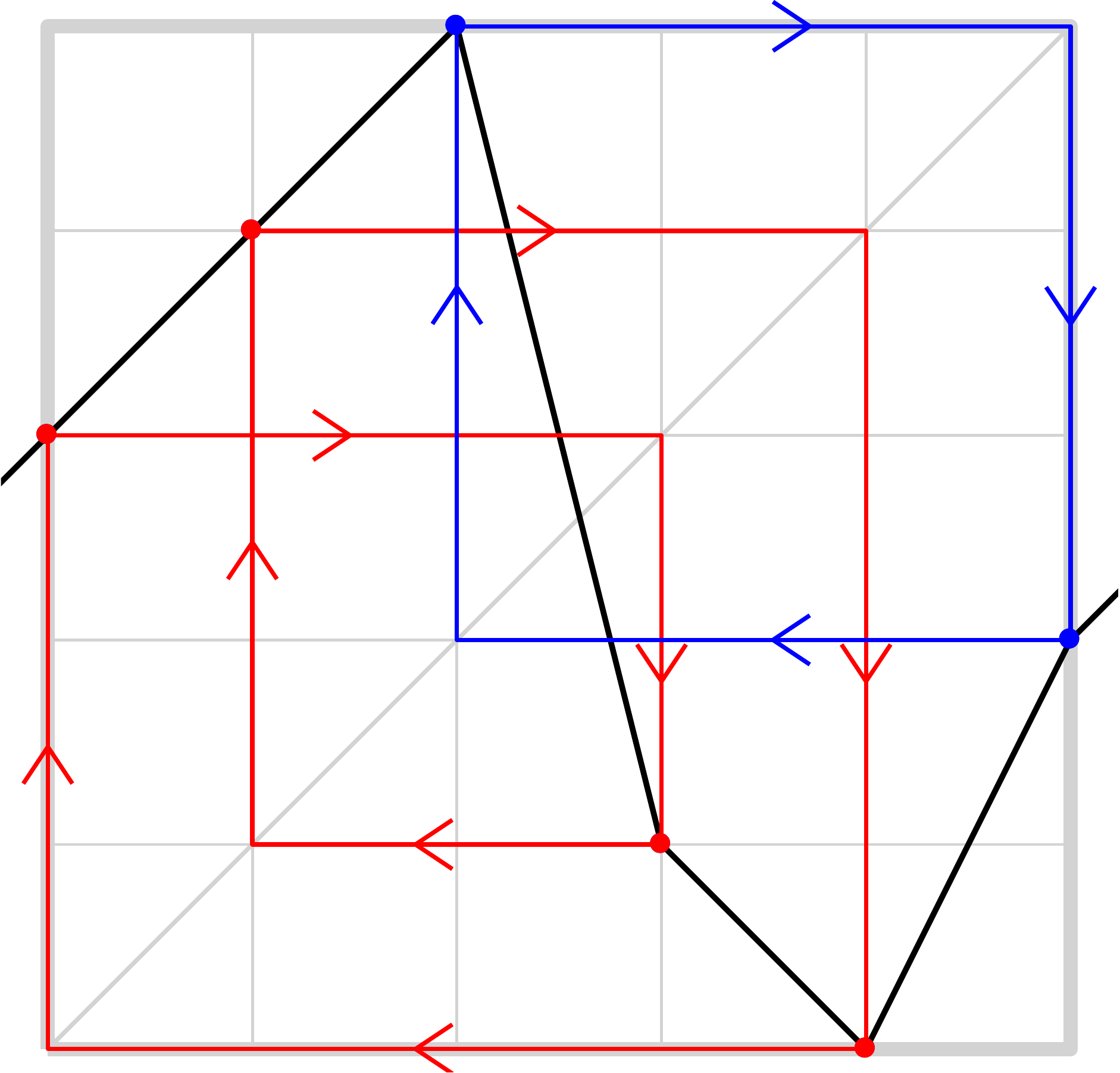}
    \includegraphics[height=\figHt]{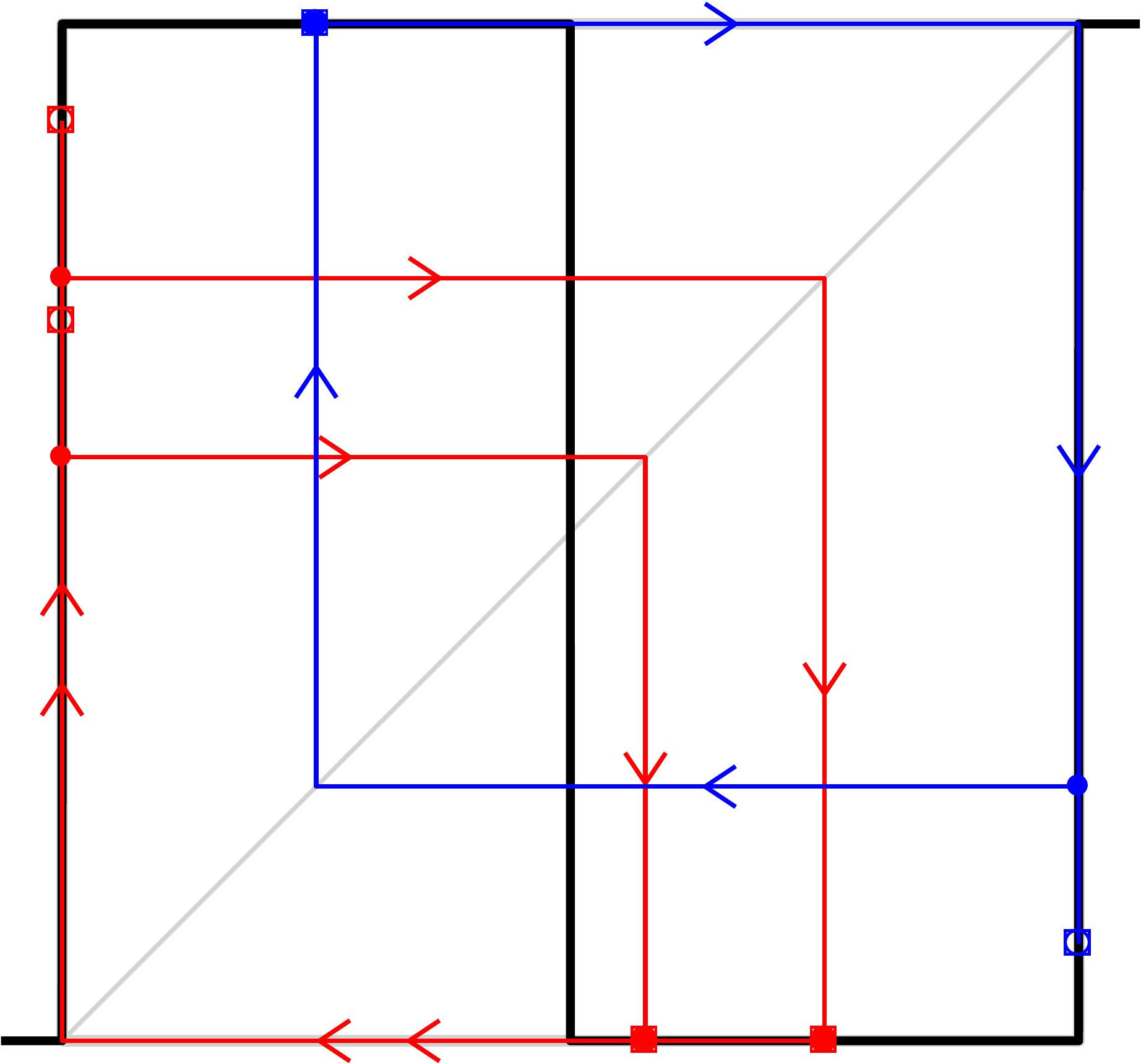} \quad
    \includegraphics[height=\figHt]{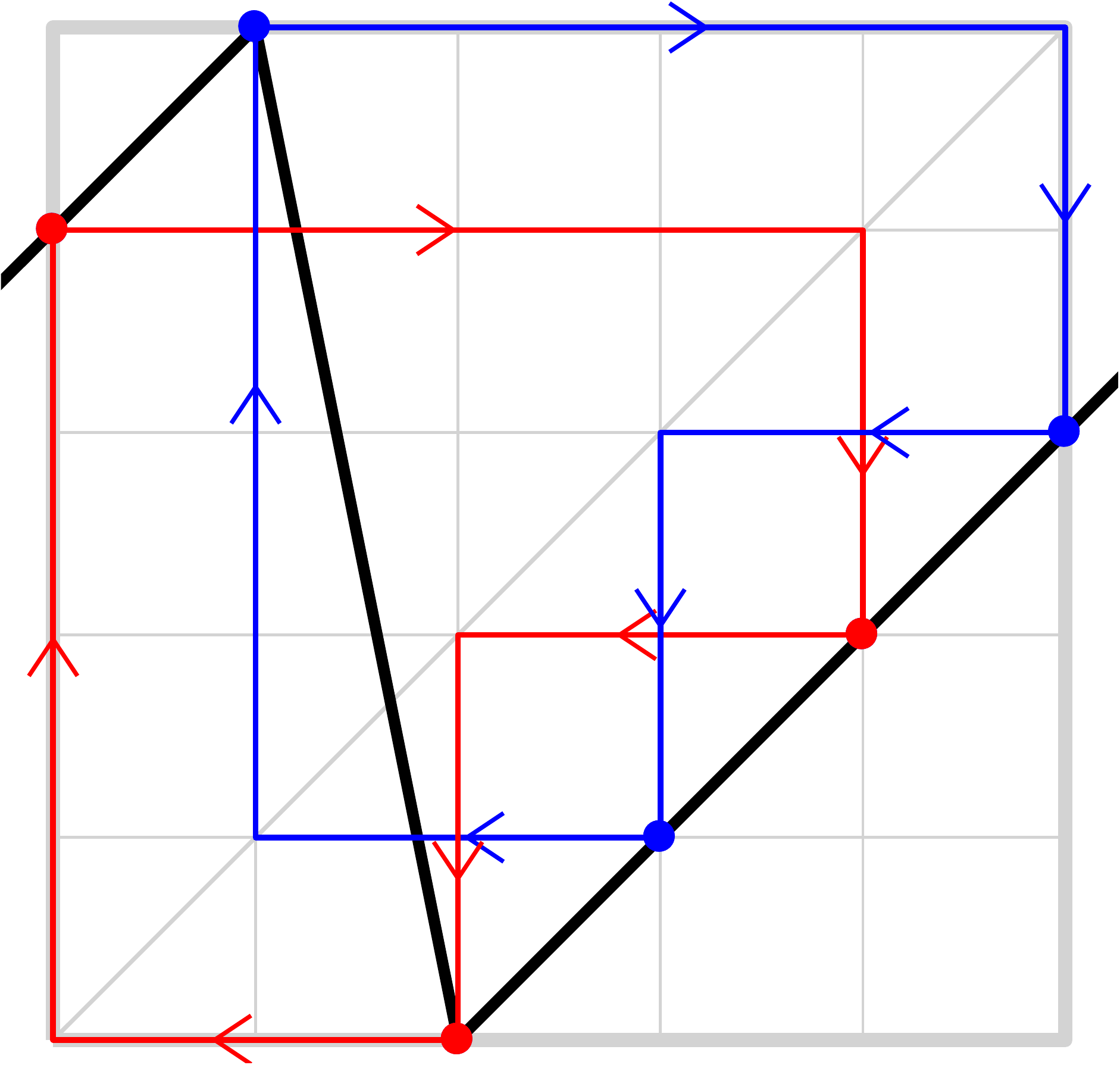}
    \includegraphics[height=\figHt]{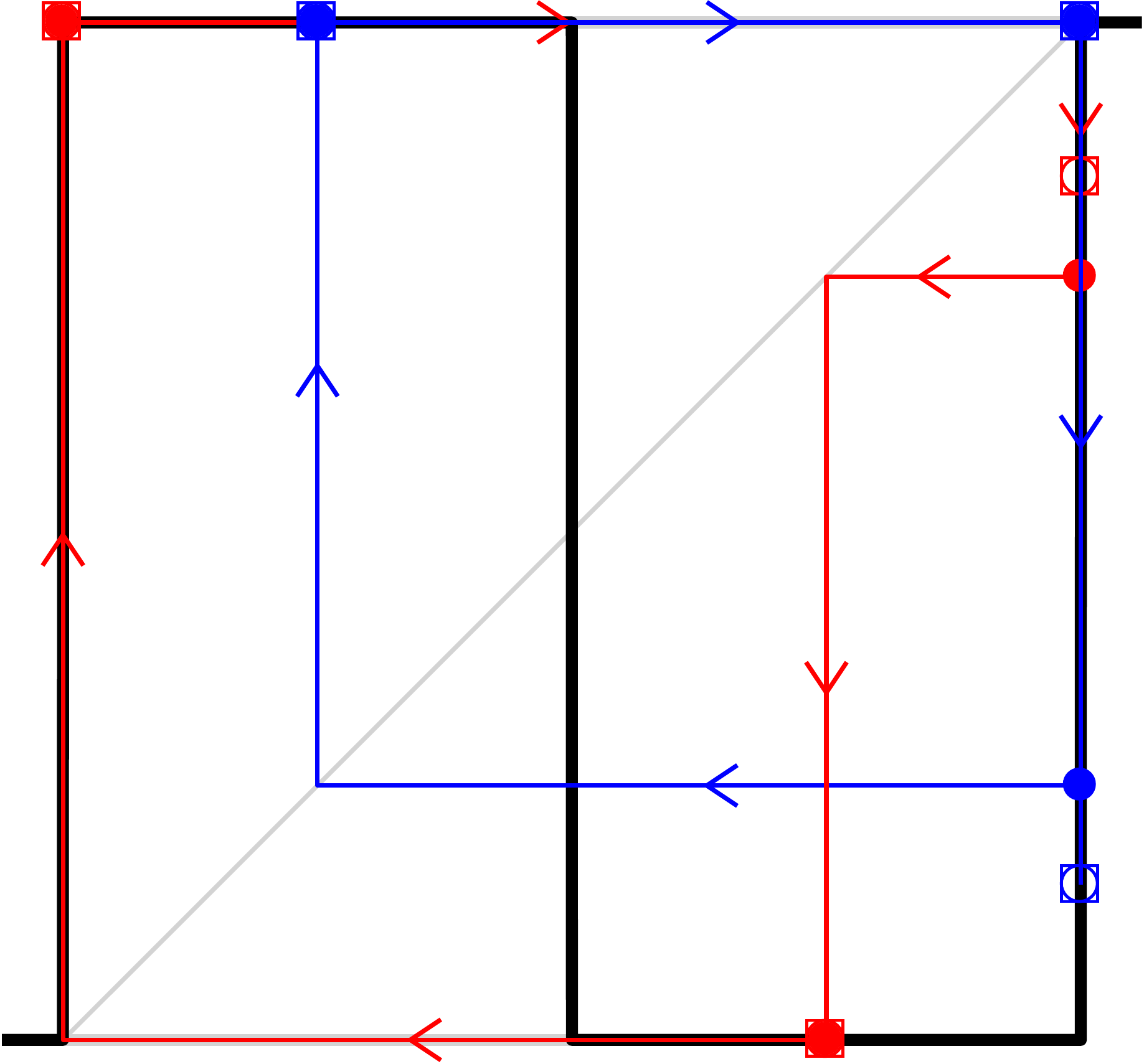} 
    }
 \caption{\label{f-aDn4b}   
   On the left is 
   shown obstructed combinatorics $\(3,\,4,\,5,\,1,\,0,\,2\)$,
   with mapping pattern $\du{x_4}\mapsto x_0\mapsto x_3 \mapsto x_1
   \mapsto\du{x_4}$  and  $\du{x_2}\leftrightarrow x_5$.  
   On the right is shown
   obstructed combinatorics $\(4,\,5,\,0,\,1,\,2,\,3\)$
   with mapping pattern 
   $\du{x_2}\mapsto x_0\mapsto x_4 \mapsto {x_2}$
   and $\du{x_1}\mapsto x_5 \mapsto x_3 \mapsto \du{x_1}$.
  Both have shape $+-+$.
 }
\end{figure}

\FloatBarrier
\phantom{menace}
\ifthenelse{\IsThereSpaceOnPage{.3\textheight}}{\clearpage}{\relax} 
\subsubsection*{Half-Hyperbolic: One critical orbit is periodic,  but
  the other is eventually repelling. }

\begin{figure}[!htb]
  \centerline{%
    \includegraphics[height=\figHt]{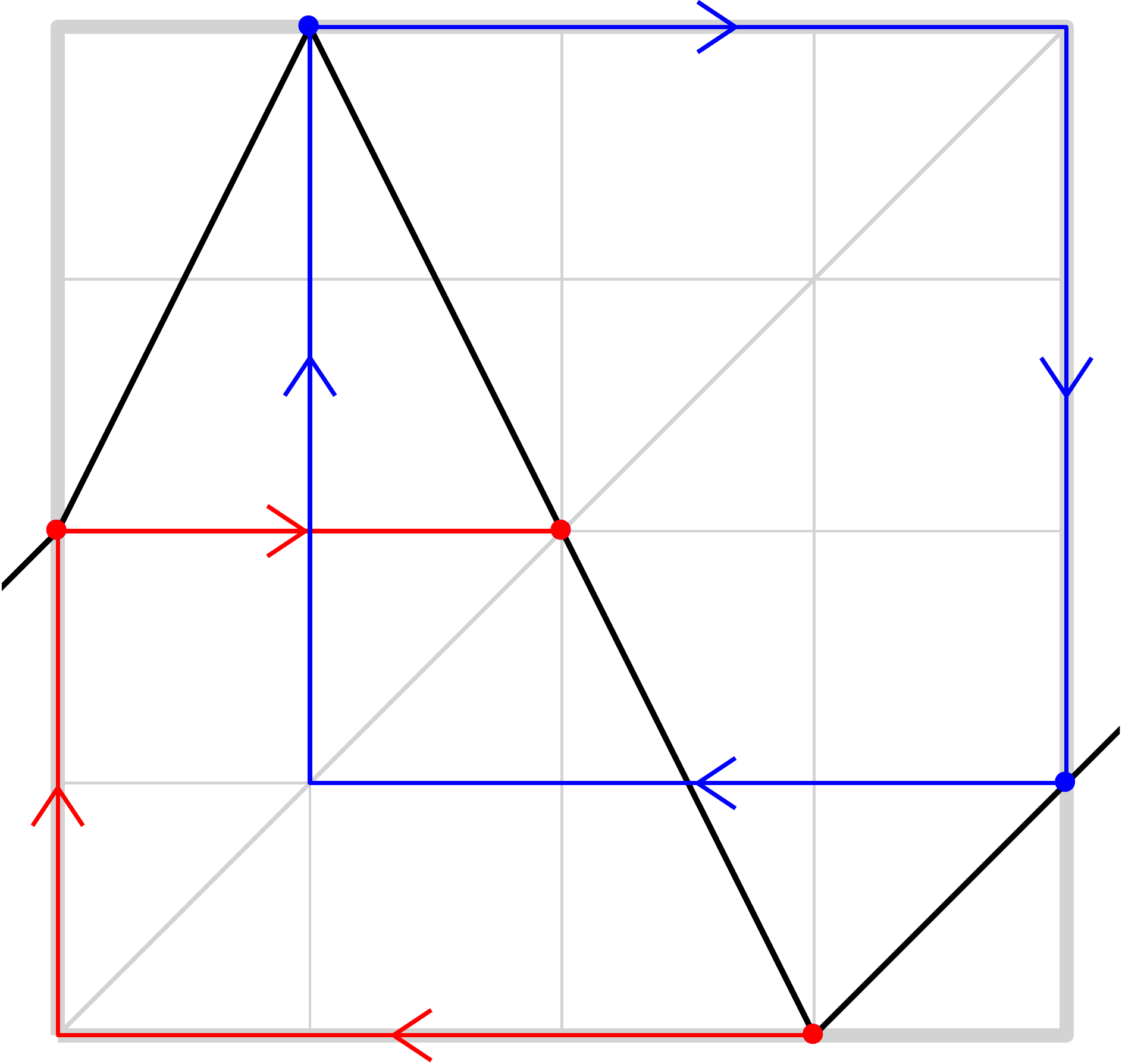}
    \includegraphics[height=\figHt]{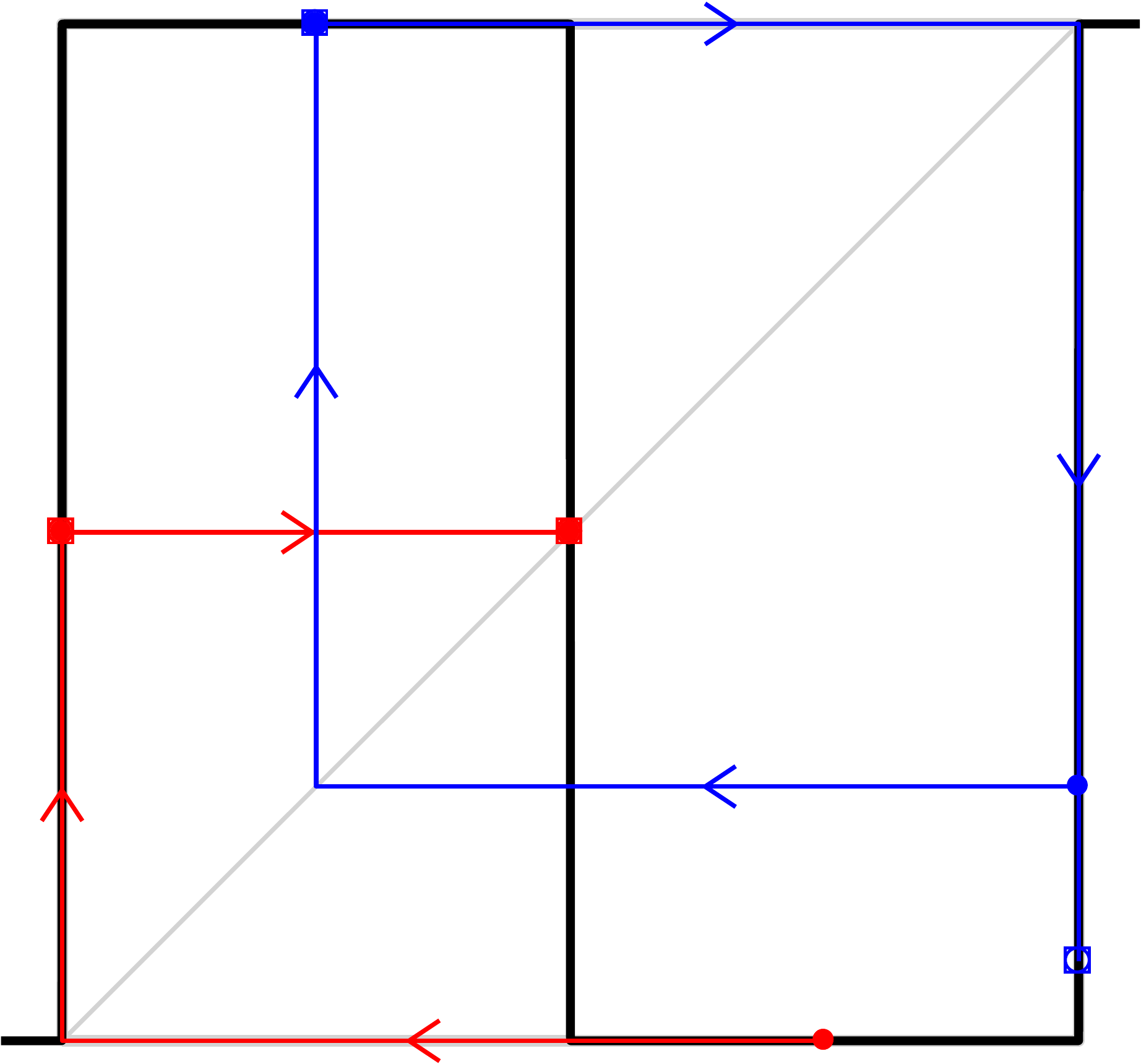} \hfil
    \includegraphics[height=\figHt]{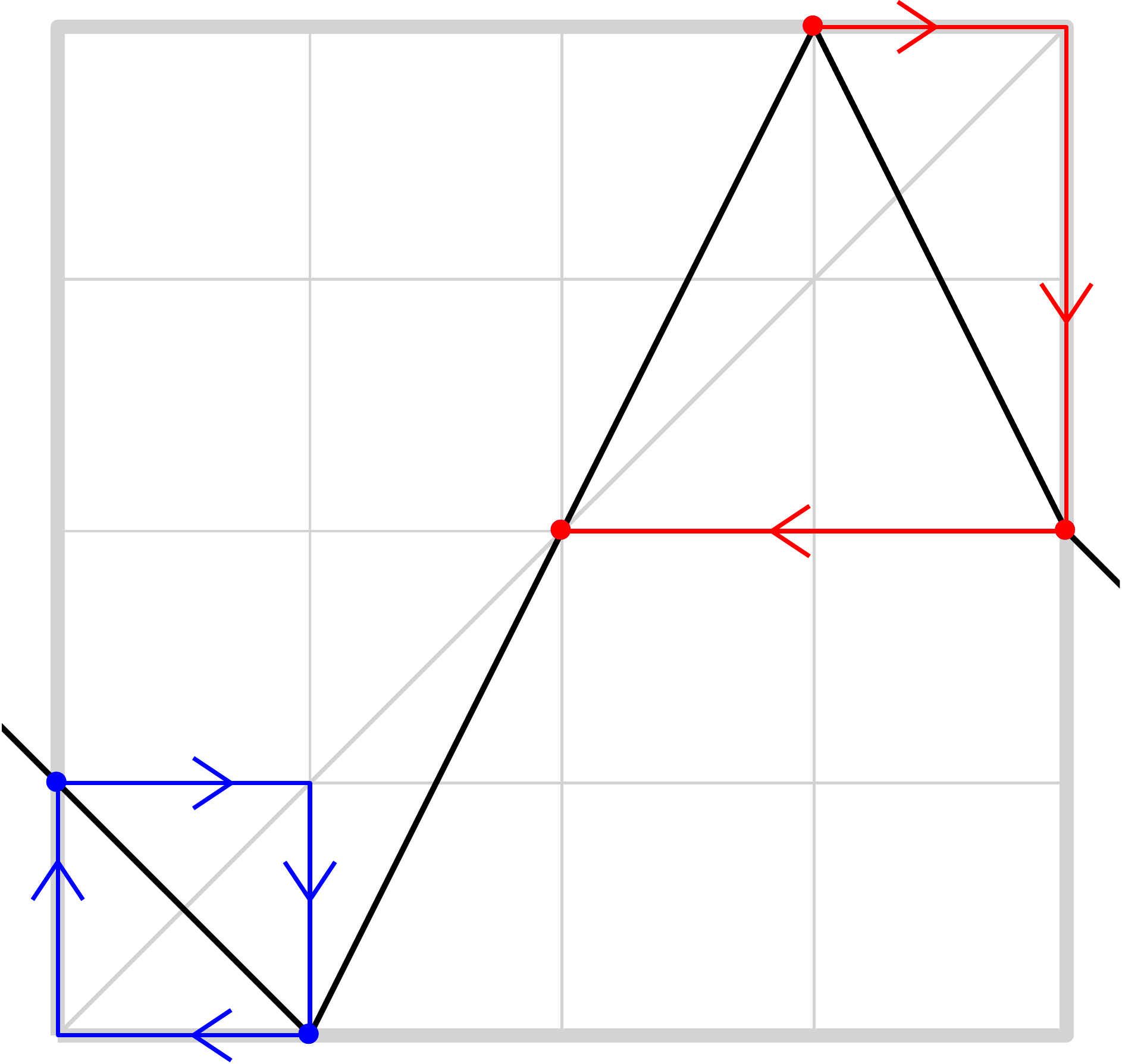}
    \includegraphics[height=\figHt]{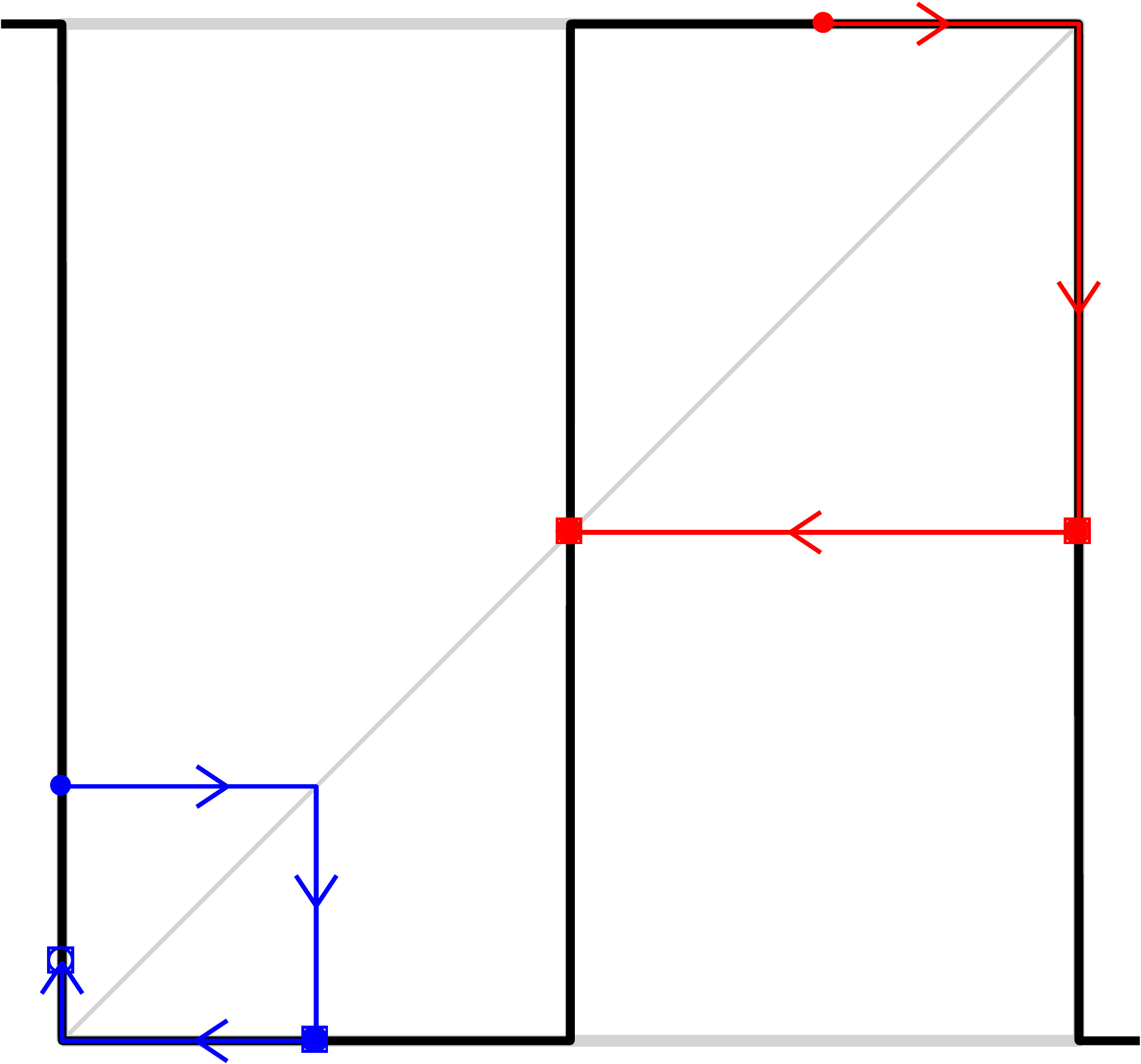}}
  \caption{\label{f-semihyp}   On the left, a strongly
    obstructed case  of topological shape $+-+$ with combinatorics
    $\(2,\,4,\,2,\,0,\,1\)$.
   The mapping pattern is $\du{x_1} \leftrightarrow x_4$; and
   {$\du{x_3}\mapsto x_0\mapsto x_2 \mapstoself$}. On the right,
    combinatorics $\(1,\,0,\,2,\,4,\,2\)$, 
  topological shape  $-+-$,   
   and mapping pattern $\du{x_1}\leftrightarrow x_0$ and $\du{x_3}\mapsto x_4
 \mapsto x_2\mapstoself$.  This is the
 ``left-right reflected'' version of the picture on the left.}
\end{figure}

\FloatBarrier
\ifthenelse{\IsThereSpaceOnPage{.3\textheight}}{\clearpage}{\relax} 
\subsubsection*{Totally Non-Hyperbolic: Every postcritical cycle is repelling.}

\begin{figure}[!htb]
  \centerline{%
    \includegraphics[height=\figHt]{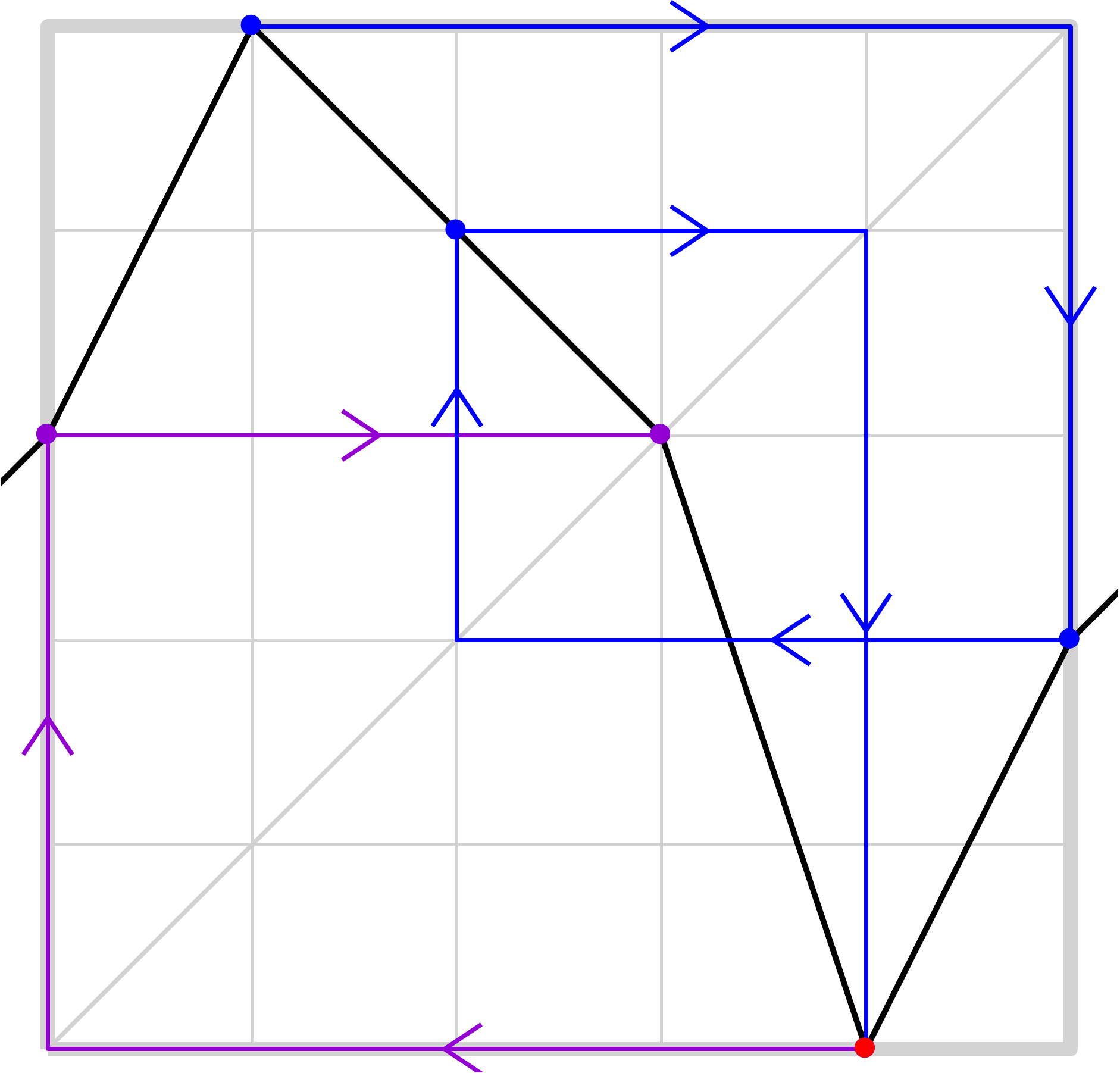} \qquad
    \includegraphics[height=\figHt]{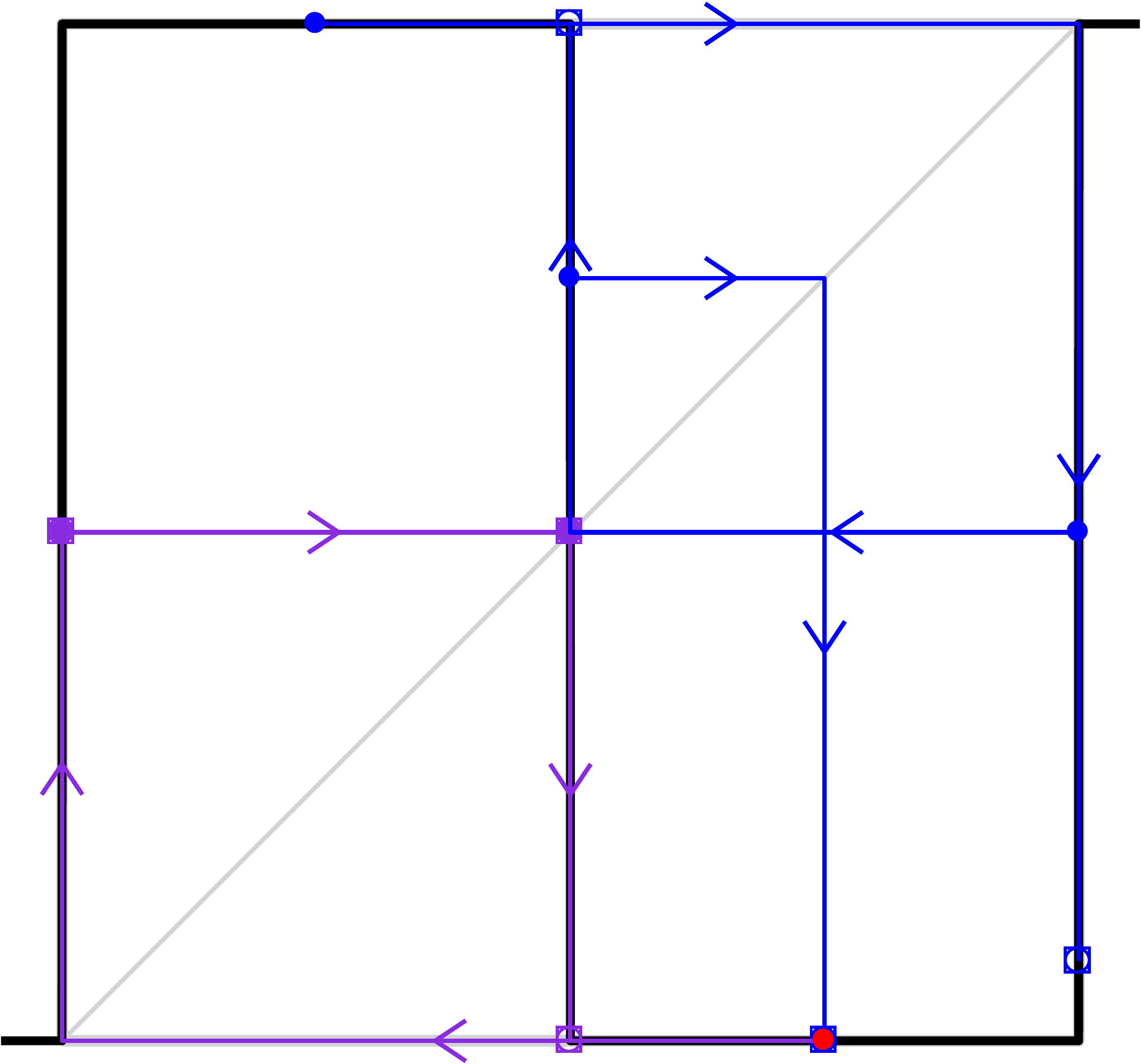}}
    \caption{\label{f-nonhyp-obst}  An illustration of a strongly
      obstructed map  of topological shape $+-+$~ with combinatorics
    $\(3,\,5,\,4,\,3,\,0,\,2\)$ and mapping pattern 
    $\du{x_1}\mapsto x_5 \mapsto x_2 \mapsto \du{x_4}\mapsto x_0\mapsto x_3
      \mapstoself$.}
 \end{figure}

 \begin{figure}[!htb]
   \centerline{%
     \includegraphics[height=\figHt]{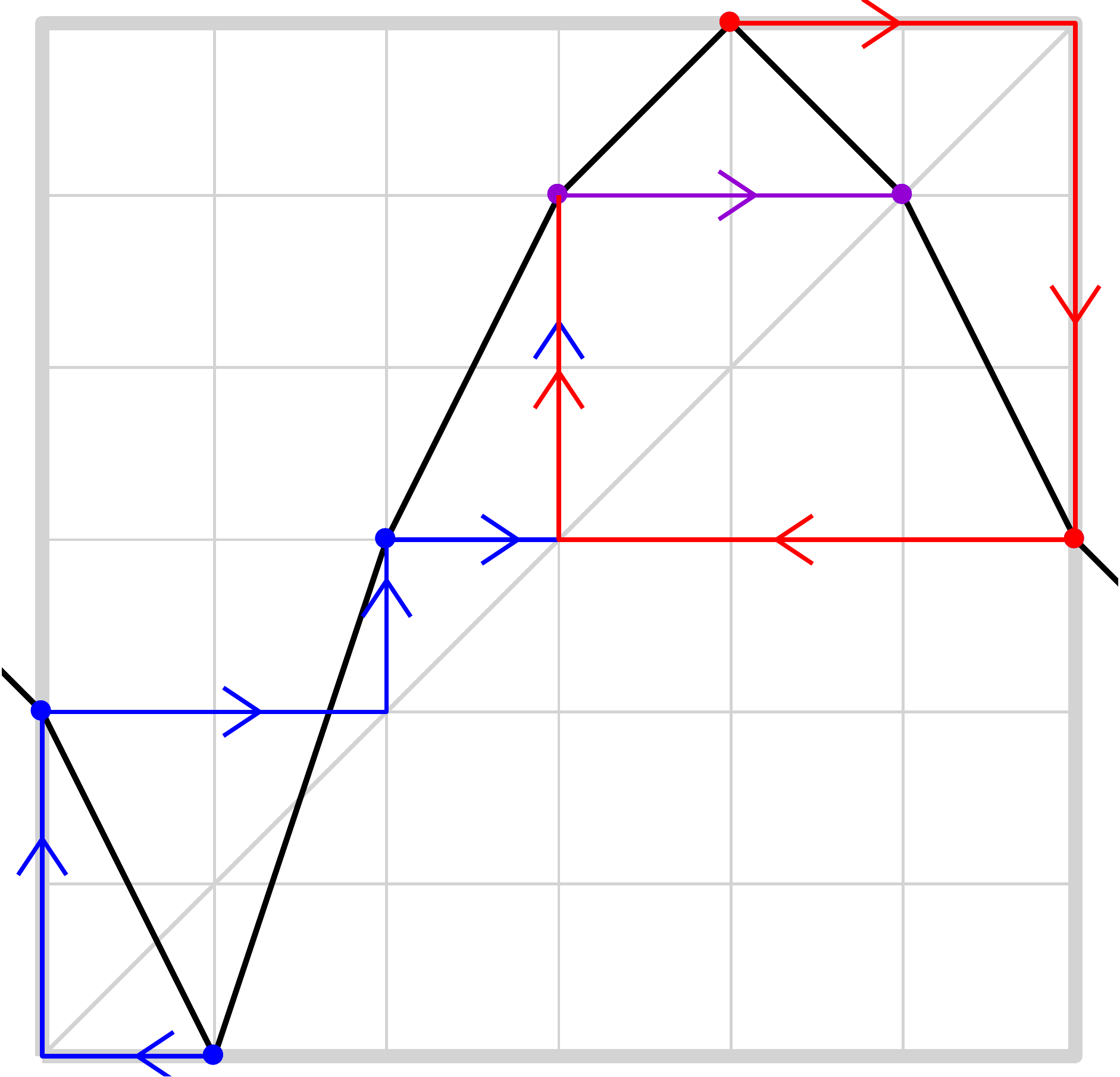}
     \includegraphics[height=\figHt]{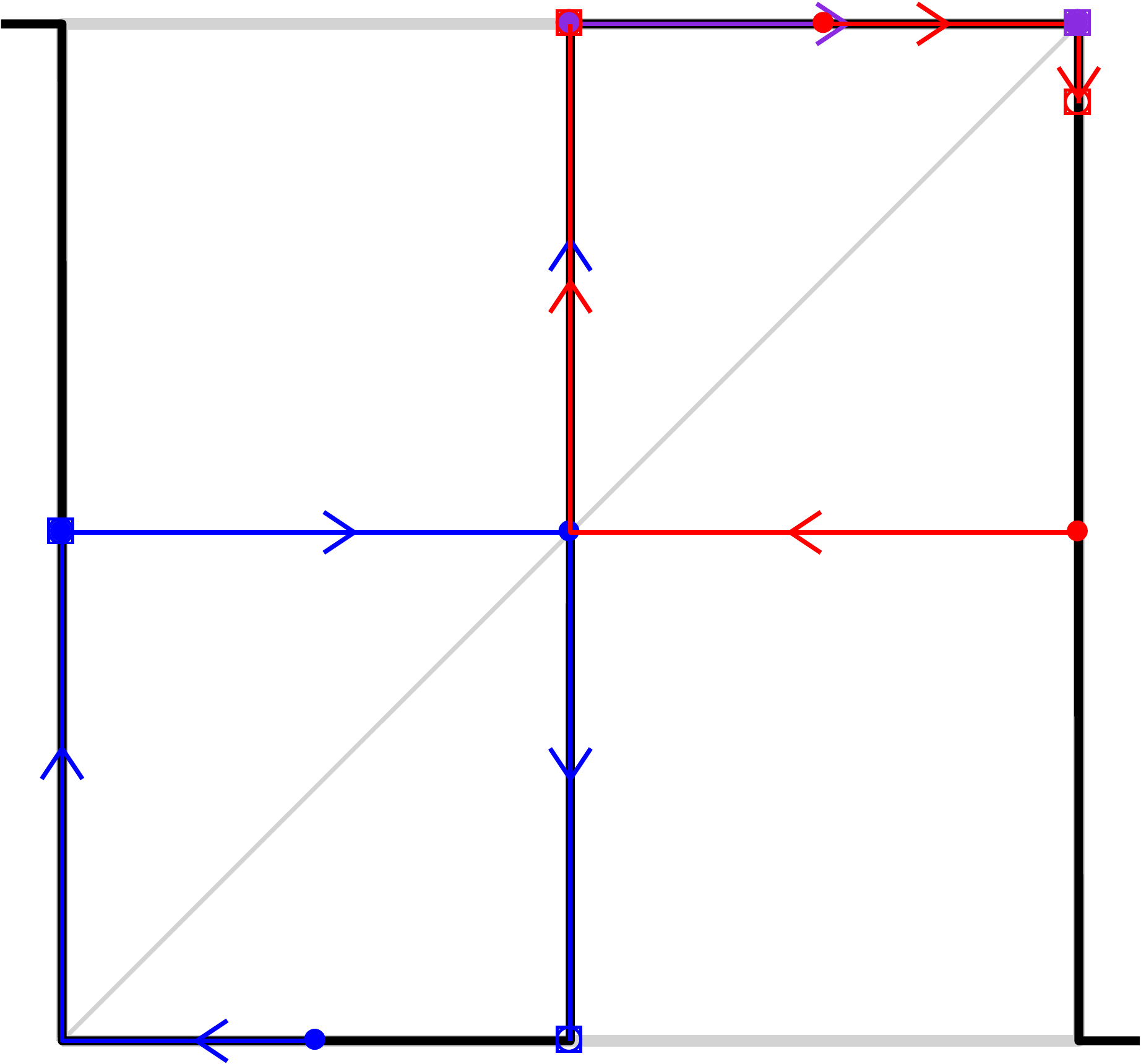} \hfil
     \includegraphics[height=\figHt]{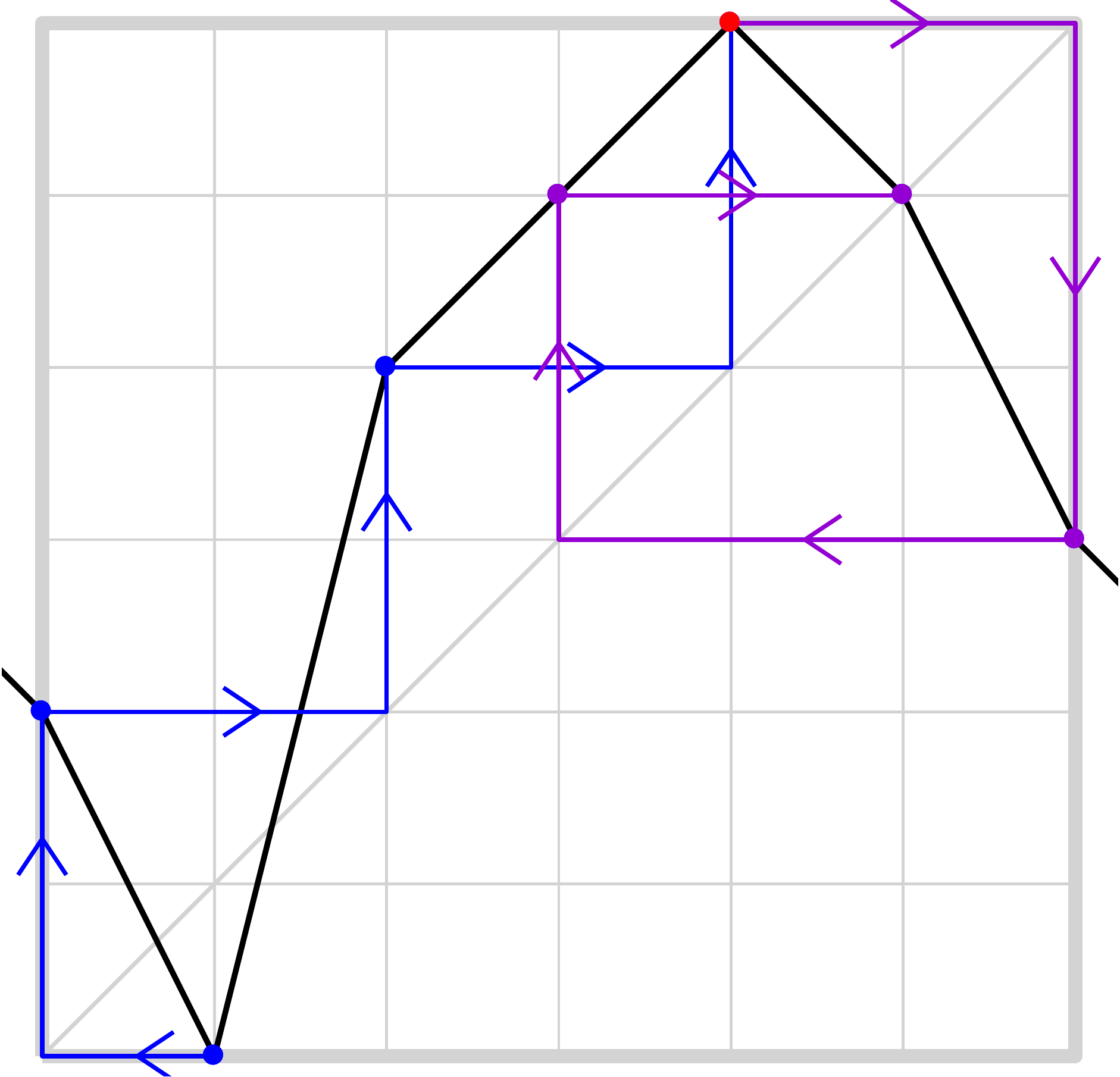}
     \includegraphics[height=\figHt]{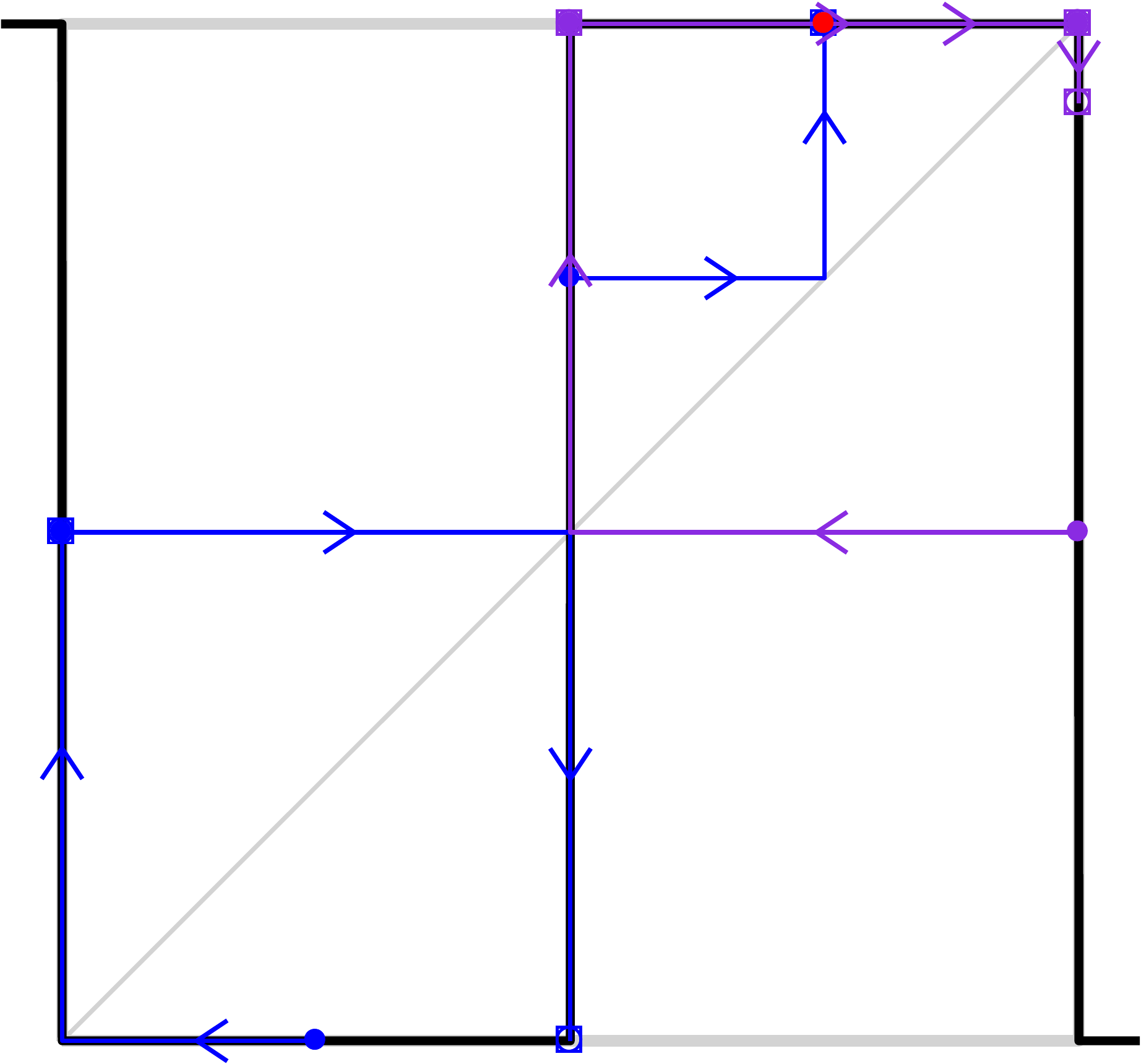}}
    \caption{\label{f-nonhyp-obst1}  
      Two cases of shape $-+-$.
      Left: combinatorics $\(2,\,0,\,3,\,5,\,6,\,5,\,3\)$ and mapping
      pattern  
    {$\du{x_1}\mapsto x_0 \mapsto x_2 \mapsto x_3 \mapsto x_5\mapstoself$}
    and {$\du{x_4}\mapsto x_6 \mapsto x_3\mapsto x_5 \mapstoself$}.
    Right:  combinatorics $\(2,\,0,\,4,\,5,\,6,\,5,\,3\)$ and mapping
    pattern   {$\du{x_1}\mapsto x_0 \mapsto x_2 \mapsto \du{x_4} \mapsto x_6
      \mapsto x_3 \mapsto x_5\mapstoself$}.
    }
 \end{figure}

\FloatBarrier
\section{Further Information about the Figures}\label{ap-b} 
After a preliminary remark, this appendix will consist of a brief \autoref{t-1}
which classifies figures according to their dynamic type and topological shape,
and the more extensive 
\autoref{t-2} which lists their combinatorics and parameters.\msk

\begin{rem}[Constraints on the parameters]\label{R-constr}
  Note that:
  \begin{eqnarray*}
     \textrm{Shape}~~ +-+~~ \textrm{or~ co-polynomial}&\Longrightarrow&\quad
    \Sigma\,\in\,[-1,\,0]~,\\
    \textrm{Shape}~~ -+-~~ \textrm{or~ polynomial}&\Longrightarrow&\quad               
     \Sigma\,\in\,[0,\,1]~.
  \end{eqnarray*}
  On the other hand,
  \begin{eqnarray*}
  \textrm{for the}~~ -+~~ \textrm{combinatorics,} &\Sigma& \in\; [-0.5,\, 0.5]\,\\
  \textrm{while for}~~ +-~~ \textrm{combinatorics,} &\Sigma& \in\;
  [-1,\,-0.5]\,\cup\,  [0.5,\,1]~.
  \end{eqnarray*}
  (Compare \autoref{F-canmod}.)
 Note that $\mu$ and $\Delta$  are invariant under orientation 
  reversal. However, $\kappa$ changes sign and~$\Sigma$ is replaced by
  $\pm 1-\Sigma~$.
  \end{rem}
\ssk

Recall that
all combinatorics of topological shape $-+-$ are obstructed;  
but that combinatorics of unimodal  shape are unobstructed, except
possibly in the totally non-hyperbolic case. (Compare 
\autoref{R-rel}, as well as \autoref{L2} and  \autoref{C-no-obs}.) \msk

\stepcounter{equation} 
\captionof{table}{\label{t-1}  A list of figure numbers
  classified according to their dynamic  type and topological shape.
  It includes figures
  which illustrate minimal and non-polynomial combinatorics,      
  We provide at least one example for each combination 
 which can actually occur, with
  the possible exception of the totally non 
      hyperbolic unimodal  case where we conjecture that there are no 
      examples. Note that Figures~\ref{F-exc}, \ref{f-aBn2}L, and
    \ref{f-nonhyp}L  are the only examples
    with Euclidean orbifold; and that
    \autoref{F-exc} is the only example where
    the rational function exists but the Thurston algorithm
    usually doesn't converge (compare  \autoref{f-pullback-134310}).}
\begin{center}
\small
\newcommand{\rb}[1]{\raisebox{-1.5ex}{#1}}
\newcommand{\trb}[1]{\rb{\textrm{#1}}}
\begin{tabular}{|c|c|c|c|c|c|}
   \hline
   \multicolumn{6}{|c|}{\sf Unobstructed}\Tstrut \Bstrut\\\hline \hline
 &\rb{\sf B}&\rb{\sf C}&\rb{\sf D}&{\sf Half}&{\sf Totally Non-}\Tstrut\\[-1ex]
 &      &      &      &{\sf Hyperbolic}&{\sf Hyperbolic} \Bstrut\\ \hline
\rb{co-poly}&\ref{f-aBn2}LR, \ref{f-aBn3}LR, & \trb{none} & \trb{none} &
         \trb{none} & \rb{\ref{f-nonhyp}LR, \ref{f-nonhyp1}L}\Tstrut\\[-1ex]
        & \ref{f-aBn4}LR, \ref{f-aBn4a}L & & & & \Bstrut\\ \hline
 $+-+$   & \ref{f-aBn4a}R, \ref{f-aBn5a} & \ref{f-aC2}R&
        \ref{f1},\,\ref{f-typeD}& \ref{f-semihyp1}R& \ref{f-nonhyp1}R\Tstrut\Bstrut\\\hline
      {\rm unimodal}& {\rm none} &\ref{f2}, \ref{f6}, \ref{f-aC1}LR, \ref{f-aC2}L& {\rm none}& \ref{f-semihyp1}L & \ref{F-exc}\Tstrut\Bstrut\\ \hline
\multicolumn{6}{c}{\relax}\Tstrut\Bstrut\\[-2.75ex]  \hline
\multicolumn{6}{|c|}{\sf Obstructed}\Tstrut \Bstrut\\\hline \hline
 $+ - +$& \ref{f-aBobst}&\ref{f-aC3}L&\ref{f-aBn3a}L, \ref{f-aDn4a}L, \ref{f-aDn4b}LR& \ref{f-semihyp}L &\ref{f-nonhyp-obst}\Tstrut\Bstrut\\
      \hline
      $- + -$&{\rm none} &\ref{f-aC3}R&\ref{f-aBn3a}R, \ref{f-aDn4a}R& \ref{f-semihyp}R & \ref{f-nonhyp-obst1}LR\Tstrut\Bstrut\\\hline
     {\rm unimodal}&{\rm none}&{\rm none}&{\rm none}&{\rm none}&{\rm none} ?\Tstrut\Bstrut\\\hline
    \end{tabular}
\end{center}
{ 
\def\+{\phantom{-}}
\newcommand\half{1/2} 
\newcommand{\corpoly}{\rule[-2ex]{0pt}{0pt} 
  \raisebox{-1ex}{$\stackrel{\hbox{corresp.}}{\hbox{poly}}$}}
\newcommand{\rb}[1]{\raisebox{1.5ex}[0pt][0pt]{#1}}
\begin{longtable}{|c|c:r :c:c:c:c|}
  \caption{ \label{t-2}
This table lists the parameters for all of the figures which
illustrate some  
combinatorics.  Parameters for the ``corresponding
polynomials'' of \autoref{t-kninv} are also listed. 
Here $N$  denotes the number of iterations that were used in producing
the figures,  starting with the points of
$\vecx$ equally spaced within the
appropriate intervals and continuing until $(\Sigma,\Delta)$ was within
$10^{-7}$ of the limiting value. 
However, two examples require special mention:
The $*$ in the ``N'' column for \autoref{F-exc} indicates that the
data for this exceptional case had to be computed directly,
without any iteration. The entries for \autoref{f-pullback-134310} show
what actually happens when we iterate with this combinatorics. (In fact
the initial conditions for \autoref{f-pullback-134310} were 
specially chosen, but any generic choice would yield similar behavior.)
In the unobstructed cases
the parameters $\mu$ and $\kappa$ (or $\Sigma$ and $\Delta$) 
of the limit map uniquely determine the combinatorics; but this is not true
for  strongly obstructed cases (characterized by $\mu=\pm\infty$ or by
$\Delta=1$). 
The converse statement that the combinatorics determines
the parameters is true in all cases. 
The parameter $\Sigma\in\R/2\Z$ is always  specified 
by its representative in the interval $(-1,\,1]$. \bsk
}\\
 \hline
 \sf figure &\sf combinatorics& $N$& $\mu$& $\kappa$& $\Sigma$&
 $\Delta$\Tstrut\Bstrut\\ \hline \hline 
\endfirsthead
  \captionsetup{singlelinecheck=off}
  \caption[]{\textit{Continued from previous page}}\\
  \hline  \sf figure &\sf combinatorics& $N$& $\mu$& $\kappa$& $\Sigma$&
     $\Delta$\Tstrut\Bstrut\\ \hline \hline 
\endhead
  \hline
  \multicolumn{7}{r}{\textit{\sffamily \autoref{t-2} continues on next page}}\Tstrut\\
\endfoot 
  \hline
\endlastfoot
\ref{f1}, \ref{f5} & $\(5,6,4,1,0,2,3\)$& 87& $-7.2407034$& $\+1.097305$&$-0.338652$&$0.836756$\Tstrut\Bstrut\\ \hline
\ref{f2}& $\(1,2,3,2,0\)$&63& $-1.270048$& $-1.351520$&$\+0.893946$&$0.561904$\Tstrut\Bstrut\\  \hline
\rb{\ref{F-nonex}}&$\stackrel{\hbox{$\(4,2,1,0,1\)$}\Tstrut\Bstrut}{\hbox{$\(3,1,0,1\)$}}$&
   $\stackrel{\hbox{$12$}\Bstrut}{\hbox{$\phantom{1}2$}}$&\rb{$-4$}&\rb{$\+1$}&$\rb{-0.271699}$&$\rb{0.728301}$\Tstrut\Bstrut\\\hline
\ref{f-bad}&$\(3,5,3,2,0,2\)$&6&$-\infty$&$\+0$&$-\half$&$1$\Tstrut\Bstrut\\\hline
\ref{f6}&$\(3,2,1,2\)$&43&$-1.295581$&$\+1.191483$&$\+0$&$\half$\Tstrut\Bstrut\\\hline
\ref{F-algExamp}&$\(1,2,4,5,2,1,0\)$&42&$-2.594313$&$-1.637869$&$\+1$&$0.712071$\Tstrut\Bstrut\\ \hline
\ref{F-exc}&$\(1,3,4,3,1,0\)$&*&$-2$&$-\sqrt{2}$&$\+1$&$0.635940$\Tstrut\Bstrut\\\hline
 &  &odd&$-8/3$&$-5/3$&$\+1$&$0.719622$\Tstrut\\
\rb{\ref{f-pullback-134310}}&\rb{$\(1,3,4,3,1,0\)$}&even&$-3/2$&$-5/4$&$\+1$&$0.545629$\Bstrut\\\hline
\ref{f-weakobs}L&$\(3,5,3,2,1,0\)$&14&$-4$&$-1$&$-0.728301$&$0.728301$\Tstrut\Bstrut\\\hline
\ref{f-weakobs}R&$\(3,4,3,2,1,0\)$&48&$-4/3$&$-1$&$-0.892232$& $0.455511$\Tstrut\Bstrut\\ \hline
\ref{f-sq}&$\(2,0,1,2,5,3\)$&9&$\+\infty$&$-1$&$\half$&$1$\Tstrut\Bstrut\\\hline 
\ref{F-FP2} & $\FP(8,9)$ &$1000$&$-256.3050$&$-1.494245$&$-0.507393$&$0.995033$\Tstrut\Bstrut\\\hline
\ref{f-3201}L&$\(3,2,0,1\)$&14&$-3.796781$&$\+0.898403$&$-0.287930$&$0.712070$\Tstrut\Bstrut\\\hline
\ref{f-3201}LC&$\(3,2,0,1,4\)$&31&$\+3.498562$&$\+0.749281$&$\+0.258130$&$0.741869$\Tstrut\Bstrut\\ \hline 
\ref{f-43013}L&$\(4,3,0,1,3\)$&23&$-5.538584$&$\+1.769292$&$-0.182973$& $0.817027$\Tstrut\Bstrut\\ \hline
\ref{f-43013}LC&$\(4,3,0,1,3,5\)$&24&$\+3.890875$&$\+0.945437$&$\+0.223012$&$0.776988$\Tstrut\Bstrut\\ \hline 
\ref{f-aBn2}L&$\(1,0\)$&1&$-2$&$\+0$&$-\half$&$\+\half$\Tstrut\Bstrut\\
\corpoly&$\(0,2,1\)$&12&$\+3.236068$&$-0.618034$&$\+0.712071$&$0.712071$\Bstrut\\ \hline
\ref{F-min}, \ref{f-aBn2}R&$\(1,2,0\)$&12&$-4.649436$&$-1.324718$&$-0.772304$&$0.772304$\Tstrut\Bstrut\\ 
\corpoly &$\(0,2,3,1\)$&12&$\+3.831874$&$-0.915937$&$\+0.772304$&$0.772304$\Bstrut\\ \hline
  \ref{f-aBn3}L&$\(1,2,3,0\)$&69&$-5.968584$&$-1.984292$&$-0.833802$&$0.833802$\Tstrut\Bstrut\\
\corpoly &$\(0,2,3,4,1\)$&12&$\+3.960270$&$-0.9801349$&$\+0.782264$&$0.782264$\Bstrut\\ \hline
  \ref{f-aBn3}R&$\(2,3,1,0\)$&12&$-3.796780$&$-0.898402$&$-0.712071$&$0.712071$\Tstrut\Bstrut\\ 
\corpoly &$\(0,3,4,2,1\)$&31&$\+3.498562$&$-0.749281$&$\+0.741869$&$0.741869$\Bstrut\\ \hline
\ref{f-aBn4}L&$\(1,2,3,4,0\)$&90&$-6.656438$&$-2.328219$&$-0.855761$&$0.855761$\Tstrut\Bstrut\\ 
\corpoly  &$\(0,2,3,4,5,1\)$&13&$\+3.990267$&$-0.995134$&$\+0.784470$&$0.784470$\Bstrut\\ \hline
\ref{f-aBn4}R&$\(2,4,3,1,0\)$&13&$-4.044724$&$-1.022362$&$-0.731704$&$0.731704$\Tstrut\Bstrut\\ 
\corpoly &$\(0,3,5,4,2,1\)$&24&$\+3.738915$&$-0.869457$&$\+0.764527$&$0.764527$\Bstrut\\ \hline
\ref{f-aBn4a}L&$\(1,3,4,2,0\)$&44&$-5.628255$&$-1.814128$&$-0.820750$&$0.820750$\Tstrut\Bstrut\\ 
\corpoly &$\(0,2,4,5,3,1\)$&12&$\+3.905709$&$-0.952855$&$\+0.778136$&$0.778136$\Bstrut\\ \hline
\ref{f-aBn4a}R&$\(2,3,4,0,1\)$&88&$-18.46037$&$-1.433732$&$-0.592613$&$0.932596$\Tstrut\Bstrut\\\hline 
\ref{f-aBn5a}&$\(2,3,4,5,6,0,1\)$&100&$-25.104560$&$-3.261693$&$-0.655923$&$0.952150$\Tstrut\Bstrut\\\hline
\ref{f-aC1}L&$\(1,2,1,0\)$&61&$-1.295598$&$-1.191488$&$1$&$\+\half$\Tstrut\Bstrut\\ \hline  
\ref{f-aC1}R&$\(1,2,3,1,0\)$&62&$-2.340346$&$-1.539254$&$1$&$0.682963$\Tstrut\Bstrut\\ \hline
\ref{f-aC2}L&$\(1,2,3,2,0\)$&63&$-1.270048$&$-1.351520$&$\+0.893945$&$0.561904$\Tstrut\Bstrut\\ \hline
\ref{f-aC2}R&$\(4,5,4,0,1,2\)$&25&$-8.401378$&$\+0.770864$&$-0.399338$&$0.854152$\Tstrut\Bstrut\\ \hline
\ref{f-typeD}&$\(6,7,5,4,1,0,2,3\)$&38&$-5.333358$&$\+0.972001$&$-0.318689$&$0.784399$\Tstrut\Bstrut\\ \hline
\ref{f-semihyp1}L&$\(2,3,2,1,0\)$&22&$-1.333333$&$-1$&$-0.8929233$&$0.455512$\Tstrut\Bstrut\\ \hline
\ref{f-semihyp1}R&$\(3,4,6,5,4,0,1\)$&68&$-6.704389$&$-1.109249$&$-0.673413$&$0.825583$\Tstrut\Bstrut\\ \hline
\ref{f-nonhyp}L&$\(2,3,2,0\)$&2&$-4$&$-1$&$-0.728301$&$0.728301$\Tstrut\Bstrut\\ 
\corpoly &$\(0,3,4,3,1\)$&40&$\+3.678573$&$-0.839287$&$\+0.759201$&$0.759201$\Bstrut \\ \hline
\ref{f-nonhyp}R&$\(1,3,4,1,0\)$&23&$-5.538584$&$-1.769292$&$-0.817021$&$0.817021$\Tstrut\Bstrut\\ 
\corpoly &$\(0,2,4,5,2,1\)$&24&$\+3.890873$&$-0.945436$&$\+0.776988$&$0.776988$\Bstrut\\ \hline
\ref{f-nonhyp1}L&$\(1,3,4,3,0\)$&53&$-5.678573$&$-1.839286$&$-0.822747$&$0.822747$\Tstrut\Bstrut\\ 
\corpoly &$\(0,2,4,5,4,1\)$&15&$\+3.927737$&$-0.963869$&$\+0.779808$&$0.779808$\Bstrut\\ \hline
\ref{f-nonhyp1}R&$\(3,4,6,3,1,0,1\)$&99&$-8.266304$&$-1$&$-0.632072$&$0.853973$\Tstrut\Bstrut\\\hline
\ref{f-aBobst}&$\(3,5,4,1,0,2\)$&6&$-\infty$&$\+0$&$-\half$&$1$\Tstrut\Bstrut\\ \hline
\ref{f-aC3}L&$\(2,4,1,0,1\)$&13&$-\infty$&$-1$&$-\half$&$1$\Tstrut\Bstrut\\ \hline
\ref{f-aC3}R&$\(1,0,1,4,2\)$&13&$\+\infty$&$-1$&$\+\half$&$1$\Tstrut\Bstrut\\ \hline
\ref{f-aBn3a}L&$\(2,3,0,1\)$&22&$-\infty$&$\+0$&$-\half$&$1$\Tstrut\Bstrut\\ \hline
\ref{f4}, \ref{f-aBn3a}R&$\(1,0,3,2\)$&22&$\+\infty$&$\+0$&$\+\half$&$1$\Tstrut\Bstrut\\ \hline
\ref{f-aDn4a}L&$\(2,4,3,0,1\)$&12&$-\infty$&$-1$&$-\half$&$1$\Tstrut\Bstrut\\ \hline
\ref{f-aDn4a}R&$\(1,0,3,4,2\)$&13&$\+\infty$&$-1$&$\+\half$&$1$\Tstrut\Bstrut\\ \hline
\ref{f-aDn4b}L&$\(3,4,5,1,0,2\)$&40&$\+\infty$&$-0.618034$&$-\half$&$1$\Tstrut\Bstrut\\ \hline
\ref{f-aDn4b}R&$\(4,5,0,1,2,3\)$&69&$-\infty$&$\+\infty$&$-\half$&$1$\Tstrut\Bstrut\\ \hline
\ref{f-semihyp}L&$\(2,4,2,0,1\)$&13&$-\infty$&$-1$&$-\half$&$1$\Tstrut\Bstrut\\ \hline
\ref{f-semihyp}R&$\(1,0,2,4,2\)$&13&$\+\infty$&$-1$&$\+\half$&$1$\Tstrut\Bstrut\\ \hline
\ref{f-nonhyp-obst}&$\(3,5,4,3,0,2\)$&5&$-\infty$&$-1$&$-\half$&$1$\Tstrut\Bstrut\\ \hline
\ref{f-nonhyp-obst1}L&$\(2,0,3,5,6,5,3\)$&8&$\+\infty$&$\+1$&$\+\half$&$1$\Tstrut\Bstrut \\ \hline
\ref{f-nonhyp-obst1}R&$\(2,0,4,5,6,5,3\)$&9&$\+\infty$&$\+1$&$\+\half$&$1$\Tstrut\Bstrut \\ \hline
\end{longtable}
%
} 

\FloatBarrier

\bigskip

{\sc Araceli Bonifant: Mathematics Department, University of Rhode Island,
  Kingston, R.I.,  02881.}
\textsf{email: bonifant@uri.edu}

\bigskip

{\sc John Milnor: Institute for Mathematical Sciences, Stony Brook University,
Stony Brook, NY. 11794-3660.}
\textsf{email: jack@math.stonybrook.edu}

\bigskip

{\sc Scott Sutherland: Institute for Mathematical Sciences, Stony Brook
  University, Stony Brook, NY. 11794-3660.}
\textsf{email: scott@math.stonybrook.edu}
\end{document}